\newcommand{\adm}{{\rm Adm}}
\newcommand{\Mod}{{\rm Mod}}
\newcommand{\Cp}{{\rm Cap}}
\newcommand{\len}{{\rm Len}}
\newcommand{\image}{{\rm Image}}
\newcommand{\gap}{{\rm D}}
\newcommand{\mesh}{{\rm Mesh}}
\newcommand{\ones}{{\mathbbm 1}}
\newcommand{\Z}{{\mathbb Z}}
\newcommand{\R}{{\mathbb R}}
\newcommand{\bP}{{\mathbf{P}}}
\newcommand{\N}{{\mathbb N}}
\newcommand{\de}{\delta}
\newcommand{\ep}{\epsilon}
\newcommand{\ga}{\gamma}
\newcommand{\Ga}{\Gamma}
\newcommand{\defeq}{:=}
\DeclareMathOperator{\length}{length}
\DeclareMathOperator{\diam}{diam}
\DeclareMathOperator{\lip}{lip}
\def\vint_#1{\mathchoice
          {\mathop{\vrule width 6pt height 3 pt depth -2.5pt
                  \kern -8pt \intop}\nolimits_{#1}}%
          {\mathop{\vrule width 5pt height 3 pt depth -2.6pt
                  \kern -6pt \intop}\nolimits_{#1}}%
          {\mathop{\vrule width 5pt height 3 pt depth -2.6pt
                  \kern -6pt \intop}\nolimits_{#1}}%
          {\mathop{\vrule width 5pt height 3 pt depth -2.6pt
                  \kern -6pt \intop}\nolimits_{#1}}}
\theoremstyle{plain}
\newtheorem{theorem}[equation]{Theorem}
\newtheorem{corollary}[equation]{Corollary}
\newtheorem{lemma}[equation]{Lemma}
\newtheorem{proposition}[equation]{Proposition}
\theoremstyle{definition}
\newtheorem{definition}[equation]{Definition}
\newtheorem{example}[equation]{Example}
\newtheorem{remark}[equation]{Remark}
 \numberwithin{equation}{section}
 \newcommand{\bi}{\begin{itemize}}
\newcommand{\ei}{\end{itemize}}
\title{Density of continuous functions in Sobolev spaces with applications to capacity}
\author{Sylvester Eriksson-Bique}
\address{Department of Mathematics and Statistics, University of Jyv\"askyl\"a,
Seminaarinkatu 15, PO Box 35,  FI-40014 University of Jyv\"askyl\"a, Finland}
\email{sylvester.d.eriksson-bique@jyu.fi}
\author{Pietro Poggi-Corradini}
\address{Kansas State University,
Department of Mathematics,
138 Cardwell Hall,
Manhattan, KS 66506}
\email{pietro@math.ksu.edu}
\thanks{The first author was partially supported by Finnish Academy Grants n.~345005 and n.~356861. The second author is partially supported by NSF DMS n.~2154032. The authors thank Nages Shanmugalingam, and Anders and Jana Bj$\ddot{\text{o}}$rn for useful conversations on these topics. The paper benefited from helpful comments by an anonymous referee.}
\begin{document}
\maketitle

\begin{abstract}
We show that capacity can be computed with locally Lipschitz functions in locally complete and separable metric spaces. Further, we show that if $(X,d,\mu)$ is a locally complete and separable metric measure space, then continuous functions are dense in the Newtonian space $N^{1,p}(X)$. Here the measure $\mu$ is Borel and is finite and positive on all metric balls. In particular, we don't assume properness of $X$, doubling of $\mu$ or any Poincar\'e inequalities. These resolve, partially or fully, questions posed by a number of authors, including J.~Heinonen, A.~Bj\"orn and J.~Bj\"orn.  In contrast to much of the past work, our results apply to \emph{locally complete} spaces $X$ and dispenses with the frequently used regularity assumptions: doubling, properness, Poincar\'e inequality, Loewner property or quasiconvexity. 
\end{abstract}

\section{Introduction}

Solutions to variational problems on metric measure spaces $(X,d,\mu)$, such as $p$-harmonic functions, may fail to be continuous or Lipschitz in a fully general setting. However, a useful tool is to approximate such minimizers by continuous or Lipschitz functions.
In many works, see for instance \cite{che99,lahtisha, shabook, bjornbook},
 one places assumptions such as, the doubling property, the Poincar\'e inequality, or properness, to prove density of Lipschitz functions. Doubling and Poincar\'e inequalities are natural in certain settings, such as $A_p$-weighted spaces \cite{HKM06}, Carnot groups \cite{le2018primer}, boundaries of certain hyperbolic groups \cite{bourdonpi} and manifolds with Ricci bounds \cite{cheeger1997structure,guofangwei}. 
 However, there are many important settings where these assumptions are overly restrictive, and we name just a handful of such: studying generalized notions of scalar curvature and intrinsic limits of manifolds with scalar curvature bounds \cite{gromov2014plateau,sormani2011intrinsic}, studying integral currents in metric spaces \cite{currents},  metric manifolds and uniformization of metric surfaces \cite{rajala,basso2023geometric,ntalampekos2023polyhedral}, the study of analysis on fractals \cite{capogna2022neumann}, Sobolev spaces on infinite dimensinal spaces such as the Wasserstein space \cite{sodini}, spaces equipped with more general weights \cite{ambrosio}, or complete and rectifiable spaces \cite{bate}.  In all these cases Sobolev spaces, and associated differential structures, still play a crucial role. 
 
 Our contribution in this paper is to remove the assumptions of doubling and Poincar\'e and to replace these with a much weaker local completeness assumption, and to still prove three fundamental properties: the density of continuous functions in the Sobolev space, equivalence of different notions of capacity and the property that the Sobolev capacity is a Choquet capacity. This  clarifies substantially the methods and dependence on assumptions. Further, it requires the use of new approximation and extension methods.

The variational problems we consider are classical in metric measure spaces, and arise from the definition of an  \emph{upper gradient}. A Borel function $g:X\to [0,\infty]$ is said to be an upper gradient for a function $u:X \to [-\infty,\infty]$ if for all rectifiable curves $\gamma:[0,1]\to X$, we have
\begin{equation}\label{eq:ug}
|u(\gamma(0))-u(\gamma(1))| \leq \int g ds,
\end{equation}
where we interpret $|\infty-\infty|=\infty$.

Next, if $1\le p<\infty$, consider the condenser $p$-capacity problem
\begin{equation}\label{eq:p-capacity}
\Cp_p(E,F) := \inf_{\stackrel{u|_E = 0}{u|_F = 1}} \int g^p ~d\mu,
\end{equation}
where $E$ and $F$ are disjoint closed sets in $X$, and where $u$ is taken to be a function on $X$ and $g$ is an upper gradient for $u$.
Heinonen and Koskela asked in \cite[Remark 2.13]{heinonenkoskela}, if the infimum in this definition could be taken over functions $u$ that are  continuous or locally Lipschitz. In general, we may consider a collection $W$ of pairs $(u,g)$, where  $u$ is a function on $X$ and $g$ is an upper gradient for $u$, and define
\[
\Cp_p^W(E,F):=\inf_{\stackrel{u|_E = 0, u|_F = 1}{(u,g) \in W}} \int g^p ~d\mu.
\]
By varying $W$, we obtain different versions of capacity considered in the literature. We focus on three variants which have appeared in the literature: i) $W={\rm lip}$ corresponds to all pairs $(u,g)$ where $u$ is locally Lipschitz, ii) $W={\rm cont}$ is the collection of all pairs $(u,g)$ where $u$ is continuous, iii) and $W=(\lip,\lip)$ is the collection of all pairs $(u,g)$ where $u$  and $g$ are locally Lipschitz. It is trivial, that restricting to the collections to $(\lip,\lip), {\rm lip}$ or ${\rm cont}$ produces a capacity, which is larger than the unrestricted capacity in (\ref{eq:p-capacity}). The problem, which bears a close affinity to approximation, is to show that these restricted capacities are still equal to the unrestricted capacity. Throughout, we will assume that $\mu$ is a Borel measure on $X$, which is positive and finite on all balls, that is $0<\mu(B)<\infty$ for each $B=B(x,r)$, with $x\in X$ and $r>0$.

Our first main result is the following.
\begin{theorem}\label{thm:capacity}
Let $(X,d,\mu)$ be a locally complete and separable metric measure space and let $E,F \subset X$ be two closed, non-empty disjoint sets with $d(E,F)>0$. Then, for $p\in[1,\infty)$,
\begin{equation}\label{eq:cap-equality}
\Cp_p(E,F)=\Cp_p^{\rm cont}(E,F)=\Cp_p^{\lip}(E,F)=\Cp_p^{(\lip,\lip)}(E,F).
\end{equation}
\end{theorem}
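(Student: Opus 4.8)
The plan is to reduce \eqref{eq:cap-equality} to a single inequality and then carry out a Lipschitz approximation of admissible pairs. First I would note that a pair $(u,g)$ with $u$ and $g$ locally Lipschitz is in particular one with $u$ locally Lipschitz, which in turn is one with $u$ continuous; hence the inclusions of admissible classes give, for free,
\[
\Cp_p(E,F)\ \le\ \Cp_p^{\rm cont}(E,F)\ \le\ \Cp_p^{\lip}(E,F)\ \le\ \Cp_p^{(\lip,\lip)}(E,F),
\]
so it suffices to prove $\Cp_p^{(\lip,\lip)}(E,F)\le\Cp_p(E,F)$. One may assume $\Cp_p(E,F)<\infty$; fixing $\e>0$, I would pick $(u,g)$ admissible for \eqref{eq:p-capacity} with $\int_X g^p\,d\mu\le\Cp_p(E,F)+\e$ and, post-composing $u$ with the $1$-Lipschitz map $t\mapsto\max(0,\min(1,t))$ (which keeps $g$ an upper gradient), assume $0\le u\le1$. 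The target is a locally Lipschitz $v$ with a locally Lipschitz upper gradient $h$ satisfying $v|_E=0$, $v|_F=1$ and $\int_X h^p\,d\mu\le\Cp_p(E,F)+C\e$, after which $\e\to0$ finishes the proof.

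For the construction of $v$ I would use a discrete convolution adapted to a Whitney-type cover. Separability gives a countable, locally finite open cover $\{B_i\}_{i\in\N}$, $B_i=B(x_i,r_i)$, of $\Om:=X\setminus(E\cup F)$ with $r_i\le\tfrac1{10}d(x_i,E\cup F)$, further refined where $g$ concentrates; let $\{\varphi_i\}$ be a locally Lipschitz partition of unity subordinate to it with $\Lip(\varphi_i)\lesssim r_i^{-1}$, and attach to each $i$ a value $u_i\in[0,1]$ approximating $u$ on $B_i$ (e.g.\ a $\mu$-median of $u|_{B_i}$). I would set $v:=\sum_i\varphi_i u_i$ on $\Om$, extended by $0$ on $E$ and $1$ on $F$; the sum being locally finite, $v$ is locally Lipschitz on $\Om$, and from $v-u_j=\sum_i\varphi_i(u_i-u_j)$ on $B_j$ one reads off that
\[
h\ :=\ g\,\ones_{\Om}\ +\ \sum_i |u_i-u_{i^\ast}|\,\Lip(\varphi_i)\,\ones_{B_i}
\]
(with $i^\ast$ a fixed neighbour of $B_i$) is an upper gradient of $v$ on $\Om$ which is again a locally finite sum of Lipschitz functions, hence locally Lipschitz; this is why passing from $\Cp_p^{\lip}$ to $\Cp_p^{(\lip,\lip)}$ costs nothing. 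Since the cover shrinks near $E\cup F$, forcing $u_i=0$ (resp.\ $1$) on balls close to $E$ (resp.\ $F$) makes $v$ extend continuously and with a locally Lipschitz upper gradient across $E\cup F$. Local completeness is used here and below: the small closed balls it furnishes are complete, so Fuglede's lemma, absolute continuity of $u$ on $p$-almost every rectifiable curve, and the gluing lemma for local upper gradients are available on them and can be patched.

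The hard part will be the estimate $\int_X h^p\,d\mu\le\int_X g^p\,d\mu+C\e$, and this is where the absence of doubling and of a Poincar\'e inequality bites: one cannot bound $|u_i-u_{i^\ast}|$ by an average of $g$ along a curve (without quasiconvexity there need be no short curves), nor invoke bounded overlap of the cover or $L^p$-boundedness of a maximal operator. My plan is to run the discrete-convolution estimate scale by scale — selecting the radii $r_i$ by a stopping-time/Whitney procedure so that at the scale of $B_i$ the oscillation of $u$ over a fixed dilate of $B_i$ is small off a set carrying only a small part of $\int g^p$, converting that oscillation into a genuine integral of $g$ near $B_i$ via Fuglede's lemma and absolute continuity on $p$-a.e.\ curve, and summing the errors over scales into a geometric series of total mass $\le\e$. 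A companion cut-off lemma near $E$ and $F$ — distributing the forcing of $v$ to its boundary values over the dyadic collars $\{2^{-n-1}\le d(\cdot,E)<2^{-n}\}$ and using $d(E,F)>0$ together with the fact that $\int g^p\,d\mu$ over the $r$-neighbourhood of $E$ tends to $0$ as $r\to0$ — bounds the extra cost of the boundary normalization by $C\e$. Post-composing $v$ with an affine truncation restores the exact values $0$ on $E$ and $1$ on $F$ at the price of a factor $(1+C\e)$ on $h$, giving $\Cp_p^{(\lip,\lip)}(E,F)\le\Cp_p(E,F)+C\e$ and hence, on letting $\e\to0$, the theorem. I expect the scale-by-scale upper gradient bound, with no Poincar\'e inequality to lean on, to be the principal difficulty.
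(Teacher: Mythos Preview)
Your reduction to the single inequality $\Cp_p^{(\lip,\lip)}(E,F)\le\Cp_p(E,F)$ is correct, but the construction you propose has a genuine gap, and the mechanism the paper uses is quite different.

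First, a concrete error: your candidate upper gradient $h=g\,\ones_\Om+\sum_i|u_i-u_{i^\ast}|\Lip(\varphi_i)\ones_{B_i}$ is \emph{not} locally Lipschitz, because $g$ is merely an $L^p$ function. So even if the energy estimate went through, you would at best obtain $\Cp_p^{\lip}$, not $\Cp_p^{(\lip,\lip)}$. Dropping the $g\,\ones_\Om$ term is not an option either, since the constants $u_i$ carry no infinitesimal information about $u$ and the remaining sum alone need not dominate the pointwise oscillation of $v$ along curves.

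Second, and more seriously, the energy estimate you flag as ``the principal difficulty'' cannot be carried out along the lines you describe. Without a Poincar\'e inequality or quasiconvexity there is no bound of the form $|u_i-u_{i^\ast}|\lesssim r_i\bigl(\vint_{CB_i} g^p\bigr)^{1/p}$: neighbouring Whitney balls need not be joined by \emph{any} rectifiable curve, so the upper gradient inequality gives no comparison of $u$-values, and Fuglede's lemma does not manufacture curves, it only discards a null family of bad ones. In such regions the medians $u_i$ can oscillate wildly while $g$ is small on the balls, and the error term $\sum_i|u_i-u_{i^\ast}|^p r_i^{-p}\mu(B_i)$ is uncontrolled.

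The paper sidesteps this entirely by \emph{not} averaging $u$. It first replaces $g$ by a slightly larger lower semicontinuous ``good function'' $\tilde g$ approximated from below by bounded Lipschitz $g_i\nearrow\tilde g$, and then defines new potentials from scratch via discrete paths:
\[
u_i(x)=\min\Bigl(1,\ \inf\Bigl\{\textstyle\int_P g_i:\ P=(p_0,\dots,p_n),\ p_0\in E,\ p_n=x,\ \mesh(P)\le i^{-1}\Bigr\}\Bigr).
\]
One checks directly that $u_i$ is Lipschitz with upper gradient $g_i$ (both locally Lipschitz), and $u_i|_E=0$. The only nontrivial point is that $\inf_F u_i\to1$; this is where the ``good'' property is used: if not, one extracts discrete paths $P_i$ from $E$ to $F$ with $\int_{P_i}g_i\le1-\delta$, the goodness forces a subsequence to converge to an honest rectifiable curve $\gamma$ from $E$ to $F$, and lower semicontinuity gives $\int_\gamma\tilde g\le1-\delta$, contradicting $\tilde g\ge g$. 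The locally complete case is then obtained by a partition-of-unity patching over the exhaustion $X_j=\{d(\cdot,\hat X\setminus X)\ge 2^{-j}\}$ of $X$ inside its completion. The moral: rather than estimating oscillations of a given $u$ (which needs Poincar\'e), one builds the competitor directly from the gradient data via discrete path integrals and uses compactness of discrete paths to recover the boundary values.
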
 
\begin{remark}\label{rmk:modulus} If $\Gamma(E,F)$ is the family of rectifiable curves connecting $E$ to $F$, then one has the equality between modulus and capacity $\Cp_p(E,F)=\Mod_p(E,F)$. Whenever $(u,g)$ is admissible for the capacity, $g$ is admissible for the modulus. Conversely, if $g$ is admissible for the modulus, then there exists a $u$ which is admissible for the capacity so that $g$ is an upper gradient of $u$. Indeed, such a $u$ is obtained by ``integrating'' $g$. See Section \ref{sec:mod-sob} for the definition of modulus, and \cite[Proposition 2.17]{heinonenkoskela} for a proof of this claim. Consequently, we obtain a stronger version of \cite[Proposition 2.17]{heinonenkoskela}, which states that $\Mod_p(E,F)=\Mod_p^c(E,F)$, where $\Mod_p^c(E,F)$ is the modulus computed with only continuous admissible functions. Specifically, our proof shows that $\Mod_p(E,F)=\Mod_p^c(E,F)$ whenever $X,E,F$ satisfy the assumptions of Theorem  \ref{thm:capacity} and $p\in[1,\infty)$. With the more restrictive assumptions that the space $X$ is proper and geodesic, this equality was known \cite[Proposition 7]{keith03}. 
\end{remark}  

The strongest previous result on this problem is due to Keith \cite[Proposition 7]{keith03}. He showed (\ref{eq:cap-equality}) under the assumption that $X$ is proper and geodesic (i.e. that each pair of points $x,y\in X$ can be connected by a rectifiable  curve $\gamma$ with $\len(\gamma) = d(x,y)$). Our result weakens properness to local completeness and separability and removes any geodesic assumption.

The capacity problem gives rise to the definition of a Sobolev space. The Sobolev spaces which we consider are those introduced by Cheeger in \cite{che99}. However, we take the perspective of precise representatives which were studied and introduced by Shanmugalingam in \cite{sha00}. The space of these functions is denoted $N^{1,p}(X)$ with $p\in [1,\infty)$, and consists of all functions $u\in L^p(X)$ that have an upper gradient $g\in L^p(X)$. For $p>1$, the space $N^{1,p}(X)$ is equivalent to variants defined using plans, see \cite{ambgigsav}.  The (semi)norm on this space is denoted $\|\cdot\|_{N^{1,p}(X)}$ which equals the usual Sobolev norm in the case of Euclidean spaces equipped with Lebesgue measure. These notions will be precisely defined below in Section \ref{sec:prelim}.

Our second main result shows the density of continuous functions in the Sobolev space and that all Sobolev functions are quasicontinuous. We say that a function $f:X\to \R\cup\{\infty,-\infty\}$ is {\it quasicontinuous} if for every $\epsilon>0$ there exists an open set $O$ with $\Cp_p(O) < \epsilon$ and so that $f|_{X\setminus O}$ is continuous. Recall that the notion of having zero capacity is a finer notion than having zero measure. Indeed, a set of capacity zero must be of measure zero. However, a set of capacity zero will usually be of smaller Hausdorff dimension. 
\begin{theorem} \label{thm:density-main} Let $(X,d,\mu)$ be a locally complete and separable metric measure space. Then $C(X) \cap N^{1,p}(X)$ is dense in $N^{1,p}(X)$ for $p\in[1,\infty)$, and every function $f\in N^{1,p}(X)$ is quasicontinuous.
\end{theorem}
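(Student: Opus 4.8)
The plan is to prove the following strengthening, from which both assertions follow at once: for every \emph{bounded} $f\in\spn$ and every $\eps>0$ there exist $v\in C(X)\cap\spn$ and an \emph{open} set $O$ with $\Cp_p(O)<\eps$, $v=f$ on $X\setminus O$, and $\|f-v\|_{\spn}<\eps$. Density of $C(X)\cap\spn$ then follows by first truncating ($\max(-k,\min(k,u))\to u$ in $\spn$, since $g\ones_{\{|u|>k\}}$ is an upper gradient of the difference and $\to0$ in $L^p$ by dominated convergence), and quasicontinuity of a bounded $f$ is immediate because $f|_{X\setminus O}=v|_{X\setminus O}$ is continuous; for unbounded $f$ one adds the truncations using $\Cp_p(\{|f|>k\})\le k^{-p}\|f\|_{\spn}^p\to0$. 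So fix $f$ bounded. Multiplying by a Lipschitz cutoff $\eta_R$ equal to $1$ on $B(x_0,R)$, vanishing off $B(x_0,2R)$, with $\Lip\eta_R\le 1/R$, we have $f\eta_R\to f$ in $\spn$ as $R\to\infty$ (using that $\mu$ is finite on balls, so $f\eta_R\in\spn$, and that $\tfrac1R\|f\|_{L^p(X)}\to0$ kills the cutoff term); splitting into positive and negative parts and rescaling, it suffices to treat $w\in\spn$ with $0\le w\le1$, upper gradient $g\in L^p$, and $w\equiv0$ on the closed set $E:=X\setminus B(x_0,2R)$, which is at positive distance from $\overline{B(x_0,2R)}\supset\supp w$ (the case of bounded $X$ needs only cosmetic changes).

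The heart of the matter, and the only place genuinely new methods are needed, is to construct $v$ and $O$ for this model $w$. The difficulty is that $w$ may be discontinuous on a topologically large set (though of measure --- indeed capacity --- zero), so its super- and sub-level sets are neither closed nor mutually at positive distance, and Theorem~\ref{thm:capacity} cannot be applied to them directly. I would proceed in two steps. \emph{(i) Locate $O$.} Using a lower semicontinuous majorant $\hat g\ge g$ with $\int\hat g^{\,p}\le\int g^p+\eps^p$ (Vitali--Carath\'eodory) and the curve characterization of upper gradients, produce an open set $O$ with $\mu(O)$, $\int_O\hat g^{\,p}$ and $\Cp_p(O)$ all small and such that $w|_{X\setminus O}$ is continuous and the level sets of $w$, intersected with $X\setminus O$, become genuinely closed and separated. \emph{(ii) Extend across $O$.} For a fine partition $0=t_0<\cdots<t_m=1$ apply Theorem~\ref{thm:capacity} --- in fact the construction underlying it, which produces locally Lipschitz competitors with near-minimal gradient between separated closed sets --- to the condensers formed by $E\cup\overline{\{w\le t_{j-1}\}\setminus O}$ and $\overline{\{w\ge t_j\}\setminus O}$, obtaining continuous ``layers'' $v_j$ with $0\le v_j\le t_j-t_{j-1}$, equal to $w_j:=\min\bigl(t_j-t_{j-1},\,\max(0,w-t_{j-1})\bigr)$ on $X\setminus O$ and interpolating inside $O$. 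Then $v:=\sum_j v_j$ converges uniformly (since $\sum_j(t_j-t_{j-1})=1$), hence is continuous; it equals $w$ on $X\setminus O$; and because the difference $w-v$ lives on $O$, where the pairwise disjoint transition regions $\{t_{j-1}<w<t_j\}$ let the layer gradients combine in $\ell^p$ rather than $\ell^1$, one obtains $\|w-v\|_{\spn}\lesssim\Cp_p(O)^{1/p}+\bigl(\int_O g^p\bigr)^{1/p}+m^{-1}\mu\bigl(B(x_0,2R)\bigr)^{1/p}$, which is $<\eps$ for $m$ large. Carrying out step~(i) rigorously --- producing $O$ with simultaneously small capacity and sufficient regularity of $w|_{X\setminus O}$ to run step~(ii) --- is the obstacle I expect to be hardest; it is precisely the work that, in the classical theory, is done by the Poincar\'e inequality together with a doubling measure.

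Finally, undoing the reductions: $\|u-v\|_{\spn}$ is bounded by the sum of the small errors from truncation, cutoff, positive/negative splitting, and the model approximation, and the exceptional open set is $O^+\cup O^-$ (the sets produced for the two model pieces), after which the displayed strengthening yields Theorem~\ref{thm:density-main}.
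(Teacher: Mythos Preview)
The proposal has a gap you essentially identify yourself. Step~(i) asks you to produce an open set $O$ with $\Cp_p(O)$ small and $w|_{X\setminus O}$ continuous; but that is precisely the definition of quasicontinuity of $w$, which is half of the theorem. Lusin's theorem gives only a compact $K$ with $w|_K$ continuous and $\mu(X\setminus K)$ small; upgrading small \emph{measure} of the complement to small \emph{capacity} is the entire difficulty once Poincar\'e and doubling are removed. You write that step~(i) ``is the obstacle I expect to be hardest; it is precisely the work that, in the classical theory, is done by the Poincar\'e inequality together with a doubling measure'' --- but the whole point of the theorem is to dispense with those hypotheses, and nothing in your sketch (the Vitali--Carath\'eodory majorant, the curve characterization) indicates how to get such an $O$ without them. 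So the key idea is missing, and as formulated the argument is circular.

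The paper proceeds in the opposite logical order, which breaks the circle: density is proved \emph{first}, by a direct Whitney-type extension (Proposition~\ref{thm:Extension}). Given $f$ bounded with bounded support and a Lusin set $K$ on which $f$ is continuous and the upper gradient bounded, one constructs $\tilde f\in C(X)\cap\spn$ with $\tilde f|_K=f|_K$ and $\int_{X\setminus K} g_{\tilde f}^{\,p}\le\int_{X\setminus K} g_*^{\,p}+\eps$, by infimizing a penalized discrete-path functional \eqref{eq:approximation} over chains whose mesh shrinks as they approach $K$. The machinery of good sequences (Definition~\ref{def:goodsequence}, Proposition~\ref{prop:goodsequences}) and discrete upper gradients (Definition~\ref{def:discreteupper}, Lemma~\ref{lem:farenough}) supplies the compactness that properness or Poincar\'e would otherwise provide. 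Quasicontinuity then follows \emph{from} density via the standard Cauchy-in-capacity argument (Proposition~\ref{prop:quasicont-complete}), and the locally complete case is handled afterward by completion and a partition of unity (Section~\ref{sec:locallycomplete}). Even granting step~(i), note that step~(ii) does not deliver $v_j=w_j$ on the transition strip $\{t_{j-1}<w<t_j\}\setminus O$ --- Theorem~\ref{thm:capacity} only pins $v_j$ on the two closed condenser plates --- so your claimed identity $v=w$ on $X\setminus O$, and with it the ``strengthening'', fails as stated; the two closed plates also need not be at positive distance without some uniform continuity; and the $\ell^p$ combination of layer gradients is suspect, since the supports of the $g_{v_j}$ (unlike those of the $g_{w_j}$) need not be disjoint.
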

Note that, if $\Omega$ is a domain in a locally complete space, then $\Omega$ is itself locally complete. Thus, the theorem directly applies to domains.  This strengthens the main result in \cite[Theorem 1.1]{bjornnages} in two ways: first, one does not need to switch representatives of $f$, and second, the assumptions are much weaker. 

A similar conclusion is contained in \cite[Theorem 4.1]{sha00}, under the additional hypothesis that  $X$ is  complete  and measure doubling, while also satisfying a Poincar\'e inequality. On the other hand, by just assuming completeness and separability and measure doubling,  it was shown in \cite{ambgigsav} that Lipschitz functions are dense when $p>1$. In \cite{teriseb}, this result was slightly extended to complete and separable metric spaces with finite Hausdorff dimension, and for all $p\in [1,\infty)$. These three results prove density of \emph{Lipschitz functions}, but with more restrictive assumptions - all of them require the space to have finite Hausdorff dimension. In contrast, our theorem removes any assumption on the dimension of $X$, but a price for this is paid in the weaker conclusion: the density of \emph{continuous} functions. Thus, Theorem \ref{thm:density-main} substantially answers a question from \cite{bjornnages} on whether continuous functions are dense in Sobolev spaces without any further assumptions.

\begin{remark} We note that Theorem \ref{thm:density-main} and Theorem \ref{thm:capacity} are related, but neither is implied directly by the other. In particular, we can not prove Theorem \ref{thm:capacity} directly by approximation, since \emph{a priori} Cauchy sequences of Sobolev functions only converge almost everywhere, and the sets $E,F$ in Theorem \ref{thm:capacity} may have measure zero.

Further, it is worth noting, that it is an interesting open problem to determine if Lipschitz functions are also dense in the Sobolev space for any complete and separable metric space equipped with a Radon measure, which is finite on balls.
\end{remark}

Next, consider the capacity of a set $E\subset X$ defined as
\[
\Cp_p(E) := \inf\{\|u\|_{N^{1,p}(X)}^p : u|_E \geq 1\}.
\]
We first establish a crucial technical result, which allows us to strengthen the results on Sobolev spaces from \cite{bjornnages}. Namely, we prove the outer regularity of $\Cp_p(E)$ in locally complete spaces. 

\begin{theorem}\label{thm:capacityouterreg}
Suppose that $(X,d,\mu)$ is locally complete and separable metric measure space. Let $E\subset X$ be any set. Then

$$\Cp_p(E) = \inf_{E\subset O} \Cp_p(O),$$ 
where the infimum is taken over open subsets of $X$ containing $E$.
\end{theorem}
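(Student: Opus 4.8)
The plan is to reduce the statement to the quasicontinuity of Newtonian functions established in Theorem~\ref{thm:density-main}. One inequality is free: by monotonicity of $\Cp_p$ every competitor for $\Cp_p(O)$ with $E\subset O$ is a competitor for $\Cp_p(E)$, so $\Cp_p(E)\le\inf_{E\subset O}\Cp_p(O)$, and if $\Cp_p(E)=\infty$ the claimed identity follows. Assume then $\Cp_p(E)<\infty$, fix $\epsilon>0$, and choose $u\in N^{1,p}(X)$ with $u|_E\ge 1$ and $\|u\|_{N^{1,p}(X)}^p<\Cp_p(E)+\epsilon$. Replacing $u$ by $\max(0,\min(u,1))$, which does not increase the Newtonian norm, I may assume $0\le u\le 1$ on $X$ and $u=1$ on $E$.

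Next I would use that $u$ (the precise representative) is quasicontinuous: given $\epsilon'>0$ there is an open set $G$ with $\Cp_p(G)<\epsilon'$ such that $u|_{X\setminus G}$ is continuous, and there is $h\in N^{1,p}(X)$ with $0\le h\le 1$, $h|_G\ge 1$, and $\|h\|_{N^{1,p}(X)}^p<\epsilon'$. Fix $\delta\in(0,1)$. Since $u$ is continuous on $X\setminus G$, the set $\{x\in X\setminus G:u(x)>1-\delta\}$ is relatively open in $X\setminus G$, hence equals $V\cap(X\setminus G)$ for some open $V\subset X$. Set $O:=V\cup G$. Then $O$ is open, and $E\subset O$: if $x\in E\cap G$ this is clear, and if $x\in E\setminus G$ then $u(x)=1>1-\delta$ forces $x\in V\cap(X\setminus G)\subset V$.

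It remains to estimate $\Cp_p(O)$. Put $w:=\max\bigl(u/(1-\delta),\,h\bigr)\in N^{1,p}(X)$. Then $w\ge 0$ and $w\ge 1$ on $O$: on $V\setminus G$ we have $u>1-\delta$, hence $u/(1-\delta)>1$, and on $G$ we have $h\ge 1$; so $w$ is admissible for $\Cp_p(O)$. Using that if $g_1,g_2$ are upper gradients of $a,b$ then $g_1\chi_{\{a\ge b\}}+g_2\chi_{\{b>a\}}$ is an upper gradient of $\max(a,b)$, together with $\max(a,b)^p\le a^p+b^p$ for $a,b\ge0$, one obtains
\[
\Cp_p(O)\le\|w\|_{N^{1,p}(X)}^p\le\frac{\|u\|_{N^{1,p}(X)}^p}{(1-\delta)^p}+\|h\|_{N^{1,p}(X)}^p<\frac{\Cp_p(E)+\epsilon}{(1-\delta)^p}+\epsilon'.
\]
Since $E\subset O$, this gives $\inf_{E\subset O'}\Cp_p(O')\le(1-\delta)^{-p}(\Cp_p(E)+\epsilon)+\epsilon'$ for all $\delta,\epsilon'$; letting $\delta,\epsilon'\to 0$ and then $\epsilon\to 0$ yields the reverse inequality $\inf_{E\subset O'}\Cp_p(O')\le\Cp_p(E)$, which proves the theorem.

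I expect the only real obstacle to be the quasicontinuity input: with no regularity assumption on $X$ there is \emph{a priori} no link between the pointwise values of a Sobolev function and open sets, and Theorem~\ref{thm:density-main} is exactly what bridges this. A delicate bookkeeping point is that $E$ may be $\mu$-null, so the whole argument must be carried out with the precise Newtonian representative and everywhere-pointwise inequalities — one cannot pass to a representative of $u$ altered on a $\mu$-null set. (I note that the quasicontinuity of the precise representative itself follows from density of continuous functions via the standard capacitary Borel--Cantelli argument based on $\Cp_p(\{|v|>\lambda\})\le\lambda^{-p}\|v\|_{N^{1,p}(X)}^p$, so the deduction is not circular even if one wishes to place this result before the quasicontinuity conclusion of Theorem~\ref{thm:density-main}.)
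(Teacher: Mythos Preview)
Your proof is correct and follows essentially the same approach as the paper's: both use the quasicontinuity conclusion of Theorem~\ref{thm:density-main} to find an open set $G$ of small capacity outside of which $u$ is continuous, enlarge $E\setminus G$ to an open set where $u>1-\delta$, and combine $u/(1-\delta)$ with a small Newtonian function that is $\ge 1$ on $G$. The only cosmetic difference is that you combine via $\max$ whereas the paper uses a sum; note that your claim about $g_1\chi_{\{a\ge b\}}+g_2\chi_{\{b>a\}}$ being an \emph{upper gradient} of $\max(a,b)$ should read \emph{$p$-weak upper gradient}, but this does not affect the norm estimate you need.
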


This improves on prior work by removing the assumption of properness and density used in \cite[Corollary 1.3]{bjornnages}. The proof  involves both Theorem \ref{thm:density-main} and an observation in Proposition \ref{prop:funconstr} on lower semicontinuity involving certain ``good'' functions. (These are used to handle the case  when $\Cp_p(E)=0$; see Proposition \ref{prop:capacity-null-case}.)

As a corollary, we show that  Sobolev Capacity is a Choquet capacity, under very weak assumptions. See Section \ref{def:Choquet} for a definition of a Choquet capacity.

\begin{corollary} \label{cor:capacity-choquet} If $(X,d,\mu)$ is a locally complete and seperable metric measure space and $p\in (1,\infty)$, then the map $E\mapsto \Cp_p(E)$, for $E\subset X$, is a Choquet capacity.
\end{corollary}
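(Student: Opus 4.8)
The plan is to verify the three defining properties of a Choquet capacity for the set function $E \mapsto \Cp_p(E)$: (1) monotonicity, (2) continuity from below along increasing sequences of arbitrary sets, and (3) continuity from above along decreasing sequences of compact (or closed) sets. Monotonicity is immediate from the definition of $\Cp_p(E)$ as an infimum over a smaller admissible class when $E$ grows. For the remaining two properties, the key tool is Theorem \ref{thm:capacityouterreg}, which reduces the study of $\Cp_p$ on arbitrary sets to its behavior on open sets, together with Theorem \ref{thm:density-main} and the quasicontinuity of Newtonian functions.

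\textbf{Continuity from below.} Given an increasing sequence $E_1 \subset E_2 \subset \cdots$ with union $E$, monotonicity gives $\lim_n \Cp_p(E_n) \le \Cp_p(E)$, so the content is the reverse inequality. I would use the reflexivity of $N^{1,p}(X)$ for $p \in (1,\infty)$: for each $n$, pick a near-optimal $u_n$ with $u_n \ge 1$ on $E_n$ (on a quasicontinuity representative, or using the truncation $\min(u_n,1)$ which only decreases the norm). If $\sup_n \Cp_p(E_n) = \infty$ there is nothing to prove; otherwise the $u_n$ are bounded in $N^{1,p}(X)$, so a subsequence converges weakly to some $u \in N^{1,p}(X)$, and by Mazur's lemma convex combinations converge strongly. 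One then argues that (a suitable representative of) $u$ satisfies $u \ge 1$ quasi-everywhere on each $E_m$, hence on $E$ up to a set of capacity zero, and $\|u\|_{N^{1,p}}^p \le \liminf_n \|u_n\|_{N^{1,p}}^p$ by weak lower semicontinuity of the norm. A set of capacity zero can be absorbed by adding a function of arbitrarily small norm supported near it (this is where Proposition \ref{prop:funconstr} and Proposition \ref{prop:capacity-null-case}, invoked in the proof of Theorem \ref{thm:capacityouterreg}, do the work), giving $\Cp_p(E) \le \liminf_n \Cp_p(E_n)$.

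\textbf{Continuity from above on compacta.} Let $K_1 \supset K_2 \supset \cdots$ be compact with intersection $K$. Again monotonicity gives $\Cp_p(K) \le \inf_n \Cp_p(K_n)$, so I need the reverse. Fix $\eps > 0$. By Theorem \ref{thm:capacityouterreg} there is an open set $O \supset K$ with $\Cp_p(O) < \Cp_p(K) + \eps$. The sets $K_n \setminus O$ are compact and decrease to $K \setminus O = \emptyset$, so by finite intersection there is $N$ with $K_n \subset O$ for all $n \ge N$; hence $\Cp_p(K_n) \le \Cp_p(O) < \Cp_p(K) + \eps$ for $n \ge N$. Letting $\eps \to 0$ finishes this case.

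\textbf{Main obstacle.} The delicate point is the continuity-from-below argument, specifically passing from weak convergence of the $u_n$ to a genuine competitor for $\Cp_p(E)$: one must control the pointwise/quasi-everywhere behavior of the weak (or Mazur) limit on the sets $E_n$, which are arbitrary and may be $\mu$-null, so only the quasicontinuous representative and the capacitary (not measure-theoretic) notion of ``almost everywhere'' is meaningful here. This is precisely the subtlety flagged in the paper's remark that Theorem \ref{thm:capacity} cannot be proven by pure approximation. I expect the cleanest route is to handle the case $\Cp_p(E) = 0$ separately via Proposition \ref{prop:capacity-null-case} and, in the general case, to lean on the outer-regularity theorem to replace each $E_n$ by an open neighborhood $O_n$ with nearly the same capacity before extracting the weak limit, so that the constraint $u \ge 1$ is imposed on open sets where quasicontinuity behaves well; the standard capacitability machinery (e.g. as in Choquet's capacitability theorem) then applies once these three properties are in hand.
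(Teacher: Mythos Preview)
Your verification of monotonicity and of continuity from above along decreasing compacta matches the paper's proof exactly: outer regularity (Theorem \ref{thm:capacityouterreg}) plus a finite-intersection argument is precisely how the paper handles property (3).

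For continuity from below, however, you have correctly located the obstacle but missed the device the paper uses to resolve it. After passing to Mazur convex combinations $\tilde u_n$ of $\{u_k\}_{k\ge n}$ (so that $\tilde u_n$ and the corresponding gradient combinations $\tilde g_n$ are Cauchy in $L^p$), the paper does \emph{not} try to control the pointwise or quasi-everywhere behavior of a limit. Instead, it picks a rapidly converging subsequence and sets
\[
\hat u_l \defeq \sup_{k\ge l}\tilde u_{n_k}, \qquad \hat g_l \defeq \sup_{k\ge l}\tilde g_{n_k}.
\]
The point is that each $\tilde u_{n_k}$, being a convex combination of functions $u_j$ with $j\ge n_k\ge k$, already equals $1$ \emph{everywhere} on $A_k$ (since $A_k\subset A_j$), so the supremum $\hat u_l$ satisfies $\hat u_l\ge 1$ everywhere on $\bigcup_k A_k$ with no exceptional set at all. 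The rapid-convergence condition $\sum_k\big(|\tilde u_{n_{k+1}}-\tilde u_{n_k}|+|\tilde g_{n_{k+1}}-\tilde g_{n_k}|\big)\in L^p$ ensures $\hat u_l,\hat g_l\in L^p$ with norms converging to those of the limits, and one checks directly that $\hat g_l$ is a $p$-weak upper gradient of $\hat u_l$. This sidesteps entirely the quasicontinuity and capacity-zero-correction mechanism you propose.

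Your suggested route---replacing each $E_n$ by an open $O_n$ via outer regularity before extracting a weak limit---is not obviously workable as stated: the $O_n$ need not be nested, and even on open sets the weak limit of functions $\ge 1$ need not be $\ge 1$ pointwise without further argument. It could perhaps be salvaged, but the supremum trick is both simpler and what the paper actually does.
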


\begin{remark}\label{rmk:neighborhoodcapacity} In much of the literature, see e.g. \cite{kinnunenmartio, HKM06}, a neighbourhood capacity is defined: 

\[
\overline{\Cp_p}(E) := \inf\{\|u\|_{N^{1,p}(X)}^p : u|_O \geq 1, \text{ for an open set } O \text{ with } E \subset O\}.
\]
An advantage of this definition is that it is automatically outer regular and a Choquet capacity without further assumptions, see \cite{kinnunenmartio}. Using Theorem \ref{thm:capacityouterreg} it is easy to show that $\Cp_p(E)=\overline{\Cp_p}(E)$ for locally complete and separable metric measure spaces. This gives another way of proving Corollary \ref{cor:capacity-choquet}. 

Much of the literature is split on which definition, $\Cp$ or $\overline{\Cp_p}$, they employ. Theorem \ref{thm:capacityouterreg} shows that very generally the two coincide, and one can use either definition and obtain an equivalent theory.
\end{remark}

In conclusion, we discuss the ways in which we improve on prior work, such as \cite{bjornnages}, and how we execute this technically. First, Theorem \ref{thm:density-main} rests on a new approximation inspired by the authors' prior work in \cite{seb2020,duality}. This approximation is built by solving an extension problem. Let $K\subset X$ be compact such that $f|_K$ is continuous. Theorem \ref{thm:Extension} describes how, and under which assumptions, we are able to extend $f|_K$ to a continuous function $\tilde{f}\in N^{1,p}(X)$. See Equation \eqref{eq:approximation} for the precise formulation of this extension. This construction ought to be thought of as a discretized and adapted version of the more familiar construction used in Proposition \ref{prop:funconstr}, see in particular Proposition \ref{prop:uppergradientpaths}. The discretization is our main new contribution and yields continuity without assuming the existence of curves. It plays a crucial role in allowing us to dispense with the geodesic assumption employed in \cite{keith03}, and the quasiconvexity assumption employed in \cite{heinonenkoskela}.
This approach to approximating Sobolev functions by discretizations is novel. Indeed, prior methods fell  short from being able to handle the case of complete and separable metric spaces.

A second technical contribution of this paper concerns removing the properness assumption in \cite{bjornnages}. This is somewhat subtle, and involves the inner-regularity of the measure $\mu$, i.e. that for any bounded Borel set $A\subset X$ and $\epsilon>0$ there is a compact set $K\subset A$ with $\mu(A\setminus K) < \epsilon$. The main argument here is in Proposition \ref{prop:funconstr}, where a slight modification of the notion of ``good function'' allows for the usual arguments in \cite[Section 3]{bjornnages} to go through. Indeed, this yields Proposition \ref{prop:capacity-null-case} and  the more general capacity results. A refinement of this notion, ``a good sequence of functions'', plays a role in the proof of Theorem \ref{thm:density-main}. 

In Section \ref{sec:prelim} we set the notation and prove some useful lemmas about good functions, discrete paths and almost upper gradients. The results in that section are new and have been written in a way that they may be useful in future work. In Section \ref{sec:complete}, we establish our main results in the complete setting. In Section \ref{sec:locallycomplete}, we extend these results to the locally complete setting using partition of units and localization. Finally, in Section \ref{sec:choquet}, we discuss the Choquet property and the equivalence of different definitions of capacity.

\section{Notation and preliminaries}\label{sec:prelim}

\subsection{Modulus and Sobolev spaces}\label{sec:mod-sob}

Throughout the paper $X$ will be a separable metric space and $\mu$ any Borel measure on $X$ which is finite and positive on each ball, that is $\mu(B(x,r)) \in (0,\infty)$ for each ball $B(x,r)\subset X$. Such measures are Radon when $X$ is (locally) complete and separable, see\cite[Theorem 7.1.7,  Definition 7.1.1]{bogachev07}. (In the reference, the claim is stated only for complete metric spaces. However, by an extension of the measure to the completion, following \cite{saksman}, we obtain the claim for locally complete spaces.). In particular, the measures $\mu$ in this paper are inner and outer regular.

By convention, we denote open  balls by $B(x,r)=\{y\in X: d(x,y) < r\}$. The value $r$ is called the radius of the ball (which may be non-unique), and any ball of radius $r$ is referred to as an \emph{$r$-ball}. The distance between two sets $A,B\subset X$ is defined as $d(A,B)=\inf_{a\in A,b\in B} d(a,b)$. For a single point $x\in X$, we adopt the convention $d(x,A)=d(\{x\},A)$. The characteristic function of a set $A\subset X$ is denoted $\ones_A$.

Generally, we will assume that either $X$ is complete or locally complete. In the latter case, we will also consider its completion $\hat{X}$. If $X$ is locally complete, then $X$ is an open subset in $\hat{X}$. The spaces of $L_p$-integrable functions with respect to $\mu$ for $p\in[1,\infty)$ will be denoted by $L^p(X)$. The $L^p$-norm of a function $f$ is denoted $\|f\|_{L^p(X)}$. The space of continuous functions on $X$ is denoted $C(X)$. We do not need a topology on this space, and thus consider it only as a set.

To discuss Newtonian spaces and capacities we next recall some classical terminology. These are covered in more detail in \cite{shabook}, as well as \cite{sha00, bjornnages}.

A curve $\gamma$ is a continuous map $\gamma:[0,1] \to X$ (or, in specific instances any continuous map $\gamma: I \to X$, where $I=[a,b]\subset \R$ is a bounded interval). The length of a rectifiable curve is denoted $\len(\gamma)$. The {\it speed} of an absolutely continuous curve, which exists for a.e. $t\in [0,1]$, is defined as 
\begin{equation}\label{eq:curve-speed}
|\gamma'(t)|=\lim_{h\to 0}\frac{d(\gamma(t+h),\gamma(t))}{h}.
\end{equation}
 Every rectifiable curve has a unique constant-speed parametrization, where $|\gamma'(t)|=\len(\gamma)$ for a.e. $t\in [0,1]$, \cite[Sec. 5.1]{shabook}. If $\tilde{\gamma}:[0,1]\to X$ is the  constant-speed parametrization of $\gamma$, we define the
path integral with respect to $\ga$ as:
\begin{equation}\label{def:curveintegral}
\int_\gamma g \, ds \defeq \int_{0}^1 g(\tilde{\gamma}(t)) |\tilde{\gamma}'(t)| \, dt = \len(\gamma) \int_{0}^1 g(\tilde{\gamma}(t))  \, dt
\end{equation}
when $g$ is any Borel function for which the right hand-side is defined. 

We will mostly only consider rectifiable curves and, unless otherwise specified, allow constant curves. We write $\gamma \subset A$ for a subset $A\subset X$ if $\gamma([0,1]) \subset A$. If $x\in X$ is any point, we write $\gamma:A\leadsto x$ to denote that $\gamma(0)\in A$ and $\gamma(1)=x$, i.e. that $\gamma$ connects $A$ to $x$. The diameter of a curve is denoted $\diam(\gamma)\defeq \diam(\image(\gamma))=\sup_{s,t\in [0,1]} d(\gamma(s),\gamma(t)).$  

Let $\Gamma$ be a collection of rectifiable curves. A non-negative Borel function $\rho:X\to[0,\infty]$ is called admissible for $\Gamma$, denoted $\rho\in \adm(\Gamma)$, if $\int_\gamma \rho \, ds \geq 1$ for each $\gamma\in \Gamma.$ Here, $\int_\gamma g \, ds$ is the path integral defined in (\ref{def:curveintegral}). Modulus is defined by

\[
\Mod_p(\Gamma) = \inf_{\rho\in \adm(\Gamma)} \|\rho\|_{L^p(X)}^p.
\]
A property is said to hold for $p$-a.e. curve $\gamma$ if it holds for each rectifiable $\gamma\not\in \Gamma$ for some collection $\Gamma$ with $\Mod_p(\Gamma)=0$. Given two sets $E,F\subset X$ we will denote by $\Ga(E,F)$ the family of all rectifiable curves $\ga$ in $X$ with $\ga(0)\in E$ and $\ga(1)\in F$.

Recall, that a non-negative Borel function $g:X\to [0,\infty]$ is called an {\it upper gradient} for $f:X\to[-\infty,\infty]$, if for every rectifiable $\gamma:[0,1]\to X$, we have
\begin{equation}\label{eq:ug2}
|f(\gamma(1))-f(\gamma(0))|\leq\int_\gamma g ~ds.
\end{equation}
Here, the left hand side is interpreted to be infinity if the expression gives $|\infty-\infty|$ or $|-\infty-(-\infty)|$.
The collection of upper gradients for $f$ is denoted by $\mathcal{D}(f)$.

We define the Newtonian space $N^{1,p}(X)$ as the collection of all functions $f\in L^p(X)$ that admit an upper gradient $g\in L^p(X)$. A seminorm on $N^{1,p}(X)$ is given by
\[
\|f\|_{N^{1,p}(X)} = \left(\|f\|_{L^p(X)}^p + \inf_{g\in \mathcal{D}(f)} \|g\|_{L^p(X)}^p\right)^{1/p}.
\]
Then, if we identify $f\sim g$ for $f,g\in N^{1,p}(X)$ whenever $\|f-g\|_{N^{1,p}(X)}=0$, we obtain a Banach space; see \cite{sha00}. Thus, while formally $N^{1,p}(X)$ consists of equivalence classes of functions, we will always consider pointwise representatives  for a given class. 

A function $g$ is a ($p$-){\it weak upper gradient}, if Inequality \eqref{eq:ug} holds for $p$-a.e.  rectifiable curve $\gamma:[0,1]\to X$. A  function $f\in L^p(X)$ always admits a minimal $p$-weak upper gradient $g_f$ for which $\|g_f\|_{L^p(X)}=\inf_{g\in \mathcal{D}(f)} \|g\|_{L^p(X)}$. See \cite[Theorem 6.3.20]{shabook} for further details. The following is a classical statement following from  the Vitali–Carath\'eodory theorem.

\begin{lemma}\label{lem:lowersem} If $g\in L^p(X)$ is any $p$-weak upper gradient for $f$, then for any $\epsilon>0$, there exists a lower semicontinuous $g_\epsilon\geq g$ so that $g_\epsilon$ is an upper gradient for $f$ and $\int_X g_\epsilon^p d\mu \leq \int_X g^p d\mu + \epsilon$. 
\end{lemma}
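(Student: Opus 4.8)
The plan is to combine two classical ingredients. First, I would absorb the $p$-exceptional curve family into a single $L^p$ function, which upgrades the $p$-weak upper gradient $g$ to a genuine upper gradient at the cost of an arbitrarily small increase in $L^p$-norm. Second, I would invoke the Vitali--Carath\'eodory theorem to replace the resulting genuine upper gradient by a lower semicontinuous majorant, again at an arbitrarily small cost.

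For the first step, since $g$ is a $p$-weak upper gradient of $f$, there is a family $\Gamma_0$ of rectifiable curves with $\Mod_p(\Gamma_0)=0$ such that \eqref{eq:ug2} holds for every rectifiable $\gamma\notin\Gamma_0$. By the definition of modulus I can choose, for each $k\in\N$, a function $\rho_k\in\adm(\Gamma_0)$ with $\|\rho_k\|_{L^p(X)}$ as small as desired; setting $\rho:=\sum_k\rho_k$ and choosing $\sum_k\|\rho_k\|_{L^p(X)}$ small enough produces a nonnegative Borel function $\rho\in L^p(X)$ with $\|g+\rho\|_{L^p(X)}^p<\|g\|_{L^p(X)}^p+\epsilon/2$, while monotone convergence along each curve gives $\int_\gamma\rho\,ds=\sum_k\int_\gamma\rho_k\,ds=\infty$ for every $\gamma\in\Gamma_0$. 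Then $g+\rho$ is a genuine upper gradient of $f$: for $\gamma\notin\Gamma_0$ the inequality \eqref{eq:ug2} already holds with $g$ in place of $g+\rho$, and for $\gamma\in\Gamma_0$ the right-hand side $\int_\gamma(g+\rho)\,ds$ is infinite.

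For the second step, observe that $\mu$ is $\sigma$-finite ($X$ is separable, hence a countable union of balls of finite measure) and outer regular, so the Vitali--Carath\'eodory theorem applies to $(g+\rho)^p\in L^1(X)$: there is a lower semicontinuous $h\colon X\to[0,\infty]$ with $h\ge(g+\rho)^p$ and $\int_X h\,d\mu<\int_X(g+\rho)^p\,d\mu+\epsilon/2$. I then set $g_\epsilon:=h^{1/p}$. Since $t\mapsto t^{1/p}$ is continuous and nondecreasing on $[0,\infty]$, the function $g_\epsilon$ is lower semicontinuous, hence Borel; moreover $g_\epsilon\ge g+\rho\ge g$, so $g_\epsilon$ is a genuine upper gradient of $f$, and $\int_X g_\epsilon^p\,d\mu=\int_X h\,d\mu<\int_X g^p\,d\mu+\epsilon$, which is the claim.

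I do not expect a serious obstacle here: once the two classical facts are available the argument is bookkeeping. The step needing the most care is the first one --- the principle that a modulus-zero curve family can be annihilated by a fixed $L^p$ function (the same mechanism that underlies Fuglede's lemma) --- together with the routine check that the Vitali--Carath\'eodory approximation is valid in this generality, for which $\sigma$-finiteness and outer regularity of $\mu$, both available here, are enough.
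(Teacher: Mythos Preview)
Your proof is correct and follows essentially the same two-step strategy as the paper: first upgrade the $p$-weak upper gradient to a genuine one by adding (the paper uses $\max$ instead of $+$, but this is immaterial) an $L^p$ function that integrates to infinity along the exceptional curves, then apply Vitali--Carath\'eodory. The only cosmetic differences are that the paper cites \cite[Lemma 5.2.8]{shabook} for the first step rather than spelling out the construction of $\rho$, and invokes the $L^p$ form of Vitali--Carath\'eodory directly rather than applying the $L^1$ version to $(g+\rho)^p$ and taking $p$-th roots.
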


\begin{proof} Let $g\in L^p(X)$ be a weak upper gradient. If $\Gamma$ is the family of rectifiable curves so that Inequality \eqref{eq:ug} does not hold, then $\Mod_p(\Gamma)=0.$ Hence, by \cite[Lemma 5.2.8]{shabook}, for any $\epsilon>0$, there is a $h$ so that $\int_X h^p d\mu \leq \epsilon/2$ and $\int_\gamma h ds = \infty$ for each $\gamma\in \Gamma$. Applying  \cite[Vitali–Carath\'eodory theorem, p.~108]{shabook} to $\max(g,h)$ we obtain a function $g_\epsilon$ such that $g_\epsilon \geq \max(g,h)$, with
$\int_X g_\epsilon^p d\mu \leq \int_X g^p d\mu+\epsilon$. Finally, we verify that  Inequality \eqref{eq:ug} holds for $g_\epsilon$ and for every rectifiable path $\gamma$. Indeed, if $\gamma\in \Gamma$, then \eqref{eq:ug} follows  from $\infty = \int_\gamma h ds \leq \int_\gamma g_\epsilon ds$. While, for $\gamma \not\in \Gamma$, Inequality \eqref{eq:ug} is satisfied since it holds for $g$ and $g\leq g_\epsilon$.
\end{proof}
If $E\subset X$, denote by $\Gamma_E$ the set of non-constant rectifiable curves that intersect $E$. A set $E$ is called $p$-exceptional if $\Mod_p(\Gamma_E)=0$. 

We will need a version of \cite[Lemma 4.3]{sha00}, see also \cite[Lemma 6.3.14]{shabook} and \cite[Proposition 2.22.]{che99} which we state next.

\begin{lemma}\label{lem:uppergradients} Suppose that $f\in N^{1,p}(X)$ has $g\in L^p(X)$ as upper gradient, and suppose $f|_A = c$ for some $c\in \R$, and for some Borel set $A\subset X$. Then, the function $g_A = g\ones_{X\setminus A}$ is a $p$-weak upper gradient for $f$. In particular, the minimal $p$-weak upper gradient $g_f$ satisfies $g_f(x)=0$ for $\mu$-almost every $x\in A$.
\end{lemma}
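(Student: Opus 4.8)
The plan is to run the classical argument behind \cite[Lemma 4.3]{sha00}: membership in $N^{1,p}(X)$ forces absolute continuity along $p$-almost every curve, and an absolutely continuous function on an interval has vanishing derivative at almost every point of any set on which it is constant. We may assume $c=0$, since $f$ and $f-c$ have the same upper gradients and the same minimal $p$-weak upper gradient.

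First I would recall the ACC$_p$ property in the precise form needed. Since $g\in L^p(X)$, the family $\Gamma_0$ of non-constant rectifiable curves $\gamma$ with $\int_\gamma g\,ds=\infty$ has $p$-modulus zero: for $\Gamma_k=\{\gamma:\int_\gamma g\,ds\ge k\}$ the function $g/k$ is admissible, so $\Mod_p(\Gamma_k)\le k^{-p}\|g\|_{L^p(X)}^p$, and letting $k\to\infty$ (using $\Gamma_0\subset\Gamma_k$) gives $\Mod_p(\Gamma_0)=0$. Fix a non-constant rectifiable $\gamma\notin\Gamma_0$; using the arc-length parametrization $\gamma:[0,\ell]\to X$ with $\ell=\len(\gamma)$ (which leaves the path integral unchanged), the map $t\mapsto\int_0^t g(\gamma(s))\,ds$ is absolutely continuous, and since $g$ is a genuine upper gradient of $f$ one has $|f(\gamma(b))-f(\gamma(a))|\le\int_a^b g(\gamma(s))\,ds$ for all $0\le a\le b\le\ell$. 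Consequently $h:=f\circ\gamma$ is finite and absolutely continuous on $[0,\ell]$, with $|h'(t)|\le g(\gamma(t))$ for a.e.\ $t$ (cf.\ \cite[Section 6.3]{shabook}).

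Now set $E:=\gamma^{-1}(A)\subset[0,\ell]$, which is Borel since $A$ is Borel and $\gamma$ is continuous. Because $h$ is absolutely continuous and $h\equiv 0$ on $E$, we have $h'(t)=0$ for a.e.\ $t\in E$: almost every $t_0\in E$ is simultaneously a Lebesgue density point of $E$ and a point of differentiability of $h$, so choosing $t_n\in E$ with $t_n\to t_0$, $t_n\ne t_0$, gives $h'(t_0)=\lim_n\frac{h(t_n)-h(t_0)}{t_n-t_0}=0$. Therefore
\[
|f(\gamma(\ell))-f(\gamma(0))|=\Big|\int_0^\ell h'(t)\,dt\Big|\le\int_{[0,\ell]\setminus E}|h'(t)|\,dt\le\int_{[0,\ell]\setminus E} g(\gamma(t))\,dt=\int_\gamma g\ones_{X\setminus A}\,ds,
\]
the last equality holding because $g\ones_{X\setminus A}$ vanishes along $\gamma$ exactly on $E$. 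Thus $g_A=g\ones_{X\setminus A}$ satisfies the upper gradient inequality for every $\gamma\notin\Gamma_0$ (it is trivial on constant curves), and $g_A\in L^p(X)$ since $0\le g_A\le g$; hence $g_A$ is a $p$-weak upper gradient of $f$. For the last assertion, apply this with any upper gradient $g\in L^p(X)$ of $f$ (one exists as $f\in N^{1,p}(X)$): minimality of $g_f$ yields $g_f\le g_A$ $\mu$-a.e., and $g_A=0$ on $A$ forces $g_f=0$ $\mu$-a.e.\ on $A$. The only genuinely delicate point is the measure-theoretic step that $h'$ vanishes a.e.\ on the level set $E$ when $A$ is merely Borel — so $E$ need not be open or closed and one must argue via density points — together with the correct invocation of ACC$_p$; everything else is routine.
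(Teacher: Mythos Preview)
Your argument is correct and is precisely the classical proof of \cite[Lemma 4.3]{sha00} (see also \cite[Lemma 6.3.14]{shabook}). The paper itself does not supply a proof of this lemma; it merely states it and cites these references, so there is nothing to compare against beyond noting that you have reproduced the standard argument faithfully.
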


Proposition \ref{prop:extension} is useful when extending Sobolev functions. It differs from Lemma \ref{lem:uppergradients} in a crucial way, that we do not need to assume that a given function $\tilde{f}$ is \emph{a priori} a Sobolev function. Our starting point is a Sobolev function $f$ and its upper gradient $g$. Another function $\tilde{f}$ agrees with $f$ on a set $K$, and we \emph{a priori} know that $g$ is also an upper gradient for $\tilde{f}$ in $X\setminus K$. Here, we say that $g$ is an upper gradient for $f$ in a set $A$, if Inequality \eqref{eq:ug} holds for every curve $\gamma$ with $\gamma \subset A$. When $\tilde{f}$ is continuous this information can be patched together to conclude that $g$ is an upper gradient for $\tilde{f}$ in all of $X$. As a consequence, this shows that $\tilde{f}$ is Sobolev. The proposition has a proof which is quite similar in spirit to \cite[Lemma 4.3]{sha00}. However, given the differences in the statements and some details of the arguments, we provide a complete proof.

\begin{proposition}\label{prop:extension} Let $f\in N^{1,p}(X)$  and let $g \in L^p(X)$ be an upper gradient for $f$. Let $\tilde{f}\in L^{p}(X)$ be a continuous function so that that $f|_K=\tilde{f}|_K$ for some closed set $K$. If $g$ is an upper gradient for $\tilde{f}$ in $X\setminus K$, then $g$ is also an upper gradient for $\tilde{f}$ in all of $X$. In particular, $\tilde{f}\in N^{1,p}(X)$.
\end{proposition}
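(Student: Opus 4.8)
The plan is to verify the upper gradient inequality \eqref{eq:ug2} for $\tilde f$ and $g$ along an arbitrary rectifiable curve $\gamma\colon[0,1]\to X$, reducing to the case already known (curves in $X\setminus K$) by decomposing $\gamma$ according to the closed set $\gamma^{-1}(K)\subset[0,1]$. First I would parametrize $\gamma$ by arc length (or constant speed) and set $I=\gamma^{-1}(K)$, which is a closed subset of $[0,1]$ since $K$ is closed and $\gamma$ is continuous. If $\gamma(0),\gamma(1)\in K$, then since $f|_K=\tilde f|_K$ and $g$ is an upper gradient for $f$ on all of $X$, we get $|\tilde f(\gamma(1))-\tilde f(\gamma(0))|=|f(\gamma(1))-f(\gamma(0))|\le\int_\gamma g\,ds$ immediately. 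The substantive case is when at least one endpoint lies outside $K$.

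The key step is the decomposition of the complement. Write $[0,1]\setminus I=\bigcup_j (a_j,b_j)$ as a countable disjoint union of open intervals (the connected components), where each $(a_j,b_j)$ is a maximal subinterval with $\gamma\big|_{(a_j,b_j)}\subset X\setminus K$. For each such component, the subcurve $\gamma_j:=\gamma|_{[a_j,b_j]}$ has its interior in $X\setminus K$; by continuity of $\tilde f$ we can pass to the limit at the endpoints, and since $g$ is an upper gradient for $\tilde f$ in $X\setminus K$ (applied to $\gamma|_{[a_j+\epsilon,b_j-\epsilon]}$ and letting $\epsilon\to0$, using continuity of $\tilde f$ and absolute continuity of $t\mapsto\int_{\gamma|_{[0,t]}}g\,ds$), we obtain
\[
|\tilde f(\gamma(b_j))-\tilde f(\gamma(a_j))|\le \int_{\gamma_j} g\,ds.
\]
Note that for the two "boundary" components touching $0$ or $1$, one endpoint is $\gamma(0)$ or $\gamma(1)$, which need not be in $K$ — that is fine, the estimate above still applies since the whole half-open subcurve lies in $X\setminus K$ except possibly at the endpoint $a_j$ or $b_j$ that lies in $K$, and again continuity of $\tilde f$ handles the limit.

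To finish, I would chain these estimates: the points $\gamma(a_j),\gamma(b_j)$ for the endpoints $a_j,b_j\in I$ lie in $K$, where $\tilde f=f$. Walking from $\gamma(0)$ to $\gamma(1)$, the increment $\tilde f(\gamma(1))-\tilde f(\gamma(0))$ telescopes: on each complementary interval $(a_j,b_j)$ the increment of $\tilde f$ is controlled by $\int_{\gamma_j}g\,ds$ as above, while the contribution of the part of $\gamma$ over $I$ — where $\tilde f\circ\gamma=f\circ\gamma$ — is handled by noting that $f\in N^{1,p}(X)$ with upper gradient $g$ makes $t\mapsto f(\gamma(t))$ absolutely continuous with $|(f\circ\gamma)'(t)|\le g(\gamma(t))|\gamma'(t)|$ a.e., so $\int_I |(f\circ\gamma)'|\,dt \le \int_I g(\gamma(t))|\gamma'(t)|\,dt$. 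Summing over all the complementary intervals (using $\sum_j\int_{\gamma_j}g\,ds\le\int_\gamma g\,ds$ by disjointness) and adding the contribution over $I$ yields
\[
|\tilde f(\gamma(1))-\tilde f(\gamma(0))|\le \sum_j \int_{\gamma_j} g\,ds + \int_I g(\gamma(t))|\gamma'(t)|\,dt \le \int_\gamma g\,ds.
\]
The main obstacle I anticipate is the bookkeeping at the limit points of $I$: a priori $\tilde f\circ\gamma$ is only known to be continuous, and one must rule out pathologies where $I$ has positive measure or accumulates complicated structure. The cleanest route is to establish directly that $h(t):=\tilde f(\gamma(t))$ is absolutely continuous on $[0,1]$ with $|h'(t)|\le g(\gamma(t))|\gamma'(t)|$ for a.e.\ $t$ — separately on $I$ (where $h=f\circ\gamma$) and on the open complement (where the in-$X\setminus K$ hypothesis gives absolute continuity on each component, with a uniform $L^1$ derivative bound) — and then invoke that a continuous function which is absolutely continuous off a closed set and absolutely continuous on that closed set, with a global $L^1$ bound on the derivative, is globally absolutely continuous; integrating $|h'|$ then gives \eqref{eq:ug2}. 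The final clause $\tilde f\in N^{1,p}(X)$ is then immediate: $\tilde f\in L^p(X)$ by hypothesis and $g\in L^p(X)$ is now an upper gradient.
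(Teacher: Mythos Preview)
Your approach is correct but considerably more elaborate than necessary. The paper's proof avoids the countable decomposition of $[0,1]\setminus I$ entirely by observing that only the \emph{last exit time} from $K$ matters. Concretely: if both endpoints lie in $K$, you already noted that the inequality is immediate from the upper gradient property of $g$ for $f$ on all of $X$. If $\gamma(0)\in K$ but $\gamma(1)\notin K$, the paper sets $t=\sup\gamma^{-1}(K)$, so that $\gamma|_{[0,t]}$ has both endpoints in $K$ (done in one stroke, with no need to analyze the structure of $\gamma^{-1}(K)$ inside $[0,t]$), while $\gamma|_{(t,1]}$ lies entirely in $X\setminus K$ and is handled by your same $\epsilon\to0$ limit using continuity of $\tilde f$. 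If neither endpoint is in $K$, one either has $\gamma\subset X\setminus K$ (done), or picks a single $t$ with $\gamma(t)\in K$ and applies the previous case to each half. Thus the paper reduces to at most three pieces, never countably many.

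Your decomposition into all components of $[0,1]\setminus I$ does work, but the ``telescoping'' you describe is not literally a telescope when $I$ is, say, a fat Cantor set; as you anticipate, making it rigorous requires the identity $h(1)-h(0)=\int_I(f\circ\gamma)'\,dt+\sum_j\bigl(h(b_j)-h(a_j)\bigr)$, which one gets by writing $\psi:=f\circ\gamma$ (absolutely continuous on all of $[0,1]$), expanding $\psi(1)-\psi(0)=\int_0^1\psi'$, and using $\psi(a_j)=h(a_j)$, $\psi(b_j)=h(b_j)$. This is fine, but note that the ``gluing lemma'' you propose at the end (a continuous function AC off a closed set and AC on that closed set is globally AC) is false as stated---the Cantor function is a counterexample---and what you actually need is precisely that $h|_I$ is the restriction of a function AC on all of $[0,1]$. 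The paper's three-case argument sidesteps all of this.
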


\begin{proof} Let $\gamma:[0,1]\to X$ be any non-constant rectifiable curve. The upper gradient inequality \eqref{eq:ug} is  invariant under reparametrizations. Hence, for convenience, we will assume that $\gamma$ has the constant-speed parametrization. There are essentially three cases to consider, when verifying Inequality \eqref{eq:ug} for $\tilde{f}$ in place of $f$. Since \eqref{eq:ug} is clear when $\int_\gamma g \, ds = \infty$, we can assume that $\int_{\gamma} g \, ds < \infty$. 

\textbf{1. Assume $\gamma(0),\gamma(1)\in K$.} Since $g$ is an upper gradient for $f$ and $f|_K=\tilde{f}|_K$ the Inequality \eqref{eq:ug} for $\gamma$ and $\tilde{f}$ is identical to that for $f$. 

\textbf{2. Assume $\gamma(0)\in K$ but $\gamma(1)\not\in K$ (or the reverse).} The reverse case of $\gamma(1) \in K$ and $\gamma(0) \not\in K$ is symmetrical and can be reduced to this by considering the curve $\tilde{\gamma}(t)=\gamma(1-t)$.  Thus take $\gamma(0)\in K$ and $\gamma(1)\not\in K$.

Let $t=\sup\gamma^{-1}(K)$. We have $t<1$ since $K$ is closed. Consider $\gamma_1=\gamma|_{[0,t]}$ (which may be constant) and $\gamma_{2,\epsilon}=\gamma|_{[t+\epsilon,1]}$ for $\epsilon \in [0,1-t)$. By case 1., we have $\int_{\gamma_1} g ds \geq |\tilde{f}(\gamma_1(0))-\tilde{f}(\gamma_1(t))|.$
 
For $0<\epsilon<1-t$, we have that $\gamma_{2,\epsilon}\subset X\setminus K$ and since $g$ is an upper gradient for $\tilde{f}$ in $X\setminus K$ we have 
\begin{equation}\label{eq:ug-outside}
 |\tilde{f}(\gamma_{2,\epsilon}(t+\epsilon))-\tilde{f}(\gamma_{2,\epsilon}(1))|\le \int_{\gamma_{2,\epsilon}} g \, ds 
\end{equation}
Thus, we get
\begin{align*}
|\tilde{f}(\gamma(0))-\tilde{f}(\gamma(1))|&\leq |\tilde{f}(\gamma(0))-\tilde{f}(\gamma(t))|+|\tilde{f}(\gamma(t))-\tilde{f}(\gamma(1))| \\
& \leq \int_{\gamma_1} g \,ds + \lim_{\epsilon\to 0} |\tilde{f}(\gamma_{2,\epsilon}(t+\epsilon))-\tilde{f}(\gamma_{2,\epsilon}(1))| & \text{(by item {\bf 1.} and continuity)}\\
& \leq \int_{\gamma_1} g \,ds + \lim_{\epsilon\to 0} \int_{\gamma_{2,\epsilon}} g \,ds & \text{(by (\ref{eq:ug-outside}))}\\
& = \int_{\gamma} g \,ds.
\end{align*}  
In the second to last line, we rewrite the integrals using $g(\gamma(t))$ multiplied by the characteristic function of  $[0,t]\cup[t+\epsilon,1]$, and then we conclude using monotone convergence.

\noindent \textbf{3. Assume $\gamma(0),\gamma(1)\not\in K$.} If $\gamma\subset X\setminus K$, then the claim follows since $\tilde{g}$ is an upper gradient for $\tilde{f}$ in $X\setminus K$. Otherwise there is some $t\in [0,1]$ so that $\gamma(t) \in K$. Now, apply the second case to $\gamma|_{[0,t]}$ and to $\gamma|_{[t,1]}$ together with the triangle inequality to get Inequality \eqref{eq:ug}.
\end{proof}

Let $N^{1,p}_b(X) \subset N^{1,p}(X)$ consist of those functions $f\in N^{1,p}(X)$ with bounded support which are bounded in $X$. More precisely, $N^{1,p}_b(X)$ consists of those $f\in N^{1,p}(X)$ for which there are constants $M,R>0$ and a point $x_0 \in X$, so that $f|_{X\setminus B(x_0,R)}=0$ almost everywhere and $f(x) \in [-M,M]$ for almost every $x\in X$. An important first step will be to reduce the approximation to such functions. This result is very standard, and can be found in many references, see e.g. \cite[Lemma 2.14]{shaharmonic}. We provide a proof for the sake of completeness.

\begin{lemma}\label{lem:densityN1pb} $N^{1,p}_b(X)$ is dense in $N^{1,p}(X)$.
\end{lemma}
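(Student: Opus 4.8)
The plan is to truncate a given $f \in N^{1,p}(X)$ in both range and support, and to control the resulting error in the Newtonian norm using dominated convergence together with Lemma \ref{lem:uppergradients}.

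First I would fix $f \in N^{1,p}(X)$ and an upper gradient $g \in L^p(X)$ with $\|g\|_{L^p(X)}^p$ within $\epsilon$ of the infimum over $\mathcal{D}(f)$. \textbf{Truncation in range.} For $M > 0$ set $f_M := \max(-M, \min(M, f))$, i.e. the truncation of $f$ at height $M$. Since $x \mapsto \max(-M,\min(M,x))$ is $1$-Lipschitz, $g$ is again an upper gradient for $f_M$ (the upper gradient inequality \eqref{eq:ug} is inherited under post-composition with $1$-Lipschitz maps), so $f_M \in N^{1,p}(X)$. As $M \to \infty$ we have $f_M \to f$ pointwise with $|f_M| \le |f| \in L^p(X)$, so $\|f_M - f\|_{L^p(X)} \to 0$ by dominated convergence. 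For the gradient term, note $f - f_M$ has upper gradient $g\ones_{\{|f| > M\}}$ (on the set $\{|f| \le M\}$ the difference is constant, so Lemma \ref{lem:uppergradients} applies, up to passing to a weak upper gradient which by Lemma \ref{lem:lowersem} is harmless for norm purposes); since $\mu(\{|f| > M\}) \to 0$ and $g^p$ is integrable, $\int g^p \ones_{\{|f|>M\}}\,d\mu \to 0$ by dominated convergence. Hence $\|f - f_M\|_{N^{1,p}(X)} \to 0$, and we may fix $M$ large so that this is $< \epsilon$; replace $f$ by $f_M$, so now $f$ is bounded with bounded upper gradient norm.

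\textbf{Truncation in support.} Fix a basepoint $x_0 \in X$. For $R > 0$ choose a cutoff $\eta_R : X \to [0,1]$ that is $1$ on $B(x_0,R)$, $0$ outside $B(x_0,2R)$, and $(1/R)$-Lipschitz, e.g. $\eta_R(x) = \max\bigl(0, \min(1, 2 - d(x,x_0)/R)\bigr)$; this has upper gradient $(1/R)\ones_{B(x_0,2R)\setminus B(x_0,R)}$. Set $f_R := \eta_R f$. By the Leibniz rule for upper gradients (for bounded Sobolev functions), $|f| \cdot (1/R)\ones_{B(x_0,2R)} + g\ones_{B(x_0,2R)}$ is an upper gradient for $f_R$, so $f_R \in N^{1,p}_b(X)$ since $f$ is bounded and supported-bounded after the truncation is applied... more precisely $f_R$ is bounded and vanishes outside $B(x_0,2R)$. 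Now $f - f_R = (1-\eta_R)f$ vanishes on $B(x_0,R)$, equals $f$ outside $B(x_0,2R)$, and is dominated by $|f| \in L^p$; since $\mu(X \setminus B(x_0,R)) \to 0$ is \emph{false} in general (the measure of $X$ may be infinite), I instead argue directly: $\|f - f_R\|_{L^p(X)}^p \le \int_{X \setminus B(x_0,R)} |f|^p \, d\mu \to 0$ as $R \to \infty$ by dominated convergence, using $|f|^p \in L^1(X)$ and $\ones_{X \setminus B(x_0,R)} \to 0$ pointwise. For the gradient, $(1-\eta_R)f$ has upper gradient $|f|(1/R)\ones_{B(x_0,2R)\setminus B(x_0,R)} + g\ones_{X\setminus B(x_0,R)}$; the second term tends to $0$ in $L^p$ as above, and the first is bounded by $(1/R)\|f\|_{L^\infty}\mu(B(x_0,2R))^{1/p}$ — and here I must be slightly careful, because $\mu(B(x_0,2R))$ can grow with $R$. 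The cleaner route is to instead use a radial cutoff that decays over the annulus $B(x_0,2R)\setminus B(x_0,R)$ and observe that $\int g^p \ones_{X \setminus B(x_0,R)} \to 0$ forces, by absolute continuity of the integral, that we may also bound the annulus term; alternatively, and most robustly, take the cutoff to be $1$-Lipschitz decaying from $1$ on $B(x_0,R)$ to $0$ on $B(x_0,R+1)$, so its upper gradient is $\ones_{B(x_0,R+1)\setminus B(x_0,R)}$ and the offending term becomes $\|f\|_{L^\infty}\,\mu\bigl(B(x_0,R+1)\setminus B(x_0,R)\bigr)^{1/p}$, which need not be small either. This is the one genuine subtlety.

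\textbf{Resolving the subtlety and conclusion.} The fix is to first truncate in range using a \emph{much} smaller truncation level on the far annulus, or more simply: apply range truncation to $f$ \emph{after} multiplying by the cutoff is not needed; instead observe that $h := (1-\eta_R) f$ satisfies $|h| \le |f|\ones_{X\setminus B(x_0,R)}$, so by Cavalieri / dominated convergence $\|h\|_{L^\infty(\text{annulus})}$ need not be small, but we can choose $R$ along a sequence and use that $\int_{X} |f|^p\,d\mu < \infty$ implies $\mu(\{|f| > \delta\}) < \infty$ for each $\delta > 0$; combining with range-truncation first at level $M$ we get $\|f\|_{L^\infty} \le M$ but still face $\mu(\text{annulus})$. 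The correct standard argument, which I would write out, is: it suffices to find \emph{any} $\tilde f \in N^{1,p}_b(X)$ with $\|f - \tilde f\|_{N^{1,p}(X)}$ small; take $\tilde f = \eta_R f_M$ with the $1$-Lipschitz cutoff $\eta_R$ supported in $B(x_0, R+1)$, note the total gradient error is at most $\|f - f_M\|_{N^{1,p}} + \|f_M - \eta_R f_M\|_{N^{1,p}}$, and for the second term use that $(1-\eta_R)f_M$ has upper gradient $\le g\ones_{X\setminus B(x_0,R)} + M\ones_{B(x_0,R+1)\setminus B(x_0,R)}$ where now, crucially, $M\ones_{B(x_0,R+1)\setminus B(x_0,R)} \le M\ones_{\{|f_M| > 0\} \cup (\text{annulus})}$ — no, the honest resolution is that $\mu(B(x_0,R+1)\setminus B(x_0,R))$ is \emph{not} controlled, so one replaces the single-annulus cutoff by spreading the decay of $\eta_R$ over $B(x_0, 2R)\setminus B(x_0,R)$ \emph{and} passing to a sequence $R_j \to \infty$ along which $\mu(B(x_0, 2R_j)\setminus B(x_0, R_j))/R_j^p \to 0$; such a sequence exists because otherwise $\sum_j \mu(B(x_0,2^{j+1})\setminus B(x_0,2^j))$ would be forced to compare to $\sum 2^{jp} = \infty$, which is fine — so instead one uses the geometric annuli $A_j = B(x_0,2^{j+1})\setminus B(x_0,2^j)$ with cutoffs of Lipschitz constant $2^{-j}$, giving gradient contribution $\sum_j 2^{-jp}M^p \mu(A_j)$, and since this can diverge, one instead notes $\int_X |f|^p d\mu = \sum_j \int_{A_j}|f|^p d\mu < \infty$ so $\int_{A_j} |f|^p d\mu \to 0$; picking the single index $j$ with this integral small and using the cutoff that decays over $A_j$ with Lipschitz constant $2^{-j} \le C/d(x,x_0)$ gives annulus gradient term $\le \int_{A_j} (C|f|/d(x,x_0))^p d\mu \le C'\int_{A_j}|f|^p\,d\mu$, which \emph{is} small. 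I expect this last point — arranging the cutoff so the Leibniz error over the far annulus is estimated by $\int_{\text{annulus}}|f|^p$ rather than by $\mu(\text{annulus})$ — to be the main obstacle, and the resolution is precisely to use a cutoff whose gradient at $x$ is $\lesssim 1/d(x,x_0)$ on the transition region. With $M$ and $R$ (or the annulus index $j$) chosen so both truncation errors are $< \epsilon$, the function $\tilde f = \eta_R f_M$ lies in $N^{1,p}_b(X)$ and satisfies $\|f - \tilde f\|_{N^{1,p}(X)} < 2\epsilon$, proving density.
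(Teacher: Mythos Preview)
Your approach is essentially the paper's---truncate in range and in support, control the difference via the Leibniz rule and Lemma~\ref{lem:uppergradients}, then apply dominated convergence---but you make the support step far harder than necessary. The ``genuine subtlety'' you identify is self-inflicted: you repeatedly bound $|f_M|$ by $M$ on the transition annulus, when the better bound $|f_M|\le |f|$ is available everywhere. With a cutoff $\eta_R$ that is $1$ on $B(x_0,R)$, vanishes off $B(x_0,2R)$, and is $(1/R)$-Lipschitz, the Leibniz error for $(1-\eta_R)f_M$ is at most $\tfrac{1}{R}|f_M|\,\ones_{B(x_0,2R)\setminus B(x_0,R)} \le |f|/R$, which lies in $L^p$ and tends to $0$ as $R\to\infty$; no annulus-measure control, geometric subsequences, or $1/d(x,x_0)$ cutoffs are needed.

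The paper streamlines this further by tying the two truncations to a \emph{single} parameter: with $\psi_M(x)=\max(\min(2-d(x_0,x)/M,1),0)$ it sets $f_M=\psi_M\cdot\min(\max(f,-M),M)$, observes $f_M-f=0$ on $A_M=B(x_0,M)\cap\{|f|\le M\}$, and bounds the weak upper gradient of $f_M-f$ by $\ones_{X\setminus A_M}\bigl(2g_f+|f|/M\bigr)$. Both summands are dominated by fixed $L^p$ functions and vanish pointwise as $M\to\infty$, so dominated convergence finishes the proof in a few lines. Your eventual argument is correct, but the paper's coupling of range and support scales is what you were groping toward.
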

\begin{proof} Fix $x_0 \in X$ and consider $M>0$. Let $\psi_M(x) = \max(\min(2-d(x_0,x)/M,1),0)$, which can be seen to be $1/M$-Lipschitz.  Define

$$f_M = \psi_M(x) \min(\max(f,-M),M).$$

We have $|f_M|\leq \min(|f|,M)$, and $\lim_{M\to\infty}f_M = f$ pointwise and in $L^p(X)$. Further, using the Leibniz rule for Sobolev functions (see \cite[Proposition 6.3.28]{shabook}) one can show that $g_{f_M} \leq \frac{|f|}{M} + g_f$ is an upper gradient for $f_M$. So, $f_M \in N^{1,p}(X)$. Also $f_M - f = 0$ on the set $A_M=B(x_0,M) \cap \{|f|\leq M\}$. By Lemma \ref{lem:uppergradients}, the function $f_M-f$ has a weak upper gradient $g_{f_M-f} \leq \ones_{X \setminus A_M} (2g_f + \frac{|f|}{R})$. So $g_{f_M-f}\to 0$ in $L^p$ by dominated convergence, since $\mu(X\setminus \bigcup_{M\in\N}A_M)=0$. Thus

$$\lim_{M \to \infty}\|f_M-f\|_{N^{1,p}(X)}^p = \lim_{M\to\infty} \|f-f_M\|_{L^p(X)}^p+\|g_{f-f_M}\|_{L^p(X)}^p=0.$$
\end{proof}

\subsection{Good functions}

We will mostly consider curve families $\Gamma$ which are invariant under re-parametrization. That is, if $\gamma\in \Gamma$, then any reparametrization of the curve is in $\Gamma$ as well. With this in mind, we say that a collection $\Gamma$ of curves is pre-compact, if every sequence $\{\gamma_i\}_{i=1}^\infty\subset \Gamma$,  where $\gamma_i:[0,1]\to X$ is parametrized by constant-speed, has a uniformly convergent subsequence. Many arguments regarding modulus rely on extracting convergent subsequences. The basic requirement is some form of compactness, and the following formulation of Arzel\`a-Ascoli captures this.

\begin{lemma}(Arzel\`a-Ascoli)\label{lem:arzela} Suppose that $X$ is complete and that $L\geq 1$. A collection $\Gamma$ of curves of length at most $L$ is pre-compact, if given the constant-speed parametrizations $\gamma:[0,1]\to X,$ the set 
\[
A_t=\{\gamma(t): \gamma\in \Gamma, \gamma \text{ parametrized by constant speed}\}
\]
is pre-compact in $X$, for every $t\in [0,1]$.
\end{lemma}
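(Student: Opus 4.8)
\textbf{Proof proposal for the Arzel\`a--Ascoli lemma (Lemma~\ref{lem:arzela}).}

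The plan is to verify the standard hypotheses of the classical Arzel\`a--Ascoli theorem for maps into the complete metric space $X$, namely equicontinuity together with pointwise relative compactness, and then invoke it. First I would observe that every $\gamma \in \Gamma$, being parametrized by constant speed with $\len(\gamma) \le L$, satisfies the Lipschitz estimate $d(\gamma(s),\gamma(t)) \le \len(\gamma)\,|s-t| \le L\,|s-t|$ for all $s,t \in [0,1]$; this is immediate from the definition of constant-speed parametrization (each subarc $\gamma|_{[s,t]}$ has length $\len(\gamma)|t-s|$, and the distance between endpoints is at most the length). Thus $\Gamma$ is a uniformly Lipschitz, hence equicontinuous, family. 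Second, the hypothesis gives that for each fixed $t$ the set $A_t = \{\gamma(t) : \gamma \in \Gamma\}$ is pre-compact in $X$, i.e. its closure $\overline{A_t}$ is compact; so the family is pointwise relatively compact with values in the complete space $X$.

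Next I would run the usual diagonal argument to produce the convergent subsequence, being slightly careful because $X$ need not be compact (only the $\overline{A_t}$ are). Given a sequence $\{\gamma_i\} \subset \Gamma$, fix a countable dense set $\{t_k\}_{k\ge 1} \subset [0,1]$ (e.g. the rationals, including $0$ and $1$). Using compactness of $\overline{A_{t_1}}$, extract a subsequence along which $\gamma_i(t_1)$ converges; from that, using $\overline{A_{t_2}}$, extract a further subsequence along which $\gamma_i(t_2)$ converges; iterate, and take the diagonal subsequence, still denoted $\{\gamma_i\}$, which converges at every $t_k$. Now equicontinuity upgrades pointwise convergence on the dense set to uniform convergence on $[0,1]$: for $\eta>0$ choose a $\tfrac{\eta}{3L}$-net $t_{k_1},\dots,t_{k_m}$ of $[0,1]$ among the $t_k$, pick $N$ so large that $d(\gamma_i(t_{k_j}),\gamma_{i'}(t_{k_j})) < \eta/3$ for all $i,i' \ge N$ and all $j$, and for arbitrary $t \in [0,1]$ pick the nearest net point $t_{k_j}$, so that
\[
d(\gamma_i(t),\gamma_{i'}(t)) \le d(\gamma_i(t),\gamma_i(t_{k_j})) + d(\gamma_i(t_{k_j}),\gamma_{i'}(t_{k_j})) + d(\gamma_{i'}(t_{k_j}),\gamma_{i'}(t)) < \tfrac{\eta}{3}+\tfrac{\eta}{3}+\tfrac{\eta}{3} = \eta,
\]
using the Lipschitz bound for the outer two terms. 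Hence $\{\gamma_i\}$ is uniformly Cauchy, and since $X$ is complete it converges uniformly to a map $\gamma:[0,1]\to X$, which is automatically continuous; this shows $\Gamma$ is pre-compact in the stated sense.

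I do not anticipate a genuine obstacle here — the statement is essentially a repackaging of a textbook result. The only point that requires a little care, and which I would make sure to handle explicitly, is that compactness is only assumed of each individual $\overline{A_t}$ rather than of $X$ or of $\bigcup_t A_t$; this is exactly why one must first restrict the diagonal extraction to a countable dense set of parameters (where only countably many compact sets $\overline{A_{t_k}}$ are used) and only afterwards propagate convergence to all of $[0,1]$ via equicontinuity, rather than trying to work with the (possibly non-precompact) union of all the $A_t$ at once. A secondary minor remark worth recording is that the limit curve $\gamma$ has length at most $L$ as well, by lower semicontinuity of length under uniform convergence, so $\Gamma$ could even be taken relatively compact inside the class of curves of length $\le L$; but this is not needed for the statement as phrased.
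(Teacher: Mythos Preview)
Your proposal is correct and is exactly the standard argument the paper has in mind; indeed the paper does not give its own proof but simply writes ``The proof is standard, see for instance the argument in [Theorem 4.25, walterreal].'' Your write-up supplies precisely that argument (uniform $L$-Lipschitz bound giving equicontinuity, diagonal extraction on a countable dense parameter set using the pointwise precompactness hypothesis, then the $\eta/3$ upgrade to uniform Cauchy and an appeal to completeness of $X$), so there is nothing to add.
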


The proof is standard, see for instance the argument in \cite[Theorem 4.25]{walterreal}. When $X$ is a proper space, pre-compactness is the same as a boundedness. However, to work in the case when $X$ is simply a  complete space, we introduce a notion of ``good function'', which allows us to circumvent the lack of properness. 

Given a collection $\Gamma$ of curves and a set $A$, the collection of curves $\Gamma^A$ contained in $A$ is defined by $\Gamma^A=\{\gamma \in \Gamma: \gamma \subset A\}$. If further $\delta, L>0$ and $g\in L^p(X)$, we define a subcollection by 
\begin{equation}\label{eq:ga-adel}
\Gamma_{\delta,L}^A(g)=\{\gamma \in \Gamma^A: \int_\gamma g \leq L, \diam(\gamma) \geq \delta\}.
\end{equation}

\begin{definition}\label{def:goodfunction} A Borel function $g:X\to [0,\infty]$ is called a 
\emph{good function}, if the set of curves $\Gamma_{\delta,L}^A(g)$ defined in (\ref{eq:ga-adel})
is pre-compact, for any family $\Gamma$ of rectifiable curves, any bounded Borel set $A$, and for all $\de, L>0$.
\end{definition}

\begin{example} Suppose that $X$ is compact. For any $\epsilon>0$ the function $g:X\to [\epsilon,\infty]$ is a good function. Indeed, let $\Gamma$ be an arbitrary family of rectifiable curves and let $A\subset X$ be a bounded Borel set. In this case, for any $\delta, L >0$ we have $\Gamma^A_{\delta,L}(g) \subset \{\gamma \subset A : \len(\gamma) \leq L/\epsilon \}$. Then, $A_t \subset X$, and $A_t$ is automatically pre-compact as a subset of a compact space $X. $ Therefore, by Lemma \ref{lem:arzela} the collection $\Gamma^A_{\delta,L}(g)$ is pre-compact.
\end{example}

The next lemma strenghtens the Vitali-Carath\'eodory Lemma \ref{lem:lowersem} by showing that any $L^p$-function can be slightly modified  to become a lower semicontinuous good function.

\begin{lemma}\label{lem:goodfunc} Assume $X$ is complete. If $g\in L^p(X)$ is non-negative, then for any $\epsilon>0$ there exists a lower semicontinuous good function $g_\epsilon \geq g$ so that $\|g_\epsilon\|_{L^p(X)}\leq \|g\|_{L^p(X)} + \epsilon$.
\end{lemma}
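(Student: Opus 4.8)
The plan is to combine the Vitali–Carathéodory Lemma 1.9 with a "localization by a proper exhaustion" trick, using the fact that on a complete and separable space any bounded set has compact closure in the completion only after we add weight — so instead we force compactness by hand using that $\mu$ is finite on balls together with a covering argument. First I would fix $x_0\in X$ and the balls $B_n = B(x_0,n)$, and look for a lower semicontinuous $h_n \ge 0$, supported essentially where the "escape to infinity" or "escape through thin necks" happens, whose role is to blow up the integral $\int_\gamma h_n$ along any curve that wanders too far in a controlled annulus without being trapped in a compact set. The key observation is: a collection $\Gamma^A_{\delta,L}(g_\epsilon)$ with $g_\epsilon \ge g$ is pre-compact provided that, for each fixed $t$, the set $A_t$ of time-$t$ points of constant-speed curves in the family is pre-compact; by Lemma 1.12 (Arzelà–Ascoli) this is all we need, since the length bound $\len(\gamma)\le L/\inf g_\epsilon$ is not directly available but the $\int_\gamma g_\epsilon \le L$ bound is. So the real target is: make $g_\epsilon$ large enough that any curve $\gamma$ with $\int_\gamma g_\epsilon \le L$ and $\diam(\gamma)\ge\delta$ and $\gamma\subset A$ (bounded) has all its points lying in a single fixed compact set $K=K(A,\delta,L)$.

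Here is how I would build $g_\epsilon$. Since $\mu$ is Radon (inner regular), for each $n$ choose a compact $K_n \subset B_n$ with $\mu(B_n\setminus K_n)$ small, arranged so that $\sum_n$ of these errors is controllable and $K_n \subset K_{n+1}$. On the "bad" open set $U_n = B_{n}\setminus K_{n}$ — more precisely on a slightly shrunk neighborhood of $B_n \setminus K_n$, to keep things open and to retain lower semicontinuity — I would add a large multiple $c_n \ones$, i.e. consider $g_\epsilon = g + \sum_n c_n \ones_{V_n}$ for suitable open sets $V_n \supset B_n\setminus K_n$ with $\mu(V_n)$ tiny and $c_n \to \infty$ fast, with $\sum_n c_n^p \mu(V_n) < \epsilon^p$ (possible since we may shrink $\mu(V_n)$ at will by outer regularity of $\mu$ and compactness of $K_n$). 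Then take a lower semicontinuous minorant… no: take $g_\epsilon$ itself, and apply Lemma 1.9 / Vitali–Carathéodory to replace $g + \sum_n c_n\ones_{V_n}$ by a lower semicontinuous function above it with $L^p$-norm increased by at most another $\epsilon$; since each $\ones_{V_n}$ with $V_n$ open is already lower semicontinuous, it is cleanest to just verify directly that $g_\epsilon$ can be taken lower semicontinuous from the start by replacing $g$ with a lower semicontinuous $\tilde g \ge g$ (Vitali–Carathéodory) and the open-set terms are automatically lsc, so the sum is lsc.

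It remains to check $g_\epsilon$ is a good function. Fix a curve family $\Gamma$, a bounded Borel set $A$, and $\delta,L>0$; pick $m$ with $A\subset B_m$. For $\gamma\in\Gamma^A_{\delta,L}(g_\epsilon)$ and any $t$, I claim $\gamma(t)\in K_N$ for a fixed $N=N(m,\delta,L)$: if $\gamma$ entered $V_n$ for some large $n$, then since $\diam(\gamma)\ge\delta$ the curve must traverse a definite portion of $V_n$ (here I would want $V_n$ chosen to also contain a definite annular "moat" around $K_n$, so that any curve touching $B_n\setminus K_n$ and having diameter $\ge\delta$ — or any curve whose image meets both $K_{n-1}$ and its complement — picks up length $\gtrsim$ some fixed amount inside $V_n$, forcing $\int_\gamma g_\epsilon \ge c_n \cdot(\text{that length}) > L$ once $c_n$ is large), a contradiction. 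Hence every $\gamma\in\Gamma^A_{\delta,L}(g_\epsilon)$ stays inside the compact set $K_N$, so $A_t\subset K_N$ is pre-compact, and Lemma 1.12 gives pre-compactness of $\Gamma^A_{\delta,L}(g_\epsilon)$.

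The main obstacle I anticipate is the geometric bookkeeping in the last paragraph: arranging the open sets $V_n$ so that (i) $\mu(V_n)$ is small enough to keep $\sum c_n^p\mu(V_n)<\epsilon^p$, (ii) $V_n$ still contains a "thick enough" neighborhood of $B_n\setminus K_n$ that any curve escaping from $K_{n-1}$ must accumulate a fixed positive amount of $g_\epsilon$-mass before reaching radius $n$, and (iii) the resulting function is genuinely lower semicontinuous. Point (ii) is subtle because without properness the "moat" around $K_n$ need not have positive $\mu$-measure of its own — but we only need the curve to have positive *length* in $V_n$, which follows from $V_n$ containing a metric neighborhood $\{x : \dist(x,B_n\setminus K_n) < r_n\}$ of fixed radius $r_n$: a rectifiable curve connecting a point of $B_n\setminus K_n$ to the complement of that neighborhood has length at least $r_n$. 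Choosing $c_n := (L+1)/r_n$ and then $r_n$ small and $V_n$ thin enough (using outer regularity to bound $\mu(V_n)$) closes the argument. The one genuinely delicate case — a curve of small diameter entirely inside some $V_n$ — is excluded by the constraint $\diam(\gamma)\ge\delta$ together with $\gamma \subset A \subset B_m$, since for $n$ large $B_n\setminus K_n$ is far from $B_m$; only finitely many $V_n$ meet $B_m$, and for those finitely many we simply note that $g_\epsilon\ge g$ is bounded below by nothing extra is needed because $A$ is already bounded and the relevant $K_N$ can be taken to contain $\overline{B_m}\cap(\text{completion})$… here I would instead argue that the union $K_m \cup \overline{V_1}\cup\cdots\cup\overline{V_m}$ is the desired compact set once we know $\overline{V_n}$ is compact, which we may arrange since $\overline{V_n}\subset \overline{B_{n+1}}$ and — this is the point where completeness is used together with the fact that $K_n$ is compact and $V_n$ is a small tube around $B_n\setminus K_n$ — actually the cleanest route is to not claim $\overline{V_n}$ compact but to absorb those finitely many bounded pieces into a single large $K_N$ produced directly by inner regularity applied to $B_m$ itself. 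I would streamline this in the write-up.
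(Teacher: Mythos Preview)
Your overall architecture is right and matches the paper's: make $g$ lower semicontinuous, use inner regularity to choose compacts $K_n\subset B_n$ with $\mu(B_n\setminus K_n)$ tiny, and add penalty terms supported on $B_n\setminus K_n$ so that any curve with $\int_\gamma g_\epsilon\le L$ and $\diam(\gamma)\ge\delta$ must stay close to a fixed compact set. But there is a genuine gap: you explicitly forgo a length bound (``$\len(\gamma)\le L/\inf g_\epsilon$ is not directly available''), and without one you cannot invoke Arzel\`a--Ascoli. Lemma~\ref{lem:arzela} requires a uniform length bound to get equicontinuity of the constant-speed parametrizations; knowing only that every $A_t$ sits in a compact set is not enough, since curves of unbounded length can wander arbitrarily inside a fixed compact set. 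The paper repairs exactly this by adding a second, \emph{positive} term
\[
\sum_{i}\frac{\epsilon}{2^{i+1}\mu(B(x_0,i))^{1/p}}\,\ones_{B(x_0,i)},
\]
which forces $g_\epsilon\ge\eta_A>0$ on any bounded $A$, giving $\len(\gamma)\le L/\eta_A$ and making Arzel\`a--Ascoli applicable.

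A secondary problem is your ``moat'' construction: you want $V_n$ to contain an $r_n$-neighbourhood of $B_n\setminus K_n$ while keeping $\mu(V_n)$ small, but inner regularity only controls $\mu(B_n\setminus K_n)$, not the topology of that set, and $B_n\setminus K_n$ may be \emph{dense} in $B_n$, so its $r_n$-neighbourhood is all of $B_n$ and $\mu(V_n)$ cannot be made small. The paper avoids this entirely by taking the penalty term to be simply $\ones_{B(x_0,i)\setminus K_i}$ (already lower semicontinuous since $B(x_0,i)\setminus K_i$ is open) and arguing directly: once the length bound is in hand, a point $\gamma(t)$ with $d(\gamma(t),K_N)\ge\eta/4$ forces a sub-arc of length $\ge\eta/4$ inside $B(x_0,j)\setminus K_N$, and summing the indicators over $i=j,\dots,N$ gives $\int_\gamma g_\epsilon\ge (N-j)\eta/4>L$ for $N$ large. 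Your closing attempt to patch things with $K_m\cup\overline{V_1}\cup\cdots\cup\overline{V_m}$ also fails, since in a non-proper space $\overline{V_n}$ need not be compact.
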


\begin{proof}
Fix $g,\epsilon$ as in the statement. By an application of Lemma \ref{lem:lowersem}, we can take $g$ to be lower semicontinuous. Fix any point $x_0 \in X$.

Choose compact sets $\tilde{K}_i \subset B(x_0,i)$, so that $\mu(B(x_0,i)\setminus \tilde{K}_i) \leq \epsilon^p 2^{-(i+1)p}$. This is possible since $\mu(B(x_0,i))<\infty$ and since the measure is Radon. Define $K_i = \bigcup_{j=1}^i \tilde{K}_j$, so that $K_n\subset K_m$ for $n\leq m$ and so that $\mu(B(x_0,i)\setminus K_i) \leq \epsilon^p 2^{-(i+1)p}$.
Define 
\[
g_{\epsilon} = g + \sum_{i=1}^\infty \left(\ones_{B(x_0,i) \setminus K_i} + \frac{\epsilon}{2^{i+1}\mu(B(x_0,i))^{1/p}} \ones_{B(x_0,i)}\right).
\]
This function is  lower semi continuous, since it is a sum of lower semicontinuous functions.
Moreover, 
\[
\|g_{\epsilon}\|_{L^p(X)} \leq \|g\|_{L^p(X)} + \sum_{i=1}^\infty \left(\left\|\ones_{B(x_0,i)\setminus K_i}\right\|_{L^p(X)} + \left\|\frac{\epsilon}{2^{i+1}\mu(B(x_0,i))^\frac{1}{p}} \ones_{B(x_0,i)}\right\|_{L^p(X)}\right) \leq \|g\|_{L^p(X)} +\epsilon.
\]

We show that $g_\epsilon$ is a good function. Let $\Gamma$ be any family of rectifiable curves and let $A$ be any bounded Borel set. There exists some $j\in \N$ so that $A\subset B(x_0,j)$. By increasing the set $A$, we get a larger collection of curves, and thus it suffices to consider $A= B(x_0,j)$. The idea is to use Arzela\`-Ascoli Lemma  \ref{lem:arzela} to show that the collection $\Gamma^{A}_{\delta, L}(g_\epsilon)$ defined in (\ref{eq:ga-adel}), for given $\delta,L>0$, is pre-compact.  
First note that, if $\gamma \in \Gamma^A_{\delta, L}(g_\epsilon)$, then $L \geq \int_\gamma g_\epsilon ds$. Since $A = B(x_0,j)$, we get $g_\epsilon>\frac{\epsilon}{2^{j+1}\mu(B(x_0,j))^\frac{1}{p}} \ones_A$. Thus, since $\ga\subset A$,  
\[
\len(\gamma) \leq \frac{2^{j+1}\mu(B(x_0,j))^\frac{1}{p}}{\epsilon} \int_\ga g_{\epsilon} \,ds \leq \frac{2^{j+1}\mu(B(x_0,j))^\frac{1}{p}L}{\epsilon}=: L'.
\]
Therefore all the curves in $\Gamma^A_{\delta, L}(g_\epsilon)$ have length at most $L'$.

Now, consider curves $\gamma\in \Gamma_{\delta,L}^A(g_\epsilon)$ which are parametrized by constant-speed. It is enough to show that for each $t\in [0,1]$ the set 
\[
A_t:=\{\gamma(t) : \gamma\in \Gamma_{\delta,L}^A(g_\epsilon) \text{ is parametrized by constant speed}\}
\]
 is pre-compact in $X$. 
Since $X$ is complete, it suffices to show that $A_t$ is totally bounded. Fix $\eta\in(0,\delta/2)$ for this purpose. Choose $N = j+\lfloor 4L/\eta\rfloor+1$. We claim that $A_t\subset \{y\in X: d(y,K_N) < \eta/4\}$. Assume, for the sake of contradiction, that
$d(\gamma(t),K_N)\geq \eta/4$ for some $\gamma\in \Gamma_{\delta,L}^A(g_\epsilon)$. Since $\diam(\gamma) \geq \de > 2\eta$, we must have a segment of $\gamma$ of length at least $\eta/4$ contained in $A\setminus K_N \subset B(x_0,j) \setminus K_N$. Thus, since $K_i \subset K_N$ for each $i$ with $1\leq i \leq N$, and $A \subset B(x_0,i)$ for each $j\leq i \leq N$, we get
\[
\int_\gamma g_\epsilon \,ds \geq \sum_{i=j}^N \int_\gamma \ones_{B(x_0,i)\setminus K_i}\,ds \geq (N-j) \int_\gamma \ones_{B(x_0,j)\setminus K_N}\,ds \geq \left(\lfloor 4L/\eta\rfloor+1\right)\frac{\eta}{4} > L,
\]
which is a contradiction. 
Thus, $A_t\subset \{y\in X: d(y,K_N) < \eta/4\}$. By covering $K_N$ by $\eta/2$ balls and inflating these balls by $2$ we get a finite covering of $A_t$ by $\eta$-balls. This shows that the set $A_t$ is totally bounded and thus pre-compact. 
\end{proof}

We will need the following lower semicontinuity of curve integrals. The result is classical, and its proof has appeared in many places, such as \cite[Proposition 4]{keith03}. For the reader's convenience we give a short proof.

\begin{lemma}\label{lem:lsc} Let $\gamma_i: [0,1]\to X$ be a sequence of rectifiable curves parametrized by constant speed, with $\len(\gamma_i) \leq L$ for some $L\in [0,\infty]$ and all $i\in \N$, and suppose that $\gamma_i$ converges uniformly to a curve $\gamma:[0,1]\to X$. If $g:X\to [0,\infty]$ is a lower semicontinuous function, then 
\[
\int_\gamma g \, ds \leq \liminf_{i\to\infty} \int_{\gamma_i} g \, ds.
\]
\end{lemma}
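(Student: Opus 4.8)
The plan is to prove the lower semicontinuity of curve integrals along uniformly convergent sequences by a standard combination of a partition refinement and Fatou's lemma. First I would reduce to the case where $\gamma$ itself is parametrized by constant speed and $\liminf_i \int_{\gamma_i} g\,ds$ is finite (otherwise there is nothing to prove); passing to a subsequence, I may assume the $\liminf$ is a genuine limit. The uniform convergence $\gamma_i \to \gamma$ together with the uniform length bound $\len(\gamma_i)\le L$ gives, via lower semicontinuity of length under uniform convergence, that $\len(\gamma)\le L$ as well, so $\gamma$ is indeed rectifiable and the left-hand side makes sense.

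The heart of the argument is the following discretization. Fix a partition $0=t_0<t_1<\dots<t_n=1$ of $[0,1]$. Because $g$ is lower semicontinuous and $[0,1]$ is nice, I can approximate $\int_\gamma g\,ds = \len(\gamma)\int_0^1 g(\gamma(t))\,dt$ from below by a Riemann-type sum: more precisely, by lower semicontinuity of $g$ and continuity of $\gamma$, for any $\varepsilon>0$ one can choose the partition fine enough that on each subinterval $[t_{k-1},t_k]$ the values $g(\gamma(t))$ stay above $g(\gamma(s_k))-\varepsilon$ for a chosen point $s_k$ — or, more robustly, use that for a lower semicontinuous $g$, $\int_\gamma g\,ds = \sup$ over partitions of $\sum_k \big(\inf_{[t_{k-1},t_k]} g\circ\gamma\big)\,d(\gamma(t_{k-1}),\gamma(t_k))$ is not quite right; instead I would work directly with the bound $\int_\gamma g\,ds \le \liminf_i \int_{\gamma_i} g\,ds$ via the following cleaner route. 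For each fixed partition, $\int_{\gamma_i} g\,ds \ge \sum_k \int_{\gamma_i|_{[t_{k-1},t_k]}} g\,ds$ (equality in fact, by additivity of the path integral). On each piece, $\gamma_i|_{[t_{k-1},t_k]} \to \gamma|_{[t_{k-1},t_k]}$ uniformly. So it suffices to treat "small" pieces and then sum. On a piece whose image is contained in a small ball $B$, lower semicontinuity says $g \ge m_B$ on a neighborhood, where $m_B$ is slightly below $\inf_B g$; then $\int_{\gamma_i|_{[t_{k-1},t_k]}} g\,ds \ge m_B \len(\gamma_i|_{[t_{k-1},t_k]})$, and by lower semicontinuity of length, $\liminf_i \len(\gamma_i|_{[t_{k-1},t_k]}) \ge \len(\gamma|_{[t_{k-1},t_k]})$. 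Summing over $k$ and letting the mesh of the partition go to zero recovers $\int_\gamma g\,ds$ on the right, while the left side is bounded by $\liminf_i \int_{\gamma_i} g\,ds$ throughout.

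Concretely, the steps in order are: (1) reduce to $\gamma$ constant-speed parametrized, $\liminf$ finite and realized as a limit along a subsequence, and record $\len(\gamma)\le L$; (2) fix $\varepsilon>0$ and, using uniform continuity of $\gamma$ plus lower semicontinuity of $g$, choose a partition so fine that for each $k$ there is an open set $U_k \supset \gamma([t_{k-1},t_k])$ with $g > c_k - \varepsilon$ on $U_k$ where $c_k := \min_{[t_{k-1},t_k]} g\circ\gamma$ (using that $g\circ\gamma$ is lower semicontinuous on a compact interval, hence attains its infimum, and lower semicontinuity provides the open neighborhood); (3) use uniform convergence to get $\gamma_i([t_{k-1},t_k])\subset U_k$ for $i$ large (uniformly in $k$ since there are finitely many), so $\int_{\gamma_i|_{[t_{k-1},t_k]}} g\,ds \ge (c_k-\varepsilon)\len(\gamma_i|_{[t_{k-1},t_k]})$; (4) apply lower semicontinuity of length piecewise, sum over $k$, and take $i\to\infty$ to obtain $\liminf_i \int_{\gamma_i}g\,ds \ge \sum_k (c_k-\varepsilon)\len(\gamma|_{[t_{k-1},t_k]})$; (5) let the partition mesh tend to $0$ so that $\sum_k c_k \len(\gamma|_{[t_{k-1},t_k]}) \to \int_\gamma g\,ds$ (this is where lower semicontinuity and the integrability/monotone convergence for the increasing approximations of $g\circ\gamma$ are used), then let $\varepsilon\to 0$.

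The main obstacle I anticipate is step (5): justifying that the piecewise-constant lower sums $\sum_k c_k\,\len(\gamma|_{[t_{k-1},t_k]})$ converge up to $\int_\gamma g\,ds$ as the mesh shrinks, even when $g$ may be $+\infty$-valued or when $g\circ\gamma$ is only lower semicontinuous and not Riemann integrable in the classical sense. This is handled by noting $g\circ\gamma$ is lower semicontinuous on $[0,1]$, hence the supremum over all finite partitions of $\sum_k \big(\inf_{[t_{k-1},t_k]} g\circ\gamma\big)(t_k-t_{k-1})$ equals $\int_0^1 g(\gamma(t))\,dt$ (a standard fact about lower semicontinuous functions, e.g. via approximation from below by continuous functions or by the monotone convergence theorem applied to increasing sequences of lower sums), and similarly with arc-length weights $\len(\gamma|_{[t_{k-1},t_k]})$ in place of $t_k-t_{k-1}$ after using the constant-speed parametrization so that the two weights are proportional. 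A slightly more robust alternative avoiding any subtlety is to first replace $g$ by the truncations $g_N = \min(g,N)$, prove the inequality for each bounded $g_N$ where all the approximations are uniform and elementary, and then let $N\to\infty$ using monotone convergence on both sides — this I expect to be the cleanest way to write the final argument.
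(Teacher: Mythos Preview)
Your partition-and-lower-sum approach is correct in outline, but it is considerably more elaborate than what the paper does, and one of your reductions is not quite right. You propose to ``reduce to $\gamma$ constant-speed'', but $\gamma$ is the uniform limit of the $\gamma_i$ and need not be constant-speed; reparametrizing $\gamma$ destroys the uniform convergence $\gamma_i\to\gamma$ on $[0,1]$. Fortunately your steps (2)--(5) do not actually require this: in step~(5) you can work with the weights $\len(\gamma|_{[t_{k-1},t_k]})=\int_{t_{k-1}}^{t_k}|\gamma'(t)|\,dt$ directly and apply monotone convergence with the measure $|\gamma'(t)|\,dt$, or use your truncation $g_N=\min(g,N)$ alternative, which is indeed the cleanest fix.

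The paper's proof is a one-line application of Fatou's lemma. After passing to a subsequence so that $\len(\gamma_i)=L_i\to L'$, one observes that each $\gamma_i$ is $L_i$-Lipschitz, so $\gamma$ is $L'$-Lipschitz and $|\gamma'|(t)\le L'$ a.e. Then
\[
\int_\gamma g\,ds \le \int_0^1 g(\gamma(t))\,L'\,dt \le \int_0^1 \liminf_{i}\,g(\gamma_i(t))\,L_i\,dt \le \liminf_i \int_0^1 g(\gamma_i(t))\,L_i\,dt = \liminf_i \int_{\gamma_i} g\,ds,
\]
using lower semicontinuity of $g$ pointwise and Fatou. Your route via partitions, local lower bounds for $g$, and lower semicontinuity of length is a valid alternative that makes the geometric content more explicit, but it buys nothing extra here and introduces the bookkeeping in step~(5) that you correctly flag as the main obstacle. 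The paper's argument avoids all of that by never discretizing.
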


\begin{proof} By passing to a subsequence, we can assume that $\lim_{i\to\infty} \int_{\gamma_i} g \, ds$ exists, and that $\len(\gamma_i)\to L'$ for some $L'$. Since $\gamma_i$ are parametrized by constant speed, see (\ref{eq:curve-speed}),  $|\gamma_i'|(t) = \len(\gamma_i)=: L_i$ for each $i\in \N$ and every $t\in [0,1]$, and $\gamma_i$ is $L_i$-Lipschitz. Therefore,  $\gamma$ is $L$-Lipschitz, and $|\gamma'|(t) \leq L$ for a.e. $t\in [0,1]$. 
\begin{align*}
\int_\gamma g \, ds = \int_0^1 g(\gamma(t))|\gamma'|(t) dt &\leq \int_0^1 g(\gamma(t)) L dt \\
&\leq \lim_{i\to\infty} \int_0^1 g(\gamma_i(t)) L_i dt & \text{(by l.s.c. and Fatou's Lemma)}\\
&= \lim_{i\to\infty} \int_{\gamma_i} g \, ds.
\end{align*}
\end{proof}
The next result is a a generalization of a well-known method to construct a function with a given upper gradient \cite[Lemma 7.2.13]{shabook}. However, the proof there that $u$ is lower-semicontinuous requires that $X$ is proper. In our generality of locally complete and complete spaces, this does not suffice. In this generality the crucial idea is to use the notion of a good function. The proof changes slightly as a consequence of this, and we provide full detail for the sake of completeness.

\begin{proposition}\label{prop:funconstr} Let $X$ be complete. Assume that $V\subset X$ is a bounded open set and that $g:X\to [0,\infty]$ is a good function. Then, the function $u:X\to \R$ given by 
\[
u(x) := \min\left(\inf_{\gamma:X\setminus V \leadsto x} \int_\gamma g \,ds,1\right)
\]
is in $L^p(X)$ for $1\le p\le\infty$, and 
has  $g$ as upper gradient. Moreover, $u$ is lower semicontinuous.
\end{proposition}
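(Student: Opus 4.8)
### Proof proposal

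The plan is to verify three things in turn: that $u$ has $g$ as an upper gradient, that $u \in L^p(X)$, and that $u$ is lower semicontinuous, with the last being the main point where the notion of a good function is essential.

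\textbf{Upper gradient.} This is the classical argument. Fix a rectifiable curve $\gamma:[0,1]\to X$; I may assume $\int_\gamma g\,ds < \infty$, since otherwise the upper gradient inequality is trivial. I want $|u(\gamma(1)) - u(\gamma(0))| \le \int_\gamma g\,ds$. The key observation is the subadditivity of the pre-truncation infimum: writing $v(x) = \inf_{\sigma: X\setminus V \leadsto x} \int_\sigma g\,ds$, for any curve $\gamma$ from $x$ to $y$ one can concatenate a near-optimal competitor $\sigma: X\setminus V \leadsto x$ with $\gamma$ to obtain a competitor for $y$, giving $v(y) \le v(x) + \int_\gamma g\,ds$, and symmetrically $v(x) \le v(y) + \int_\gamma g\,ds$; hence $|v(x)-v(y)| \le \int_\gamma g\,ds$. (One has to be slightly careful when $v(x) = \infty$: if $\int_\gamma g\,ds < \infty$ and $v(x) = \infty$ then also $v(y)=\infty$ by the same concatenation, so $|v(x)-v(y)|$ is interpreted as $0 \le \int_\gamma g\,ds$; and if $v(x) < \infty$ the estimate is literal.) Since $u = \min(v,1)$ and truncation by a constant does not increase oscillation, $|u(x)-u(y)| \le |v(x)-v(y)| \le \int_\gamma g\,ds$. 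So $g \in \mathcal{D}(u)$.

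\textbf{Integrability.} Since $0 \le u \le 1$ everywhere, $u$ is certainly in $L^\infty$. Moreover $u$ vanishes on $X\setminus V$: for $x \in X\setminus V$ the constant curve at $x$ is an admissible competitor with integral $0$, so $v(x) = 0$ and $u(x)=0$. Since $V$ is bounded, $V \subset B(x_0,R)$ for some ball, and $\mu(B(x_0,R)) < \infty$; hence $\|u\|_{L^p(X)}^p \le \mu(V) \le \mu(B(x_0,R)) < \infty$ for all $p \in [1,\infty)$, and the case $p=\infty$ is immediate.

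\textbf{Lower semicontinuity.} This is the main obstacle and the reason the hypothesis that $g$ is a \emph{good function} enters. Fix $x\in X$ and a sequence $x_n \to x$; I want $u(x) \le \liminf_n u(x_n)$. Passing to a subsequence I may assume $u(x_n) \to \ell := \liminf_n u(x_n)$, and I may assume $\ell < 1$ (otherwise there is nothing to prove since $u\le 1$). For each $n$ pick a curve $\gamma_n : X\setminus V \leadsto x_n$ with $\int_{\gamma_n} g\,ds \le v(x_n) + 1/n \le \ell + 1/n + o(1) < 1$ for large $n$ (here I also use that for large $n$, $u(x_n) = v(x_n)$ since $u(x_n) < 1$). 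Parametrize each $\gamma_n$ by constant speed on $[0,1]$. Now I need a uniform-convergence extraction: the curves $\gamma_n$ all have endpoint in $X\setminus V$ and endpoint converging to $x$, all lie in some fixed bounded set $A$ — take $A$ a bounded open neighbourhood of $V\cup\{x\}$, noting that for large $n$ the curves must stay in a bounded set because $\int_{\gamma_n} g\,ds$ is bounded... here I should be careful. Actually the cleanest route: apply the good-function property to the curve family $\Gamma = \{\gamma_n\}_n$ together with a bounded Borel set $A$ containing all the $\gamma_n$ and the choice $L = 1$, $\delta = $ some fixed positive number. For $\delta$ I use a dichotomy: either $\diam(\gamma_n) \ge \delta_0$ for a fixed $\delta_0 > 0$ along a subsequence — then those $\gamma_n$ lie in $\Gamma^A_{\delta_0, 1}(g)$ which is pre-compact by the good-function hypothesis, so a further subsequence converges uniformly to a limit curve $\gamma$, necessarily with $\gamma(0) \in X\setminus V$ (the set $X\setminus V$ is closed) and $\gamma(1) = \lim x_n = x$, and then by the lower semicontinuity of curve integrals (Lemma \ref{lem:lsc}, recalling that by Lemma \ref{lem:lowersem}/\ref{lem:goodfunc} considerations $g$ is lower semicontinuous — actually I should simply invoke that any good function arising here is lower semicontinuous, or note that the proposition is applied with a lower semicontinuous good function; in fact I can apply Lemma \ref{lem:lsc} directly if $g$ is l.s.c., and good functions in our applications are produced l.s.c. by Lemma \ref{lem:goodfunc}) we get $\int_\gamma g\,ds \le \liminf_n \int_{\gamma_n} g\,ds \le \ell$, whence $v(x) \le \int_\gamma g\,ds \le \ell$ and so $u(x) \le \ell$; or else $\diam(\gamma_n) \to 0$, in which case $x_n = \gamma_n(1)$ and $\gamma_n(0) \in X\setminus V$ forces $d(x_n, X\setminus V) \to 0$, so $x \in \overline{X\setminus V} = X\setminus V$, giving $u(x) = 0 \le \ell$. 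In the first branch I also need that $g$ being lower semicontinuous (which we may assume, or which holds for the good functions we plug in) makes Lemma \ref{lem:lsc} applicable. In all cases $u(x) \le \ell = \liminf_n u(x_n)$, proving lower semicontinuity.

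One technical point I would spell out carefully in the write-up is the choice of the bounded Borel set $A$ in the application of Definition \ref{def:goodfunction}: I would take $A$ to be a bounded open set containing $\overline{V}$ and all but finitely many $x_n$ (possible since $x_n \to x$ and $V$ is bounded), and discard the finitely many exceptional indices; since membership of $\gamma_n$ in $\Gamma^A$ requires $\gamma_n \subset A$, I must also ensure the optimal/near-optimal competitors $\gamma_n$ can be taken inside $A$ — this is where one restricts attention to competitors with small integral: a competitor leaving $A$ would, by the lower bound of a good function on bounded pieces (or simply because near-minimizers of bounded length cannot stray arbitrarily far given that $V$ is bounded and $x_n \to x$) have to be replaced; cleanly, one observes that it suffices in the definition of $v(x_n)$ to consider curves of length at most $L'$ for a suitable $L'$, and such curves starting in $X\setminus V$ near $\overline V$ and ending near $x$ stay within a fixed bounded set, which we take to be $A$. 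I expect this bounded-localization bookkeeping, rather than any deep idea, to be the fiddly part of the argument.
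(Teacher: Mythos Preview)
Your upper-gradient argument and the $L^p$ bound are correct and match the paper. The real content is lower semicontinuity, and there your strategy---extract a uniformly convergent subsequence of near-optimal competitors via the good-function hypothesis, then invoke Lemma~\ref{lem:lsc}---is also the paper's strategy. But you have a genuine gap at exactly the point you flag as ``fiddly bookkeeping'': you never get the competitor curves into a fixed bounded set $A$, and the good-function property only yields pre-compactness for families $\Gamma^A_{\delta,L}(g)$ with $\gamma\subset A$.

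Your proposed fixes do not work. The curves $\gamma_n:X\setminus V\leadsto x_n$ start at \emph{arbitrary} points of the (possibly unbounded) set $X\setminus V$ and may wander arbitrarily far before returning; the bound $\int_{\gamma_n} g\,ds\le 1$ gives no length bound, since a good function need not be bounded below on bounded sets (that is a feature of the \emph{construction} in Lemma~\ref{lem:goodfunc}, not of Definition~\ref{def:goodfunction}). So ``take $A\supset\overline V$'' and ``near-minimizers have bounded length'' both fail. The paper's device is a subcurve trick: once one knows $x\in V$ (which follows from $u(x)>0$, so your dichotomy is reorganized as ``$x\notin V$ trivial, else $x\in V$''), fix $\delta>0$ with $B(x,2\delta)\subset V$, let $t_i$ be the last time $\gamma_i$ is within $\delta/(2i)$ of $X\setminus V$, and replace $\gamma_i$ by $\tilde\gamma_i=\gamma_i|_{[t_i,1]}$. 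Then $\tilde\gamma_i\subset V$, $\diam(\tilde\gamma_i)\ge \delta/2$, and $\int_{\tilde\gamma_i}g\le 1$, so $\tilde\gamma_i\in\Gamma^V_{\delta/2,1}(g)$ and the good-function pre-compactness applies. In the limit $\tilde\gamma_{i_k}\to\gamma$ one has $\gamma(0)\in X\setminus V$ (from $d(\tilde\gamma_{i_k}(0),X\setminus V)\to 0$) and $\gamma(1)=x$, and Lemma~\ref{lem:lsc} closes the argument. This localization-by-subcurve is the missing idea, not mere bookkeeping.

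A minor remark: you worry about whether $g$ is lower semicontinuous for Lemma~\ref{lem:lsc}. The proposition as stated does not assume it, and the paper's own proof invokes Lemma~\ref{lem:lsc} without comment; in every application the good function comes from Lemma~\ref{lem:goodfunc} and is l.s.c., so this is harmless, but you are right that strictly speaking it is an implicit extra hypothesis.
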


\begin{remark}\label{rmk:infimum} We adopt the usual conventions for the infimum. If $x\in X\setminus V$, then the constant curve is allowed, hence $u(x)=0$. If there are no curves $\gamma: X\setminus V \mapsto x$, then $u(x)=1$ since the infimum over an emptyset is $\infty$.
\end{remark}

\begin{proof}[Proof of Proposition \ref{prop:funconstr}] If we show that $u$ is lower semicontinuous, then measurability will follow and we will have $u\in L^p(X)$, because, by Remark \ref{rmk:infimum}, $u\leq \ones_V$. Proving that $g$ is an upper gradient for $u$ is classical argument, see \cite[Lemma 3.1]{bjornnages}. We recall this argument now. If $\gamma:[0,1]\to X$ and $\gamma_0:X\setminus V \leadsto \gamma(0)$ are any rectifiable curves, then we can form a curve $\gamma_1: X \setminus V \leadsto \gamma(1)$ by concatenating them. 
Thus, $\int_{\gamma_1} g \, ds = \int_{\gamma_0} g \, ds + \int_{\gamma} g \, ds$. By the definition of $u$, we get $u(\gamma(1)) \leq \int_{\gamma_1} g \, ds \leq \int_{\gamma_0} g \, ds + \int_{\gamma} g \, ds$. Infimizing over $\gamma_0$ yields $u(\gamma(1)) \leq \inf_{\gamma_0} \int_{\gamma_0} g \, ds + \int_{\gamma} g \, ds.$ Since $u(\gamma(1)) \leq 1$, we have $u(\gamma(1)) \leq u(\gamma(0)) + \int_{\gamma} g \, ds$. By reversing the curve, we obtain $u(\gamma(0))  \leq u(\gamma(1)) + \int_{\gamma} g\, ds$. From these two inequalities, we get inequality \eqref{eq:ug}.

Thus the more important part of the proof is to show that $u$ is  lower semicontinuous.  Arguing by contradiction, assume that we can find some sequence $x_i$ converging to $x$ in $X$, with the property $\lim_{i\to \infty} u(x_i) < u(x)-\Delta$ for some $\Delta>0$. 
Since $u$ is non-negative, we must have that $u(x)>\Delta$, hence $x\in V$. Since $V$ is open, there is some ball $B(x,2\delta) \subset V$, with $\de>0$. By convergence, we can throw away finitely many terms and assume that $x_i \in B(x,\delta)$ for all $i\in\N$. We can also pass to a subsequence so that $u(x_i)<u(x)-\Delta$ for every $i\in \N$. 
In particular, we have $u(x_i)<1$. Hence, we can choose curves $\gamma_i : X\setminus V \leadsto x_i$ with
\begin{equation}\label{eq:lessthanone}
\int_{\gamma_i} g ds \leq u(x)-\Delta \leq 1.
\end{equation}
Assume $\gamma_i: [0,1]\rightarrow X$ has the constant-speed parametrization and let 
\[
t_i:=\sup\left\{t \in [0,1] : d(\gamma_i(t),X\setminus V) \leq \frac{\delta}{2i}\right\},
\]
which is the last time $\gamma_i(t)$ is $\frac{\delta}{2i}$ away from $X\setminus V$.

Let $\tilde{\gamma_i}$ be the constant speed parametrization of $\gamma_i|_{[t_i,1]}$ on $[0,1]$, so that $\tilde{\gamma_i} \subset V$, for each $i\in\N$. By \eqref{eq:lessthanone} and since $\tilde{\gamma_i}$ is a subcurve of $\gamma_i$, we have $\int_{\tilde{\gamma_i}} g , ds \leq 1$.

Since $d(x_i, X \setminus V) \geq \delta$, we have $\diam(\tilde{\gamma_i}) \geq \delta/2$. Thus $\tilde{\gamma_i} \in \Gamma^V_{\delta/2, 1}(g)$, where $\Gamma$ is the collection of all rectifiable curves. Since $g$ is a good function, and since $\tilde{\gamma_i}$ are parametrized by constant speed, there is a subsequence $\{\tilde{\gamma}_{i_k}\}_{k\in\N}$ converging uniformly to some continuous function $\gamma:[0,1]\to X$. 

Next, by Lemma \ref{lem:lsc}, we have 
\[
\int_\gamma g ds \leq \liminf_{k\to\infty} \int_{\tilde{\gamma}_{i_k}} g ds \leq u(x)-\Delta.
\]
By construction, $d(\tilde{\gamma}_{i_k}(0), X \setminus V) \le \frac{\delta}{2i_k}$, and $\tilde{\gamma}_{i_k}(1)=x$. Also, $\lim_{k\rightarrow\infty}\tilde{\gamma}_{i_k}(0)=\ga(0)$ and $\lim_{k\rightarrow\infty}\tilde{\gamma}_{i_k}(1)=\ga(1)$. Therefore, sending $k\to\infty$, we get $\gamma(0)  \in X \setminus V$ and $\gamma(1)=x$, namely, $\gamma: X \setminus V \leadsto x$. This leads to
$$u(x) \le \int_\gamma g ds \leq u(x)-\Delta,$$
which is a contradiction.
\end{proof}

At this juncture, we note, that one of the crucial insights of the present paper is that we can replace path integrals with discrete Riemann-type sums over paths. We will next move to develop the language for such arguments. We will also prove an analogue of Proposition \ref{prop:funconstr} in Proposition \ref{prop:uppergradientpaths} for such constructions.

\subsection{Discrete paths}
We will be considering discrete path approximations to curves. A \emph{(discrete) path} is a sequence $P=(p_0,\dots,p_n)$, with $n\geq 0$, and which does not repeat.  We identify $P$ sometimes with the image set, for example in writing $p\in P$ to state that a point $p$ lies in the path. When $n\geq 1$ define its \emph{mesh size} by $\mesh(P) = \max_{k=0,\dots n-1} d(p_k,p_{k+1})$ and its \emph{length} by $\len(P) = \sum_{k=0}^{n-1} d(p_k,p_{k+1})$. The \emph{diameter}, $\diam(P)$, is the diameter as a set of points. Note that the path $P=(p_0)$ consisting of only one point, and with diameter, length and mesh equal to zero, is permitted. Given a function $g:X\to \R$ we define the \emph{discrete integral} of $g$ along $P$ by
\[
\int_P g := \sum_{k=0}^{n-1} g(p_k)d(p_k,p_{k+1}).
\]
Again, if $P$ is a single point, then $\int_P g = 0$.

Discrete paths can be extended to curves after we pass to a larger super space. For such arguments, we introduce an isometric Kuratowski embedding $\iota: X\to \ell_\infty(\N)$ into the sequence space $\ell_\infty(\N)$.  We will fix such an embedding for the remainder of this subsection.  

Given a discrete path $P$ we call a curve $\gamma:[0,1]\to \ell_\infty(\N)$ its \emph{linearly interpolating curve} if it is constructed as follows. If $\len(P)=0$, i.e. when discrete path consists of only single point $P=(p_0)$, then define $\gamma(t)=p_0$ for all $t \in [0,1]$. Next, we define the interpolant when $\len(P)>0$. Let $t_0=0$ and define 
\begin{equation}\label{eq:time-partition}
t_l := \sum_{k=0}^{l-1} \frac{d(p_k,p_{k+1}) }{ \len(P)},
\end{equation}
for $l=1,\dots, n$. We refer to $t_0,\dots, t_n$ as the time-partition points associated to $P$, which are only defined if $\len(P)>0$. Define $\gamma(t_l) = p_l$ and  define 
\begin{equation}\label{eq:curve-interpolation}
\gamma(t) = \frac{t_{l+1}-t}{t_{l+1}-t_l} p_{l} + \frac{t-t_l}{t_{l+1}-t_l} p_{l + 1},
\end{equation}
for $t \in (t_l,t_{l+1})$.  Note that since the path is simple, $p_l\neq p_{l+1}$ and $t_l\neq t_{l+1}$ for all $l=0,\dots, n-1$.

With this construction, we have the following lemma.

\begin{lemma}\label{lem:interpolation} Let $P$ be a discrete path and $\gamma$ its linearly interpolating curve. We have the following.
\begin{enumerate}
\item $\len(\gamma)=\len(P)$.
\item $d(\gamma(t), X) \leq \mesh(P)$ for every $t\in [0,1]$.
\item $\gamma$ is parametrized by constant speed.
\end{enumerate}
\end{lemma}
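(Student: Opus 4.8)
The plan is to verify the three claims directly from the definition of the linearly interpolating curve $\gamma$ given in \eqref{eq:curve-interpolation}, treating separately the degenerate case $\len(P)=0$ (where $\gamma$ is constant and all three assertions are immediate) and the main case $\len(P)>0$ with time-partition points $t_0,\dots,t_n$ as in \eqref{eq:time-partition}.

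For claim (1), I would observe that on each subinterval $[t_l,t_{l+1}]$ the curve $\gamma$ is the affine segment in $\ell_\infty(\N)$ from $p_l$ to $p_{l+1}$, parametrized over an interval of length $t_{l+1}-t_l = d(p_l,p_{l+1})/\len(P)$. An affine segment between two points realizes the distance between them as its length, so $\len(\gamma|_{[t_l,t_{l+1}]}) = d(p_l,p_{l+1})$, and summing over $l=0,\dots,n-1$ gives $\len(\gamma) = \sum_l d(p_l,p_{l+1}) = \len(P)$. This same computation shows $\gamma$ is affine with constant speed $|\gamma'(t)| = d(p_l,p_{l+1})/(t_{l+1}-t_l) = \len(P)$ on the interior of each $[t_l,t_{l+1}]$, hence the speed equals $\len(P) = \len(\gamma)$ a.e.\ on $[0,1]$; this is exactly the constant-speed parametrization, giving claim (3).

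For claim (2), fix $t\in[0,1]$; it lies in some $[t_l,t_{l+1}]$, so by \eqref{eq:curve-interpolation} $\gamma(t)$ is a convex combination $\lambda p_l + (1-\lambda)p_{l+1}$ with $\lambda\in[0,1]$. Since $p_l\in X$ (identifying $X$ with its image under $\iota$), we estimate $d(\gamma(t),X) \leq \|\gamma(t) - p_l\|_{\ell_\infty} = (1-\lambda)\|p_{l+1}-p_l\|_{\ell_\infty} = (1-\lambda)\,d(p_l,p_{l+1}) \leq d(p_l,p_{l+1}) \leq \mesh(P)$, using that $\iota$ is isometric and the definition of mesh size. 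In the case $\len(P)=0$ we have $\gamma(t)=p_0\in X$, so $d(\gamma(t),X)=0\le\mesh(P)=0$.

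I do not expect any serious obstacle here: the statement is essentially the assertion that a polygonal path traversed at constant speed has the expected length, stays within $\mesh(P)$ of the vertex set, and is $\len(P)$-Lipschitz. The only points requiring a word of care are that the Kuratowski embedding $\iota$ is isometric (so norms of differences in $\ell_\infty(\N)$ equal the original distances), and the bookkeeping that the time-partition \eqref{eq:time-partition} is chosen precisely so the speed comes out constant; both are built into the construction preceding the lemma. I would keep the proof to a short paragraph handling the degenerate case and then the affine-segment computation on each $[t_l,t_{l+1}]$.
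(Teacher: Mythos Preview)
Your proposal is correct and follows essentially the same approach as the paper's proof: both handle the degenerate case $\len(P)=0$ separately, compute the length and speed on each affine segment $[t_l,t_{l+1}]$ using \eqref{eq:time-partition}, and bound $d(\gamma(t),X)$ by the distance to a vertex $p_l$, which is at most $d(p_l,p_{l+1})\le\mesh(P)$.
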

\begin{proof}
Since $\gamma$ is piecewise linear, we can compute its length by adding up the linear segments, and $\len(\gamma)=\len(P)$ follows. 

If $\len(P)=0$, then $\gamma$ is a constant curve and $d(\gamma,X)=0=\mesh(P)$. Further, $\gamma$ has constant (zero) speed.  
Suppose then that $\len(P)>0$ and let $t_l$, for $l=0,\dots, n-1$, be the time-partition points. For every $t\in [0,1]$,  there is a $l=0,\dots, n-1$ for which $t\in [t_l,t_{l+1}]$. Hence, by (\ref{eq:curve-interpolation}),  
\[
d(\gamma(t),X) \leq \min(d(\gamma(t),p_l), d(\gamma(t),p_{l+1})) \leq d(p_l,p_{l+1}) \leq \mesh(P),
\] 
for each $t\in [0,1]$. 

Next, if $t\in (t_l,t_{l+1})$ for some $l=0,\dots, n-1$, then $\gamma$ is linear in a neighborhood of $t$, and has speed $d(p_l,p_{l+1})/(t_{l+1}-t_l)=\len(P)$, by (\ref{eq:time-partition}) and (\ref{eq:curve-interpolation}). Therefore, $\gamma$ has speed $\len(P)$ at all points $t\not\in\{t_0,\dots, t_n\}$, in other words, it is parametrized by constant speed.
\end{proof}

If $P_i=(p^i_0,\dots p^i_{n(i)})$ is a sequence of discrete paths, we say that it converges to a curve $\gamma:[0,1]\to X$, if $\lim_{i\to\infty}\mesh(P_i)=0$ and the linear interpolation curves $\gamma_i$ converge to $\gamma$ uniformly in $\ell_\infty(\N)$, in the sense that:
\begin{equation}\label{eq:conv-discrete-cont}
\lim_{i\rightarrow\infty} \sup_{t\in[0,1]} \|\gamma_i(t)-\gamma(t)\|_\infty =0.
\end{equation}
Moreover, one can show that this notion of convergence does not depend on the embedding to $\ell_\infty(\N)$.

We will need the following variant of Lemma \ref{lem:lsc}, in the context of discrete paths.

\begin{lemma}\label{lem:lsccurve} Assume that $P_i=(p^i_0,\dots p^i_{n(i)})$ is a sequence of discrete paths converging to a rectifiable curve $\gamma:[0,1]\to X$,
in the sense of (\ref{eq:conv-discrete-cont}). Also, assume that $\liminf_{i\to\infty} \len(P_i)<\infty$. Then, for any lower semicontinuous non-negative function $g:X\to [0,\infty]$ we have

\[
\int_\gamma g \, ds \leq \liminf_{i\to\infty} \int_{P_i} g.
\] 
\end{lemma}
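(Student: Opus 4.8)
The plan is to mimic the proof of Lemma \ref{lem:lsc}, but comparing the discrete integral $\int_{P_i} g$ to the curve integral $\int_{\gamma_i} g\,ds$ of the linear interpolant $\gamma_i$ of $P_i$ (viewed as a curve in $\ell_\infty(\N)$), and then passing to the limit. The key point is that the discrete integral and the continuous integral along the interpolant are \emph{comparable up to an error controlled by the mesh and an oscillation term}, which vanishes because $g$ is lower semicontinuous and the mesh tends to zero.

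Here is the order of steps I would carry out. First, pass to a subsequence so that $\liminf_{i\to\infty}\int_{P_i}g = \lim_{i\to\infty}\int_{P_i}g$, and (using $\liminf_i \len(P_i)<\infty$) so that $\len(P_i)\to L'$ for some finite $L'$; discard the finitely many $P_i$ with $\len(P_i)=0$ unless infinitely many are single points, in which case $\gamma$ is constant and the inequality is trivial ($\int_\gamma g\,ds=0$). Second, for each $i$ let $\gamma_i:[0,1]\to\ell_\infty(\N)$ be the linear interpolant of $P_i$; by Lemma \ref{lem:interpolation} it has constant speed $\len(P_i)$ and $\len(\gamma_i)=\len(P_i)$. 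Note that on the interpolation interval $[t^i_k,t^i_{k+1}]$ the curve $\gamma_i$ runs from $p^i_k$ to $p^i_{k+1}$ at speed $\len(P_i)$, so
\[
\int_{\gamma_i} g\circ\iota \; ds = \sum_{k=0}^{n(i)-1} d(p^i_k,p^i_{k+1}) \left(\frac{1}{t^i_{k+1}-t^i_k}\int_{t^i_k}^{t^i_{k+1}} g(\gamma_i(t))\,dt\right),
\]
where I abuse notation writing $g$ for a lower semicontinuous extension; more precisely, since $\gamma_i$ leaves $X$, I would instead directly estimate. Third — the main comparison — I would show $\int_{P_i} g \le \int_{\gamma_i}\hat g\,ds + \eps_i$ is \emph{not} quite what is needed; rather the cleaner route is: the values $g(p^i_k)$ are exactly the values of $g$ at the endpoints of the segments, so $\int_{P_i} g$ is a left-endpoint Riemann sum for $\int_0^1 g(\gamma_i(t))\len(P_i)\,dt$ with respect to the partition $\{t^i_k\}$. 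Thus it suffices to bound, for every fixed continuous (hence uniformly continuous on compacts) lower bound, the difference; but since $g$ is merely l.s.c., I instead argue: fix a continuous $\varphi$ with $0\le\varphi\le g$ on a neighborhood of $\gamma([0,1])$ in $\ell_\infty(\N)$ — here I'd use that $g\circ\iota^{-1}$, being l.s.c. on $\iota(X)$, is a supremum of continuous functions, and extend — then $\int_\gamma \varphi\,ds \le \liminf_i \int_0^1 \varphi(\gamma_i(t))\len(P_i)\,dt$ by Lemma \ref{lem:lsc}'s argument applied in $\ell_\infty(\N)$, and $\int_0^1\varphi(\gamma_i(t))\len(P_i)\,dt - \int_{P_i}g \le \int_0^1\varphi(\gamma_i(t))\len(P_i)\,dt - \int_{P_i}\varphi \le \omega_\varphi(\mesh(P_i))\cdot\len(P_i) \to 0$ by uniform continuity of $\varphi$ (on a compact neighborhood) and $\mesh(P_i)\to 0$, $\len(P_i)\to L'$. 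Finally, take the supremum over all such $\varphi$ and invoke the fact that $g$ is the pointwise supremum of such $\varphi$ together with monotone convergence (the Vitali–Carathéodory / l.s.c. structure) to get $\int_\gamma g\,ds = \sup_\varphi \int_\gamma \varphi\,ds \le \liminf_i \int_{P_i} g$.

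The main obstacle I anticipate is handling the fact that the interpolant $\gamma_i$ lives in $\ell_\infty(\N)$ rather than in $X$, so $g$ is only defined on $\iota(X)$ — I need to work with a continuous extension $\hat\varphi$ of each continuous minorant $\varphi$ of $g$ to $\ell_\infty(\N)$ (Tietze/McShane extension), and verify the oscillation bound $\bigl|\int_0^1\hat\varphi(\gamma_i(t))\len(P_i)\,dt - \int_{P_i}\varphi\bigr| \le \len(P_i)\sup_{k}\sup_{t\in[t^i_k,t^i_{k+1}]}|\hat\varphi(\gamma_i(t))-\hat\varphi(p^i_k)|$, which is at most $\len(P_i)$ times the modulus of continuity of $\hat\varphi$ evaluated at $\mesh(P_i)$, since $\|\gamma_i(t)-p^i_k\|_\infty \le d(p^i_k,p^i_{k+1}) \le \mesh(P_i)$ on that subinterval. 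The other delicate point is organizing the passage from continuous minorants $\varphi$ of $g$ back to $g$ itself: since $g$ is l.s.c. and nonnegative, $g = \sup\{\varphi : \varphi \text{ continuous}, 0\le\varphi\le g\}$ pointwise, and $\int_\gamma g\,ds = \sup_\varphi \int_\gamma\varphi\,ds$ by monotone convergence along an increasing sequence of such $\varphi$; combining with the uniform bound $\int_\gamma\varphi\,ds \le \liminf_i\int_{P_i}\varphi \le \liminf_i\int_{P_i}g$ gives the claim. If instead the paper's conventions let us substitute $\gamma_i$ for a genuinely $X$-valued approximant (via Lemma \ref{lem:interpolation}(2) the interpolant stays within $\mesh(P_i)$ of $X$), the extension step can be streamlined, but the oscillation estimate remains the technical heart of the argument.
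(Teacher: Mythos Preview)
Your proposal is correct and follows essentially the same approach as the paper. The paper organizes the reduction slightly differently---it first shows that the lemma for continuous $g$ implies the lemma for lower semicontinuous $g$ (via an increasing sequence $g_l\nearrow g$ of continuous functions and monotone convergence), and then carries out the oscillation/Tietze-extension argument once for continuous $g$; you instead keep $g$ l.s.c.\ throughout and work with continuous minorants $\varphi\le g$, taking the supremum at the end. Both routes hinge on the same two ingredients you identified: the Tietze extension of a continuous function to $\ell_\infty(\N)$ together with uniform continuity on a compact neighborhood of $\gamma([0,1])$ to control $\bigl|\int_{\gamma_i}\hat\varphi\,ds - \int_{P_i}\varphi\bigr|$ by $\len(P_i)$ times a modulus-of-continuity term, and the Lemma~\ref{lem:lsc}-style estimate $\int_\gamma \varphi\,ds \le \liminf_i \int_{\gamma_i}\hat\varphi\,ds$ for the interpolants.
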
 
\begin{proof} If $\len(\ga)=0$, then the inequality is trivial. So we may assume 
 that $\len(\gamma)>0$, which, by (\ref{eq:conv-discrete-cont}),  implies that $\len(P_i)>0$ for all but finitely many $i\in\N$. For convenience, we pass to a subsequence so that $\len(P_i)>0$ for all $i\in\N$.

We first show that if the Lemma has been proven for $g$ continuous, then it follows for lower semicontinuous $g$. Indeed, we can find an increasing sequence of functions $\{g_l\}_{l\in\N}$ of non-negative  continuous functions converging pointwise to $g$ (see \cite[Corollary 4.2.3]{shabook}). For each $l$ we have

\[
\int_\gamma g_l \, ds \leq \liminf_{i\to\infty} \int_{P_i} g_l \leq \liminf_{i\to\infty}  \int_{P_i} g.
\]
Send $l\to\infty$ and use monotone convergence then to conclude the lemma for all $g$ lower semicontinuous.  

Hence, assume that $g$ is continuous. Denote the interpolating paths for $P_i$ by $\gamma_i$, and use superscripts of the form $t_l^i$ when defining $\ga_i$ as in  (\ref{eq:time-partition})  and (\ref{eq:curve-interpolation}).

Fix $\epsilon>0$. Extend $g$ to be continuous on $\ell_\infty(\N)$ using the Tietze extension theorem; see for example \cite{milman97}. Since the image of $\gamma$ is compact, and $g$ continuous, we can find for any $\epsilon>0$ a $\delta>0$ so that if $x,y\in \ell_\infty(\N)$ and 
$ \max \left(d(x,\gamma),d(y,\gamma),d(x,y) \right) < \delta $
then $|g(x)-g(y)|<\epsilon$. Choose an $N$ so that for $i\geq N$, we have $\mesh(P_i) < \delta/2$, $d(\gamma_i(t),\gamma(t)) < \delta/2$ for all $t\in [0,1]$. 

Next, let $i\geq N$ be arbitrary. For every $t\in [t_l^i,t_{l+1}^i]$, we have $d(\gamma_i(t),   \gamma(t_l^i)) \leq \mesh(P_i) < \delta$ and $\gamma(t_l^i)\in P_i \in X$. Thus, by the choice of $\delta$, we get $g(\gamma_i(t))\leq g(\gamma_i(t_l^i)) + \epsilon$. Integrating this inequality and using (\ref{eq:time-partition}) , we get  
\[
\int_{t_l^i}^{t_{l+1}^i} g(\gamma_i(t)) \len(P_i) dt \leq \len(P_i) (t_{l+1}^i-t_l^i) (g(\gamma(t_l^i)) + \epsilon) \leq d(p_l^i,p_{l+1}^i) (g(\gamma(t_l^i)) + \epsilon).
\]
Finally, by  summing over $l=0, \dots n(i)-1$, Lemma \ref{lem:interpolation} gives
\[
\int_{\gamma_i} g ds = \int_0^1 g(\gamma_i(t)) \len(\gamma_i) dt \leq \sum_{l=0}^{n(i)-1} d(p_l^i,p_{l+1}^i) (g(\gamma(t_l^i)) + \epsilon) \leq \int_{P_i} g + \epsilon \len(P_i).
\]
By taking a limit inferior with $i\to \infty$, we get
\begin{equation}\label{eq:gamiPi}
\liminf_{i\to\infty} \int_{\gamma_i} g \leq \liminf_{i\to\infty} \int_{P_i} g +  \epsilon \liminf_{i\to\infty} \len(P_i) \, .
\end{equation}

Next, by Lemma \ref{lem:interpolation}, each $\gamma_i$ is parametrized by constant speed $\len(P_i)$. Thus, the $\gamma_i$ are $\len(P_i)$-Lipschitz. Let $L = \liminf_{i\to\infty} \len(P_i)$. Then, from uniform convergence, we get that $\gamma$ is $L$-Lipschitz. Together with the fact that the functions $h_i(t)=g(\gamma_i(t))$ converge uniformly to the function $g(\gamma(t))$, since $g$ is continuous, we get

\begin{align*}
\int_\gamma g \, ds &\leq \int_0^1 g(\gamma(t)) L dt \\
&= \lim_{i\to\infty} \int_0^1 g(\gamma_i(t)) L dt\\
&\leq \liminf_{i\to\infty} \int_0^1 g(\gamma_i(t)) \len(P_i) dt\\
&= \liminf_{i\to\infty} \int_{\gamma_i} g ds \leq \liminf_{i\to\infty} \int_{P_i} g +  \epsilon L\, .
\end{align*}
Since $\epsilon>0$ was arbitrary, the claim follows.
\end{proof}

We will need the following compactness statement for discrete paths.

\begin{lemma}\label{lem:compactness} If $\{P_i\}_{i\in \N}$ is a sequence of paths in a complete metric space $X$ satisfying
\begin{enumerate}
\item $\lim_{i\to\infty} \mesh(P_i)=0$; 
\item $\len(P_i) \leq S$ for some $S\in(0,\infty)$ and all $i\in \N$; and 
\item for any $\tau>0$ there is a compact set $K_\tau\subset X$ that $\max_{p\in P_i}d(p,K_\tau) \leq \tau$ for all $i\in \N$,
\end{enumerate}
 then a subsequence of $P_i$ converges to a curve $\gamma:[0,1]\to X$ in the sense of (\ref{eq:conv-discrete-cont}).
\end{lemma}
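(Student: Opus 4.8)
The plan is to transfer the problem to the ambient Banach space $\ell_\infty(\N)$ via the fixed isometric Kuratowski embedding $\iota\colon X\to\ell_\infty(\N)$, and there to invoke the Arzel\`a--Ascoli Lemma \ref{lem:arzela}. Let $\gamma_i\colon[0,1]\to\ell_\infty(\N)$ be the linearly interpolating curve of $P_i$. By Lemma \ref{lem:interpolation} each $\gamma_i$ is parametrized by constant speed $\len(\gamma_i)=\len(P_i)\le S$, so each $\gamma_i$ is $S$-Lipschitz; replacing $S$ by $\max(S,1)$ we may take $L:=S\ge 1$ as the common length bound required by Lemma \ref{lem:arzela}. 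Since $\ell_\infty(\N)$ is a complete metric space, it then remains only to verify the pointwise precompactness hypothesis of that lemma.

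Fix $t\in[0,1]$ and set $A_t=\{\gamma_i(t):i\in\N\}$. For each $i$, the point $\gamma_i(t)$ lies on a segment between two consecutive path points $p^i_l,p^i_{l+1}$ (or equals the single point $p^i_0$ when $\len(P_i)=0$), so by the interpolation formula \eqref{eq:curve-interpolation} we get $\|\gamma_i(t)-p^i_l\|_\infty\le d(p^i_l,p^i_{l+1})\le\mesh(P_i)$; cf.\ Lemma \ref{lem:interpolation}(2). Combining this with hypothesis (3), for every $\tau>0$ we obtain $d(\gamma_i(t),\iota(K_\tau))\le\mesh(P_i)+\tau$. Given $\e>0$, use hypothesis (1) to choose $N$ with $\mesh(P_i)<\e$ for $i\ge N$ and take $\tau=\e$; then $\gamma_i(t)$ lies within $2\e$ of the compact set $\iota(K_\e)$ for all $i\ge N$. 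Covering $\iota(K_\e)$ by finitely many $\e$-balls and adjoining balls around the finitely many points $\gamma_1(t),\dots,\gamma_{N-1}(t)$ yields a finite $3\e$-net for $A_t$. Hence $A_t$ is totally bounded, and being a subset of the complete space $\ell_\infty(\N)$ it is precompact.

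With the hypotheses of Lemma \ref{lem:arzela} verified in $\ell_\infty(\N)$, the family $\{\gamma_i\}_{i\in\N}$ is precompact, so some subsequence $\gamma_{i_k}$ converges uniformly to a curve $\gamma\colon[0,1]\to\ell_\infty(\N)$. Because $X$ is complete, $\iota(X)$ is closed in $\ell_\infty(\N)$; since $d(\gamma_{i_k}(t),\iota(X))\le\mesh(P_{i_k})\to 0$ and $\gamma_{i_k}(t)\to\gamma(t)$, this forces $\gamma(t)\in\iota(X)$ for every $t$, so $\gamma$ is (the image under $\iota$ of) a curve in $X$. As a uniform limit of $S$-Lipschitz curves, $\gamma$ is itself $S$-Lipschitz, hence rectifiable. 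Finally, $\mesh(P_{i_k})\to 0$ by hypothesis (1), so $P_{i_k}\to\gamma$ in the sense of \eqref{eq:conv-discrete-cont}, which is the assertion of the lemma.

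I expect the only genuinely delicate point to be the second paragraph: since $X$ need not be proper, one cannot work directly in $X$, and one must convert the tightness hypothesis (3) together with the vanishing of the mesh into honest precompactness of each time-slice $A_t$ inside $\ell_\infty(\N)$. Completeness is then used twice more — of $\ell_\infty(\N)$, to pass from total boundedness to compactness, and of $X$, to ensure the limiting curve does not escape $X$.
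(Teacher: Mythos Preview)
Your proof is correct and follows essentially the same route as the paper: you pass to the interpolating curves in $\ell_\infty(\N)$, verify the time-slice precompactness hypothesis of the Arzel\`a--Ascoli Lemma \ref{lem:arzela} via total boundedness (combining the tightness assumption (3) with the vanishing mesh), and then use completeness of $X$ to ensure the uniform limit stays in $\iota(X)$. The only cosmetic difference is that the paper absorbs the finitely many initial curves $\gamma_1,\dots,\gamma_N$ into a single auxiliary compact set $K'=K_{\tau/8}\cup\bigcup_{j=1}^N\gamma_j$, whereas you adjoin individual balls around the points $\gamma_1(t),\dots,\gamma_{N-1}(t)$; both achieve the same $3\e$-net.
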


\begin{proof}For each $i\in \N$ let $\gamma_i:[0,1]\to \ell_\infty(\N)$ be the curve linearly interpolating $P_i$. Lemma  \ref{lem:interpolation} states that we have $\len(\gamma_i) \leq S$ and that the curves $\gamma_i$ are parametrized by constant speed. First, we show that a subsequence of $(\gamma_i)_{i\in \N}$ converges uniformly to some curve $\gamma:[0,1]\to \ell_\infty(\N)$.

Fix $t\in [0,1]$. Let $A_t=\{\gamma_i(t): i\in\N\}$. The claim follows from Lemma \ref{lem:arzela}, if we show that $A_t$ is pre-compact. Since $\ell_\infty(\N)$ is complete, it suffices to show that $A_t$ is totally bounded. Fix $\tau>0$ and choose $N\in\N$ so that $\mesh(P_i) \leq \tau/8$ for all $i\geq N$ and a compact set $K_{\tau/8}$ as in the statement. Then, for $i \geq N$, we have 
\[
d(\gamma_i(t),K_{\tau/8})\leq \mesh(P_i) + \max_{p\in P_i}d(p,K_{\tau/8}) \leq \tau/4.
\] 
Set $K' = K_{\tau/8} \cup \bigcup_{j=1}^N \gamma_j$ which is compact. Since $K'$ is compact, it can be covered by a finite collection $\mathcal{B}$ of balls of radius $\tau/2$. Every point $\gamma_i(t)\in A_t$ has $d(\gamma_i(t),K') \leq \tau/4$, and thus by inflating each ball in $\mathcal{B}$ by two we can cover $A_t$ by finitely many balls of radius $\tau$. Therefore, $A_t$ is totally bounded and pre-compact as desired. 

Thus, a subsequence $\gamma_{i_k}$ converges uniformly to some curve $\gamma:[0,1]\to \ell_\infty(\N)$. Further, for any $t\in [0,1]$ we have $d(\gamma(t),X)=\lim_{k\to \infty} d(\gamma_{i_k}(t), X)\leq \lim_{k\to\infty} \mesh(P_i)=0$ by Lemma \ref{lem:interpolation}. Thus, the image of $\gamma$ is contained in $X$ and the claim follows.
\end{proof}

Discrete paths can be used to conveniently define functions which have given upper gradients, in the spirit of Proposition \ref{prop:funconstr}.

\begin{proposition}\label{prop:uppergradientpaths} Suppose $X$ is a metric space. Let $\delta,M>0$ and let $E\subset X$ be a non-empty subset. Let $g:X\to [0,\infty)$ be a continuous function and $h:E\to \R$ a bounded function. Define
\[
f(y):=\min\left(\inf_{P} \left\{h(p_0)+\int_P g\right\}, M\right),
\]
where the infimum is taken over all discrete paths $P=(p_0,\dots, p_n)$ with $p_0\in E, p_n=y$ and $\mesh(P)\leq \delta$.

Then, $f$ is locally Lipschitz, and $g$ is an upper gradient of $f$.
Moreover, if $h$ is constant and less than or equal to $M$, then $f\equiv h$ on $E$.
\end{proposition}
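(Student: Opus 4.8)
The plan is to introduce the auxiliary \emph{un-truncated} function $\tilde f(y):=\inf_P\{h(p_0)+\int_P g\}$, where the infimum runs over all discrete paths $P=(p_0,\dots,p_n)$ with $p_0\in E$, $p_n=y$ and $\mesh(P)\le\delta$, so that $f=\min(\tilde f,M)$, and then to verify the three assertions in turn. The basic bookkeeping I will use repeatedly is that (i) concatenating two discrete paths adds their discrete integrals and does not increase the mesh, and (ii) deleting a loop from a discrete path (so that it does not repeat points) only decreases $\int_P g$ and does not increase $\mesh(P)$, since the replacement edge reuses a length already present.

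\emph{Local Lipschitz continuity.} I would first observe that if $d(y,z)\le\delta$ and $P=(p_0,\dots,p_n=y)$ is admissible for $\tilde f(y)$, then appending $z$ (and deleting a loop if $z\in P$) gives an admissible path for $\tilde f(z)$ of discrete integral $\int_P g+g(y)\,d(y,z)$, so $\tilde f(z)\le\tilde f(y)+g(y)\,d(y,z)$ whenever $\tilde f(y)<\infty$. The same extension argument shows that both $\{\tilde f<\infty\}$ and $\{\tilde f=\infty\}$ contain the $\delta$-ball about each of their points, hence are open; on the latter $f\equiv M$, and on the former, combining the bound above with the one obtained by interchanging $y$ and $z$ yields $|\tilde f(y)-\tilde f(z)|\le\max(g(y),g(z))\,d(y,z)$ for $d(y,z)\le\delta$, so $\tilde f$ is locally Lipschitz there because $g$ is continuous, hence locally bounded. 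Therefore $f=\min(\tilde f,M)$ is locally Lipschitz on all of $X$, in particular continuous.

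\emph{Upper gradient.} For the upper gradient property I would fix a rectifiable $\gamma:[0,1]\to X$, assume $\int_\gamma g\,ds<\infty$, and (the constant case being trivial) parametrize $\gamma$ by arclength on $[0,\ell]$. For a sufficiently fine partition $0=s_0<\dots<s_N=\ell$, the points $q_j=\gamma(s_j)$ satisfy $d(q_{j-1},q_j)\le\delta$ and $d(q_{j-1},q_j)\le \len(\gamma|_{[s_{j-1},s_j]})=s_j-s_{j-1}$, so (after deleting loops) $Q=(q_0,\dots,q_N)$ is a discrete path from $\gamma(0)$ to $\gamma(1)$ of mesh $\le\delta$ with $\int_Q g\le\sum_j g(\gamma(s_{j-1}))(s_j-s_{j-1})$, and the right-hand side tends to $\int_\gamma g\,ds$ as the partition is refined since $g\circ\gamma$ is continuous on the compact interval $[0,\ell]$. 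If $\tilde f(\gamma(0))<\infty$, prepending a near-optimal admissible path from $E$ to $\gamma(0)$ and concatenating with $Q$ produces an admissible competitor for $\tilde f(\gamma(1))$, giving $\tilde f(\gamma(1))\le\tilde f(\gamma(0))+\int_\gamma g\,ds$; applying this to the reversed curve gives the reverse inequality. Hence when both $\tilde f(\gamma(0)),\tilde f(\gamma(1))$ are finite, $|\tilde f(\gamma(1))-\tilde f(\gamma(0))|\le\int_\gamma g\,ds$, and since $t\mapsto\min(t,M)$ is $1$-Lipschitz the same bound holds for $f$; and if $\tilde f(\gamma(0))=\infty$, then the inequality just proved, applied to the reversed curve, forces $\tilde f(\gamma(1))=\infty$ as well, so $f(\gamma(0))=M=f(\gamma(1))$ and \eqref{eq:ug} holds trivially (symmetrically if $\tilde f(\gamma(1))=\infty$). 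Thus $g$ is an upper gradient for $f$.

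\emph{The constant case and the main obstacle.} If $h\equiv c$ with $c\le M$, then for $y\in E$ the one-point path $(y)$ is admissible and contributes $c$, so $\tilde f(y)\le c$, while every admissible path contributes $c+\int_P g\ge c$; hence $\tilde f(y)=c$ and $f(y)=\min(c,M)=c$. I expect the main obstacle to be the upper gradient inequality: one must bound $\int_P g$ from above by $\int_\gamma g\,ds$ (the Riemann-sum estimate together with $d(q_{j-1},q_j)\le s_j-s_{j-1}$), repair the non-repetition requirement by loop deletion without spoiling either the mesh or the integral bound, and handle carefully the situation in which no admissible discrete path reaches an endpoint of $\gamma$ — which is exactly where truncation at $M$ is what makes the statement true.
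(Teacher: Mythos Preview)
Your proof is correct and the local Lipschitz and constant-case arguments are essentially the same as in the paper. The one genuine difference is in the upper gradient step. You discretize $\gamma$ into a path $Q$, concatenate $Q$ with a near-optimal admissible path from $E$ to $\gamma(0)$, delete loops, and pass to the limit; this forces you to work with the untruncated $\tilde f$, to treat the case $\tilde f=\infty$ separately, and to keep track of loop deletion under concatenation. The paper instead reuses the pointwise inequality $f(x)\le f(y)+g(y)\,d(x,y)$ (valid for $d(x,y)\le\delta$) that was already established for local Lipschitzness: for a fine partition of $\gamma$ it telescopes $\sum_j\bigl(f(\gamma(s_j))-f(\gamma(s_{j-1}))\bigr)\le\sum_j g(\gamma(s_{j-1}))\,d(\gamma(s_{j-1}),\gamma(s_j))\le\sum_j g(\gamma(s_{j-1}))\,\len(\gamma|_{[s_{j-1},s_j]})$ and lets the mesh go to zero. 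That route avoids introducing $\tilde f$, avoids the $\infty$-case entirely (since $f$ is bounded), and never needs to concatenate or repair paths. Your approach buys nothing extra here, but it is sound.
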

\begin{proof}
Fix $y\in X$ and assume that $d(x,y)\le \de$. Let $P=(p_0,\ldots,p_n)$ be a discrete path with $p_0\in E, p_n=y$ and $\mesh(P)\leq \delta$. Let $p_{n+1}=x$ and set $P'=(p_0,\ldots,p_k)$, where $k=\inf\{j\ge 0: p_j=x\}$. Thus, $P'$ is a discrete path with $p_0\in E$, $p_k=x$, and $\mesh(P')\le\de$. In particular,
\[
f(x)\le h(p_0)+\int_{P'}g\le h(p_0)+\int_{P}g+g(y)d(x,y).
\]
Taking the infimum over $P$ and comparing with $M$ we get
\begin{equation}\label{eq:g-fxfy}
f(x)\le f(y) + g(y)d(x,y)
\end{equation}
By switching the role of $x$ and $y$, we find that
\begin{equation}\label{eq:maxg-lip}
|f(y)-f(x)|\leq \max(g(x),g(y))d(x,y),
\end{equation}
whenever $d(x,y)\le \de$. Since $g$ is continuous, it is also locally bounded. Hence, (\ref{eq:maxg-lip}) implies that $f$ is locally Lipschitz.

Next, we want to show that $g$ is an upper gradient for $f$. Let $\ga:[0,1]\rightarrow X$ be a curve with constant speed and length $L$. Fix a partition $s_0=0<s_1<\cdots<s_k=1$. Then,
\begin{align*}
\sum_{j=1}^k g(\ga(s_{j-1}))L|s_j-s_{j-1}| & =   \sum_{j=1}^k g(\ga(s_{j-1}))\length\left(\ga|_{[s_j,s_{j-1}]}\right) \\
& \ge  \sum_{j=1}^k g(\ga(s_{j-1}))d\left(\ga(s_{j-1}),\ga(s_j)\right) \\
& \ge  \sum_{j=1}^k \left(f(\ga(s_j))-f(\ga(s_{j-1}))\right) & \text{( by (\ref{eq:g-fxfy}))} \\
& = f(\ga(s_k))-f(\ga(s_0)).
\end{align*}
Taking the limit as the mesh goes to zero, we find that
\[
 f(\ga(1))-f(\ga(0)) \le \int_\ga g ds.
\]
Running $\ga$ in reverse, substituting $s$ with $1-s$, we find by a similar argument that
\[
|f(\ga(1))-f(\ga(0))| \le \int_\ga g ds.
\]
This shows that $g$ is an upper gradient for $f$.

Finally, assume that $h$ is constant and less than or equal to $M$. Then, for $y\in E$, we may choose the constant discrete path $P=(y)$, and get that $f(y)\le h(y)=h$. Conversely, for any non-constant discrete path $P=(p_0,\dots,p_n)$ with $p_0\in E$, $p_n=y$, and $\mesh(P)\le \de$, we have
\[
h(p_0)+\int_P g\ge h(p_0)=h.
\]
Taking the infimum over such paths, and then the minimum with $M$, we find that $f(y)\ge h$.
Therefore, $f(y)=h$.
\end{proof}

\subsection{Good sequences of functions}

For sequences of discrete paths it becomes often convenient to construct a ``good sequence of functions'', which approximate a given function.

\begin{definition}\label{def:goodsequence} We say that a sequence of continuous functions $(g_i)_{i\in \N}$, $g_i:X\to [0,\infty)$, is a \emph{good sequence of functions}, if it satisfies the following properties:
\begin{enumerate}
\item Increasing: $g_i(x) \leq g_j(x)$ for each $i\leq j$ and all $x\in X$.
\item Positivity: For any bounded set $V\subset X$, there exists an $\eta_V>0$ so that $g_i(x) \geq \eta_V$ for every $i\in \N$ and every $x\in V$.
\item ``Goodness'': For any bounded set $V\subset X$, any $\delta,L>0$, and any sequence $(P_i)_{i\in \N}$ of discrete paths $P_i\subset V$ with $\lim_{i\to\infty}\mesh(P_i) =0$, and such that
\begin{enumerate}
\item $\int_{P_i} g_i \leq L$,
\item $\diam(P_i) \geq \delta$,
\end{enumerate}
there is a subsequence converging to a curve $\gamma$ in the sense of (\ref{eq:conv-discrete-cont}).
\end{enumerate}
\end{definition}

\begin{proposition}\label{prop:goodsequences}
Let $(X,d,\mu)$ be a complete separable metric measure space, where $\mu$ is a Borel measure that is positive and finite on $r$-balls with $0<r<\infty$. Assume $p\in[1,\infty)$ and let $g$ be given a lower semicontinuous function. For every $\epsilon>0$, there exists 
a good sequence of bounded Lipschitz continuous functions $(\tilde{g}_i)_{i\in\N}$  converging pointwise to a function $\tilde{g}$ that is a lower semicontinuous good function, and  so that for every bounded set $A\subset X$, there exists some $\eta_A>0$ so that 
\begin{equation}\label{eq:etaa-lb}
\tilde{g}(x)\ge g(x)+\eta_A,\qquad\text{for all $x\in A$,}
\end{equation}
and
\begin{equation}\label{eq:gtildeineq}
\int_X \tilde{g}^p d\mu \leq \int_X g^p d\mu + \epsilon.
\end{equation}
Moreover, if $K\subset X$ is a compact set on which $g|_K$ is bounded, then we can choose  $\tilde{g}$ so that $\tilde{g}|_K$ is bounded.
\end{proposition}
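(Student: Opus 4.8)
### Proof Proposal for Proposition \ref{prop:goodsequences}

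The plan is to build $\tilde{g}$ by adding to $g$ two explicitly chosen series of small bump functions, modeled closely on the construction in Lemma \ref{lem:goodfunc}, but now arranged so that (i) the approximating sequence $(\tilde g_i)$ consists of genuinely Lipschitz functions, and (ii) the "goodness" property survives passage to discrete paths. First I would fix a basepoint $x_0\in X$ and, using that $\mu$ is Radon (inner regular), choose an increasing sequence of compact sets $K_i$ with $K_i\subset B(x_0,i)$ and $\mu(B(x_0,i)\setminus K_i)\le \epsilon^p 2^{-(i+1)p}$, exactly as in Lemma \ref{lem:goodfunc}. The new point is that I cannot simply use the indicator $\ones_{B(x_0,i)\setminus K_i}$, since that is only lower semicontinuous, not Lipschitz. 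Instead I replace it by a Lipschitz function $\varphi_i$ with $\ones_{B(x_0,i)\setminus K_i}\le \varphi_i \le \ones_{X}$; more carefully, I take $\varphi_i$ to be $1$ on $B(x_0,i)\setminus N_{r_i}(K_i)$ and supported in a slightly larger set, where the thickening radius $r_i$ is chosen small enough that $\mu(\{\varphi_i>0\}\setminus (B(x_0,i)\setminus K_i))$ stays within the budget $\epsilon^p 2^{-(i+1)p}$ — possible since $\mu$ is finite on balls and $\mu(N_r(K_i)\cap B(x_0,i))\downarrow \mu(K_i\cap B(x_0,i))$ as $r\downarrow 0$. I also add the Lipschitz function $\psi_i$ equal to $\epsilon\, 2^{-(i+1)}\mu(B(x_0,i))^{-1/p}$ on $B(x_0,i)$ and cut off to be Lipschitz (a truncated distance function), which secures the positivity property (2) on every bounded set. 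I should also truncate $g$ itself (e.g. by $\min(g,i)$) inside $\tilde g_i$ to keep each $\tilde g_i$ bounded, so that $\tilde g_i$ is a bounded Lipschitz function; since $g$ is lower semicontinuous it is an increasing limit of continuous functions, and after truncation these can be taken Lipschitz (see \cite[Corollary 4.2.3]{shabook} combined with a McShane/Lipschitz regularization). Set $\tilde g_i$ to be the $i$-th partial sum of these contributions plus the truncated Lipschitz approximant of $g$, and $\tilde g:=\lim_i \tilde g_i = g+\sum_i(\varphi_i+\psi_i)$.

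Next I would verify the three defining properties of a good sequence and the two displayed inequalities. Monotonicity (1) is immediate from the construction as partial sums plus a monotone approximation of $g$. Positivity (2): any bounded $V$ lies in some $B(x_0,j)$, and on $B(x_0,j)$ every $\tilde g_i$ with $i\ge j$ dominates $\psi_j\ge \epsilon 2^{-(j+1)}\mu(B(x_0,j))^{-1/p}=:\eta_V$; finitely many remaining indices are handled by taking a smaller constant, or simply noting $\tilde g_i \ge \tilde g_j$ is false for $i<j$, so instead one uses $\tilde g_i \ge \psi_i \ge$ a uniform constant on $V$ once one also includes $\psi_j$ for all $j\le i$ in $\tilde g_i$ — this is why the partial sum must contain all $\psi_j$, $j\le i$, which it does. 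The inequality \eqref{eq:etaa-lb} follows since on a bounded $A\subset B(x_0,j)$ we have $\tilde g \ge g + \psi_j \ge g + \eta_A$. The $L^p$ bound \eqref{eq:gtildeineq} is the same telescoping estimate as in Lemma \ref{lem:goodfunc}: $\|\tilde g - g\|_{L^p}\le \sum_i(\|\varphi_i\|_{L^p}+\|\psi_i\|_{L^p})\le \sum_i(\epsilon 2^{-(i+1)}+\epsilon 2^{-(i+1)})=\epsilon$, using the measure budgets on the supports of the $\varphi_i$. Lower semicontinuity of $\tilde g$ holds because it is an increasing limit of continuous (indeed Lipschitz) functions.

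The heart of the proof, and the step I expect to be the main obstacle, is property (3): goodness along discrete paths. Here I would mimic the totally-bounded argument in Lemma \ref{lem:goodfunc}, but translated to the discrete setting using Lemma \ref{lem:compactness}. Given $V\subset B(x_0,j)$, $\delta, L>0$, and discrete paths $P_i\subset V$ with $\mesh(P_i)\to 0$, $\int_{P_i}g_i\le L$, $\diam(P_i)\ge\delta$: from $\tilde g_i\ge \psi_j$ on $V$ I first bound $\len(P_i)\le \psi_j^{-1} L =: S$ uniformly, since $\int_{P_i}\tilde g_i \ge \psi_j \len(P_i)$; this gives hypothesis (2) of Lemma \ref{lem:compactness}. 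Hypothesis (1) is given. For hypothesis (3) — that the points of $P_i$ stay within $\tau$ of a fixed compact set — I argue by contradiction as in Lemma \ref{lem:goodfunc}: if a point $p\in P_i$ were far (distance $\ge\tau$) from $K_N$ for $N$ chosen of order $j+\lceil 4S/\tau\rceil$, then since $\diam(P_i)\ge\delta > 2\tau$ a consecutive run of $P_i$ of total length $\gtrsim \tau$ must lie in $B(x_0,j)\setminus K_N\subset \bigcap_{i=j}^N(B(x_0,i)\setminus K_i)$; but on that region each $\tilde g_i$ with large $i$ dominates $\sum_{l=j}^N \varphi_l$, and $\varphi_l \ge 1$ on $B(x_0,l)\setminus N_{r_l}(K_l)\supset B(x_0,j)\setminus K_N$ once the thickening radii $r_l$ are chosen small enough (which I can arrange in advance, shrinking them further if necessary so that $N_{r_l}(K_l)\subset N_{r_N}(K_N)$ for $l\le N$ — care is needed here and is the delicate bookkeeping point), forcing $\int_{P_i}\tilde g_i \gtrsim (N-j)\cdot\tau \gg L$, a contradiction. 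A subtlety absent from the continuous case is that the path only samples $g_i$ at the vertices $p_k$, so I must make sure the "run of length $\gtrsim\tau$ inside the bad region" actually contributes $\gtrsim (N-j)\tau$ to the discrete integral $\sum_k \tilde g_i(p_k)d(p_k,p_{k+1})$ — this works because the vertices of that run lie in $B(x_0,j)\setminus N_{r_N}(K_N)$ when $\mesh(P_i)$ is smaller than $\tau - \max_l r_l$, so I should pick the index threshold and the $r_l$ coherently at the start. With hypotheses (1)–(3) of Lemma \ref{lem:compactness} verified, a subsequence of $(P_i)$ converges to a curve $\gamma:[0,1]\to X$, establishing (3).

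Finally, for the last sentence: if $K$ is compact with $g|_K$ bounded by some constant $C$, then because $K$ is bounded it lies in some $B(x_0,j)$, so $\tilde g|_K = g|_K + \sum_i(\varphi_i+\psi_i)|_K \le C + \sum_i(1 + \epsilon 2^{-(i+1)}\mu(B(x_0,i))^{-1/p})$, which is finite (the $\varphi_i$ vanish on $K$ for $i$ large since $K\subset K_i$ eventually, so only finitely many $\varphi_i$ are nonzero on $K$); hence $\tilde g|_K$ is bounded. If one wants this to be compatible with the truncation defining $\tilde g_i$, one simply ensures the truncation level $i\to\infty$, so that $\tilde g_i|_K = \tilde g_i|_K$ eventually equals the untruncated partial sum on $K$. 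This completes the plan.
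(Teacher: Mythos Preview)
Your strategy is the same as the paper's: add to $g$ a convergent series of Lipschitz ``penalty'' bumps localized near the complements of an exhausting sequence of compacta, plus small positive constants on balls, and then run the totally-bounded argument from Lemma~\ref{lem:goodfunc} at the level of discrete paths via Lemma~\ref{lem:compactness}. Two points deserve correction.

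First, the bookkeeping you flag as ``delicate'' is a genuine gap as written. With bounded bumps $\varphi_l\le 1$ you need \emph{many} indices $l$ with $\varphi_l\equiv 1$ on the bad run, i.e.\ $r_l<\tau/2$; since $\tau>0$ is arbitrary and the $r_l$ are fixed in advance, you must arrange $r_l\to 0$ and then choose $N$ large relative to both $j$ and the threshold $l_0(\tau)$ where $r_l$ drops below $\tau/2$. This can be made to work, but you have not done it. The paper sidesteps the whole issue by using the \emph{unbounded} Lipschitz penalty $n\min(1,d(x,E_n))\psi_n(x)$ in place of your $\varphi_n$: this single $n$-th term already contributes $\ge n\tau/2$ at any vertex with $d(\cdot,E_N)\ge\tau/2$, so one term (with $N\sim L/\tau^2$) suffices for the contradiction, and no thickening radii appear at all. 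The extra factor $n$ in the $L^p$ bound is absorbed by taking $\mu(B(x_0,n+1)\setminus E_n)\le \epsilon^p 2^{-4pn}$ rather than $2^{-(n+1)p}$.

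Second, your ``Moreover'' argument asserts that $K\subset K_i$ eventually, but nothing in your construction forces this: the $K_i$ come from inner regularity and need not contain $K$. The fix (which the paper makes explicit) is simply to enlarge each $K_i$ to $K_i\cup K$ at the outset; this preserves compactness and the measure bound. With that adjustment, $\varphi_i|_K=0$ for all $i$ and $\tilde g|_K\le g|_K+\sum_i\psi_i|_K$ is bounded, as you want.
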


\begin{proof}
Let $x_0 \in X$ be arbitrary, and let $\epsilon \in (0,1)$. For simplicity, if $K$ is provided, scale the metric so that $K\subset B(x_0,1)$. Let $\psi_i(x)=\max(0,\min(i+1-d(x_0,x), 1))$ so that $\psi_i|_{B(x_0,i)}=1$ and $\psi_i|_{X\setminus B(x_0,i+1)}=0$. One directly observes that $\psi_i$ is Lipschitz for every $i\in \N$.

Define $E_i$ to be an increasing sequence of compact sets so that $\mu(B(x_0,i+1)\setminus E_i) \leq \epsilon^p 2^{-4pi}$. If the set $K$ is provided as in the `Moreover part' of the statement, then we choose $E_i$ so that $K\subset E_i$ for each $i$.  These sets $E_i$ can be constructed since $\mu$ is Radon. By lower semicontinuity, we may choose an increasing sequence of Lipschitz continuous bounded functions $g_i$ converging to $g$. The standard construction is to let $g_i(x):=\inf\{g(y)+id(x,y): y\in X\}$, see \cite[Proposition 4.2.2]{shabook}.

We modify these functions as follow:
\begin{equation}\label{eq:defseqg}
\tilde{g}_i(x) := g_i(x) +  \sum_{n=1}^i \left(n \min(1,d(x,E_n) ) + \frac{\epsilon}{8^n (\mu(B(x_0,n+1))+1)} \right)\psi_n(x).
\end{equation}
Note that $\tilde{g}_i$ is Lipschitz continuous and bounded as well.

Also, define
\[
\tilde{g}(x) := g(x) + \sum_{n=1}^\infty \left(n \min(1,d(x,E_n)) + \frac{\epsilon}{8^n (\mu(B(x_0,n+1))+1)}\right)\psi_n(x).
\]
Then, it holds that $\lim_{i\to \infty} \tilde{g}_i(x)=\tilde{g}(x)$ and $g(x) \leq \tilde{g}(x)$ for every $x\in X$. If the set $K$ was provided in the `Moreover part' of the proposition, then for every $x\in K$, we have $d(x,E_n)=0$ and $\psi_n(x)=1$, so $\tilde{g}\leq g + \epsilon$ is bounded on $K$.

We begin by verifying Inequality \eqref{eq:gtildeineq}. By Minkowski's Inequality,
\begin{align*}
\|\tilde{g}\|_{L^p(X)} & \leq \|g\|_{L^p(X)} +  \sum_{n=1}^\infty  \|n \min(1,d(\cdot,E_n)) \psi_n\|_{L^p(X)} + \left\|\frac{\epsilon}{8^n (\mu(B(x_0,n+1))+1)}\psi_n\right\|_{L^p(X)} 
\end{align*}
Note that
\[
 \min(1,d(\cdot,E_n)) \psi_n\le \ones_{B(x_0,n+1)\setminus
  E_n}\qquad\text{and}\qquad  \psi_n\le \ones_{B(x_0,n+1)}
\]
Therefore,
\begin{align*}
\|\tilde{g}\|_{L^p(X)} & \le  \|g\|_{L^p(X)} +\epsilon\sum_{n=1}^\infty (n 2^{-4n} + 8^{-n}) \le  \|g\|_{L^p(X)} +\epsilon.
\end{align*}
Raising both sides to the power $p$, applying the mean value theorem to the function $x\mapsto x^p$, and using $0<\ep<1$, we get that
\[
\|\tilde{g}\|_{L^p(X)} \le \|g\|_{L^p(X)}^p +\epsilon p\left(\|g\|_{L^p(X)}+1\right)^{p-1}.
\]
Finally, replacing $\epsilon$ with $\epsilon p^{-1}(\|g\|_{L^p(X)} +1)^{-(p-1)}$, yields the desired estimate (\ref{eq:gtildeineq}).

Also, let $\eta_i \defeq \epsilon 8^{-i}(\mu(B(x_0,i+1))+1)^{-1}$, then $\tilde{g}_i|_{B(x_0,i)}\geq g_i|_{B(x_0,i)} + \eta_i$ for every $i\in\N$. Further, we get that for any bounded set $A\subset X$ there is some $i$ so that $A\subset B(x_0,i)$ and so that $\tilde{g}|_A\geq g|_A+\eta_i$.

To show goodness for the sequence $\tilde{g}_i$. Let $L,\delta>0$ and let $A$ be a bounded set and consider any sequence $(P_i)_{i\in \N}\subset A$ of discrete paths $P_i$ with  $\lim_{i\to\infty}\mesh(P_i) =0$, such that
\begin{enumerate}
\item $\int_{P_i} \tilde{g}_i \leq L$,
\item $\diam(P_i) \geq \delta$.
\end{enumerate}
By passing to a subsequence, we can assume $\mesh(P_i) \leq \frac{1}{i}$. Since $P_i \subset A$, by (\ref{eq:etaa-lb}), we have $\tilde{g_i}|_{P_i} \geq \eta_A$ for some $\eta_A>0$. Let $L'=\frac{L}{\eta_A}$. Then 

\[
\len(P_i) \frac{1}{L'} = \int_{P_i} \frac{\eta_A}{L}  \leq \int_{P_i} \frac{1}{L}\tilde{g_i}|_{A} \leq 1.
\]

Thus, $\len(P_i) \leq L'$. By Lemma \ref{lem:compactness} it suffices to prove that for every $\tau\in (0,1)$ there is a compact set $K_\tau$ for which $d(P_i, K_\tau) \leq \tau$ for all $i\in\N$.  Without loss of generality, assume $\tau \in (0,\delta/2)$. 

Since $P_i \subset A$ for all $i\in \N$ and since $A$ is bounded, there is some $T$ so that $P_i \subset B(x_0,T)$ for all $i\in \N$. Choose $N=\lfloor 2^4 \max(L,1) / \tau^2 \rfloor+T+1$. Let $K_\tau = E_N \cup \bigcup_{i=1}^N P_i$. Then $K_\tau$ is compact, and it suffices to show that $\sup_{p\in P_i}d(p,K_\tau)\leq \tau$.
This is clear for $i=1,\dots, N$, thus consider $i>N$. 

Suppose for the sake of contradiction that there is a point $p_k^i\in P_i$ for some $k=0,\dots, n(i)$ with $d(p_k^i,K_\tau) > \tau$. Note that $\diam(P_i) \geq \delta>2\tau$ and $\mesh(P_i)\leq 1/i  \leq \tau/8$. Consider the maximal interval $[k_0,k_1]$ containing $k$ and so that the corresponding subpath $(p_l^i)_{l=k_0}^{k_1}$ stays in $B(p_k^i,\tau/2)$. In particular, for $0\leq k_0\leq l \leq k_1\leq n(i)$, we have $d(p_l^i,K_\tau) \geq \tau/2$. Furthermore, by maximality of the interval $[k_0,k_1]$, the path must exit the ball. Hence, $p_l^i \not \in B(p_k^i,\tau/2)$, for either $l=k_0-1$ or $l=k_1+1$, and
\begin{equation}\label{eq:largesubseq-1}
\sum_{l=k_0}^{k_1-1}d(p_l^i,p_{l+1}^i) \geq \tau/2-1/i \geq \tau/4.
\end{equation}

Now, take any index $l\in \{k_0,\dots, k_1\}$ and let $i\geq N$. Since $d(p_l^i,K_\tau) \geq \tau/2$ and $E_N \subset K_\tau$, we have 
\[
d(p_l^i,E_N)\geq d(p_l^i,K_\tau)\geq \tau/2.
\]
Further, since $p_l^i \subset B(x_0, T)\subset B(x_0,N)$ we have 
\[
N\min(1,d(p_l^i,E_N))\psi_N(p_l^i)\geq N\tau/2.
\]
Therefore, we get $\tilde{g}_i(p_l^i) \geq N \tau/2$ and thus by Inequality \eqref{eq:largesubseq-1} 
\[
L \geq \sum_{l=0}^{n(i)-1} \tilde{g}_i(p^i_l)d(p^i_l,p^i_{l+1}) \geq (\tau/4) (N \tau/2) > L.
\]
This is a contradiction, and thus, $d(p,K_\tau) \leq \tau$ for each $p\in P_i$ and all $i\in \N$.
\end{proof}

Finally, we formulate a result analogous to Lemma \ref{lem:lsccurve} and Lemma \ref{lem:lsc}, in the case of good sequences for functions.
\begin{lemma}\label{lem:lscgoodfunc} Suppose that $\{g_i\}_{i\in \N}$ is a good sequence of functions converging to $g$, and that $P_i$ is a sequence of discrete paths converging to a curve $\gamma$, then
\[
\int_\gamma g ds \leq \liminf_{i\to \infty} \int_{P_i} g_i.
\]
\end{lemma}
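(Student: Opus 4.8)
The plan is to derive the statement from Lemma \ref{lem:lsccurve} by using the defining properties of a good sequence to supply the hypotheses that lemma needs, namely that $\gamma$ is rectifiable and that $\liminf_i\len(P_i)<\infty$. First, since the claimed inequality is trivial when $\liminf_{i\to\infty}\int_{P_i}g_i=\infty$, I would set $\ell:=\liminf_{i\to\infty}\int_{P_i}g_i$, assume $\ell<\infty$, and pass to a subsequence (relabeled again as $(P_i)$) along which $\int_{P_i}g_i\to\ell$ and $\int_{P_i}g_i\le L$ for some finite $L$ and all $i$; this subsequence still converges to $\gamma$ in the sense of (\ref{eq:conv-discrete-cont}).

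Second, I would pin down the geometry. The image of $\gamma$ is compact, hence contained in a ball $B(x_0,R)$ with $x_0\in X$; uniform convergence of the interpolating curves $\gamma_i$ to $\gamma$ then puts every vertex of $P_i$ in the fixed bounded set $V:=B(x_0,R+1)$ once $i$ is large (the vertices $p_l^i$ lie on $\image(\gamma_i)$, and $\sup_t\|\gamma_i(t)-\gamma(t)\|_\infty<1$ eventually). Discarding finitely many indices, we may assume $P_i\subset V$ for all $i$. The positivity clause of Definition \ref{def:goodsequence} then furnishes $\eta_V>0$ with $g_i\ge\eta_V$ on $V$ for all $i$, so that $\eta_V\len(P_i)\le\int_{P_i}g_i\le L$ and hence $\len(P_i)\le L/\eta_V=:S$. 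By Lemma \ref{lem:interpolation}, each $\gamma_i$ is $S$-Lipschitz, so its uniform limit $\gamma$ is $S$-Lipschitz; in particular $\gamma$ is rectifiable and $\liminf_i\len(P_i)\le S<\infty$, so Lemma \ref{lem:lsccurve} is now applicable.

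Third, I would bring in monotonicity. Fix $m\in\N$. Because $(g_i)$ is increasing, $g_m\le g_i$ pointwise for $i\ge m$, so $\int_{P_i}g_m\le\int_{P_i}g_i$ for such $i$. Since $g_m$ is continuous, it is lower semicontinuous, and Lemma \ref{lem:lsccurve} applied to $g_m$ and the sequence $(P_i)$ gives
\[
\int_\gamma g_m\,ds\ \le\ \liminf_{i\to\infty}\int_{P_i}g_m\ \le\ \liminf_{i\to\infty}\int_{P_i}g_i\ =\ \ell.
\]
Finally, since $(g_i)$ increases pointwise to $g$, the functions $t\mapsto g_m(\gamma(t))|\gamma'(t)|$ increase pointwise to $t\mapsto g(\gamma(t))|\gamma'(t)|$ on $[0,1]$, so the monotone convergence theorem yields $\int_\gamma g\,ds=\lim_{m\to\infty}\int_\gamma g_m\,ds\le\ell$, which is the assertion.

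I expect the only genuinely delicate point to be the uniform bound $\len(P_i)\le S$: without it there is no control on the limit curve and Lemma \ref{lem:lsccurve} cannot be invoked, and this is precisely where the positivity clause (2) of Definition \ref{def:goodsequence} enters. It is worth noting that the ``goodness'' clause (3) of that definition plays no role in this particular lemma; only the increasing property and positivity, together with the pointwise convergence $g_i\to g$, are used.
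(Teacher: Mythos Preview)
Your proof is correct and follows the same core strategy as the paper: apply Lemma \ref{lem:lsccurve} to each fixed $g_m$, use monotonicity of the sequence $(g_i)$ to pass from $\int_{P_i}g_m$ to $\int_{P_i}g_i$, and then let $m\to\infty$ via monotone convergence. The paper's proof is in fact terser and simply invokes Lemma \ref{lem:lsccurve} without explicitly checking its hypotheses (rectifiability of $\gamma$ and $\liminf_i\len(P_i)<\infty$); your derivation of these from the positivity clause of Definition \ref{def:goodsequence} fills in that step and is a genuine gain in rigor, and your observation that clause (3) of the definition is not needed here is correct.
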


\begin{proof} By Lemma \ref{lem:lsccurve}, for any $l$ fixed, we have

$$\int_\gamma g_l ds \leq \liminf_{i\to \infty} \int_{P_i} g_l.$$

Since $\{g_i\}_{i\in \N}$ is an increasing sequence of functions, we get
$$\int_\gamma g_l ds \leq \liminf_{i\to \infty} \int_{P_i} g_i.$$
Sending $l\to\infty$ yields the claim.
\end{proof}

\subsection{Continuous ``almost'' upper gradients}

We will approximate an upper gradient by continuous functions. Recall, that a minimal $p$-weak upper gradient $g_f$ of a function $f\in N^{1,p}(X)$, is \emph{a priori}, only in $L^p(X)$. Lemma \ref{lem:lowersem} shows that, by introducing a small error $\epsilon>0$, we can find a lower semicontinuous function $g_\epsilon \in L^p(X)$ which is an actual upper gradient. We would like to replace $g_\epsilon$ with a continuous function. However, the upper gradient inequality \eqref{eq:ug} is only preserved if we approximate $g_\epsilon$ from \emph{above} by a function $h$. Further, it is impossible to approximate every $L^p(X)$-function from above by a continuous, let alone bounded, function. Fortunately, lower semicontinuous functions can be approximated \emph{from below} by a sequence of continuous bounded functions. This does not preserve \eqref{eq:ug}. However, it will preserve being an ``almost upper gradient'' in the following sense.

\begin{definition}\label{def:discreteupper}
Let $V$ be a closed set with $\mu(V)<\infty$ and let $C\subset V$ be a closed subset of $X$. A function $h$ is a {\it $(\delta,\Delta)$-discrete upper gradient} for $f$ on $(C,V)$ if for every discrete path $P=(p_0,\dots,p_n)$ with $\mesh(P)\leq \delta$, $P \subset V$, $p_0,p_n \in C$ and $\diam(\{p_0,\dots, p_n\})>\Delta$ we have
\[
|f(p_n)-f(p_0)|\leq \int_P h \,.
\]
\end{definition} 

Here, it is necessary to localize the condition to apply only to curves with large enough diameter,  which lie within a bounded set $V$, and which connect points in a closet set $C$. The first two of these are used to ensure compactness of the relevant families of curves. The final one is a bit more subtle, and is related to the fact that a Sobolev function may not be continuous, and $C$ should be thought of as a closed set such that $f|_C$ is continuous. In fact, the following lemma illustrates well the role of each of these assumptions.

\begin{lemma}\label{lem:farenough} Assume that $C,V\subset X$ are closed bounded sets with $C\subset V$. Let $M>0$ and $f:X\to [0,M]$ be a measurable function which is continuous on $C$. Let $g:X\to [0,\infty]$ be a lower semicontinuous upper gradient for $f$. Suppose that $\eta>0$ and $(g_i)_{i\in \N}$ is a good sequence of functions, which converges pointwise to a lower semicontinuous function $\tilde{g}$ with $\tilde{g}|_V > g|_V+\eta$, as constructed in Proposition \ref{prop:goodsequences}.
Then, for every $\Delta>0$ there exists an $N\in \N$ so that $g_i$ is a $(1/i,\Delta)$-discrete upper gradient for $f$ on $(C,V)$ for every $i\geq N$.
\end{lemma}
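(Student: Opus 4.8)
The plan is to argue by contradiction. Suppose the statement fails for some $\Delta>0$: then for every $N\in\N$ there is some $i\geq N$ and a ``bad'' discrete path $P_i=(p^i_0,\dots,p^i_{n(i)})$ with $\mesh(P_i)\leq 1/i$, $P_i\subset V$, $p^i_0,p^i_{n(i)}\in C$, $\diam(P_i)>\Delta$, but $|f(p^i_{n(i)})-f(p^i_0)|>\int_{P_i}g_i$. Since $f$ takes values in $[0,M]$, the right-hand side is bounded: $\int_{P_i}g_i<M$. Passing to a subsequence (re-indexing so that $i\to\infty$ along the bad indices) we may assume $\mesh(P_i)\to 0$, $\int_{P_i}g_i\leq M$, $\diam(P_i)\geq\Delta$, and $P_i\subset V$ with $V$ bounded. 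Since $(g_i)$ is a good sequence of functions, property (3) of Definition~\ref{def:goodsequence} applies with $L=M$ and $\delta=\Delta$: after passing to a further subsequence, $P_i$ converges to a rectifiable curve $\gamma:[0,1]\to X$ in the sense of \eqref{eq:conv-discrete-cont}.

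Next I would identify the endpoints of $\gamma$ and bound its integral. The interpolating curves $\gamma_i$ of $P_i$ have $\gamma_i(0)=p^i_0$ and $\gamma_i(1)=p^i_{n(i)}$, so by uniform convergence $\gamma(0)=\lim_i p^i_0$ and $\gamma(1)=\lim_i p^i_{n(i)}$; since $C$ is closed, both endpoints lie in $C$. Moreover $\diam(\gamma)\geq\Delta>0$, so $\gamma$ is non-constant, and in particular $\gamma(0)\neq\gamma(1)$ is possible or not, but the upper gradient inequality for $f$ still applies. By Lemma~\ref{lem:lscgoodfunc} (applied to the good sequence $(g_i)$ converging to $\tilde g$, using $\liminf_i\len(P_i)<\infty$ which follows as in the proof of Proposition~\ref{prop:goodsequences} from positivity of the $g_i$ and the bound $\int_{P_i}g_i\leq M$),
\[
\int_\gamma \tilde g\,ds \leq \liminf_{i\to\infty}\int_{P_i}g_i.
\]
Since $\tilde g\geq g$ on all of $X$ (indeed $\tilde g|_V>g|_V+\eta$), we get $\int_\gamma g\,ds\leq\int_\gamma\tilde g\,ds\leq\liminf_i\int_{P_i}g_i$.

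Finally, since $g$ is an upper gradient for $f$ and $\gamma$ is a rectifiable curve, $|f(\gamma(1))-f(\gamma(0))|\leq\int_\gamma g\,ds\leq\liminf_i\int_{P_i}g_i$. On the other hand, $f$ is continuous on $C$ and $p^i_0,p^i_{n(i)},\gamma(0),\gamma(1)\in C$, so $f(p^i_0)\to f(\gamma(0))$ and $f(p^i_{n(i)})\to f(\gamma(1))$, giving
\[
|f(\gamma(1))-f(\gamma(0))|=\lim_{i\to\infty}|f(p^i_{n(i)})-f(p^i_0)|\geq\liminf_{i\to\infty}\int_{P_i}g_i,
\]
where the inequality uses the defining property of the bad paths. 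Combining the two displays forces $|f(\gamma(1))-f(\gamma(0))|=\liminf_i\int_{P_i}g_i$, but the strict inequality $|f(p^i_{n(i)})-f(p^i_0)|>\int_{P_i}g_i$ together with convergence only yields $\geq$, so to extract a genuine contradiction I would instead keep the strict inequality at the discrete level and pass to the limit carefully: we have $|f(\gamma(1))-f(\gamma(0))|=\lim_i|f(p^i_{n(i)})-f(p^i_0)|$ and $|f(p^i_{n(i)})-f(p^i_0)|>\int_{P_i}g_i$, while simultaneously $|f(\gamma(1))-f(\gamma(0))|\leq\int_\gamma g\,ds\leq\liminf_i\int_{P_i}g_i$; chaining these, $\liminf_i\int_{P_i}g_i\geq|f(\gamma(1))-f(\gamma(0))|=\lim_i|f(p^i_{n(i)})-f(p^i_0)|\geq\liminf_i\int_{P_i}g_i$, forcing equality throughout, which is consistent — so the sharper move is to note that the upper gradient inequality holds with the \emph{same} $g$ for \emph{every} subcurve, and in fact we should apply the argument not to get a numerical contradiction but to contradict the supposed failure more directly. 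Concretely: the hypothesis $\tilde g|_V>g|_V+\eta$ gives the strict slack $\int_\gamma\tilde g\,ds\geq\int_\gamma g\,ds+\eta\,\len(\gamma)$ with $\len(\gamma)\geq\diam(\gamma)\geq\Delta>0$, so $\int_\gamma g\,ds\leq\liminf_i\int_{P_i}g_i-\eta\Delta$, hence $|f(\gamma(1))-f(\gamma(0))|\leq\liminf_i\int_{P_i}g_i-\eta\Delta<\lim_i|f(p^i_{n(i)})-f(p^i_0)|-\eta\Delta$, contradicting $f(p^i_0)\to f(\gamma(0))$, $f(p^i_{n(i)})\to f(\gamma(1))$. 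The main obstacle is precisely this last bookkeeping: one must exploit the \emph{strict} gap $\eta\Delta$ coming from $\tilde g>g+\eta$ on $V$ and $\len(\gamma)\geq\Delta$ to beat the merely non-strict passage to the limit in $|f(p^i_{n(i)})-f(p^i_0)|\geq\int_{P_i}g_i$; everything else is an assembly of Lemma~\ref{lem:lscgoodfunc}, goodness, and continuity of $f$ on $C$.
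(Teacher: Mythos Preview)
Your proposal is correct and follows essentially the same route as the paper: contradiction, extract a limit curve $\gamma$ via goodness, identify $\gamma(0),\gamma(1)\in C$, apply Lemma~\ref{lem:lscgoodfunc}, and derive a contradiction from the upper gradient inequality for $g$ together with the strict gap $\tilde g|_V>g|_V+\eta$ and $\len(\gamma)\geq\diam(\gamma)\geq\Delta>0$. The paper streamlines the endgame by passing to the limit in the bad-path inequality directly to obtain $|f(\gamma(1))-f(\gamma(0))|\geq\int_\gamma\tilde g\,ds$ (combining continuity of $f|_C$ with Lemma~\ref{lem:lscgoodfunc} in one step), which then contradicts $\int_\gamma\tilde g\,ds>\int_\gamma g\,ds\geq|f(\gamma(1))-f(\gamma(0))|$; this avoids the bookkeeping detour in your middle paragraph, and you should also record explicitly that $\gamma\subset V$ (since $V$ is closed) so that the $\eta$-gap indeed applies along all of $\gamma$.
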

\begin{proof}
Arguing by contradiction, there exists $\Delta>0$ and an infinite subset $\mathbb{I}\subset \N$ so that for every $i\in \mathbb{I}$ there exists a path $P_{i}=(p_0^i,\dots, p_{n(i)}^{i})$ with $\mesh(P_i) \leq \frac{1}{i}$, $\diam(P_i) \geq \Delta$, $P_i \subset V$, $p_0^i, p_{n(i)}^i \in C$ and

\begin{equation}\label{eq:contradiction-discrete}
|f(p^i_{n(i)})-f(p^i_0)|> \int_{P_i} g_i \, .
\end{equation}

Since $|f|\le M$, we get  $\int_{P_i} g_i \leq 2M$ for each $i\in \mathbb{I}$. By Definition \ref{def:goodsequence} (3), there exists an infinite subset $\mathbb{J}\subset \mathbb{I}$, so that $(P_i)_{i\in \mathbb{J}}$ converges to a curve $\gamma$.

In particular, $\gamma(1)$ is a limit of the sequence $(p^i_{n(i)})_{i\in \mathbb{J}}$ , and thus $\gamma(1)\in C$. Similarly, $\gamma(0)$ is a limit of the sequence $(p^i_{0})_{i\in \mathbb{J}}$ , hence $\gamma(0)\in C$. Further, $\gamma \subset V$, since $V$ is closed, and $\diam(\gamma)\geq \Delta$ since $\diam(P_i) \geq \Delta$ for all $i\in \N$.

By sending $i\in \mathbb{J}$ to infinity in  Inequality \eqref{eq:contradiction-discrete}, using Lemma \ref{lem:lscgoodfunc} and the fact that $f|_C$ is continuous, we get 
\begin{equation}\label{eq:contradiction}
|f(\gamma(1))-f(\gamma(0))|\geq \int_{\ga} \tilde{g} ds. \,
\end{equation}
However, $\tilde{g}|_V> g|_V+\eta$, which contradicts the upper gradient inequality. Therefore the claim has been proved.
\end{proof}

\section{The case when $X$ is complete}
\label{sec:complete}

In this section we will prove versions of our main theorems when $X$ is a metric measure space that is complete and separable. We show that:
\begin{itemize}
\item That capacity is outer regular for sets $E$ with $\Cp_p(E)=0$;
\item different versions of the capacity are equal, namely $\Cp_p =\Cp_p^c=\Cp_p^{\lip}=\Cp_p^{(\lip,\lip)}$ under some weak hypothesis;
\item $C(X)\cap N^{1,p}(X)$ is dense in $N^{1,p}(X)$;
\item every function $f\in N^{1,p}(X)$ is quasicontinuous;
\item $\Cp_p$ is outer regular, and thus a Choquet capacity.
\end{itemize}
In the subsections that follow, we address each one of these claims in turn.

\subsection{Null capacity sets}

We will employ the following lemma for capacity. A set $E$ is said to be {\it $p$-exceptional}, if $\Mod_p(\Gamma_E)=0$, where $\Gamma_E$ is the collection of all rectifiable curves $\gamma$ for which $\gamma \cap E \neq \emptyset$.

\begin{lemma}\label{lem:capacity-exceptional}(\!\!\cite[Proposition 7.2.8.]{shabook}) Suppose that $(X,d,\mu)$ is a separable metric measure space, then a set $E\subset X$ satisfies $\Cp_p(E)=0$ if and only if $E$ is $p$-exceptional and $\mu(E)=0$.
\end{lemma}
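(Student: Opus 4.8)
My plan is to prove the two implications directly, in each case reducing to the standard characterization of modulus-zero families: for a family $\Gamma$ of rectifiable curves, $\Mod_p(\Gamma)=0$ if and only if there is a non-negative Borel $h\in L^p(X)$ with $\int_\gamma h\,ds=\infty$ for every $\gamma\in\Gamma$, and such an $h$ can moreover be chosen with $\|h\|_{L^p(X)}$ arbitrarily small (this is exactly what is invoked in the proof of Lemma \ref{lem:lowersem} via \cite[Lemma 5.2.8]{shabook}). Here $\Gamma_E$ denotes the family of \emph{non-constant} rectifiable curves meeting $E$, and keeping track of non-constancy is important: otherwise a one-point set could have zero capacity while $\Gamma_E$ would contain the useless constant curve at that point.

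For the implication "$\mu(E)=0$ and $E$ $p$-exceptional implies $\Cp_p(E)=0$" I would test the capacity directly with $u=\ones_E$. Then $\|\ones_E\|_{L^p(X)}^p=\mu(E)=0$ and $\ones_E|_E\equiv 1$, so it only remains to produce cheap upper gradients. Fix $\epsilon>0$ and take, via the modulus characterization, a non-negative Borel $h\in L^p(X)$ with $\int_\gamma h\,ds=\infty$ for all $\gamma\in\Gamma_E$ and $\|h\|_{L^p(X)}^p\le\epsilon$. Then $h$ is an upper gradient of $\ones_E$: along a rectifiable curve $\gamma$ the difference $|\ones_E(\gamma(1))-\ones_E(\gamma(0))|$ is either $0$, or equals $1$, in which case $\gamma$ is non-constant and meets $E$, hence $\gamma\in\Gamma_E$ and $\int_\gamma h\,ds=\infty$. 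Therefore $\ones_E\in N^{1,p}(X)$ with $\|\ones_E\|_{N^{1,p}(X)}^p\le\mu(E)+\|h\|_{L^p(X)}^p\le\epsilon$, and since $\epsilon$ was arbitrary, $\Cp_p(E)=0$.

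For the converse, assume $\Cp_p(E)=0$ and choose for each $k\in\N$ a function $u_k\in N^{1,p}(X)$ with $u_k|_E\ge 1$ and $\|u_k\|_{N^{1,p}(X)}^p\le 4^{-k}$; replacing $u_k$ by $|u_k|$ we may assume $u_k\ge 0$ (this keeps $u_k|_E\ge 1$ and does not increase the norm), and by Lemma \ref{lem:lowersem} we may fix a lower semicontinuous upper gradient $g_k\in L^p(X)$ of $u_k$ with $\|g_k\|_{L^p(X)}^p\le 2\cdot 4^{-k}$. That $\mu(E)=0$ follows from Chebyshev's inequality: $E\subset\{u_k\ge 1\}$, so $\mu(E)\le\int_X u_k^p\,d\mu\le 4^{-k}$ for every $k$. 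For $p$-exceptionality, set $\rho_N:=\sum_{k\ge N}(u_k+g_k)$ for $N\in\N$; this is a non-negative Borel function with $\|\rho_N\|_{L^p(X)}\le\sum_{k\ge N}(\|u_k\|_{L^p(X)}+\|g_k\|_{L^p(X)})\to 0$ as $N\to\infty$. The key claim is that $\int_\gamma\rho_N\,ds=\infty$ for every $\gamma\in\Gamma_E$. If $\int_\gamma g_k\,ds=\infty$ for some $k\ge N$ this is immediate; otherwise, by monotone convergence $\sum_{k\ge N}\int_\gamma g_k\,ds=\int_\gamma\big(\sum_{k\ge N}g_k\big)\,ds<\infty$, and, choosing $t_0$ with $\gamma(t_0)\in E$, the upper gradient inequality for $u_k$ applied to the subcurve of $\gamma$ joining $\gamma(s)$ to $\gamma(t_0)$ gives $u_k(\gamma(s))\ge u_k(\gamma(t_0))-\int_\gamma g_k\,ds\ge 1-\int_\gamma g_k\,ds$ for every $s\in[0,1]$. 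Summing over $k\ge N$ and noting that all but finitely many of the summands $1-\int_\gamma g_k\,ds$ exceed $\tfrac12$, we obtain $\sum_{k\ge N}u_k(\gamma(s))=\infty$ for all $s$, so the integrand $\sum_{k\ge N}u_k$ is identically $+\infty$ along $\gamma$, whence $\int_\gamma\rho_N\,ds\ge\int_\gamma\big(\sum_{k\ge N}u_k\big)\,ds=\infty$ because $\len(\gamma)>0$. Thus each $\rho_N$ is admissible for $\Gamma_E$, so $\Mod_p(\Gamma_E)\le\|\rho_N\|_{L^p(X)}^p\to 0$ and $E$ is $p$-exceptional.

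The routine ingredients are the modulus characterization and Chebyshev's inequality; the one step demanding care — and the main obstacle — is the claim $\int_\gamma\rho_N\,ds=\infty$ for $\gamma\in\Gamma_E$. It must exploit simultaneously that $\rho_N$ dominates a sum of the $u_k$'s (which are $\ge 1$ at the point of $E$ on $\gamma$) and a sum of their upper gradients $g_k$ (which prevents the $u_k$'s from dropping far below that value anywhere on $\gamma$), and it relies on $\gamma$ being non-constant, so that an integrand equal to $+\infty$ on a curve of positive length has infinite integral. Getting this interplay right, together with consistently using the non-constant version of $\Gamma_E$, is essentially the whole content of the lemma.
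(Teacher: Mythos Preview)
The paper does not prove this lemma; it is quoted without proof from \cite[Proposition 7.2.8]{shabook}. Your argument is correct and is essentially the standard one found there.

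One small point worth tightening: you assert that $\rho_N=\sum_{k\ge N}(u_k+g_k)$ is a Borel function, but Newtonian functions are \emph{a priori} only $\mu$-measurable, while admissibility for modulus (as defined in the paper) requires Borel functions. Replacing the $u_k$ by Borel representatives is not innocuous here, since it can destroy both $u_k|_E\ge 1$ and the upper gradient inequality. The clean fix is already implicit in your computation: you show that every $\gamma\in\Gamma_E$ with $\int_\gamma\big(\sum_{k\ge N}g_k\big)\,ds<\infty$ satisfies $\gamma([0,1])\subset\{\sum_{k\ge N}u_k=\infty\}$, a $\mu$-null set, hence is contained in some Borel null set $B$. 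Non-constant curves contained in $B$ form a zero-modulus family (the Borel function $\infty\cdot\ones_B$ is admissible and has vanishing $L^p$-norm), and curves with $\int_\gamma\big(\sum_{k\ge N}g_k\big)\,ds=\infty$ also form a zero-modulus family since $\sum_{k\ge N}g_k\in L^p(X)$ is Borel. Thus $\Mod_p(\Gamma_E)=0$ without ever needing the $u_k$ themselves to appear in an admissible function.
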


Lemma \ref{lem:capacity-exceptional} is crucial when one wants to show that capacity is outer regular. The first step is to analyze sets with zero capacity. The proof of  Proposition \ref{prop:capacity-null-case} below follows closely that of \cite[Proposition  7.2.12]{shabook} -- except for the novel use of a good function.

\begin{proposition}\label{prop:capacity-null-case}  Suppose that $(X,d,\mu)$ is a complete separable metric measure space and let $E \subset X$ satisfy $\Cp_p(E)=0$. For any $\epsilon>0$ we have an open set $O$ s.t. $E \subset O$ and $\Cp_p(O)<\epsilon$.
\end{proposition}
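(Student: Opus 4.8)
The plan is to follow the classical argument for $\Cp_p(E)=0 \Rightarrow$ outer regularity (as in \cite[Proposition 7.2.12]{shabook}), but using a good function in place of an arbitrary $L^p$ upper gradient, so that the candidate potential we build is lower semicontinuous and hence its superlevel sets are open. First I would use Lemma \ref{lem:capacity-exceptional}: since $\Cp_p(E)=0$, the set $E$ is $p$-exceptional, i.e. $\Mod_p(\Gamma_E)=0$, and $\mu(E)=0$. By \cite[Lemma 5.2.8]{shabook} there is a Borel function $\rho\in L^p(X)$ with $\|\rho\|_{L^p(X)}^p \le \epsilon'$ (for a suitable small $\epsilon'$ to be fixed at the end) such that $\int_\gamma \rho\,ds = \infty$ for every $\gamma\in\Gamma_E$. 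Replacing $\rho$ by $\max(\rho,\rho')$ and applying Lemma \ref{lem:goodfunc} (here completeness of $X$ is used), I may assume $\rho$ is a lower semicontinuous \emph{good function} with $\|\rho\|_{L^p(X)}^p$ still controlled by a constant times $\epsilon'$, and still $\int_\gamma\rho\,ds=\infty$ for all $\gamma\in\Gamma_E$.

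Next I would build the potential. Because $X$ is separable we can write $E\subset \bigcup_{j} B_j$ where each $B_j$ is a bounded open set; it suffices to handle one bounded piece at a time and sum the resulting open sets and capacities. So fix a bounded open set $V$ with $E\cap V' $ the piece under consideration, and apply Proposition \ref{prop:funconstr} to the good function $N\rho$ (for a large constant $N$ chosen at the end) and the bounded open set $V$, obtaining
\[
u(x) := \min\Bigl(\inf_{\gamma: X\setminus V \leadsto x}\int_\gamma N\rho\,ds,\ 1\Bigr),
\]
which lies in $L^p(X)$, has $N\rho$ as an upper gradient, and — crucially — is lower semicontinuous. For $x\in E\cap V$, any curve from $X\setminus V$ to $x$ must meet $E$ (it reaches $x\in E$), hence is in $\Gamma_E$, hence has $\int_\gamma N\rho\,ds=\infty$; therefore $u(x)=1$ on $E\cap V$. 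Then $O:=\{x: u(x)>1/2\}$ is open (by lower semicontinuity), contains $E\cap V$, and $2u$ is admissible for $\Cp_p(O)$ with upper gradient $2N\rho$, giving $\Cp_p(O)\le \|2u\|_{L^p}^p + \|2N\rho\|_{L^p}^p \le C N^p \epsilon'$.

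Finally I would assemble the global open set: cover $E$ by countably many bounded open sets $V_k$, run the above on each to get open $O_k\supset E\cap V_k$ with $\Cp_p(O_k)$ as small as desired (shrink $\epsilon'$ per index, absorbing the factor $CN^p$ and a $2^{-k}$), set $O=\bigcup_k O_k\supset E$, and use countable subadditivity of $\Cp_p$ to conclude $\Cp_p(O)<\epsilon$. One must be a little careful that $u$ built from $N\rho$ on $V$ restricted to $V_k$ interacts correctly — cleanest is to take $V$ a fixed large bounded set containing $E$ if $E$ is bounded, and otherwise to first reduce to the bounded case by the covering, treating each $E\cap V_k$ with its own $V$. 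The main obstacle, and the point where the generality of the paper really enters, is exactly the lower semicontinuity of $u$: in a non-proper space the infimum defining $u$ need not be attained and the usual compactness argument fails, which is precisely why we must pass to a good function before invoking Proposition \ref{prop:funconstr}. Everything else (the $\Mod_p=0$ input, admissibility bookkeeping, countable subadditivity, the final choice of constants) is routine.
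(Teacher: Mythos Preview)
Your overall strategy is the same as the paper's, and you have correctly identified the key new ingredient: one must pass to a \emph{good} lower semicontinuous function before building the potential via Proposition \ref{prop:funconstr}, so that $u$ is lower semicontinuous and its superlevel set is open. However, there is a genuine gap in your capacity estimate.

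You claim $\Cp_p(O)\le \|2u\|_{L^p}^p + \|2N\rho\|_{L^p}^p \le C N^p \epsilon'$. The second term is indeed bounded by $(2N)^p\epsilon'$, but the first term is not: all you know is $0\le u\le \ones_V$, so $\|2u\|_{L^p}^p\le 2^p\mu(V)$, and you have not made $\mu(V)$ small. Your $V$ is just ``a bounded open set'' used to localize, and its measure could be arbitrarily large; shrinking $\epsilon'$ does nothing to the term $2^p\mu(V)$. This is exactly where the second conclusion of Lemma \ref{lem:capacity-exceptional}, namely $\mu(E)=0$, must be used (you record it but never invoke it). The paper's fix is to choose, by outer regularity of the Radon measure $\mu$, an open $V\supset E$ (for $E$ bounded) with $\mu(V\setminus E)\le \epsilon 2^{-p-1}$; since $\mu(E)=0$ this gives $\mu(V)\le \epsilon 2^{-p-1}$, and then $\|2u\|_{L^p}^p\le 2^p\mu(V)\le \epsilon/2$. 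With this choice the factor $N$ is unnecessary: since $\int_\gamma\rho\,ds=\infty$ for every $\gamma\in\Gamma_E\supset\Gamma(E,X\setminus V)$, the potential built from $\rho$ itself already satisfies $u|_E=1$, and no rescaling is needed. Once $V$ is chosen this way, the rest of your argument (and the reduction to bounded $E$ by countable subadditivity) goes through exactly as in the paper.
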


\begin{proof}
Capacity is easily seen to be sub-additive and so it suffices to consider the case when $E$ is bounded. Thus, assume that $E \subset B(x_0,R)$ for some ball $B(x_0,R)$ with $x_0\in X, R>0$. Choose  an open set $V\subset B(x_0,R)$ so that $E\subset V$ and $\mu(V\setminus E)\leq \epsilon 2^{-p-1}$. 
By Lemma \ref{lem:capacity-exceptional}, the set $E$ is $p$-exceptional. Thus, $\Mod_p(\Gamma_E)=0$ and since $\Gamma(E,X\setminus V) \subset \Gamma_E$ we have $\Mod_p(\Gamma(E,X\setminus V))=0$. 
Let $g$ be an admissible function for $\Gamma(E,X\setminus V)$ with $\int_X g^p d\mu\le \epsilon 2^{-p-3}$. Lemma \ref{lem:goodfunc} provides a good function that is lower semicontinuous and admissible for $\Gamma(E,X\setminus V)$, with $g_\epsilon\ge g$  and $\int_X g_\epsilon^p d\mu \leq \epsilon 2^{-p-2}$.


Define $u(x) := \min(1,\inf_{\gamma:X\setminus V \mapsto x} \int_\gamma g_\epsilon ds )$. By Proposition \ref{prop:funconstr}, $u$ is lower semicontinuous, $u|_E= 1$, $u|_{X\setminus V}=0$, and $u$ has upper gradient $g_\epsilon$. Thus, $U=\{u>\frac{1}{2}\}$ will be an open set containing $E$ and $U\subset V$. Take $O=U$. Then, $\tilde{u}=2u \in N^{1,p}(X)$ and $\tilde{u}|_O \geq 1$. Therefore, from $\tilde{u}\leq 2\cdot \ones_{V\setminus E}$ we get
\[
\Cp_p(O) \leq \int_X |2u|^p d\mu + \int_X (2g_\epsilon)^p d\mu \leq \epsilon
\]
and the claim follows.
\end{proof}

\subsection{Different versions of capacity}
We now state and prove a version of Theorem \ref{thm:capacity}, when the space $X$ is assumed to be complete, rather than merely locally complete.
We will use Theorem \ref{thm:capacity-complete} later, in Section \ref{sec:choquet}, to prove the more general statement formulated  in Theorem \ref{thm:capacity}.
\begin{theorem}\label{thm:capacity-complete}
Let $(X,d,\mu)$ be a complete, bounded and separable metric measure space equipped with a Radon measure which is positive and finite on all balls. Let $E,F \subset X$ be two non-empty closed disjoint sets with $d(E,F)>0$, and let $p\in[1,\infty)$. Then
$$\Cp_p(E,F)=\Cp_p^c(E,F)=\Cp_p^{\lip}(E,F)=\Cp_p^{(\lip,\lip)}(E,F).$$
\end{theorem}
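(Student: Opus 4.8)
The plan is to establish the chain of inequalities
\[
\Cp_p(E,F) \le \Cp_p^{\rm c}(E,F) \le \Cp_p^{\lip}(E,F) \le \Cp_p^{(\lip,\lip)}(E,F) \le \Cp_p(E,F) + \text{(error)},
\]
where the first three inequalities are trivial from the inclusions $(\lip,\lip)\subset \lip \subset {\rm c}$ of admissible classes (a locally Lipschitz function is continuous, and a pair with locally Lipschitz $g$ is in particular a pair with locally Lipschitz $u$). So the entire content is to show $\Cp_p^{(\lip,\lip)}(E,F) \le \Cp_p(E,F)+\text{error}$ for an arbitrary small error, i.e.\ given $(u,g)$ admissible for the unrestricted capacity with $\int g^p\,d\mu$ nearly optimal, I must produce a \emph{new} pair $(\tilde u,\tilde g)$ with $\tilde u,\tilde g$ locally Lipschitz, $\tilde u|_E=0$, $\tilde u|_F=1$, and $\int \tilde g^p\,d\mu \le \int g^p\,d\mu + \epsilon$.

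The construction I would use is exactly the discrete-path machinery developed in Section~\ref{sec:prelim}. Starting from a near-optimal admissible $(u,g)$, replace $g$ by a lower semicontinuous upper gradient via Lemma~\ref{lem:lowersem}, then apply Proposition~\ref{prop:goodsequences} to obtain a good sequence of bounded Lipschitz functions $(\tilde g_i)$ increasing to a lower semicontinuous good function $\tilde g$ with $\int \tilde g^p\,d\mu \le \int g^p\,d\mu + \epsilon$ and $\tilde g|_A \ge g|_A + \eta_A$ on each bounded set. Now define, using Proposition~\ref{prop:uppergradientpaths} with $E$ in the role of its set "$E$", $h\equiv 0$, $M=1$, and mesh parameter $\delta = d(E,F)/2 > 0$,
\[
\tilde u(y) := \min\Bigl(\inf_P\Bigl\{ \int_P \tilde g\Bigr\},\,1\Bigr),
\]
the infimum over discrete paths $P$ from $E$ to $y$ with $\mesh(P)\le \delta$. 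By Proposition~\ref{prop:uppergradientpaths}, $\tilde u$ is locally Lipschitz with upper gradient $\tilde g$ (which is itself only lower semicontinuous, not Lipschitz — see below), and $\tilde u|_E = 0$. The remaining point is that $\tilde u|_F = 1$: any discrete path from $E$ to a point of $F$ with mesh $\le d(E,F)/2$ must contain a consecutive pair $p_k,p_{k+1}$ straddling the "gap", and using positivity of $\tilde g$ on the bounded set containing $E,F$ together with the good-function compactness (Lemma~\ref{lem:compactness}, Lemma~\ref{lem:lsccurve}) one shows $\inf_P \int_P \tilde g \ge 1$; if a sequence of paths had discrete integrals bounded below $1$, a subsequence would converge to a rectifiable curve $\gamma$ from $E$ to $F$ with $\int_\gamma \tilde g\,ds < 1$, but $\tilde g \ge g + \eta > g$ on the relevant bounded set and $g$ is admissible for $\Gamma(E,F)$, so $\int_\gamma \tilde g\,ds \ge \int_\gamma g\,ds \ge 1$, a contradiction. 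Hence $\tilde u$ is admissible for $\Cp_p^{\lip}(E,F)$, giving $\Cp_p^{\lip}(E,F) \le \int \tilde g^p\,d\mu \le \Cp_p(E,F)+2\epsilon$; letting $\epsilon\to 0$ handles the $\lip$ case.

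For the $(\lip,\lip)$ case one more step is needed, since $\tilde g$ is only lower semicontinuous. Here I would instead run Proposition~\ref{prop:uppergradientpaths} directly with one of the \emph{Lipschitz} functions $\tilde g_i$ from the good sequence in the role of "$g$": for each $i$ set $\tilde u_i(y) := \min(\inf_P \int_P \tilde g_i,\,1)$ over mesh-$\le\delta$ discrete paths from $E$; then $\tilde u_i$ is locally Lipschitz with locally Lipschitz upper gradient $\tilde g_i$, vanishes on $E$, and — by the same good-sequence compactness argument (Lemma~\ref{lem:lscgoodfunc}, essentially the content of Lemma~\ref{lem:farenough}) — satisfies $\tilde u_i|_F = 1$ for all $i$ large enough, because a hypothetical sequence of low-cost paths to $F$ would, along the good sequence $(\tilde g_i)$, subconverge to a curve $\gamma$ from $E$ to $F$ with $\int_\gamma \tilde g\,ds < 1$, again contradicting admissibility of $g$. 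For such $i$ the pair $(\tilde u_i, \tilde g_i)$ is admissible for $\Cp_p^{(\lip,\lip)}(E,F)$ and $\int \tilde g_i^p\,d\mu \le \int \tilde g^p\,d\mu \le \Cp_p(E,F)+2\epsilon$. Letting $\epsilon \to 0$ closes the chain. The main obstacle, and the place requiring genuine care, is verifying $\tilde u|_F=1$ (equivalently $\tilde u_i|_F=1$ for large $i$) without any curves available a priori: this is precisely where local completeness/boundedness, the positivity clause of the good sequence, the length bound it forces on low-cost discrete paths, and the compactness Lemma~\ref{lem:compactness} all enter, to extract from near-minimizing discrete paths an honest rectifiable curve in $\Gamma(E,F)$ and thereby contradict admissibility of the original $g$.
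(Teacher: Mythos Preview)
Your overall plan is the paper's plan: build competitors by infimizing discrete integrals of a good sequence $\tilde g_i$ over discrete paths starting at $E$, invoke Proposition~\ref{prop:uppergradientpaths} for local Lipschitzness and the upper-gradient property, and use the discrete-path compactness to force the value $1$ on $F$ by contradiction with admissibility of $g$ for $\Gamma(E,F)$. The gap is your choice of a \emph{fixed} mesh parameter $\delta=d(E,F)/2$.

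Every compactness tool you cite---Lemma~\ref{lem:compactness}, the ``Goodness'' clause~(3) of Definition~\ref{def:goodsequence}, and Lemma~\ref{lem:lscgoodfunc}---requires $\mesh(P_i)\to 0$. With mesh frozen at $\delta$, a hypothetical sequence of low-cost paths need not subconverge to a rectifiable curve \emph{in $X$}: the interpolating curves live in $\ell_\infty(\N)$, and the step in the proof of Lemma~\ref{lem:compactness} that places the limit back in $X$ uses precisely $d(\gamma(t),X)\le\lim_i\mesh(P_i)=0$. To see that this is not merely a technicality, take $X$ a finite $\tfrac{\delta}{2}$-spaced subset of $\R$ with counting measure, $E$ and $F$ its endpoints. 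There are no nonconstant rectifiable curves, so $\Cp_p(E,F)=0$ and one may take $g\equiv 0$; your $\tilde g_i$ are then uniformly small, and the obvious mesh-$\le\delta$ discrete walk from $E$ to $F$ has $\int_P\tilde g_i$ arbitrarily small, so your $\tilde u_i|_F<1$ for \emph{every} $i$. Thus the conclusion ``$\tilde u_i|_F=1$ for $i$ large'' fails outright with fixed mesh. (The aside about a consecutive pair ``straddling the gap'' does not yield any lower bound on $\int_P\tilde g_i$.)

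The fix is exactly what the paper does: let the mesh depend on $i$, defining
\[
u_i(x)=\min\Bigl(\inf\Bigl\{\textstyle\int_P g_i:\ p_0\in E,\ p_n=x,\ \mesh(P)\le i^{-1}\Bigr\},\,1\Bigr).
\]
Then a hypothetical sequence of low-cost paths has $\mesh(P_i)\to 0$, the good-sequence compactness applies, and one extracts $\gamma\in\Gamma(E,F)$ with $\int_\gamma g_\epsilon\,ds<1$, a contradiction. One subtlety remains: this argument only yields $a_i:=\inf_F u_i\to 1$, not $a_i=1$; the paper then rescales, taking $\tilde u_i=\min(u_i/a_i,1)$ with locally Lipschitz upper gradient $a_i^{-1}g_i$, and chooses $i$ large enough that $a_i^{-p}\int g_i^p\,d\mu\le\int g^p\,d\mu+2\epsilon$. (If you prefer to avoid the rescaling, you can exploit the strict gap $\tilde g\ge g+\eta$ from Proposition~\ref{prop:goodsequences} to upgrade $a_i\to 1$ to $a_i=1$ eventually, since the limit curve would satisfy $\int_\gamma\tilde g\,ds\ge 1+\eta\,\len(\gamma)>1$; but the mesh must still shrink with $i$.)
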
   
\begin{corollary} \label{rmk:refinedversion} Let $E,F\subset X$ be two non-empty closed disjoint sets with $d(E,F)>0$, and let $p\in  [1,\infty)$. If $u\in N^{1,p}(X)$ is non-negative with $u|_E=0, u|_F=1$ and $g$ is an upper gradient for $u$ in $L^p(X)$, then there exists a sequence of functions $u_i \in N^{1,p}(X)$, which are locally Lipschitz, and which have locally Lipschitz upper gradients $h_{i}\in L^p(X)$, with $h_i \to g$ in $L^p(X)$. 
\end{corollary}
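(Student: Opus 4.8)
The plan is to revisit the construction behind Theorem~\ref{thm:capacity-complete}, this time keeping careful track of the upper gradients. Since $X$ is bounded and $\mu$ is finite on balls, $\mu(X)<\infty$, so bounded functions lie in $L^p(X)$; in particular we may replace $u$ by $\min(u,1)$, which still vanishes on $E$, equals $1$ on $F$, still has $g$ as an upper gradient (truncation by $1$ is $1$-Lipschitz), and is now a measurable function $X\to[0,1]$ that is locally constant, hence continuous, on the closed set $C:=E\cup F$, because $d(E,F)>0$. Keep denoting this function by $u$ and set $M=1$. The goal is to produce, for each $m$, an admissible locally Lipschitz competitor with a locally Lipschitz upper gradient $h_m$ satisfying $\|h_m-g\|_{L^p(X)}\to 0$.

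Fix $m\in\N$. First apply Lemma~\ref{lem:lowersem} to $g$ with error $1/m$, getting a lower semicontinuous upper gradient $g^{(m)}\ge g$ of $u$ with $\int_X (g^{(m)})^p\,d\mu\le\int_X g^p\,d\mu+1/m$. Then apply Proposition~\ref{prop:goodsequences} to $g^{(m)}$, again with error $1/m$: this yields a good sequence of bounded Lipschitz functions $(\tilde g^{(m)}_i)_{i\in\N}$ increasing pointwise to a lower semicontinuous good function $\tilde g^{(m)}$ with $\tilde g^{(m)}\ge g^{(m)}+\eta_m$ on all of $X$ for some $\eta_m>0$ and $\int_X(\tilde g^{(m)})^p\,d\mu\le\int_X g^p\,d\mu+2/m$. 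Apply Lemma~\ref{lem:farenough} with $f=u$, $V=X$, $C=E\cup F$ and the threshold $\Delta:=d(E,F)/2>0$: there is $N_m$ so that for $i\ge N_m$ the function $\tilde g^{(m)}_i$ is a $(1/i,\Delta)$-discrete upper gradient for $u$ on $(C,X)$. Finally, for each $i$, apply Proposition~\ref{prop:uppergradientpaths} with $E'=E$, constant boundary datum $h\equiv 0\le M$, mesh bound $\delta=1/i$, and the continuous function $\tilde g^{(m)}_i$, obtaining a locally Lipschitz
\[
u^{(m)}_i(y):=\min\Bigl(\inf\Bigl\{\textstyle\int_P \tilde g^{(m)}_i \;:\; P=(p_0,\dots,p_n),\ p_0\in E,\ p_n=y,\ \mesh(P)\le 1/i\Bigr\},\,1\Bigr),
\]
which has $\tilde g^{(m)}_i$ as an upper gradient and satisfies $u^{(m)}_i|_E=0$. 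For $i\ge N_m$, any competing discrete path $P$ from $E$ to a point of $F$ has $\diam(P)\ge d(E,F)>\Delta$, so the discrete upper gradient inequality forces $\int_P\tilde g^{(m)}_i\ge|u(p_n)-u(p_0)|=1$; hence $u^{(m)}_i|_F=1$. As $u^{(m)}_i$ is bounded and $\tilde g^{(m)}_i\in L^p(X)$, we get $u^{(m)}_i\in N^{1,p}(X)$, admissible for the $(\lip,\lip)$ condenser problem.

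It remains to control $\tilde g^{(m)}_i$ in $L^p(X)$. Here we use the monotonicity $0\le g\le g^{(m)}\le\tilde g^{(m)}$ together with the elementary inequality $(a+b)^p\ge a^p+b^p$ for $a,b\ge 0$, $p\ge 1$: taking $a=g$, $b=\tilde g^{(m)}-g$ and integrating gives
\[
\|\tilde g^{(m)}-g\|_{L^p(X)}^p\le\int_X(\tilde g^{(m)})^p\,d\mu-\int_X g^p\,d\mu\le \frac{2}{m}.
\]
Since $\tilde g^{(m)}_i\uparrow\tilde g^{(m)}\in L^p(X)$, dominated convergence gives $\|\tilde g^{(m)}-\tilde g^{(m)}_i\|_{L^p(X)}\to 0$ as $i\to\infty$, so we may pick $i(m)\ge N_m$ with $\|\tilde g^{(m)}-\tilde g^{(m)}_{i(m)}\|_{L^p(X)}\le 1/m$. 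Setting $u_m:=u^{(m)}_{i(m)}$ and $h_m:=\tilde g^{(m)}_{i(m)}$ (both locally Lipschitz, $h_m\in L^p(X)$ an upper gradient of $u_m$, $u_m|_E=0$, $u_m|_F=1$), the triangle inequality yields $\|h_m-g\|_{L^p(X)}\le 1/m+(2/m)^{1/p}\to 0$, which is the assertion.

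I do not expect a serious obstacle here: every ingredient---the Vitali--Carath\'eodory step, the passage to a lower semicontinuous good function and a good sequence, the discrete extension, and the identification $u^{(m)}_i|_F=1$---is already available, and indeed this is essentially the construction proving Theorem~\ref{thm:capacity-complete}. The only genuinely new point is the observation that the upper gradients produced by Proposition~\ref{prop:goodsequences} dominate $g$ and are reached monotonically from below, which, combined with superadditivity of $t\mapsto t^p$, upgrades the mere convergence $\int_X h_m^p\,d\mu\to\int_X g^p\,d\mu$ (all that is needed for the capacity identities) to genuine norm convergence $h_m\to g$ in $L^p(X)$; a routine diagonalization over $m$ then closes the argument.
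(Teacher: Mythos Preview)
Your argument is correct and follows the same construction as the paper (the discrete-path minimization from the proof of Theorem~\ref{thm:capacity-complete}), together with the diagonalization over the error parameter and the superadditivity of $t\mapsto t^p$ to pass from $\int h_m^p\to\int g^p$ to $h_m\to g$ in $L^p$. The one genuine variation is in how you secure the boundary condition on $F$: the paper shows $a_i:=\inf_F u_i\to 1$ by a contradiction/compactness argument and then rescales, setting $h_i=(a_i)^{-1}g_i$; you instead invoke Lemma~\ref{lem:farenough} with $\Delta=d(E,F)/2$, which immediately yields $\int_P \tilde g^{(m)}_i\ge 1$ for every competing path and hence $u^{(m)}_i|_F=1$ without any rescaling. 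Your route is slightly cleaner here (no $a_i$, no division), while the paper's route is more self-contained in that it does not appeal to the separate Lemma~\ref{lem:farenough}; substantively the two are equivalent, since Lemma~\ref{lem:farenough} is proved by exactly the same compactness-and-lower-semicontinuity mechanism that drives the $a_i\to 1$ argument.
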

Note that $h_{i}$ need not be the minimal $p$-weak upper gradient of $u_i$.
\begin{proof}[Proof of Corollary \ref{rmk:refinedversion}]
The proof is the same as the one for Theorem \ref{thm:capacity-complete}, and is obtained by setting $h_{i}=(a_i)^{-1}g_i$ at the end of the proof. 
\end{proof}

\begin{proof}[Proof of Theorem \ref{thm:capacity-complete}] An infimum over a smaller set yields a larger value than an infimum over a larger set, and thus
$$\Cp_p(E,F)\leq \Cp_p^c(E,F)\leq \Cp_p^{\lip}(E,F) \leq \Cp_p^{(\lip,\lip)}(E,F).$$

Therefore, it suffices to prove $\Cp_p^{(\lip,\lip)}(E,F) \leq \Cp_p(E,F)$. If $\Cp_p(E,F)=\infty$, this is immediate. Thus, let us assume that $\Cp_p(E,F)<\infty$ and let $\epsilon>0$ be arbitrary. By definition of capacity, there  exists $u:X \to [-\infty,\infty]$  with $u|_E=0$ and $u|_F=1$ and an $L^p$-upper gradient $g:X\rightarrow[0,\infty]$ for $u$ with $\int g^p d\mu \leq \Cp_p(E,F)+\epsilon.$  By replacing $u$  with $\max(\min(u,1),0)$ we can assume that $u:X\to [0,1]$. Further, since $\mu(X)<\infty$, we get $u\in L^p(X)$ and, moreover, that $u\in N^{1,p}(X)$.

We have $\int_\gamma g \,ds \geq 1$ for each rectifiable $\gamma$ connecting $E$ to $F$. By Proposition  \ref{prop:goodsequences} there exists a $g_\epsilon \in L^p(X)$ which is lower semicontinuous with $g_\epsilon > g$ and $\int_X g^p d\mu \leq \int_X g^p d\mu + \epsilon$, and a good sequence of bounded and Lipschitz continuous non-negative functions $\{g_i\}_{i\in \N}$ that satisfy $g_i \nearrow g_\epsilon$ and $g_i\ge 0$.  Let
\begin{equation}\label{eq:uicapacity}
u_i(x) := \min\left(\inf\left\{\int_P g_i : P=(p_0, \dots, p_n), p_0 \in E, p_n=x, \mesh(P) \leq i^{-1}\right\},1\right).
\end{equation}
Note that $u_i:X\to [0,1]$, since we are taking a minimum with $1$. Further, $u|_E=0$ since for $x\in E$ we can use a constant path $P=(p_0)$. By Proposition \ref{prop:uppergradientpaths}, the function $g_i$ is an upper gradient for $u_i$.

We show first that the function $u_i$ is $M_i$-Lipschitz with $M_i=\max\{i, \sup_{x\in X} g_i(x)\}$. Note that $M_i<\infty$ since $g_i$ is bounded. To see the Lipschitz property, observe that  if $x,y\in X$ and $d(x,y)\geq \frac{1}{i}$, then since $0\le u_i\le 1$,
\[
|u_i(x)-u_i(y)|\leq 1 \leq M_id(x,y). 
\]
On the other hand, if $x,y\in X$ and $d(x,y) < \frac{1}{i}$, then any discrete path $P=(p_0,\dots, p_n)$ with $p_0 \in E, p_n = x, \mesh(P) \leq i^{-1}$, can be expanded to $P'=(p_0,\dots, p_n, y)$ with $\mesh(P')\leq i^{-1}$. It may be that $P'$ is not simple, as we require for paths. This occurs only if for some $i\in[0,n]$ we have $p_i=y$, and then we truncate $P'$ at such index.

This gives, $u_i(y) \leq \int_{P'} g_i \leq \int_P g_i + d(x,y) g_i(x)$. Infimizing over $P$ yields $u_i(y)\leq u_i(x) + M_i d(x,y)$. By symmetry, we get $|u(x)-u(y)|\leq M_id(x,y)$, which completes the proof of the Lipschitz bound.

Let $a_i = \inf_{x\in F} u_i(x)$. We show next that $\lim_{i\to \infty} a_i = 1$. Since $g_i \leq g_j$ is an increasing sequence of functions, the limit  $\lim_{i\to \infty} a_i$ exists. We obtain our claim via contradiction: Suppose that $\lim_{i\to \infty} a_i<1$. Then there would exist some $\delta>0$ so that $a_i < 1-\delta$ for every $i\in \N$. 

By definition, for every $i$, there exists a discrete path $P^i=(p_0^i, \dots, p_n^i)$ with $\int_{P_i} g_i < 1-\delta$ and with $p_0^i \in E, p_n^i \in F$ and $\mesh(P_i)<i^{-1}$. By the final condition, $\diam(P^i)\geq d(E,F)$ for each $i$. 

Since $g_i$ is a good sequence of functions, and since $X$ is bounded, there exists a subsequence $i_k$ so that $P_{i_k}\to \gamma$ for some curve $\gamma:[0,1]\to X$. Since $E$ and $F$ are closed, we conclude that $\gamma(0)\in E, \gamma(1)\in F$. By Lemma \ref{lem:lsccurve}, we have, for each $i\in \N$,
\[
\int_\gamma g_i \,ds \leq \liminf_{k\to\infty}\int_{P_{i_k}} g_i < 1-\delta.
\]
Sending $i\to \infty$, and with monotone convergence, we get $\int_\gamma g_\epsilon \,ds < 1-\delta$, which is a contradiction to the fact that $\int_\gamma g_\epsilon \,ds \geq \int_\gamma g \,ds \geq 1.$ Thus, our initial assumption was false, and $\lim_{i\to \infty} a_i = 1$.

Choose now $i$ so large that 
\[
\frac{\int_X g_\epsilon^p d\mu}{a_i^p} \leq \int_X g^p d\mu+2\epsilon.
\]

Then $\tilde{u}_i = \min(\frac{u_i}{a_i},1)$ is a Lipschitz function with the upper gradient $(a_i)^{-1}g_i$. Further, $\tilde{u}_i|_{E}=0$, $\tilde{u}_i|_{F}=1$, and 
\[
\int_X ((a_i)^{-1}g_i)^p d\mu \leq \int_X g^p d\mu+2\epsilon. 
\]
Thus, $\Cp_p^{\rm lip}(E,F) \leq \Cp_p(E,F)+2\epsilon$, and the claim follows since $\epsilon>0$ is arbitrary.
\end{proof}

The proof of the statement shows in fact slightly more. For future reference, we state this as a theorem.

 \begin{theorem}\label{thm:approximating-uppergrad}
Let $(X,d,\mu)$ be complete separable metric measure space with $\mu(X)<\infty$. Let $E,F \subset X$ be two non-empty closed disjoint sets with $d(E,F)>0$, and let $p\in[1,\infty)$. Then, for any $\epsilon>0$,  and for any ``admissible'' function $g\in L^p(X)$, i.e.,  so that $\int_\gamma g \, ds \geq 1$ for every $\gamma \in \Gamma(E,F)$,  there exists a locally Lipschitz function $g_\epsilon$ that is also admissible, meaning that $\int_\gamma g_\epsilon \, ds \geq 1$ for every $\gamma \in \Gamma(E,F)$, and such that $\|g-g_\epsilon\|_{L^p(X)}\leq \epsilon.$
\end{theorem}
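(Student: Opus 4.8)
\textbf{Proof plan for Theorem \ref{thm:approximating-uppergrad}.}
The plan is to essentially extract the construction already carried out in the proof of Theorem \ref{thm:capacity-complete}, discarding the parts that built the function $u_i$ and keeping only the data about the gradients. So the first step is to reduce to a cleaner setting: by replacing $g$ with $\max(g,0)$ (admissibility is unaffected since curve integrals only see nonnegative parts) we may assume $g \geq 0$, and then apply Proposition \ref{prop:goodsequences} to $g$ — or, more directly, first pass through Lemma \ref{lem:lowersem} to get a lower semicontinuous admissible $\hat g \geq g$ with $\|\hat g\|_{L^p(X)}^p \leq \|g\|_{L^p(X)}^p + \epsilon^p/2^{p+1}$, which is still admissible. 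Proposition \ref{prop:goodsequences} then yields a lower semicontinuous good function $\tilde g$ and a good sequence of bounded Lipschitz functions $g_i \nearrow \tilde g$ with $\tilde g \geq \hat g$ pointwise (so $\tilde g$ is admissible) and $\|\tilde g\|_{L^p(X)}^p \leq \|\hat g\|_{L^p(X)}^p + \epsilon^p/2^{p+1} \leq \|g\|_{L^p(X)}^p + \epsilon^p$.

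The second step is the compactness/normalization argument, which is verbatim the middle of the proof of Theorem \ref{thm:capacity-complete}. For each $i$ define the discrete-path function $u_i$ as in \eqref{eq:uicapacity} using $g_i$ with mesh $\leq i^{-1}$; by Proposition \ref{prop:uppergradientpaths} it is locally Lipschitz (in fact $M_i$-Lipschitz) with upper gradient $g_i$, and $u_i|_E = 0$. Set $a_i = \inf_{x\in F} u_i(x)$. The claim is that $a_i \to 1$: if not, there are discrete paths $P^i$ from $E$ to $F$ with mesh $\to 0$, $\int_{P^i} g_i < 1-\delta$, and $\diam(P^i) \geq d(E,F) > 0$; since $\{g_i\}$ is a good sequence and $X$ is bounded, a subsequence converges to a curve $\gamma \in \Gamma(E,F)$, and Lemma \ref{lem:lsccurve} together with monotone convergence gives $\int_\gamma \tilde g \, ds \leq 1-\delta$, contradicting admissibility of $\tilde g$.

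The third step is to read off the conclusion: pick $i$ large enough that $a_i^{-p} \|\tilde g\|_{L^p(X)}^p \leq \|g\|_{L^p(X)}^p + (\text{something} < (\|g\|_{L^p} + \epsilon)^p)$; more carefully, since $a_i \to 1$ and $\|\tilde g - g\|$ is already controlled, choose $i$ so that $g_\epsilon := a_i^{-1} g_i$ satisfies $\|g_\epsilon - g\|_{L^p(X)} \leq \epsilon$. This $g_\epsilon$ is locally Lipschitz (a constant times a bounded Lipschitz function), and it is admissible: for any $\gamma \in \Gamma(E,F)$, apply Proposition \ref{prop:uppergradientpaths}'s conclusion that $g_i$ is an upper gradient of $u_i$ to the curve $\gamma$, giving $\int_\gamma g_i \, ds \geq u_i(\gamma(1)) - u_i(\gamma(0)) \geq a_i - 0 = a_i$, hence $\int_\gamma g_\epsilon \, ds \geq 1$.

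\textbf{Main obstacle.} There is no real obstacle here — the theorem is a repackaging of work already done, and the author flags exactly this ("The proof of the statement shows in fact slightly more"). The one point requiring a little care is the bookkeeping with $\epsilon$'s and $p$-th powers when going from the $L^p$-norm bounds on $\tilde g$ and the normalization $a_i^{-1}$ to the clean estimate $\|g - g_\epsilon\|_{L^p(X)} \leq \epsilon$; one should state the reduction as "given $\epsilon$, run the construction with a smaller auxiliary parameter $\epsilon'$," using the mean value theorem for $x \mapsto x^p$ exactly as in the proof of Proposition \ref{prop:goodsequences}, rather than trying to track sharp constants. A secondary subtlety is making sure the admissibility estimate $\int_\gamma g_\epsilon\,ds \geq 1$ uses the upper gradient property of $g_i$ for $u_i$ (Proposition \ref{prop:uppergradientpaths}) rather than any property of the discrete paths directly, since $\gamma$ is a genuine curve, not a discrete path.
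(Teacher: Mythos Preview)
Your proposal is correct and follows essentially the same approach as the paper: the paper states Theorem \ref{thm:approximating-uppergrad} immediately after Theorem \ref{thm:capacity-complete} with the remark that ``the proof of the statement shows in fact slightly more,'' and Corollary \ref{rmk:refinedversion} makes explicit that one sets $h_i = a_i^{-1} g_i$, exactly as you do. Your admissibility check for $g_\epsilon = a_i^{-1} g_i$ via the upper-gradient inequality for $u_i$ along genuine curves (rather than discrete paths) is the right observation, and your remark that one must first pass to a lower semicontinuous majorant before invoking Proposition \ref{prop:goodsequences} actually fills a small gap that the paper's proof of Theorem \ref{thm:capacity-complete} glosses over.
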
  

\subsection{Continuous functions are dense in Sobolev spaces}
Next, we prove the density of continuous functions in Newton-Sobolev spaces, in the case when the space $X$ is complete. Later, in Section \ref{sec:locallycomplete}, we will use Theorem \ref{thm:density-complete} below to extend the result to the case when $X$ is locally complete, which will thus give a proof for Theorem \ref{thm:capacity}.

\begin{theorem}\label{thm:density-complete} Let $(X,d,\mu)$ be  complete and separable metric measure space. Then $C(X) \cap N^{1,p}(X)$ is dense (in norm) in $N^{1,p}(X)$ for $p\in[1,\infty)$.
\end{theorem}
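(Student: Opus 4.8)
The plan is to reduce to bounded, compactly supported functions and then construct an explicit continuous approximant using the discrete-path machinery already developed. First I would invoke Lemma \ref{lem:densityN1pb} to assume $f\in N^{1,p}_b(X)$, so that $f$ takes values in $[-M,M]$ and vanishes outside some ball $B(x_0,R)$; after a translation in value we may even assume $0\le f\le M$. Next, I would apply Lemma \ref{lem:lowersem} to get a lower semicontinuous upper gradient $g\in L^p(X)$ with $\|g\|_{L^p}^p$ close to the minimal value, and then feed $g$ into Proposition \ref{prop:goodsequences} to obtain a good sequence of bounded Lipschitz functions $(\tilde g_i)$ increasing to a lower semicontinuous good function $\tilde g$ with $\tilde g>g+\eta_A$ on each bounded set $A$ and $\int \tilde g^p\,d\mu\le \int g^p\,d\mu+\epsilon$.

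The heart of the argument is the choice of the set $K$ and the extension. Since $\mu$ is Radon (hence inner regular) and $f\in L^p$, for any $\epsilon>0$ there is a compact set $K\subset \overline{B(x_0,R')}$ (with $R'>R$) such that $f|_K$ is continuous and $\mu(X\setminus K)$ is small where it matters — more precisely, we want $f=0$ off a large ball and $f|_K$ continuous with $\mu(\{x: f(x)\ne 0\}\setminus K)$ small, which one gets by combining Lusin's theorem with inner regularity. Then I would define, following the template of Proposition \ref{prop:uppergradientpaths} applied with $E=K$, $h=f|_K$, a function
\begin{equation}\label{eq:approximation}
\tilde f_i(y):=\max\!\left(\min\left(\inf_P\left\{f(p_0)+\int_P \tilde g_i\right\},\,M\right),\,-M\right),
\end{equation}
where the infimum runs over discrete paths $P=(p_0,\dots,p_n)$ with $p_0\in K$, $p_n=y$, $\mesh(P)\le 1/i$ (one also needs a two-sided version to control $\tilde f_i$ from below, e.g. intersecting with the analogous lower envelope built from $f(p_0)-\int_P\tilde g_i$). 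By Proposition \ref{prop:uppergradientpaths}, each $\tilde f_i$ is locally Lipschitz with upper gradient $\tilde g_i$, hence continuous and (since it is bounded and supported in a bounded set, using $\tilde g_i\in L^p$) in $N^{1,p}(X)$.

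The remaining work is to show $\tilde f_i\to f$ in $N^{1,p}(X)$ along a subsequence. For the gradient part, $\tilde g_i\nearrow \tilde g$ with $\int\tilde g^p\le\int g^p+\epsilon$, and on the complement of $K$ one uses Lemma \ref{lem:uppergradients} to see the relevant gradient contributions come from $X\setminus K$, which has small measure where $f\ne 0$; combined with Proposition \ref{prop:extension} (the continuous function $\tilde f_i$ agrees with $f$ on... actually it agrees with the boundary data only at $K$, so one argues directly) this bounds $\|g_{\tilde f_i - f}\|_{L^p}$. The key pointwise convergence $\tilde f_i(y)\to f(y)$ for a.e.\ $y$ is where Lemma \ref{lem:farenough} enters: it says $\tilde g_i$ is a $(1/i,\Delta)$-discrete upper gradient for $f$ on $(K,V)$ for $i$ large, which forces $|\tilde f_i(y)-f(p_0)|$ to be controlled along near-optimal paths and, letting the diameter threshold $\Delta\to 0$ as we take points close to $K$, pins down the limit to be $f(y)$ at points of $K$ and — via the upper gradient inequality and the $L^p$-smallness of the excess — almost everywhere. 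Then dominated convergence ($|\tilde f_i|\le M\ones_{B}$) gives $\tilde f_i\to f$ in $L^p$.

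\textbf{Main obstacle.} The delicate point is establishing the a.e.\ convergence $\tilde f_i\to f$: a Sobolev function need not be continuous, so the discrete envelopes could, a priori, "see around" the bad set and converge to something other than $f(y)$. This is exactly what Lemma \ref{lem:farenough} and the good-sequence compactness are designed to prevent — any discrete path realizing (nearly) the infimum and having non-negligible diameter subconverges to an honest rectifiable curve along which the genuine upper gradient inequality for $f$ holds — but orchestrating the diameter-threshold limit together with the inner-regularity choice of $K$ so that the exceptional set has measure zero is the technically subtle heart of the proof, and is where the novelty over \cite{bjornnages} lies.
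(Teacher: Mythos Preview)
Your proposal assembles the right pieces (reduction to $N^{1,p}_b$, a Lusin compact $K$, a good sequence of gradients, a discrete-path infimum), but there is a genuine gap in the gradient estimate for $f-\tilde f_i$. The construction of Proposition \ref{prop:uppergradientpaths} does \emph{not} yield $\tilde f_i|_K=f|_K$: that proposition only asserts equality on $E$ when $h$ is \emph{constant}, and for non-constant $h=f|_K$ the constant path merely gives $\tilde f_i(y)\le f(y)$ on $K$; your two-sided envelope gives the reversed inequality $\underline f_i(y)\ge f(y)$, so no sandwich results. One can indeed deduce (via Lemma \ref{lem:farenough} for paths of diameter $>\Delta$ and uniform continuity of $f|_K$ for paths of diameter $\le\Delta$) that $\tilde f_i\to f$ uniformly on $K$, and hence $\tilde f_i\to f$ in $L^p$. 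But without \emph{exact} equality on $K$ you cannot invoke Lemma \ref{lem:uppergradients} to localize $g_{f-\tilde f_i}$ to $X\setminus K$, and the only available bound $g_{f-\tilde f_i}\le g+\tilde g_i$ has fixed $L^p$-size and does not tend to zero. This is the step you flagged as ``so one argues directly'', and it is precisely where the argument breaks.

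The paper avoids this by building a \emph{single} continuous $\tilde f$ with $\tilde f|_K=f|_K$ exactly (Proposition \ref{thm:Extension}); then Lemma \ref{lem:uppergradients} localizes $g_{f-\tilde f}$ to the small set $B(x_0,R)\setminus K$ and the estimate closes. Forcing $\tilde f|_K=f|_K$ requires two ideas absent from your sketch: (i) a penalty $\bP(d(p_0,p_1))\ge\omega(d(p_0,p_1))$ on the first jump, built from the modulus of continuity $\omega$ of $f|_K$, which handles short paths starting and ending in $K$; and (ii) a Whitney-type multi-scale construction, with a gap function $D(x)$ and candidate gradient $G(x)$ that at distance $\sim 2^{-n}$ from $K$ enforce mesh $\le 1/i_n$ and use $g_{i_n}$, where $i_n$ is chosen via Lemma \ref{lem:farenough} so that $g_{i_n}$ is $(1/i_n,2^{-n})$-discretely admissible. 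A single fixed scale $(1/i,\Delta)$ cannot simultaneously handle paths of all diameters approaching $K$, which is why your limiting scheme in $i$ does not produce exact agreement on $K$.
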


Given a function $f\in N^{1,p}(X)$ and $\ep>0$, we want to find a continuous Newton-Sobolev function $\tilde{f}$ on $X$ such that $\|f-\tilde{f}\|_{N^{1,p}(X)}\le \ep$. The idea of the proof is to consider an appropriately large compact set $K\subset X$ (see Equation (\ref{eq:large-K})), where $f|_K$ is continuous, and then to find an extension $\tilde{f}$ which is continuous everywhere, and which has controlled minimal $p$-weak upper gradient $g_{\tilde{f}}$. That is, our proof will be based on the following extension result of Whitney type. 
\begin{proposition}\label{thm:Extension}
Let $(X,d,\mu)$ be  complete and separable metric measure space. Let $f\in N^{1,p}(X)$ and let $g_*\in L^p(X)$ be an upper gradient. Suppose that $f|_{X\setminus B(x_0,R)}=0$ for some  $x_0\in X$, and $R>0$.  Suppose there is a compact set  $K \subset B(x_0,R)$ with $f|_K$ continuous and $g_*|_K$ bounded.

Then, for every $\epsilon>0$, there exists a function $\tilde{f}$ with:
\begin{enumerate}
\item $\sup_{x\in X}|\tilde{f}(x)|\leq \sup_{x\in K}|f(x)|$.
\item $\tilde{f}|_K = f|_K$ and  $\tilde{f}|_{X\setminus B(x_0,R)}=f|_{X\setminus B(x_0,R)}=0$.
\item $\tilde{f} \in N^{1,p}(X)\cap C(X)$.
\item $\int_{X\setminus K} g_{\tilde{f}}^p \, d\mu \leq \int_{X\setminus K} g_*^p\,d\mu+\epsilon$.
\end{enumerate}
\end{proposition}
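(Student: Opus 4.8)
The plan is to build $\tilde f$ as a \emph{discretized Whitney-type extension} of the boundary datum $f|_C$, where $C:=K\cup(X\setminus B(x_0,R))$, obtained from the inf-convolution construction of Proposition \ref{prop:uppergradientpaths} applied to a suitable continuous approximate upper gradient; discrete paths play the role that curves play in the classical Proposition \ref{prop:funconstr}, and the compactness tools of Section \ref{sec:prelim} replace the properness that is unavailable here.

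First I would make two harmless reductions. Truncating $f$ to $[-M,M]$ with $M:=\sup_K|f|$ does not change $f$ on $K$, preserves $f\equiv 0$ off $B(x_0,R)$, and only decreases an upper gradient off $K$ while keeping it bounded on $K$; so I may assume $|f|\le M$ on $X$. Next I would produce a lower semicontinuous upper gradient $g^{ls}$ for $f$ that is still bounded on $K$ and has $\int_{X\setminus K}(g^{ls})^p\,d\mu\le\int_{X\setminus K}g_*^p\,d\mu+\epsilon$ with $g^{ls}\ge g_*$ off $K$; this is a mild variant of the Vitali--Carath\'eodory step (Lemma \ref{lem:lowersem}), localized away from $K$ and using that $g_*|_K$ is already bounded. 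Feeding $g^{ls}$ into Proposition \ref{prop:goodsequences}, and invoking its ``moreover'' clause, yields a good sequence of bounded Lipschitz functions $g_i\nearrow\tilde g$, where $\tilde g$ is a lower semicontinuous good function with $\tilde g|_K$ bounded, $\tilde g|_V>g^{ls}|_V+\eta_V$ on every bounded $V$, and $\int_X\tilde g^p\,d\mu\le\int_X(g^{ls})^p\,d\mu+\epsilon$, the error again supported off $K$. Since $K$ is compact inside the open ball $B(x_0,R)$, $\sigma:=d(K,X\setminus B(x_0,R))>0$, the set $C$ is closed, and $f|_C$ is continuous with $\sup_C|f|\le M$.

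With this data I would set, for each large $i$,
\[
\tilde f_i(y):=\min\!\left(\inf_P\Big\{f(p_0)+\int_P g_i\Big\},\ M\right),
\]
the infimum over discrete paths $P=(p_0,\dots,p_n)$ with $p_0\in C$, $p_n=y$ and $\mesh(P)\le 1/i$, and define $\tilde f:=\sup_i\tilde f_i$. Since $g_i$ increases while the admissible mesh decreases, $\tilde f_i$ is increasing in $i$, so $\tilde f$ is well defined with $-M\le\tilde f\le M$. By Proposition \ref{prop:uppergradientpaths} each $g_i$ is an upper gradient for $\tilde f_i$ on $X$, hence (letting $\gamma$ run over curves and using monotone convergence) $\tilde g$ is an honest $L^p$ upper gradient for $\tilde f$, so $\tilde f\in N^{1,p}(X)$ once $\tilde f\in L^p(X)$ is known. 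The engine for the remaining properties is Lemma \ref{lem:farenough}: with $V=\overline{B}(x_0,2R)$ it gives, for each $\Delta>0$, an $N(\Delta)$ so that $g_i$ is a $(1/i,\Delta)$-discrete upper gradient for $f$ on $(C,V)$ once $i\ge N(\Delta)$; choosing $\Delta_i\downarrow 0$ with $i\ge N(\Delta_i)$ makes the discrete upper gradient inequality available at every scale in the limit. Combining this with the strict positivity $g_i\ge\eta_V$ of a good sequence and the modulus of continuity $\omega_K$ of $f|_K$, I would check: (1) $|\tilde f|\le M=\sup_K|f|$; (2) $\tilde f=f$ on $K$ --- the constant path gives $\tilde f_i\le f$ on $K$, while the discrete upper gradient inequality for paths of diameter $>\Delta_i$ and the continuity estimate for paths of diameter $\le\Delta_i$ give $\tilde f_i\ge f-\omega_K(\Delta_i)$ on $K$, so $\sup_i\tilde f_i=f$; and $\tilde f\equiv 0$ on $X\setminus B(x_0,R)$ --- for $\Delta_i<\sigma$ a near-optimal path is either anchored in $X\setminus B(x_0,R)$, where $f=0$, or crosses the shell $\{0<d(\cdot,X\setminus B(x_0,R))<\sigma\}$, where the discrete upper gradient inequality forces $\int_Pg_i$ to dominate $|f(p_0)|$. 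In particular $\tilde f\in L^p(X)$, so $\tilde f\in N^{1,p}(X)$; and (4) follows since $g_{\tilde f}\le\tilde g$ and the $L^p$-surplus of $\tilde g$ over $g^{ls}$, hence over $g_*$, was arranged to live off $K$ (one may instead route this through Proposition \ref{prop:extension} applied to $\max(g_*,\tilde g)$, together with locality of minimal weak upper gradients).

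The hard part is (3): that $\tilde f$ is \emph{continuous}, not merely lower semicontinuous as an increasing supremum of locally Lipschitz functions. The delicate locus is $K$ itself --- where $f|_K$ is only continuous, so no single $\tilde f_i$ agrees with it --- together with $\partial B(x_0,R)$; the obstruction is precisely that, without quasiconvexity, a point $y$ near $K$ need not be joined to $K$ by short curves, and the discrete path machinery is what lets one cope with this. I would argue by contradiction: a failure of upper semicontinuity at $y_0$ produces $y_n\to y_0$, indices $i_n\to\infty$, and near-optimal discrete paths $P_n$ for $\tilde f_{i_n}(y_n)$ whose discrete integrals $\int_{P_n}g_{i_n}$ stay bounded and whose diameters are bounded below; by goodness of $(g_i)$ and Lemma \ref{lem:compactness}, a subsequence of the $P_n$ converges to a rectifiable curve $\gamma$, and Lemmas \ref{lem:lsccurve}--\ref{lem:lscgoodfunc} pass the inequalities to $\gamma$, contradicting either the definition of $\tilde f(y_0)$ (via the constant path) or the upper gradient inequality for $f$ against $\tilde g>g^{ls}+\eta_V$; the uniform bound $g_i\le\tilde g$ with $\tilde g|_K$ bounded handles the matching estimates near $K$. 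Once continuity is in hand, $\tilde f\in N^{1,p}(X)\cap C(X)$, and Proposition \ref{prop:extension} (with $K$ closed, $\tilde f|_K=f|_K$, $\tilde f\in L^p$) gives an alternative derivation of the Sobolev membership and of the sharp bound (4).
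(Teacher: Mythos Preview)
Your overall architecture --- build a discretized inf-convolution against a good sequence $g_i\nearrow\tilde g$, use Lemma \ref{lem:farenough} to recover the values of $f$ on $C$, and push the upper gradient through Proposition \ref{prop:extension} --- matches the paper's. But the specific extension $\tilde f:=\sup_i\tilde f_i$, with $\tilde f_i$ defined by an infimum over paths of mesh $\le 1/i$, has a genuine gap at exactly the point you flag as hard: upper semicontinuity at $\partial K$.

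The problem is that as $i\to\infty$ the mesh constraint $1/i$ becomes uniformly small, and in a space with no local connectivity there may be \emph{no} admissible paths from $C$ to a point $y_n$ near $K$ once $i$ is large. Concretely, take $X=\{0\}\cup\{1/n:n\ge1\}\subset\R$, $K=\{0,1\}$, $f(0)=0$, $f(1)=1$, $M=1$; there are no nonconstant rectifiable curves, so any $f$ is Sobolev with zero upper gradient. For $n\ge2$ and $i>n(n+1)$ there is no discrete path from $\{0,1\}$ to $1/n$ with mesh $\le1/i$ (the gap $1/n-1/(n+1)$ exceeds $1/i$), so $\tilde f_i(1/n)=M=1$ for all large $i$, hence $\tilde f(1/n)=1\not\to0=\tilde f(0)$. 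Your single-step estimate $\tilde f_i(y_n)\le f(y_0)+g_i(y_0)\,d(y_0,y_n)$ using the path $(y_0,y_n)$ is only valid when $d(y_0,y_n)\le 1/i$, and taking $\sup_i$ destroys this. Relatedly, your contradiction sketch is oriented the wrong way: a failure of upper semicontinuity means the infimum defining $\tilde f_{i_n}(y_n)$ is \emph{large}, so ``near-optimal'' paths need not have small (or even finite) integral --- there may be none at all --- and there is nothing to feed into the compactness Lemmas \ref{lem:compactness}--\ref{lem:lscgoodfunc}.

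This is precisely why the paper does \emph{not} take a supremum over $i$, but instead builds a \emph{single} extension via formula \eqref{eq:approximation} with three extra ingredients you are missing: (i) a \emph{variable} mesh $\gap(x)$ comparable to $d(x,K)$ (so the allowed step size never drops below the distance to $K$), (ii) a scale-dependent gradient $G(x)$ that equals $g_{i_n}$ on the dyadic shell $\{2^{-n-1}\le d(\cdot,K)<2^{-n}\}$, and (iii) a first jump of \emph{unrestricted} length, compensated by a penalty $\bP(d(p_0,p_1))$ tied to the modulus of continuity $\omega_K$. Ingredient (iii) is what guarantees the upper bound near $K$ (the one-step path $(y_0,y_n)$ is always admissible, with cost $\bP(d)+G(y_0)d\to0$), while (i)--(ii) are what make the discrete upper gradient property of Lemma \ref{lem:farenough} available at \emph{every} scale in a single function rather than only in the limit. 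A secondary issue: your paths are not restricted to a bounded set $V$, so Lemma \ref{lem:farenough} and the goodness of $(g_i)$ (Definition \ref{def:goodsequence}(3)) are not directly applicable; the paper confines admissible paths to $V=\overline{B(x_0,2R)}$ and sets $\tilde f=0$ outside $V$ by fiat.
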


We delay the proof of this extension result, briefly, in order to show how the density result follows from it.

\begin{proof}[Proof of Theorem \ref{thm:density-complete}.]
First, recall that by Lemma \ref{lem:densityN1pb} the space of bounded Newton-Sobolev functions with bounded support $N^{1,p}_b(X)$ is dense in $N^{1,p}(X)$. Next, we show that $C(X)\cap N^{1,p}_b(X)$ is dense in $N^{1,p}_b(X)$. If $f \in N^{1,p}_b(X)$, then there is a constant $M<\infty$ such that $|f|\leq M$ everywhere in $X$ and there is a ball $B(x_0,R)$ so that $f|_{X\setminus B(x_0,R)}=0$. Let $g\in L^p(X)$ be any upper gradient of $f$. Since $f=0$  in $X\setminus B(x_0,R)$, we can assume by modifying $g_*$ that $g_*|_{X\setminus B(x_0,R)}=0$. Indeed, this modification leaves \ref{eq:ug} invariant. 

Let $\epsilon>0$ be fixed, by using Lusin's theorem and the absolute continuity of integrals, choose a compact set $K \subset B(x_0,R)$ so that $f|_K$ is continuous, $g_*|_K$ is bounded and so that
\begin{equation}\label{eq:large-K}
\int_{X\setminus K} 2^{p+3}g^p \, d\mu + \mu(B(x_0,R)\setminus K)2^{p+1} M^p \leq \epsilon.
\end{equation}
This is possible, since $\mu$ is Radon, $g_*\in L^p(X)$, and $g_*= 0$ in $X\setminus B(x_0,R)$.

By Proposition \ref{thm:Extension} there exists a function $\tilde{f} \in N^{1,p}(X)\cap C(X)$ with  $\tilde{f}|_K = f|_K$  and 
\[
\int_{X\setminus K} g_{\tilde{f}}^p \, d\mu \leq \int_{X\setminus K} g_*^p \,d\mu + \epsilon 2^{-p-3}\leq \epsilon 2^{-p-2},
\]
where the last inequality follows by (\ref{eq:large-K}).
Furthermore, $\tilde{f}|_{X\setminus B(x_0,R)} =0$ and $|\tilde{f}|\leq M$ everywhere. 

Since $(f-\tilde{f})|_{K\cup (X \setminus B(x_0,R))}=0$, by Lemma \ref{lem:uppergradients} and subadditivity of minimal $p$-weak upper gradients, we have that $g_{f-\tilde{f}}\leq (g_{\tilde{f}}+g_*)\ones_{B(x_0,R)\setminus K}.$ Similarly, $|f-\tilde{f}|\leq 2M\ones_{B(x_0,R)}$. Thus, $\int_X g_{f-\tilde{f}}^p d\mu \leq \epsilon/2$. Also, we have $\int_X |f-\tilde{f}|^p\, d\mu\leq (2M)^p \mu(B(x_0,R)\setminus K).$ Therefore,
\[
\|f-\tilde{f}\|_{N^{1,p}(X)}^p \leq \int_X |f-\tilde{f}|^p \, d\mu  + \int_X g_{f-\tilde{f}}^p \, d\mu \leq \epsilon.
\]

\end{proof}

\begin{proof}[Proof of Proposition \ref{thm:Extension}.] This proof will take some detours and require some auxiliary results. The function $\tilde{f}$ is defined in Formula \eqref{eq:approximation} below. However, this definition depends on some technical choices, which are explained first. The crucial properties ensured by these choices are codified as lemmas. Once the function $\tilde{f}$ has been properly defined, we verify, one-by-one, the properties of the theorem. The proof ends at the end of this subsection by verifying the fourth property in the statement.

Fix a function $f\in N^{1,p}(X)$ and an arbitrary $\epsilon>0$. By assumption, 
\begin{equation}\label{eq:x0R}
\text{there is $x_0 \in X$ and $1\le R<\infty$, so that $f|_{X\setminus B(x_0,R)}=0$.} 
\end{equation}
Since $K$ is compact and $f$ is uniformly continuous on $K$, 
\begin{equation}\label{eq:M}
M:=\sup_{x\in K}|f(x)|<\infty.
\end{equation}
If $X\setminus B(x_0,R)\neq\emptyset$, assume, by scaling the metric $d$, that 
\begin{equation}\label{eq:scaling-d}
d(K, X \setminus B(x_0,R))\geq 1.
\end{equation} 

Let $g_*\in L^p(X)$ be the given upper gradient for $f$, which is assumed to be bounded on $K$. Thus, there is a constant $S<\infty$ so that $|\sup_{x\in K} g_*(x)|=S$. 
The result will be proven by constructing a function $\tilde{f}\in N^{1,p}(X) \cap C(X)$  so that the properties in Proposition \ref{thm:Extension} hold.

We construct $\tilde{f}$ together with an upper gradient $g \in L^p(X)$ for it.
Our choice of upper gradient $g$ is given in Lemma \ref{lem:choice-ug} below.
Let $\ep>0$ and let $g_\epsilon$ be a lower semicontinuous upper gradient for $f$ with $\int_{X} g_\epsilon^p d\mu \leq \int_{X}g_*^p d\mu +  \epsilon 2^{-5}$, as guaranteed by Lemma \ref{lem:lowersem}. By replacing $g_\epsilon(x)$ with $\min(g_\epsilon,S\ones_K+\infty \ones_{X\setminus K})$, we can assume that $g_\epsilon|_K$ is bounded by $|\sup_{x\in K}g_\epsilon(x)|=S$. Then, Proposition \ref{prop:goodsequences} applied to the function $g_\ep$ gives the following lemma.

\begin{lemma}\label{lem:choice-ug}
With $g_*$ and $g_\epsilon$ as defined above, there is a lower semicontinuous good function $g:X\to [0,\infty]$ so that  $g|_K$ is bounded, that admits a good sequence $\{g_i\}_{i\in \N}$ of bounded Lipschitz continuous functions  so that $g_i\nearrow g$ pointwise on $X$, and such that
\[
\int_{X} g^p d\mu \leq \int_{X} g_\ep^p d\mu +\ep 2^{-5} \leq \int_{X}g_*^p d\mu +  \epsilon 2^{-4}.
\]
Moreover, for every bounded set $V\subset X$ there exists an $\eta>0$ such that $g|_{V}>g_\epsilon+\eta$.  
\end{lemma}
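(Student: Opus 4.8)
The plan is to deduce Lemma \ref{lem:choice-ug} directly from Proposition \ref{prop:goodsequences}, applied to the function $g_\epsilon$ rather than to $g_*$. Recall that $g_\epsilon$ has already been fixed above as a lower semicontinuous upper gradient for $f$ with $\int_X g_\epsilon^p\,d\mu \le \int_X g_*^p\,d\mu + \epsilon 2^{-5}$, and that it has been truncated away from $K$ so that $g_\epsilon|_K$ is bounded (by $S$). I would invoke Proposition \ref{prop:goodsequences} with the lower semicontinuous input function $g_\epsilon$, with the error parameter $\epsilon 2^{-5}$ in place of $\epsilon$, and using the ``Moreover'' clause with the compact set $K$, on which $g_\epsilon$ is bounded. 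This produces a lower semicontinuous good function $\tilde g$ with $\tilde g|_K$ bounded, together with a good sequence $(\tilde g_i)_{i\in\N}$ of bounded Lipschitz continuous functions increasing pointwise to $\tilde g$, such that $\int_X \tilde g^p\,d\mu \le \int_X g_\epsilon^p\,d\mu + \epsilon 2^{-5}$ and, for each bounded set $A\subset X$, there is $\eta_A>0$ with $\tilde g \ge g_\epsilon + \eta_A$ on $A$. I then set $g := \tilde g$ and $g_i := \tilde g_i$.

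All the asserted structural properties of $g$ and $(g_i)$ — lower semicontinuity of $g$, goodness of $g$, the fact that $(g_i)$ is a good sequence of bounded Lipschitz functions with $g_i \nearrow g$ pointwise, and boundedness of $g|_K$ — are then immediate from the conclusion of Proposition \ref{prop:goodsequences}. For the quantitative estimate I would simply chain the two inequalities:
\[
\int_X g^p\,d\mu \le \int_X g_\epsilon^p\,d\mu + \epsilon 2^{-5} \le \int_X g_*^p\,d\mu + \epsilon 2^{-5} + \epsilon 2^{-5} = \int_X g_*^p\,d\mu + \epsilon 2^{-4}.
\]
For the last sentence of the lemma, given a bounded set $V$ I would take $A = V$ above and set $\eta := \eta_V/2 > 0$; then $g \ge g_\epsilon + \eta_V > g_\epsilon + \eta$ on $V$ (with the evident interpretation at the measure-zero set where $g_\epsilon = \infty$, since there $g = \infty$ as well).

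Since Lemma \ref{lem:choice-ug} is essentially a repackaging of Proposition \ref{prop:goodsequences} tailored to the data $(g_*, g_\epsilon, K)$, there is no genuine obstacle here; the work is entirely bookkeeping. The only points needing a moment of care are: feeding in $g_\epsilon$ (not $g_*$) as the input function, so that the strict lower bound in the last sentence is phrased relative to $g_\epsilon$ as required later by Lemma \ref{lem:farenough}; keeping the dyadic error budget $\epsilon 2^{-5} + \epsilon 2^{-5} = \epsilon 2^{-4}$ consistent with the constants used downstream in the proof of Proposition \ref{thm:Extension}; and passing from the non-strict bound $\tilde g \ge g_\epsilon + \eta_V$ provided by Proposition \ref{prop:goodsequences} to the strict bound by halving the constant.
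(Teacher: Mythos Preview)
Your proposal is correct and matches the paper's approach exactly: the paper states just before the lemma that ``Proposition \ref{prop:goodsequences} applied to the function $g_\epsilon$ gives the following lemma,'' and your bookkeeping (feeding in $g_\epsilon$, using error $\epsilon 2^{-5}$, invoking the ``Moreover'' clause with $K$, and chaining the two integral estimates) is precisely what is intended. The only addition you make beyond the paper is the explicit halving of $\eta_V$ to pass from $\geq$ to $>$, which is harmless and arguably a small improvement in precision.
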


We now introduce several auxiliary functions that require some motivation. By passing from $g_*$  to the functions $g_i$, we have gained continuity but at the price of losing the property that $g_i$ is an upper gradient for $f$. This loss forces us to make further choices, whose role we now briefly describe.

The construction of a good sequence of functions guarantees that $g_i$, for large $i$, is a discrete upper gradient in the sense of Definition \ref{def:discreteupper}. The discrete upper gradient property only holds for the closed sets $C$ and $V$, which we choose as follows. First,
\begin{equation}\label{eq:V}
V:=\overline{B(x_0,2R)}.
\end{equation} Note, that $V$ is the closure of the ball, and not the closed ball - although this makes little difference for the proof. Second,
\begin{equation}\label{eq:C}
C:=K \cup (\overline{B(x_0,2R)}\setminus B(x_0,R)).
\end{equation} 
Note that $f|_C$ is continuous, and $V$ is bounded, so the set $V$ localizes the argument. In the definition of $C$ we adjoin the annulus $\overline{B(x_0,2R)}\setminus B(x_0,R)$ to ensure later that our approximation will vanish outside of $B(x_0,R)$, see Figure \ref{fig:paths}.

The discrete upper gradient property includes two parameters $(\delta, \Delta)$, where the first controls the mesh-size and the second the diameter of discrete paths. As we decrease $\Delta$, we need to pass further into the sequence $g_i$ and decrease the mesh-size $\delta$. Due to this, we cannot construct an approximation using a single function $g_i$ or a single mesh-size $\delta$. Consequently, as we approach the set $K$, we will force the mesh-size to decrease, and the index $i$ to increase. This leads to a definition of auxiliary functions $\gap(x)$ and $G(x)$, where $\gap$ stands for the size of gaps, and $G$ will be a candidate gradient for the constructed function $\tilde{f}$. The idea of using different functions and meshes, which are fixed at dyadic length scales, comes from a Whitney-type extension argument. 

Finally, to force the property that $f|_C=\tilde{f}|_C$, we need to  control the behaviour off of $K$, and this involves the modulus of continuity  $\omega$ of $f|_K$. The modulus of continuity is used to define a ``penalty'' term $\bP$, which eventually will depend on the distance to $K$. The use of the penalty term is a bit similar to how one extends a uniformly continuous function off a subset to a uniformly continuous function on the entire space; see for example \cite{milman97}. 

The value of the approximation at a point $x\in X$ will be ultimately obtained by infimizing over discrete paths connecting $x$ to the closed set $C$, which have mesh-size controlled by $\gap(x)$ - these will be called $(x,\gap)$-admissible paths. The minimized function sums $G$ over such a path together with a penalty term and a term from $f$. The reader may now wish to glance at Equation \eqref{eq:approximation} to see how the three functions, $\bP,\gap,G$ are used. It may also be helpful to compare this to \eqref{eq:uicapacity}, or to the approximation and discussion found in \cite{seb2020} - where also a more detailed historical comparison is contained.

Denote by $\omega:(0,\infty) \to\R$ the modulus of continuity for $f|_K$, that is 
\begin{equation}\label{eq:mod-cont}
\omega(\delta) = \sup\{ |f(x)-f(y)| : d(x,y)\leq \delta, x,y\in K \}.
\end{equation}

\begin{lemma}\label{lem:auxiliaryfuns}
With $C,$ $V,$ and $K,$ as defined above in (\ref{eq:C}), (\ref{eq:V}), and, (\ref{eq:large-K}),
if $g_i$ is a sequence of good functions converging to $g$, as constructed in Lemma \ref{lem:choice-ug}, and $\omega$ is the modulus of continuity of $f|_K$, as defined in (\ref{eq:mod-cont}), then there exists an increasing sequence $i_n\in \N$, a gap function $\gap:X\to [0,\infty)$, a candidate upper gradient $G:X\to [0,\infty]$, and a penalty function $\bP:[0,\infty)\to [0,\infty)$, with the following properties given $n\in \N$:
\begin{enumerate}
\item[(i)] $g_{i_n}$ is an $(i_n^{-1}, 2^{-n})$-discrete upper gradient for $f$ on $(C,V)$;
\item[(ii)] the gap function $D$ satisfies:
\bi
\item[(a)] $\gap(x)\leq d(x,K)/4$ for every $x\in X$;
\item[(b)] $0<\gap(x)< i_3^{-1}$ for each $x\in X\setminus K$;
\item[(c)] $D(x)=\min(1/i_{n+2}, 2^{-n-3})$, if $2^{-n-1}\leq d(x,K) < 2^{-n}$, for $n\ge 1$;
\ei
\item[(iii)] the candidate upper gradient satisfies:
\bi
\item[(a)] $G(x)\geq g_{i_3}(x)$ for all $x$;
\item[(b)] for $n>3$, $G(x)\geq g_{i_n}$ if $d(x,K)\leq 2^{-n}$;
\item[(c)] for $n\ge 3$, $G(x)\leq g_{i_n}$ if $d(x,K) \geq 2^{-n-1}$;
\item[(d)] $G|_K=g|_K$.
\ei
\item[(iv)] the penalty function satisfies:
\bi
\item[(a)] $\bP(r)\geq 2M$ if $r \geq i_3^{-1}$; 
\item[(b)] $\lim_{r\to 0} \bP(r)=0$, but $\bP(r)\geq \omega(r)+\omega(2^{1-n})$ for $i_{n+1}^{-1}\leq r< i_{n}^{-1}$ and $n\geq 3$;
\item[(c)] $\bP(r)\geq \omega(r)$, for all $r>0$.
\ei
\end{enumerate}
\end{lemma}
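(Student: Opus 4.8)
This is essentially a bookkeeping lemma: the four objects are produced by explicit dyadic formulas keyed to $d(\cdot,K)$, and the only genuine input is property~(i), which comes straight from Lemma~\ref{lem:farenough}.

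To obtain~(i), I would first pass from $f$ to a truncated and shifted version $\check f$ taking values in a bounded interval $[0,2M]$ and still agreeing with $f$ on $C$ — concretely $\check f:=M+\max(\min(f,M),-M)$, which agrees with $f$ on $C$ since $|f|\le M$ on $K$ by \eqref{eq:M} and $f\equiv 0$ on the annulus $\overline{B(x_0,2R)}\setminus B(x_0,R)\subset X\setminus B(x_0,R)$ by \eqref{eq:x0R}. Both $g_*$ and $g_\epsilon$ remain upper gradients of $\check f$ (truncation is $1$-Lipschitz, a shift changes nothing), and, since Definition~\ref{def:discreteupper} reads only the endpoint values at $p_0,p_n\in C$, the $(\delta,\Delta)$-discrete upper gradient property on $(C,V)$ is the same for $\check f$ and for $f$. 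Now invoke Lemma~\ref{lem:farenough} with $C\subset V$ the closed bounded sets from \eqref{eq:C} and \eqref{eq:V} — note $\check f|_C$ is continuous, being continuous on $K$, constant on the annulus, with $K$ at distance $\ge 1$ from $X\setminus B(x_0,R)$ by \eqref{eq:scaling-d} — with $g_\epsilon$ as the lower semicontinuous upper gradient and with the good sequence $(g_i)\nearrow g$ of Lemma~\ref{lem:choice-ug}, whose ``Moreover'' clause supplies, for the bounded $V$, an $\eta>0$ with $g|_V>g_\epsilon|_V+\eta$ — the last hypothesis of Lemma~\ref{lem:farenough}. Its conclusion gives, for each $n$, an $N_n$ so that $g_i$ is an $(1/i,2^{-n})$-discrete upper gradient for $f$ on $(C,V)$ whenever $i\ge N_n$.

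I would then set $i_1:=\max(N_1,4)$ and $i_n:=\max(N_n,i_{n-1}+1)$ for $n\ge2$, so $(i_n)$ is strictly increasing with $i_n\ge N_n$; (i) is then immediate, and $(g_{i_n})_n$ is an increasing sequence of bounded Lipschitz functions with $g_{i_n}\nearrow g$. The remaining three objects are written by hand. Gap function: $\gap(x):=0$ on $K$; $\gap(x):=\min(i_{n+2}^{-1},2^{-n-3})$ when $2^{-n-1}\le d(x,K)<2^{-n}$ with $n\ge1$; $\gap(x):=\tfrac12 i_3^{-1}$ when $d(x,K)\ge\tfrac12$. Then (ii)(c) is built in; (ii)(a) holds because $2^{-n-3}=2^{-n-1}/4\le d(x,K)/4$ on the $n$-th annulus and $\tfrac12 i_3^{-1}\le\tfrac18\le d(x,K)/4$ otherwise (as $i_3\ge4$); (ii)(b) holds as the values are positive off $K$ and bounded by $i_3^{-1}$. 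Candidate gradient: $G|_K:=g|_K$, and off $K$ patch $g_{i_{\max(n,3)}}$ together along the half-open dyadic annuli $\{2^{-n-1}\le d(x,K)<2^{-n}\}$; then (iii)(a),(d) are immediate and (iii)(b),(c) follow from the monotonicity of $(g_{i_n})$, the factor-$2$ gap between $2^{-n}$ in (b) and $2^{-n-1}$ in (c) giving exactly the slack needed to reconcile the half-open convention at scale boundaries. Penalty: $\bP(0):=0$; $\bP(r):=\omega(r)+\omega(2^{1-n})$ for $i_{n+1}^{-1}\le r<i_n^{-1}$ with $n\ge3$; $\bP(r):=2M$ for $r\ge i_3^{-1}$. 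Then (iv)(a) is built in; (iv)(c) holds since $\omega(r)\le2M$ for all $r$ (because $|f|\le M$ on $K$) and $\bP(r)\ge\omega(r)$ on the rest; (iv)(b) holds because $r\to0$ forces $n\to\infty$, so $\omega(r)\to0$ and $\omega(2^{1-n})\to0$ by uniform continuity of $f|_K$ on the compact set $K$. All three are Borel (countable patchings of continuous, resp. monotone, pieces), $\gap$ is finite-valued, and $G$ is $[0,\infty]$-valued.

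There is no conceptual obstacle: the content of the lemma is property~(i), and it is in the proof of Lemma~\ref{lem:farenough}, not here, that the goodness of $(g_i)$ (precompactness of families of discrete paths of bounded $g_i$-length and diameter $\ge\Delta$) is used. The care needed is twofold: legitimizing the use of Lemma~\ref{lem:farenough} when $f$ need not be bounded, which the truncation-and-shift handles because the discrete condition sees only endpoint values in $C$; and the routine but fussy task of picking consistent half-open conventions so the dyadic formulas for $\gap$, $G$, $\bP$ match at the boundaries $d(x,K)=2^{-n}$, and checking that one recursion for $(i_n)$ meets all the quantitative demands of (ii)–(iv) at once (enlarging individual $i_n$ further whenever a strict inequality requires it).
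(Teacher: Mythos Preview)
Your proposal is correct and follows essentially the same approach as the paper: Lemma~\ref{lem:farenough} supplies the indices $i_n$ for (i), and $\gap$, $G$, $\bP$ are given by explicit piecewise dyadic formulas in $d(\cdot,K)$ that match the paper's almost verbatim. In fact your argument is slightly \emph{more} careful: the paper invokes Lemma~\ref{lem:farenough} directly without addressing that its hypotheses ask for $f:X\to[0,M]$, whereas you handle this by passing to the truncated-and-shifted $\check f$ and observing that the discrete-upper-gradient condition on $(C,V)$ depends only on the endpoint values in $C$, where $\check f$ and $f$ agree.
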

\begin{proof}
By Lemma \ref{lem:farenough}, there is an increasing sequence $i_n$, for $n\in \N$, so that $g_{i_n}$ is an $(i_n^{-1}, 2^{-n})$-discrete upper gradient for $f$ on $(C,V)$. Define $\gap$ as a step function depending on dyadic length scales determined by the distance to $K$:
\[\gap(x) := \begin{cases} \min(1/i_3,1/8) & d(x,K) \geq 2^{-1} \\ \min(1/i_{n+2}, 2^{-n-3}) & 2^{-n-1}\leq d(x,K) < 2^{-n}, n\geq 1 \\  0 & x\in K.  \end{cases}\]
Similarly, define the candidate upper gradient piecewise using the good functions $g_i$ and their limit $g$.
\[G(x) := \begin{cases}  g_{i_3}(x) & d(x,K) \geq 2^{-3} \\ g_{i_n}(x) & 2^{-n-1}\leq d(x,K) < 2^{-n}, n\geq 3 \\ g(x)  & x\in K. \end{cases}\]
Finally, we define a penalty function depending on the modulus of continuity of $f$ on $K$.
\[\bP(r) := \begin{cases} 2M & \frac{1}{i_3} \leq r \\ \omega(2^{1-n})+\omega(r)& \frac{1}{i_{1+n}}\leq r< \frac{1}{i_{n}}, n\geq 3 \\  0 & r=0 \end{cases}.\]
Notice that we have $\lim_{r\to 0} \bP(r) = 0$. The properties of $\bP$ are direct to verify, once one notices that $2M\geq \omega(r)$ for all $r>0$. 

We let the reader verify that these definitions imply the properties stated above.

\end{proof}

Next, we describe the admissible discrete paths used in the extension. 

\begin{definition}\label{def:admissible}
With $V$, and $C$ defined as in (\ref{eq:V}) and (\ref{eq:C}),
a discrete path $P=(p_0, \dots p_n)$ is called $(x,\gap)$-{\it admissible} if $p_0\in C$, $p_n=x$, and $d(p_k,p_{k+1}) \leq \gap(p_k)$ for each $k=1,\dots, n-1$ and $p_1,\dots, p_n \in V$. 
\end{definition}
In other words, the length of the first step $d(p_0,p_1)$ can be arbitrary, but after this ``first jump'', the following steps are controlled by the function $\gap$. This ensures that there is always at least one $(x,\gap)$-admissible path for every $x\in V$, namely $P=(p_0,x)$ for any $p_0\in C$ (or $P=(p_0)$ if $x=p_0\in C$. This fact will be used to guarantee an upper-bound for the extension.

Note that Lemma \ref{lem:auxiliaryfuns} guarantees $\gap(x)\leq d(x,K)/4$ for all $x\in X$. This implies the following useful lemma. 
\begin{lemma} \label{lem:admissiblepath} 
With $K$ as defined  in (\ref{eq:large-K}),
let $\gap(x)\ge 0$ be a function such that $\gap(x)\leq d(x,K)/4$ for all $x\in X$. If $P=(p_0, p_1,\dots,p_n)$ is $(x,\gap)$-admissible and $p_1\not\in K$, then $p_l\not\in K$ for any $l\geq 1$. Alternatively, if $n\geq 1$ and $p_1\in K$, then $P=(p_0,p_1)$.
\end{lemma}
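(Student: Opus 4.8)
The plan is to argue by contradiction using the hypothesis $\gap(x)\le d(x,K)/4$ and the triangle inequality, exploiting that after the first step all jumps are short relative to the distance to $K$. Suppose $P=(p_0,p_1,\dots,p_n)$ is $(x,\gap)$-admissible, so $p_1,\dots,p_n\in V$ and $d(p_k,p_{k+1})\le \gap(p_k)$ for $k=1,\dots,n-1$. Assume $p_1\notin K$; I want to show $p_l\notin K$ for every $l\ge 1$. First I would record the elementary estimate that, for $k\ge 1$,
\[
d(p_{k+1},K)\ge d(p_k,K)-d(p_k,p_{k+1})\ge d(p_k,K)-\gap(p_k)\ge \tfrac34\, d(p_k,K),
\]
using $\gap(p_k)\le d(p_k,K)/4$. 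In particular $d(p_{k+1},K)>0$ whenever $d(p_k,K)>0$, so once a vertex is off $K$, all subsequent vertices stay off $K$. Since $p_1\notin K$ means $d(p_1,K)>0$, an immediate induction on $l$ gives $d(p_l,K)>0$, i.e. $p_l\notin K$, for all $l\ge 1$.

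For the alternative statement, suppose $n\ge 1$ and $p_1\in K$. By the contrapositive of what was just proved, if any later vertex $p_l$ with $l\ge 1$ lay outside $K$ then $p_1$ would have to lie outside $K$ as well — more precisely, applying the same monotonicity estimate forward from the first index $l\ge 1$ at which $p_l\notin K$ would be impossible since that index cannot be $1$; but this does not by itself force $n=1$. The cleaner route is to observe that $p_1\in K$ forces $\gap(p_1)=0$ (by the definition of $\gap$ in Lemma \ref{lem:auxiliaryfuns}, or simply because $\gap(p_1)\le d(p_1,K)/4 = 0$). Hence if $n\ge 2$, admissibility requires $d(p_1,p_2)\le \gap(p_1)=0$, so $p_2=p_1$, contradicting the requirement that a discrete path does not repeat. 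Therefore $n=1$ and $P=(p_0,p_1)$.

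The argument is short and the only real content is the one-line distance estimate together with the observation that $\gap$ vanishes exactly on $K$. I do not anticipate a genuine obstacle here; the one point to be careful about is the degenerate cases — a constant path $P=(p_0)$ (vacuously fine), and ensuring the simplicity (non-repeating) requirement of discrete paths is invoked correctly to rule out $p_2=p_1$. I would state the estimate once, note $\gap(x)=0 \iff x\in K$ is immediate from $\gap(x)\le d(x,K)/4$ and $\gap>0$ off $K$ (Lemma \ref{lem:auxiliaryfuns}(ii)(b)), and conclude.
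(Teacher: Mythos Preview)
Your proof is correct and follows essentially the same approach as the paper: both use the inequality $d(p_k,p_{k+1})\le \gap(p_k)\le d(p_k,K)/4$ together with the triangle inequality to propagate $p_l\notin K$ by induction, and both dispose of the case $p_1\in K$ by noting $\gap(p_1)=0$ forces $p_2=p_1$, contradicting simplicity. Your write-up is in fact a bit more explicit than the paper's about the induction step and about invoking the non-repeating requirement on discrete paths.
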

\begin{proof}
If $p_1\in K$, since $D\equiv 0$ on $K$,  $d(p_{l+1},p_l)=0$ for $l\ge 1$, so the path $P$ will stay at $p_1$. Else, if $p_1\not\in K$, then $d(p_{l+1},p_l) \le D(p_l) < d(p_l,K)/2$ for $l\ge 1$, hence $p_l\not\in K$ for $l\ge 1$.
\end{proof}
\begin{definition}\label{def:approximation} With the functions $\bP,G,\gap$ from Lemma \ref{lem:auxiliaryfuns} and $K,C,V\subset X$, as defined in (\ref{eq:large-K}), (\ref{eq:C}), (\ref{eq:V}), and $M,R \in (0,\infty),x_0 \in X$ defined in (\ref{eq:M}) and (\ref{eq:x0R}), with $d(K,X\setminus B(x_0,R))\geq 1$, as in (\ref{eq:scaling-d}). We set the extension of $f$ as follow: for $x\in V$, 
\begin{equation}\label{eq:approximation}
\tilde{f}(x) := \min\left(M, \inf_{P=(p_0,\dots, p_n=x)} \Phi(P) \right),
\end{equation}
 where the infimum is taken over all $(x,\gap)$-admissible paths $P$ and $\Phi(P)$ is defined as follows. 
\begin{equation}\label{eq:def-phip}
\Phi(P):= \bP(d(p_0,p_1)) + f(p_0) + \sum_{k=0}^{n-1}G(p_
k)d(p_k,p_{k+1}),
\end{equation}
when $n\geq 0$. When $n=0$, we define $\Phi(P):=f(p_0)$. Finally, for $x\not\in V$, set $\tilde{f}(x)=0$. 
\end{definition}

\begin{figure}
\centering
\begin{tikzpicture}[square/.style={regular polygon,regular polygon sides=4}]
\draw[very thick, draw,fill=black!10](0,0) circle (5cm);    
\draw[very thick, draw,fill=white](0,0) circle (3.5cm);  

\draw[very thick,fill=black!45] plot[smooth cycle] coordinates {(-1.5,0) (-0.5,0.5) (1.5,1.5) (0,2) (-2,0.5)};
\draw[very thick,fill=black!45] plot[smooth cycle] coordinates {(0,-1) (1,-1.5) (1.5,-0.5) (0.5,0)};

\begin{scope}[every node/.style={circle,thick, draw=black,fill=white!100!}]
\node (p) [scale=0.4]at (0.8,1.4) {};
\node (q) [scale=0.4]at (0,-4) {};
\node (s)[scale=0.4] at (0.8,-1) {};
\node (u)[scale=0.4] at (-4,0) {};
\node (t) [scale=0.4]at (0.1,1.5) {};
\end{scope}    

\begin{scope}[every node/.style={circle,thick, draw=black,fill=black!100!}]
\node (P) [scale=0.4]at (2.5,1) {};
\node (Q) [scale=0.4]at (0.1,-1.5) {};
\node (S)[scale=0.4] at (-0.8,1) {};
\node (U)[scale=0.4] at (-1.4,0.5) {};
\node (T) [scale=0.4]at (-1,1.6) {};
\node (R) [scale=0.4]at (3.5,2) {};
\end{scope}  

\node[below right] at (2.5,1) {$P$};
\node[left] at (0.1,-1.5) {$Q$};
\node[above right] at (-0.8,1) {$S$};
\node[above] at (-1.4,0.5) {$U$};
\node[left] at (-1,1.6) {$T$};
\node[above] at (3.5,2) {$R$};

\node[below right] at (0.5,1) {{\huge $K$}};

\begin{scope}[every node/.style={square,thin, draw=black,fill=white!100!}]

\node (p1) [scale=0.4]at (2,2.4) {};
\node (p2) [scale=0.4]at (2.4,2) {};
\node (p3) [scale=0.4]at (2.1,1.6) {};
\node (p4) [scale=0.4]at (2.8,1.4) {};

\draw[dashed, very thick] (p) -- (p1);
\draw[dashed] (p1) -- (p2) -- (p3) -- (p4) -- (P);

\node (t1) [scale=0.4]at (-0.7,3.1) {};
\node (t2) [scale=0.4]at (-1.2,3) {};
\node (t3) [scale=0.4]at (-2,2.5) {};
\node (t4) [scale=0.4]at (-1.5,2.4) {};
\node (t5) [scale=0.4]at (-1.8,2.1) {};
\node (t6) [scale=0.4]at (-1,2.2) {};
\node (t7) [scale=0.4]at (-1.4,1.9) {};
\node (t8) [scale=0.4]at (-0.8,1.8) {};

\draw[dashed, very thick] (t) -- (t1);
\draw[dashed] (t1) -- (t2) -- (t3) -- (t4) -- (t5) -- (t6) -- (t7) -- (t8) -- (T);

\node (q1) [scale=0.4]at (0.7,-3.1) {};
\node (q2) [scale=0.4]at (-0.4,-2.7) {};
\node (q3) [scale=0.4]at (0.2,-2.5) {};
\node (q4) [scale=0.4]at (-0.2,-2.3) {};
\node (q5) [scale=0.4]at (0.1,-2.1) {};
\node (q6) [scale=0.4]at (0.3,-1.7) {};

\draw[dashed, very thick] (q) -- (q1);
\draw[dashed] (q1) -- (q2) -- (q3) -- (q4) -- (q5) -- (q6) -- (Q);

\draw[dashed, very thick] (s) -- (S);

\draw[dashed, very thick] (u) -- (U);
\end{scope}

\end{tikzpicture}
\caption{The figure shows six different admissible paths starting at various points in $C$ represented by  white circles and ending at the points $P,Q,R,S, T$ and $U$. The set $C$ is the lighter gray annulus together with the compact set $K$, represented by the two darker gray islands in the center. Squares indicate the points along the paths and dashed segments the jumps that we imagine occurring in between.\\
- The points $P$ and $Q$ show how a path can either start in $K$ or in the annular region. \\
- The point $R$ shows that a path contained in $C$ can have zero length. \\
- The path ending at $S$ shows that one can jump between points in $K$, but must stop there.\\ 
- Similarly, the path ending at $U$ makes one jump from $C\setminus K$ to $K$. \\
- Finally, the path ending at $T$ depicts how a path that at some point leaves $K$ can never return, but can get very close. \\
The first unrestricted jump of each path is bolded, and note that all the paths must be contained in the bounded set $V$ which is a ball containing the full figure. }\label{fig:paths}
\end{figure}
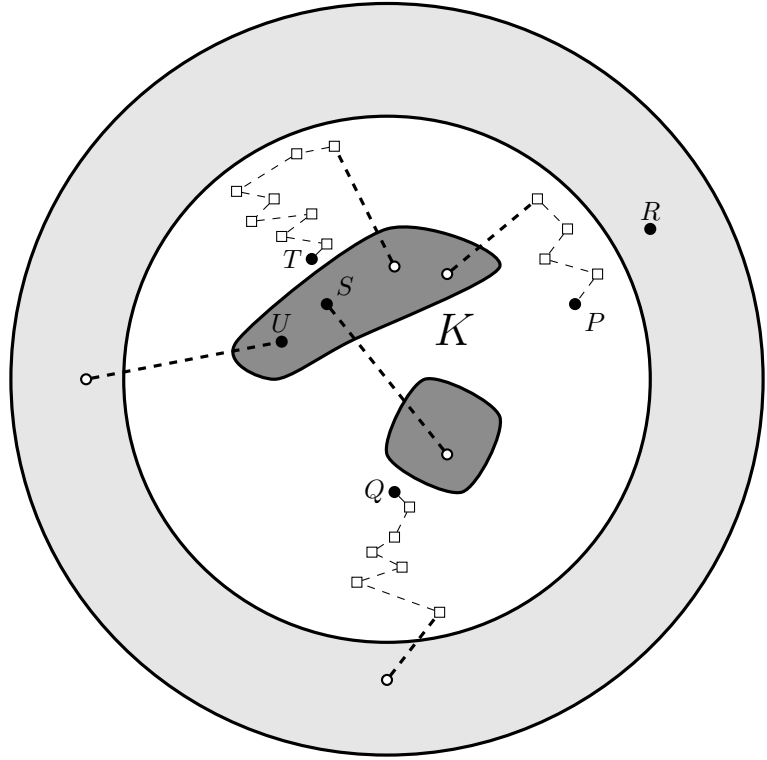

Examples of admissible paths are depicted in Figure \ref{fig:paths}. We now show that  $\tilde{f}$ satisfies properties 1--4 in the statement of Proposition \ref{thm:Extension}.
\begin{flalign}\label{eq:property-one}
\textbf{Property (1): }  \sup_{x\in X}|\tilde{f}(x)|\leq \sup_{x\in K}|f(x)|. && 
\end{flalign}
\noindent  Recall that $M=\sup_{x\in K} |f(x)|$. We want to show that $\tilde{f}:X\to[-M,M]$. First,  from Definition \ref{def:approximation}, it follows that $\tilde{f}(x)\leq M$ for each $x\in X$. On the other hand, since $G\geq 0$ and $\bP\geq 0$, $\Phi(P)\geq f(p_0)$ for every path $P=(p_0,\dots, p_n)$. Thus, $\inf_P \Phi(P)\geq \inf_{x\in C}f(x)=\inf_{x\in K}f(x)\geq -M$, and $\tilde{f}(x)\geq -M$ for $x\in V$.  For $x\not\in V$ we have $\tilde{f}(x)=0$, and the claim follows.

\begin{flalign}\label{eq:property-two}
 \textbf{Property (2): } \tilde{f}|_K = f|_K\quad\text{and}\quad \tilde{f}|_{X\setminus B(x_0,R)}=f|_{X\setminus B(x_0,R)}=0. &&
\end{flalign}
Take an arbitrary $x\in K \cup (X \setminus B(x_0,R))=C\cup(X\setminus V)$, where $C, V$, and $K$ are defined above in (\ref{eq:large-K}), (\ref{eq:V}), and (\ref{eq:C}).
We will show that $\tilde{f}(x)=f(x)$.  If $x\in X\setminus V$, then, by definition, $\tilde{f}(x)=0=f(x)$. Thus, we can assume that $x\in C$. The path $P=(x)$ is $(x,\gap)$-admissible and so we have $\tilde{f}(x) \leq \Phi(P)=f(x)$. We want to prove the opposite inequality, and for that we will separate the two cases when $x\in K$ and when $x\in C\setminus K$.

$\bullet$ First, consider the case $x\in K$. It suffices to prove $\Phi(P)\geq f(x)$ for every $(x,D)$-admissible path $P$. This is clear if $P=(x)$, and thus we can assume that $P=(p_0,\dots,p_n=x)$ with $n\geq 1$. If $p_1\not\in K$, then $p_1\ne x$ and Lemma \ref{lem:admissiblepath} shows that the path will never reach $x$. Therefore, $p_1$ must be in  $K$. Then, Lemma \ref{lem:admissiblepath} again shows that $p_1=x$ and $n=1$, so $P=(p_0,p_1=x)$. In other words, we are reduced to considering a path with only one jump.
There are two possibilities: 
\bi
\item[---] If $p_0$ is in the annulus $C \setminus K$, the path $P$ is illustrated by the path ending at the point $U$ in Figure \ref{fig:paths}. Then, since $p_0\not\in B(x_0,R)$, the normalization (\ref{eq:scaling-d}) implies that $d(p_0,x)\geq 1$. By Property (iv)(a) of Lemma \ref{lem:auxiliaryfuns}, the penalty on the first jump satisfies $\bP(d(p_0,p_1))\geq 2M$. Therefore, from the definition of $\Phi(P)$ in (\ref{eq:def-phip}), we get that $\Phi(P)\geq \bP(d(p_0,p_1)) \geq 2M \geq f(x)$. 
\item[---] If, on the other hand, $p_0 \in K$, then  
the path $P$ is illustrated by the path ending at the point $S$ in Figure \ref{fig:paths}. In this case, we estimate using the modulus of continuity:
\begin{align*}
\bP(d(p_0,p_1)) & \geq \omega(d(p_0,p_1)) & \text{(Property (iv)(c) of Lemma \ref{lem:auxiliaryfuns})}\\
& \geq |f(p_1)-f(p_0)| & \text{(by (\ref{eq:mod-cont}))}
\end{align*}
Thus, from (\ref{eq:def-phip}), we get $\Phi(P)\geq f(p_0) + \bP(d(p_0,p_1)) \geq f(x)$.
\ei

$\bullet$ Now, consider the case that $x\in C \setminus K$, and $P=(p_0,\dots, p_n)$ is any $(x,\gap)$-admissible path. We need to show that $\Phi(P)\geq f(x)=0$ for every such path. Recall that $(x,D)$-admissible paths start in $C$ and end at $x$. If $p_0\not\in K$, then $f(p_0)=0$, and, from (\ref{eq:def-phip}), we get $\Phi(P)\geq f(p_0)=0$. 
Thus, we are left to consider the case when $p_0\in K$ and $x\in C\setminus K$.
Let $i_n$ be the sequence constructed in Lemma \ref{lem:auxiliaryfuns}.
If the first jump satisfies $d(p_0,p_1)\geq i_1^{-1}>i_3^{-1}$, then by Lemma \ref{lem:auxiliaryfuns} (iv)(a), $\bP(d(p_0,p_1))\geq 2M$ and $f(p_0)+\bP(d(p_0,p_1))\geq 0=f(x)$. Therefore, we can assume that $d(p_0,p_1)\leq i_1^{-1}$. By Property (ii)(b) of Lemma \ref{lem:auxiliaryfuns}, we have $\gap(p_k)\leq i_1^{-1}$, for $k\ge 1$. Thus, $d(p_k,p_{k+1})\leq \gap(p_k)\leq i_1^{-1}$ for all $k\geq 1$. In particular, $\mesh(P)\leq i_1^{-1}$. Since $x\in C \setminus K$, the normalization (\ref{eq:scaling-d}) implies that $d(x,K)\geq 1$ and therefore $\diam(P)\geq 1$. Finally, by Property (iii)(a)  of Lemma \ref{lem:auxiliaryfuns}, we have that $G\geq g_{i_1}$. Recall that $g_{i_1}$ is a discrete $(i_1^{-1}, 2^{-1})$-upper gradient for $f$ with respect to $(C,V)$. By Definition \ref{def:discreteupper} and Definition \ref{def:admissible}, since $\mesh(P)\leq i_1^{-1}$, $p_0, p_n\in C$, $P \subset V$, and $\diam(P)\geq 1$,  we get
\[
\sum_{k=0}^{n-1} g_{i_1}(p_k)d(p_k,p_{k+1}) = \int_P g_{i_1} \geq |f(p_n)-f(p_0)|.
\]
In particular, $\Phi(P)\ge f(p_0) + \sum_{k=1}^{n-1}G(p_
k)d(p_k,p_{k+1}) \geq f(p_n)=0$. 

Finally, the inequality $\tilde{f}(x)\geq f(x)$ follows by infimizing over discrete paths $P$.
\begin{flalign}\label{eq:property-three}
\textbf{Property (3): } \tilde{f} \in N^{1,p}(X)\cap C(X).&&
\end{flalign}
To prove $\tilde{f} \in  N^{1,p}(X)\cap C(X)$, we proceed in a few stages. First, we show local Lipschitz continuity and an upper gradient property in the complement of $K$.

\begin{lemma}\label{lem:loclip}
The function $\tilde{f}$ is locally Lipschitz in $X \setminus K$ with upper gradient the function $g$ restricted to $X\setminus K$, where $g$ is defined as in Lemma \ref{lem:choice-ug}.
\end{lemma}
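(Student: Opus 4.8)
The statement to prove is Lemma \ref{lem:loclip}: $\tilde{f}$ is locally Lipschitz on $X\setminus K$ with upper gradient $g|_{X\setminus K}$. I would prove local Lipschitz continuity first, since the upper gradient property for a locally Lipschitz function will then follow by an argument essentially identical to the last part of the proof of Proposition \ref{prop:uppergradientpaths}: once we have $|\tilde{f}(x)-\tilde{f}(y)|\le \max(G(x),G(y))\,d(x,y)$ for $x,y$ close, and $G\le g$ near $x\notin K$ (by property (iii)(c) of Lemma \ref{lem:auxiliaryfuns}, since $d(x,K)>0$ forces $d(x,K)\ge 2^{-n-1}$ for suitable $n$), we sum along a curve contained in $X\setminus K$, pass to the limit over partitions, and reverse the curve. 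So the real content is the local Lipschitz estimate.

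**Key steps.** Fix $x\in X\setminus K$ and a small radius $r>0$ with $B(x,r)\subset X\setminus K$ and, say, $r<\tfrac14 d(x,K)$; I would also want $2r$ small enough that $\gap$ and $G$ are essentially ``frozen'' at the dyadic scale of $x$, i.e.\ all points of $B(x,2r)$ lie in one (or two adjacent) dyadic annuli $\{2^{-n-1}\le d(\cdot,K)<2^{-n}\}$, where $\gap \equiv \min(1/i_{n+2},2^{-n-3})=:c>0$ is constant and $G=g_{i_n}$ is Lipschitz (being one of the Lipschitz functions $g_i$). The case $x\in X\setminus V$ is trivial since $\tilde f\equiv 0$ there and $V$ is closed, and $x$ on the boundary of $V$ but outside $K$ can be absorbed by shrinking $r$, so assume $x$ lies in the open set $B(x_0,2R)\setminus K$. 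Now take $y,z\in B(x,r)$ with $d(y,z)\le c$. Given any $(z,\gap)$-admissible path $P=(p_0,\dots,p_m=z)$, I extend it to $P'=(p_0,\dots,p_m,y)$; the last step has length $d(z,y)\le c=\gap(z)=\gap(p_m)$ and $y\in V$, so $P'$ is $(y,\gap)$-admissible after truncating if $y$ already appears (exactly as in the proofs of Propositions \ref{prop:uppergradientpaths} and \ref{thm:capacity-complete}). Then $\Phi(P')=\Phi(P)+G(z)\,d(z,y)$, so $\tilde f(y)\le \Phi(P)+G(z)d(z,y)$; infimizing over $P$ and taking the minimum with $M$ gives $\tilde f(y)\le \tilde f(z)+G(z)d(y,z)$. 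Here one must check the truncation does not increase $\Phi$ — because $G,\bP\ge 0$ this is immediate, and a truncated admissible path remains admissible. Swapping $y$ and $z$ yields $|\tilde f(y)-\tilde f(z)|\le \max(G(y),G(z))\,d(y,z)$ for $d(y,z)\le c$. Since $G=g_{i_n}$ is bounded on $B(x,2r)$ (it is a bounded function) and $c>0$ is fixed, $\tilde f$ is Lipschitz on $B(x,r)$: for $d(y,z)>c$ just use $|\tilde f(y)-\tilde f(z)|\le 2M\le (2M/c)d(y,z)$.

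**Upper gradient property.** With the pointwise inequality $\tilde f(y)\le \tilde f(z)+G(z)d(y,z)$ in hand for $d(y,z)$ below a local threshold, let $\gamma:[0,1]\to X$ be any rectifiable curve with $\gamma\subset X\setminus K$, parametrized by constant speed; cover its (compact) image by finitely many balls on each of which the above holds and on which $G\le g$ (using (iii)(c) of Lemma \ref{lem:auxiliaryfuns}), take a partition $s_0<\dots<s_k$ fine enough that consecutive points are within the threshold, telescope $\sum_j(\tilde f(\gamma(s_j))-\tilde f(\gamma(s_{j-1})))\le \sum_j G(\gamma(s_{j-1}))d(\gamma(s_{j-1}),\gamma(s_j))\le \sum_j g(\gamma(s_{j-1}))\,\mathrm{len}(\gamma|_{[s_{j-1},s_j]})$, and let the mesh go to $0$ to get $\tilde f(\gamma(1))-\tilde f(\gamma(0))\le \int_\gamma g\,ds$; reversing $\gamma$ gives the absolute value. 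This is exactly the scheme in Proposition \ref{prop:uppergradientpaths}.

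**Main obstacle.** The only delicate point is the bookkeeping around the gap function being a step function: one must choose the radius $r$ so small that $B(x,2r)$ meets at most one or two consecutive dyadic annuli, ensuring $\gap$ has a positive lower bound there and $G$ agrees with a single Lipschitz function $g_{i_n}$ (or is sandwiched between $g_{i_n}$ and $g_{i_{n+1}}$, both Lipschitz and $\le g$), so that the extension-and-truncation move on admissible paths is legitimate and $G\le g$ holds. Everything else — nonnegativity of $\bP,G$, closedness of $V$, the telescoping along curves — is routine and mirrors arguments already carried out earlier in the paper.
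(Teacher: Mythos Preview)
Your proposal is correct and follows essentially the same approach as the paper: establish the pointwise inequality $|\tilde f(y)-\tilde f(z)|\le\max(G(y),G(z))\,d(y,z)$ for nearby $y,z\in X\setminus K$ by extending $(z,\gap)$-admissible paths by one step, deduce local Lipschitz continuity from boundedness of $G$, and then obtain the upper gradient property of $g$ by telescoping along a fine partition of a curve in $X\setminus K$. One small refinement worth making explicit: in your final limit you pass from the Riemann sum $\sum_j g(\gamma(s_{j-1}))\,\mathrm{len}(\gamma|_{[s_{j-1},s_j]})$ to $\int_\gamma g\,ds$, but $g$ is only lower semicontinuous, so this convergence is not automatic; the paper (consistently with your own ``main obstacle'' paragraph) instead bounds $G\le g_{i_n}$ on $\gamma$ via property (iii)(c), uses continuity of $g_{i_n}$ to pass to $\int_\gamma g_{i_n}\,ds$, and only then invokes $g_{i_n}\le g$.
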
 

\begin{proof} Recall that $D(x)>0$ for $x\in X\setminus K$, by Lemma \ref{lem:auxiliaryfuns} property (ii)(b). If $x,y \in X \setminus K$ and if $d(x,y) \leq \min\{D(x),D(y)\}$, then we claim that 
\begin{equation}\label{eq:lipest}
|\tilde{f}(x)-\tilde{f}(y)|\leq \max\{G(x),G(y)\}d(x,y). 
\end{equation}
Suppose for the moment, that this is true. Then, we can show that $g$ is an upper gradient for $\tilde{f}$ in $X\setminus K$. Indeed,
let $x\in X\setminus K$ and fix $n\ge 1$ so that $2^{-n}\leq d(x,K)$. 
Again, let $i_n$ be the sequence constructed in Lemma \ref{lem:auxiliaryfuns}.
Take any $r< \min(2^{-n-3}, 1/i_{n+2})/2$. Then, for every $y \in B(x,r)$ we have $d(y,K) > 2^{-n-1}$ and, by Lemma \ref{lem:auxiliaryfuns} properties (ii)(c) and (iii)(c), we have and $D(y) \geq 2r$   and $G(y) \leq g_{i_{n}}(y) \leq C(i_{n})$, where $C(i_{l})$ is the supremum of the bounded function $g_{i_{l}}$ for $l\in \N$. Thus Inequality \eqref{eq:lipest} implies that $\tilde{f}|_{B(x,r)}$ is $C(i_{n})$-Lipschitz. Using this local Lipschitz property and compactness, if $\gamma:[0,1] \to X\setminus K$ is any rectifiable curve then $\tilde{f}\circ\gamma$ is Lipschitz, and $d(\gamma,K)>2^{-n-1}$ for some $n$. Therefore, by Lemma \ref{lem:auxiliaryfuns} property (iii)(c) again, we have $G\circ \gamma \leq g_{i_n}\circ\gamma$ and that $|\tilde{f}(\gamma(t))-\tilde{f}(\gamma(s))|\leq \max\{g_{i_n}(\gamma(t)),g_{i_n}(\gamma(t))\}d(\gamma(s),\gamma(t))$ for any $s,t\in [0,1]$ with $d(\gamma(s),\gamma(t)) \leq \min\{D(\gamma(s)),D(\gamma(t))\}$. Following similar arguments as in \cite[Lemma 4.7]{sha00}, this together with the continuity of $g_{i_n}$ and Lemma \ref{lem:auxiliaryfuns} property (ii)(c) yields
\[
|\tilde{f}(\gamma(0))-\tilde{f}(\gamma(1))|\leq \int_0^1 |(\tilde{f}\circ \gamma)'|dt \leq \int_\gamma g_{i_n} \, ds.
\]
Since $g_{i_n}\leq g$, then $g$ is an upper gradient for $\tilde{f}$ in $X\setminus K$.

Next, we prove Inequality \eqref{eq:lipest}. Fix $x,y \in X \setminus K$ with $d(x,y) \leq \min\{D(x),D(y)\}$. By Property (ii)(b) of Lemma \ref{lem:auxiliaryfuns} we have $\min\{D(x),D(y)\}\leq 1$. Recall that,  by (\ref{eq:scaling-d}), $R>1$.  If $x$ or $y$ is in $X\setminus B(x_0,2R)$, then $d(x,y) \leq 1$ and both $x,y\not\in B(x_0,R)$. Thus, by Property 2 in (\ref{eq:property-two}), $\tilde{f}(x)=\tilde{f}(y)=0$ and inequality \eqref{eq:lipest} is immediate. 

We are left to consider the case when $x,y\in B(x_0,2R)$ and Formula \eqref{eq:approximation} gives the values of the function $\tilde{f}$ at $x$ and $y$. By symmetry, it suffices to show that $\tilde{f}(x) \leq \tilde{f}(y)+G(y)d(x,y)$. If $\tilde{f}(y)=M$, the claim follows by the definition of $\tilde f$. Otherwise, we have $\tilde{f}(y)=\inf_{P} \Phi(P)$, where $P=(p_0,\dots,p_n)$ runs through all $(y,\gap)$-admissible paths. Let $P$ be any $(y,\gap)$ admissible path. If $x\in P$, then by truncating $P$, we obtain an $(x,\gap)$ admissible sub-path, and $\tilde{f}(x)\leq \Phi(P)$ by definition. If $x\not \in P$, the augmented path $P'=(p_0,\dots, p_n,x)$ is $(x,\gap)$ admissible, since $d(p_n,x)=d(y,x)\leq \min\{D(x),D(y)\}\leq \gap(p_n)$. Thus,
\[
\tilde{f}(x) \leq \Phi(P')=\bP(d(p_0,p_1)) + f(p_0) + \sum_{k=0}^{n-1} G(p_k)d(p_k,p_{k+1}) + G(y)d(x,y)=\Phi(P)+ G(y)d(x,y).
\]
Infimizing over all discrete $(y,\gap)$-admissible paths $P$ now yields $\tilde{f}(x)\leq \tilde{f(y)}+G(y)d(x,y)$, and thus the claim.
\end{proof}

Next, we prove continuity of $\tilde f$ on all of $X$.

\begin{lemma}\label{lem:fcont}
The function $\tilde{f}: X\longrightarrow \R$ is continuous.
\end{lemma}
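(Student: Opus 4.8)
The plan is to reduce the statement to continuity at a single point $z_0\in K$. By Lemma \ref{lem:loclip}, $\tilde f$ is locally Lipschitz, hence continuous, on $X\setminus K$; by Property \eqref{eq:property-two}, $\tilde f|_K=f|_K$, which is continuous. So it remains to show that $\tilde f(y_j)\to f(z_0)=\tilde f(z_0)$ for every sequence $y_j\in X\setminus K$ with $y_j\to z_0\in K$ (if $y_j\in K$ this is immediate). For $j$ large we have $y_j\in V$, so $\tilde f(y_j)$ is given by \eqref{eq:approximation}.

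For the upper bound, I would test with the two-point path $P=(z_0,y_j)$, which is $(y_j,\gap)$-admissible because the first jump is unrestricted and $z_0\in K\subset C$. This gives
\[
\tilde f(y_j)\le \Phi(P)=\bP(d(z_0,y_j))+f(z_0)+G(z_0)\,d(z_0,y_j).
\]
Since $G(z_0)=g(z_0)<\infty$ ($g|_K$ is bounded by Lemma \ref{lem:choice-ug}) and $\bP(r)\to 0$ as $r\to 0$ (Lemma \ref{lem:auxiliaryfuns}(iv)(b)), the right-hand side tends to $f(z_0)$, so $\limsup_j\tilde f(y_j)\le f(z_0)$.

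The content is the lower bound $\liminf_j\tilde f(y_j)\ge f(z_0)$, which I would prove by contradiction in the style of Lemma \ref{lem:farenough} and the lower-semicontinuity argument in Proposition \ref{prop:funconstr}. Suppose $\tilde f(y_j)<f(z_0)-\Delta_0$ for some $\Delta_0>0$ along a subsequence. Since $f(z_0)\le M$, there are $(y_j,\gap)$-admissible paths $P^j=(p^j_0,\dots,p^j_{n_j}=y_j)$ with $\Phi(P^j)<f(z_0)-\Delta_0$. Because the penalty term, the sum $\int_{P^j}G$, and $f(p^j_0)+M$ are all nonnegative and bounded by $2M$, Lemma \ref{lem:auxiliaryfuns}(iv)(a) forces $d(p^j_0,p^j_1)<i_3^{-1}$; together with $\gap<i_3^{-1}$ off $K$ (Lemma \ref{lem:auxiliaryfuns}(ii)(b)) and $p^j_1,\dots,p^j_{n_j}\notin K$ (Lemma \ref{lem:admissiblepath}) this yields $\mesh(P^j)<i_3^{-1}$, and since $G\ge g_{i_3}\ge \eta_V>0$ on $V$ (Lemma \ref{lem:auxiliaryfuns}(iii)(a) and Definition \ref{def:goodsequence}(2)) we get $\len(P^j)\le 2M/\eta_V$, while $P^j\subset V$. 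Using the goodness of the good sequence (through Lemma \ref{lem:compactness}, splitting off the portion of $P^j$ that stays at a fixed distance from $K$, where $\tilde f$ is already Lipschitz with a fixed constant and contributes nothing in the limit), I would extract a subsequence along which $P^j$ converges, in the sense of \eqref{eq:conv-discrete-cont}, to a rectifiable curve $\gamma$. Since $y_j\to z_0$ and $C$ is closed, $\gamma(1)=z_0\in K\subset C$ and $\gamma(0)\in C$. Passing to the limit in $\Phi(P^j)<f(z_0)-\Delta_0$, using Lemma \ref{lem:lscgoodfunc} (via $G\ge g_{i_3}$ and, near $K$, $G\ge g_{i_n}\nearrow g$ at the matching scale), continuity of $f$ on $C$, and $\bP\ge 0$, gives $f(\gamma(0))+\int_\gamma g\,ds\le f(z_0)-\Delta_0$. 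But $g$ is an upper gradient for $f$ and $\gamma$ is a rectifiable curve from $\gamma(0)$ to $z_0$, so $\int_\gamma g\,ds\ge |f(z_0)-f(\gamma(0))|\ge f(z_0)-f(\gamma(0))$, hence $f(\gamma(0))+\int_\gamma g\,ds\ge f(z_0)$ — a contradiction. (When $\gamma(0)$ lies in the annulus one uses $f(\gamma(0))=0$; when $\gamma(0)\in K$ the same inequality applies directly.)

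I expect the compactness/limiting step to be the main obstacle: admissible paths terminate at $y_j\notin C$, where the discrete upper-gradient inequality for the $g_{i_n}$ is unavailable, and their mesh is only globally bounded by $i_3^{-1}$ rather than vanishing. The resolution is exactly the three devices designed into the construction — the penalty $\bP$ controlling the first jump, the annulus adjoined to $C$ so the limiting endpoints land in $C$, and the gap function $\gap$ forcing the mesh to shrink as the path approaches $K$ — which let one split $P^j$ at a fixed distance scale from $K$, invoke Lemma \ref{lem:compactness} on the near-$K$ piece, use Lemma \ref{lem:loclip} on the far piece, and recover in the limit a genuine rectifiable curve with endpoints in $C$ along which the (true) upper gradient $g$ of $f$ can be used.
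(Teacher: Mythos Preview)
Your reduction to boundary points of $K$ and the upper bound via the two-point path $(z_0,y_j)$ match the paper exactly. The gap is in the lower bound: your plan to extract a limiting rectifiable curve $\gamma$ from the paths $P^j$ and then invoke the genuine upper-gradient inequality for $g$ cannot be carried out, because the hypotheses of the compactness and lower-semicontinuity lemmas you cite are not met. Convergence of discrete paths to a curve in the sense of \eqref{eq:conv-discrete-cont}, the goodness property in Definition~\ref{def:goodsequence}(3), Lemma~\ref{lem:compactness}, and Lemma~\ref{lem:lscgoodfunc} all require $\mesh(P^j)\to 0$. But after your Reduction (first jump $<i_3^{-1}$, $\gap<i_3^{-1}$), the mesh is only uniformly bounded by $i_3^{-1}$; it need not tend to zero. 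In the paper's situation after the analogous reductions, one is left (via pigeonhole) with paths $P_i'$ of diameter in a \emph{fixed} dyadic band $[2^{-l-1},2^{-l}]$, whose mesh is then bounded by $1/i_{l+1}$ --- a fixed positive number --- so no limit curve is available. Your proposed splitting into a near-$K$ piece and a far-from-$K$ piece does not resolve this: at any fixed positive distance scale the mesh bound from $\gap$ is merely a fixed constant, not $o(1)$.

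The paper's proof takes a different route for the lower bound and \emph{never} passes to a limit curve. Instead, it augments $P_i$ by the extra point $x$ to get $P_i'$ with both endpoints in $C$, and then applies the $(\delta,\Delta)$-discrete upper-gradient property of the $g_{i_n}$ (Lemma~\ref{lem:farenough}/Lemma~\ref{lem:auxiliaryfuns}(i)) directly to $P_i'$, at a scale matched to its diameter. Concretely: if $\diam(P_i')\ge 2^{-3}$ one uses $g_{i_3}$; if $\diam(P_i')\in[2^{-l-1},2^{-l}]$ for some $l\ge 3$, the gap function forces $d(p_k^i,p_{k+1}^i)\le 1/i_{l+1}$ for the interior steps, and the penalty $\bP$ (property (iv)(b)) is used to dispose of the case where the first jump exceeds $1/i_{l+1}$ --- otherwise $\mesh(P_i')\le 1/i_{l+1}$ and the discrete upper-gradient inequality for $g_{i_{l+1}}$ gives $|f(x)-f(p_0^i)|\le \int_{P_i'} g_{i_{l+1}}\le \int_{P_i'} G$ up to the controllable last-step error. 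This scale-matching is exactly why $\gap$, $G$, and $\bP$ were built in dyadic layers, and it replaces your limiting-curve step entirely.
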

\begin{proof}
By Lemma \ref{lem:loclip}, the function $\tilde{f}|_{X\setminus K}$ is continuous. Also, $\tilde{f}|_K=f|_K$ is continuous by assumption. Recall that $K$ is closed. Thus to prove the lemma, it suffices to prove sequential continuity at points of $\partial K$, with the sequence approaching from $X\setminus K$. Fix $x\in \partial K$ and a sequence $x_i\to x$ with $x_i\not\in K$ for each $i\in \N$. Since $\partial K\subset B(x_0,R)$, we may assume that $x_i\in B(x_0,R)$ for all $i$ by passing to the tail of the sequence. Since the first jump is free, for every $i\in \N$, the path $P=(x,x_i)$ is an $(x_i,\gap)$-admissible path. Thus, by Property (iv)(d) of Lemma \ref{lem:auxiliaryfuns}, we have
\[
\tilde{f}(x_i) \leq f(x)+\bP(d(x,x_i))+g(x)d(x,x_i).
\]
By Property (iv)(b) of Lemma \ref{lem:auxiliaryfuns} and the fact that $g|_K$ is bounded by Lemma \ref{lem:choice-ug}, we have $\limsup_{i\to\infty}\tilde{f}(x_i) \leq f(x)$. So, it suffices to show that $\liminf_{i\to\infty} \tilde{f}(x_i)\geq f(x)$. Indeed, by passing to a subsequence, it suffices to assume that the limit $\lim_{i\to \infty} \tilde{f}(x_i)$ exists and then to show that 
\begin{equation}\label{eq:mainclaim}\lim_{i\to \infty} \tilde{f}(x_i) \geq f(x).
\end{equation} In the following, we will analyze several sub-cases depending on the values of $\tilde{f}$ and the constructed paths. Eliminating each sub-case will reduce the problem to a simpler situation. In the following, WLOG is short for ``Without loss of generality''.

\vskip.3cm

\noindent \textbf{Reduction 1: WLOG $\tilde{f}(x_i)<M$ for infinitely many $i$.} If for all but finitely many $i$ we have $\tilde{f}(x_i)=M$, the claim \eqref{eq:mainclaim} follows from the definition of $M$. 

Thus, we may pass to a subsequence, where $\tilde{f}(x_i)<M$ for every $i\in\N$. By definition of $\tilde{f}$ in \eqref{eq:approximation}, we may find discrete paths $P_i=(p^i_0,\dots, p^i_{n(i)})$ which are $(x_i,\gap)$-admissible and for which
\begin{equation}\label{eq:inthelimit}
\lim_{i\to\infty} \tilde{f}(x_i) = \lim_{i\to\infty} \Phi(P_i) = \lim_{i\to\infty}\left\{ \bP(d(p^i_0,p^i_1)) + f(p^i_0) + \sum_{k=0}^{n(i)-1} G(p^i_k)d(p^i_k,p^i_{k+1})\right\},
\end{equation}
and $\Phi(P_i)<M$. Note that $n(i)>0$, because $x_i\in B(x_0,R)\setminus K$, and hence is not in $C$.

\vskip.3cm

\noindent \textbf{Reduction 2:  WLOG the points $p^i_0$ do not converge to $x$.} If $\lim_{i\to\infty}p^i_0 = x$, then  we get that $\lim_{i\to\infty} \Phi(P_i)\geq \lim_{i\to\infty} f(p^i_0) = f(x)$, as desired, because $p^i_0$ and $x$ are in $C$ and $f|_C$ is continuous.  

Thus, by passing to some subsequence we are left to consider the case that $\lim_{i \to \infty} d(p^i_0,x)=\Delta$ for some $\Delta>0$. By further passing to a subsequence we can ensure $\Delta/2 \leq d(p^i_0,x) \leq 2\Delta$, so that $\diam(P_i) \geq \Delta/2$ for each $i\in \N$.
By passing to another subsequence,  since $\lim_{i \to \infty} p_{n(i)}^i = \lim_{i \to \infty} x_i = x$, we can assume that  that 
\begin{equation}\label{eq:large-diam-path}
d(p_{n(i)}^i,x) \leq \min\{\Delta/2, i_3^{-1}\} \leq \diam(P_i)\qquad\text{for all $i\in\N$}.
\end{equation}
This allows us to compare the values of $f(x)$ and $f(p_0^i)$  by considering the augmented discrete path $P'_i=(p_0^i,\dots, p_{n(i)}^i, x)$. 

At this point we may picture the path $P_i$ as the path corresponding to the point $T$ in Figure \ref{fig:paths}. The path $P'_i$ is obtained by augmenting $P_i$ with a jump to $x$. Hence,  $P_i'$ is no longer admissible.

\vskip.3cm

\noindent \textbf{Reduction 3: WLOG the first jump  $d(p^i_0,p^i_1)$ is less than $i_3^{-1}$.} If not, by Lemma \ref{lem:auxiliaryfuns} (iv)(a), $\bP(d(p_0^i,p^i_1))\geq 2M$. However, this contradicts the fact that $\Phi(P_i)<M$. Therefore, we must thus have $d(p^i_0,p^i_1) \leq \frac{1}{i_3}$ for all $i\in \N$. 

Recall that Lemma \ref{lem:auxiliaryfuns} (ii)(a-b) gives $\gap(x)\leq i_3^{-1},$ for all $x \in X$. Thus, since $P_i$ is $(x_i,\gap)$-admissible, and $d(p_{n(i)}^i,x)\leq i_3^{-1}$, we get $\mesh(P_i')\leq i_3^{-1}$. 

\vskip.3cm

\noindent \textbf{Reduction 4: WLOG eventually the diameter $\diam(P_i')$ is less than $2^{-3}$.}  If $\diam(P_i')\geq 2^{-3}$, for infinitely many $i\in\N$, we may pass to a subsequence where this property holds. Then, since $g_{i_3}$ is $(1/i_3,2^{-3})$-discretely admissible for $f$ with respect to $(C,V)$, we can apply the admissibility condition to the path $P'$ and we get
\begin{align}\label{eq:Pprimesum}
|f(x)-f(p_0^i)| &\leq  g_{i_3}(p_{n(i)}^i)d(p_{n(i)}^i,x)+\sum_{k=0}^{n(i)-1} g_{i_3}(p_k^i)d(p_k^i,p_{k+1}^i)  \nonumber \\
&\leq g_{i_3}(p_{n(i)}^i) d(p_{n(i)}^i,x)+\sum_{k=0}^{n(i)-1} G(p_k^i)d(p_k^i,p_{k+1}^i).& \text{(by Lemma \ref{lem:auxiliaryfuns} (iii)(a))}
\end{align}
Thus, 
\[
\Phi(P_i)=\bP(d(p^i_0,p^i_1)) + f(p^i_0) + \sum_{k=0}^{n(i)-1} G(p^i_k)d(p^i_k,p^i_{k+1})\geq f(x) - g_{i_3}(p_{n(i)}^i) d(p_{n(i)}^i,x),
\] 
and by sending $i\to\infty$ along such a subsequence and noting that $g_{i_3}$ is continuous and bounded, we get that $\lim_{i\to\infty} \Phi(P_i)\geq f(x)$. Therefore, in this case, $\lim_{i\to\infty} \tilde{f}(x_i)\geq f(x)$ using Equation \eqref{eq:inthelimit}.

Thus, by the last reduction and by (\ref{eq:large-diam-path}), we can assume that 
\[
\frac{\Delta}{2} \leq \diam(P'_i)\leq 2^{-3}.
\]
In particular,
if we let $L\in \Z$ be such that $2^{-L}\leq \Delta < 2^{1-L}$, then $L\geq 3$. Since $p_{n(i)}^i$ is converging to $x$, by passing to the tail, we can assume 
$d(p_{n(i)}^i,x) \leq \min\{\Delta/2, i_{L+1}^{-1}\}$.

Let $l_i\in \N$  be such that  $3\le l_i\leq L$ and $2^{-l_i-1}\leq \diam(P'_i) \leq 2^{-l_i}$. By the pigeonhole principle, we can pass to a sub-sequence with $l_i=l$ for all $i\in \N$.
Given $l\in \N$, which controls the size of the diameter, we now want to control the mesh-size of the path $P_i'$.

\vskip.3cm

\noindent \textbf{Reduction 5: WLOG for all but finitely many indices $i$, we have $d(p_0^i,p^i_1)\leq i_{l+1}^{-1}$,} where $i_n$ is the bound for the mesh-size defined in Lemma \ref{lem:auxiliaryfuns}. If not, then $d(p_0^i,p^i_1)\geq i_{l+1}^{-1}$ for infinitely many $i\in \N$. For such indices $i$, we have $\bP(d(p_0^i,p^i_1))\geq \omega(2^{1-l})$ by Property (iv)(b) of Lemma \ref{lem:auxiliaryfuns}. Thus, 
\begin{align*}
\Phi(P_i) & \geq \bP(d(p_0^i,p^i_1))+f(p_0^i) \\
& \geq \omega(2^{1-l})+f(p_0^i) \\
& \geq \omega(\diam(P'_i))+f(p_0^i)  & \text{(since $2^{1-l}\ge \diam(P'_i)$)}\\
& \geq \omega(d(x,p_0^i))+f(p_0^i)   & \text{(since $x,p_0^i\in P'_i$)}\\
&\geq f(x)
\end{align*}
Letting $i\to \infty$ along the given subsequence gives the claim. Therefore, we can assume by passing to the tail that $d(p_0^i,p^i_1)\leq i_{l+1}^{-1}$ for all $i\in \N$.

\vskip.3cm

\noindent \textbf{End of proof of Lemma \ref{lem:fcont}:} By the reductions described above, after passing to a subsequence, 
\begin{equation}\label{eq:first-and-last-step}
d(p_0^i,p^i_1)\leq i_{l+1}^{-1}\qquad\text{and}\qquad d(p_{n(i)}^i,x)\leq i_{L+1}^{-1}\leq i_{l+1}^{-1}
\end{equation}
For $k=1, \dots, n(i)-1$, we have $d(p_k^i,K)\leq \diam(P'_i)\leq 2^{-l}$. Since $P_i$ is $(x,D)$-admissible, by Property (ii)(c) of  Lemma  \ref{lem:auxiliaryfuns},  
\begin{equation}\label{eq:middle-steps}
d(p_k^i,p_{k+1}^{i}) \leq \gap(p_k^i) \leq \frac{1}{i_{l+1}}.
\end{equation} 
Combining (\ref{eq:first-and-last-step}) and (\ref{eq:middle-steps}), we get $\mesh(P_i') \leq 1/i_{l+1}$ and $\diam(P_i') \geq 2^{-l-1}$. Finally, since $g_{i_{l+1}}$ is $(i_{l+1}^{-1},2^{-l-1})$-discretely admissible for $f$ for $(C,V)$ we get Inequality \eqref{eq:Pprimesum} with $i_{l+1}$ replacing $i_3$.

Therefore, 
\[
\Phi(P_i)=\bP(d(p^i_0,p^i_1)) + f(p^i_0) + \sum_{k=0}^{n(i)-1} G(p^i_k)d(p^i_k,p^i_{k+1})\geq f(x) - g_{i_{l+1}}(p_{n(i)}^i) d(p_{n(i)}^i,x),
\]
and the claim follows from Equation \eqref{eq:inthelimit} by sending $i\to\infty$ and noting that $g_{i_{l+1}}$ is continuous.




\end{proof}

Next, we quickly get the Sobolev property. 

\begin{lemma}\label{lem:Sob} The function $\tilde{f}\in N^{1,p}(X)$ and $g$ is its upper gradient.
\end{lemma}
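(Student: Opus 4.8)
The plan is to combine the pieces already established in the proof of Proposition \ref{thm:Extension} with the abstract patching result Proposition \ref{prop:extension}. Recall that Lemma \ref{lem:loclip} shows that $g$ (restricted to $X\setminus K$, where $g$ is the good function from Lemma \ref{lem:choice-ug}) is an upper gradient for $\tilde f$ on $X\setminus K$, in the sense that the upper gradient inequality \eqref{eq:ug} holds for every rectifiable curve $\gamma$ with $\gamma\subset X\setminus K$. Also, Lemma \ref{lem:fcont} shows that $\tilde f$ is continuous on all of $X$, and Property (1) in \eqref{eq:property-one} together with $\tilde f|_{X\setminus V}=0$ and $\mu(V)<\infty$ shows $\tilde f\in L^p(X)$. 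Moreover $g\in L^p(X)$ by Lemma \ref{lem:choice-ug}, since $g$ differs from the fixed $L^p$-upper gradient $g_*$ by something with small $L^p$-norm. Finally, property (iii)(d) of Lemma \ref{lem:auxiliaryfuns} gives $G|_K=g|_K$, and Property (2) in \eqref{eq:property-two} gives $\tilde f|_K=f|_K$.

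First I would check the hypotheses of Proposition \ref{prop:extension} with the closed set $K$, the Sobolev function $f\in N^{1,p}(X)$, and its upper gradient $g$. One subtlety: Proposition \ref{prop:extension} requires $g$ to be an upper gradient for the \emph{original} $f$ on all of $X$, not just in $L^p$. Since $g\ge g_\epsilon\ge g_*$ pointwise (by the construction in Lemma \ref{lem:choice-ug} and Lemma \ref{lem:lowersem}), and $g_*\in\mathcal D(f)$, it follows that $g$ is an upper gradient for $f$; so $f\in N^{1,p}(X)$ with upper gradient $g\in L^p(X)$ is legitimate. Then $\tilde f\in L^p(X)$ is continuous with $f|_K=\tilde f|_K$ on the closed set $K$, and $g$ is an upper gradient for $\tilde f$ in $X\setminus K$ by Lemma \ref{lem:loclip}. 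Proposition \ref{prop:extension} then immediately yields that $g$ is an upper gradient for $\tilde f$ on all of $X$, hence $\tilde f\in N^{1,p}(X)$ with $g$ as an upper gradient, which is exactly the claim of Lemma \ref{lem:Sob}.

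I do not expect any serious obstacle here; the lemma is essentially a bookkeeping step that invokes Proposition \ref{prop:extension}. The only point requiring a little care is to be sure that the $g$ appearing in the statement of Lemma \ref{lem:Sob} is the same $g$ from Lemma \ref{lem:choice-ug} and that it really is an honest (not merely weak) upper gradient for $\tilde f$ on $X\setminus K$ — this is precisely what Lemma \ref{lem:loclip} provides, since its proof shows $|\tilde f(\gamma(0))-\tilde f(\gamma(1))|\le\int_\gamma g\,ds$ for \emph{every} rectifiable $\gamma\subset X\setminus K$, not just $p$-a.e. one. A short proof would therefore read:

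\begin{proof}[Proof of Lemma \ref{lem:Sob}] By Lemma \ref{lem:choice-ug}, $g\in L^p(X)$ and $g$ is an upper gradient for $f$, since $g\ge g_\ep\ge g_*$ pointwise and $g_*\in\mathcal{D}(f)$. By Lemma \ref{lem:fcont}, $\tilde f$ is continuous, and by Property (1) in \eqref{eq:property-one} together with $\tilde f|_{X\setminus V}=0$ and $\mu(V)<\infty$, we have $\tilde f\in L^p(X)$. By Property (2) in \eqref{eq:property-two}, $f|_K=\tilde f|_K$ on the closed set $K$. Finally, Lemma \ref{lem:loclip} shows that $g$ is an upper gradient for $\tilde f$ in $X\setminus K$. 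Hence all the hypotheses of Proposition \ref{prop:extension} are satisfied, and we conclude that $g$ is an upper gradient for $\tilde f$ on all of $X$. In particular, $\tilde f\in N^{1,p}(X)$ with upper gradient $g$. \end{proof}
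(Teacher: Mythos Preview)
Your proof is correct and follows essentially the same approach as the paper: verify the hypotheses of Proposition~\ref{prop:extension} (namely that $g$ is an upper gradient for $f$, that $\tilde f$ is continuous and in $L^p$, that $\tilde f|_K=f|_K$, and that $g$ is an upper gradient for $\tilde f$ in $X\setminus K$), then apply it. The paper's proof is terser and does not spell out why $g$ is an upper gradient for $f$ or why $\tilde f\in L^p(X)$, but your added justifications are correct and the argument is otherwise identical.
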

\begin{proof}
Recall that $f\in N^{1,p}(X)$, $g$ is an upper gradient for $f$, $f|_K=\tilde{f}|_K$ by Property (\ref{eq:property-two}), $\tilde{f}\in C(X)$ by Lemma \ref{lem:fcont}, and $g$ is an upper gradient for $\tilde{f}$ in $X\setminus K$ by Lemma \ref{lem:loclip}. Thus, Proposition \ref{prop:extension} applied to $f,g,\tilde{f}$ and $K$ shows that $g$ is an upper gradient for $\tilde{f}$ and that consequently $\tilde{f}\in N^{1,p}(X)$. 
\end{proof}
This establishes Property (\ref{eq:property-three}).
\begin{flalign}\label{eq:property-four}
\textbf{Property (4): } \int_{X\setminus K} g_{\tilde{f}}^p \, d\mu \leq \int_{X\setminus K} g_*^p\,d\mu+\epsilon&&
\end{flalign}
By Lemma \ref{lem:Sob}, $\tilde{f}\in N^{1,p}(X)$ with upper gradient $g$. Recall that $g_{\tilde{f}}$ is the minimial $p$-weak upper gradient and is smaller than any other upper gradient, i.e. $g_{\tilde{f}}\leq g$ (a.e.). By construction, $g_* \leq g$. Thus, the fourth property follows from Lemma \ref{lem:choice-ug} and the fact that $g\in L^p(X)$:

\[
\int_{X\setminus K} g_{\tilde{f}}^p d\mu \leq \int_{X\setminus K} g^p d\mu = \int_{X} g^p d\mu -\int_K g^p d\mu \leq \int_{X} g_*^p + \epsilon 2^{-4} -  \int_{K} g_*^p d\mu =\int_{X\setminus K} g_*^p + \epsilon 2^{-4}.
\]
\end{proof}

\subsection{Newton-Sobolev functions are quasicontinuous}
\begin{proposition}\label{prop:quasicont-complete} If $X$ is complete and separable and $f\in N^{1,p}(X)$, then $f$ is quasicontinuous.
\end{proposition}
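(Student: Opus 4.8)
The plan is to deduce quasicontinuity from the density result Theorem~\ref{thm:density-complete} by a standard telescoping (Borel--Cantelli) argument, the only non-classical ingredient being Proposition~\ref{prop:capacity-null-case}, which converts a set of zero capacity into an open set of arbitrarily small capacity. Fix $\epsilon>0$. Using Theorem~\ref{thm:density-complete} I would choose $f_i\in C(X)\cap N^{1,p}(X)$ with $\|f-f_i\|_{N^{1,p}(X)}\to 0$ so quickly that the differences $u_i:=f_{i+1}-f_i\in C(X)\cap N^{1,p}(X)$ satisfy $\|u_i\|_{N^{1,p}(X)}^p\le \epsilon\,2^{-i(p+1)}$; this is possible because $\|u_i\|_{N^{1,p}(X)}\le \|f-f_{i+1}\|_{N^{1,p}(X)}+\|f-f_i\|_{N^{1,p}(X)}$.

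The core estimate is the Chebyshev-type inequality for Sobolev capacity: for $\lambda>0$ the function $\min(\lambda^{-1}|u_i|,1)$ is admissible for $\Cp_p(\{|u_i|\ge\lambda\})$ and has $N^{1,p}$-norm at most $\lambda^{-1}\|u_i\|_{N^{1,p}(X)}$ (truncation does not increase the minimal weak upper gradient), whence $\Cp_p(\{|u_i|\ge\lambda\})\le\lambda^{-p}\|u_i\|_{N^{1,p}(X)}^p$. Since $u_i$ is continuous, the set $E_i:=\{x:|u_i(x)|>2^{-i}\}$ is open, and by monotonicity of $\Cp_p$ together with the previous estimate at $\lambda=2^{-i}$ we get $\Cp_p(E_i)\le 2^{ip}\|u_i\|_{N^{1,p}(X)}^p\le \epsilon\,2^{-i}$. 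For $m\in\N$ put $G_m:=\bigcup_{i\ge m}E_i$, an open set with $\Cp_p(G_m)\le\sum_{i\ge m}\epsilon\,2^{-i}=\epsilon\,2^{1-m}$ by countable subadditivity of $\Cp_p$. On $X\setminus G_m$ we have $|u_i|\le 2^{-i}$ for all $i\ge m$, so $(f_k)$ is uniformly Cauchy there; let $\phi_m$ denote its uniform limit, which is continuous on $X\setminus G_m$ as a uniform limit of restrictions of continuous functions.

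It remains to identify $\phi_m$ with $f$ off a small open set. Since $f_k\to f$ in $N^{1,p}(X)$, hence in $L^p(X)$, and, more precisely, since $N^{1,p}$-convergence passes (along a subsequence) to pointwise convergence outside a set of capacity zero (see \cite{sha00,shabook}), there is a set $Z$ with $\Cp_p(Z)=0$ such that $f_k\to f$ pointwise on $X\setminus Z$ along this subsequence. As $f_k\to\phi_m$ pointwise on $X\setminus G_m$, we conclude $\phi_m=f$ on $X\setminus(G_m\cup Z)$. By Proposition~\ref{prop:capacity-null-case} there is an open set $W\supset Z$ with $\Cp_p(W)<\epsilon$. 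Choosing $m$ with $\epsilon\,2^{1-m}<\epsilon$ and setting $O:=G_m\cup W$, which is open with $\Cp_p(O)\le\Cp_p(G_m)+\Cp_p(W)<2\epsilon$, we obtain that $f|_{X\setminus O}=\phi_m|_{X\setminus O}$ is continuous, being a restriction of a function continuous on $X\setminus G_m\supseteq X\setminus O$. Rescaling $\epsilon$ completes the proof.

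The step I expect to be the real obstacle is the \emph{bookkeeping between measure and capacity}: convergence in $N^{1,p}(X)$ (or $L^p(X)$) only controls functions $\mu$-almost everywhere, whereas quasicontinuity is a statement about the everywhere-defined Newtonian representative. This is precisely why it is essential to work with the \emph{continuous} approximants (so that each $E_i$ is genuinely open) and to invoke Proposition~\ref{prop:capacity-null-case} to replace the residual capacity-zero discrepancy set $Z$ by an open set of small capacity. All the remaining ingredients---the capacity Chebyshev inequality, monotonicity and countable subadditivity of $\Cp_p$, and the elementary telescoping estimate on $X\setminus G_m$---are routine.
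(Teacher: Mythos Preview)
Your proof is correct and follows the same overall strategy as the paper's sketch: approximate $f$ by continuous Sobolev functions via Theorem~\ref{thm:density-complete}, apply a capacity Chebyshev inequality, and run a Borel--Cantelli/telescoping argument. The bookkeeping differs in one respect worth noting. The paper compares $f_n$ directly to $f$, bounding $\Cp_p(\{|f_n-f|\ge \epsilon_0/n\})$ via the test function $n\epsilon_0^{-1}|f_n-f|$; this makes the uniform limit on the good set equal to $f$ automatically, but the bad sets are not manifestly open (the sketch does not address this). You instead compare consecutive approximants $u_i=f_{i+1}-f_i$, so your bad sets $E_i=\{|u_i|>2^{-i}\}$ are open by continuity of $u_i$; the price is that the uniform limit $\phi_m$ must then be identified with $f$ off a capacity-null set $Z$, for which you correctly invoke Proposition~\ref{prop:capacity-null-case} to replace $Z$ by an open set of small capacity. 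Your route is thus slightly more careful about the openness requirement in the definition of quasicontinuity, at the cost of the extra ingredient Proposition~\ref{prop:capacity-null-case}; the paper's route is more direct but leaves the passage to an open exceptional set implicit.
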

We follow the arguments in \cite{bjornnages} and \cite{sha00}, but without relying on the hypothesis of properness and density of continuous functions in $N^{1,p}(X)$. Hence, we only provide a sketch.
\begin{proof}[Sketch of the proof of Proposition \ref{prop:quasicont-complete}]
By Theorem \ref{thm:density-complete}, there is a sequence
$f_i\in N^{1,p}(X)\cap C(X)$, for $i\in\N$,  with $\|f_i-f\|_{N^{1,p}(X)}\leq 2^{-i}$. 

Next, we apply the argument from the proof of \cite[Theorem 3.7]{sha00} to show that $f_i$ converges capacity almost everywhere to  $f\in N^{1,p}(X)$ . Fix $\epsilon_0>0$, and let $E_{\epsilon_0,n}=\{x\in X: |f_n-f|\geq \epsilon_0/n\}$. We have
\[
\int_X |f_n-f|^p d\mu \leq 2^{-np},
\]
and thus $\mu(E_{\epsilon_0,n})\leq n^p 2^{-np}\epsilon_0^{-p}$. Let $E_N = \bigcup_{n\geq N} E_{\epsilon, N}$. By a union bound, we get that for any $\epsilon>0$, there exists an $N$, so that $\mu(E_N)\leq \epsilon.$ The sequence of functions $f_n(x)$ converges uniformly to $f$ for any $x\in X\setminus E_N$, and thus for a.e. $x\in X$, since $\epsilon>0$ is arbitrary.

By considering $u_n=|f_n-f|n\epsilon_0^{-1}$ as a test function, we get $\Cp_p(E_{\epsilon_0,n})\leq n^p 2^{-np}\epsilon_0^{-p}$. At the expense of possibly increasing $N$, we get $\Cp_p(E_N)\leq \epsilon.$ Since $\epsilon>0$ is arbitrary, $\lim_{N\to \infty}\Cp_p(E_N)=0$. Further, $f_i$ converges pointwise to $f$ outside the set $E=\cap_{n=1}^\infty E_n$, which has capacity zero. 

Since the convergence is uniform, $f$ is continuous in $X\setminus E_N$, for every $N$. Therefore, $f$ is quasicontinuous, since $\lim_{N\to \infty}\Cp_p(E_N)=0$.
\end{proof}

\section{Localization and when $X$ is locally complete}
\label{sec:locallycomplete}
The previous section was focused entirely on complete spaces. 
In the final sections we improve these statements to a locally complete setting. Specifically, we prove Theorem \ref{thm:density-main}: Concluding that $N^{1,p}(X)\cap C(X)$ is dense in  $N^{1,p}(X)$ and that each function $f\in N^{1,p}(X)$ is quasicontinuous.

 These theorems will all be reduced to the complete setting by taking completions. This makes $X$ into an open set in its completion, and we are left to consider domains $\Omega$ in complete spaces. Then, in each case, we consider the set of points $X_\delta \subset X$, whose distance do the boundary in the completion is at least $\delta$, and construct partitions of unity subordinate to such sets. Each $X_\delta$ is complete, and the proofs mainly involve checking that we can ``patch'' together the information from each $X_\delta$ to their union, which is $X$.
 
 For technical reasons, we prove these theorems in a slightly different order from those in the complete setting.
 
 \subsection{Preliminaries on taking a completion}
 
 First, we address some measure theoretic issues in taking a completion. Let $X$ be locally complete, and let $\hat{X}$ be its completion. The completion is separable, if $X$ is separable. Further $X$ is an open subset of $\hat{X}$. If $\mu$ is a Radon measure on $X$, then we can define a Radon measure $\hat{\mu}$ on $\hat{X}$ as follows. If $E\subset X$ is Borel, then $E\cap X$ is also Borel and we can define $\hat{\mu}(E)=\mu(E\cap X)$. (In fact, by a different argument $E\cap X$ is Borel in $X$ whenever $E$ is Borel even when $X$ is not Borel measurable in $\hat{X}$, see \cite[Proof of Lemma 1]{saksman}). Since $\mu$ is finite on balls, so is $\hat{\mu}$ and therefore $\hat{\mu}$ is a Radon measure.  (See discussion at the beginning of Section \ref{sec:prelim}). 

Since we will be dealing with concepts relative to $X$ and $\hat{X}$ we need some care in our notation. For capacity, we will indicate the space $Y$ with respect to which it is computed in the superscript, as in $\Cp_p^Y(E)$, for $E\subset Y$. We remark, that if $E\subset X \subset Y$ and the measures on the spaces relate by restriction $\mu_X=\mu_Y|_X$ (where $X$ is measurable in $Y$), then $\Cp_p^X(E) \leq \Cp_p^Y(E)$. Here, we use the fact that in this same setting if $u\in N^{1,p}(Y)$, then $u|_X \in N^{1,p}(X)$, as readily follows from the definition.

\subsection{Quasicontinuity}

\begin{proof}[Proof of quasicontinuity in Theorem \ref{thm:density-main}] Fix $f\in N^{1,p}(X)$. Let $\hat{X}$ be the completion of $X$ and $\hat{\mu}$ be the extension of $\mu$ to $\hat{X}$.

We have that $X$ is also an open set in $\hat{X}$ since $X$ is locally complete. Let $\delta>0$ be arbitrary. Define $X_\delta = \{x: d(x,\hat{X}\setminus X) \geq \delta\}$. Then, $X_\delta$ is a closed subset of $\hat{X}$. Choose $\psi_\delta(x) = \min\{1,\frac{2}{\delta}d(x,\hat{X}\setminus X_{\delta/2})\}$. Then $\psi_\delta|_{X_\delta} \geq 1$,  $\psi_\delta$ is $2/\delta$-Lipschitz and $\psi_\delta |_{\hat{X}\setminus X_{\delta/2}}=0$.

Let $f_\delta = f \psi_\delta$. We have $f_\delta|_X \in N^{1,p}(X)$ and $f_\delta |_{\hat{X}\setminus X_{\delta/2}}=0 \in N^{1,p}(\hat{X}\setminus X_{\delta/2})$. Then, $f_\delta\in N^{1,p}(\hat{X})$ since $N^{1,p}(X)$ has the sheaf property: If $A,B\subset \hat{X}$ are open sets and $f|_A \in N^{1,p}(A), f|_B \in N^{1,p}(B)$, then $f|_{A\cup B}\in N^{1,p}(A\cup B)$.\footnote{This can be seen by the following argument: if $g_A,g_B\in L^p(A)$ are upper gradients for $f|_A$ and $f|_B$, then $g=g_A\ones_A + g_B \ones_B \in L^p(A\cup B)$ is an upper gradient of $f|_{A\cup B}$. Indeed, the upper gradient inequality \eqref{eq:ug} can be verified for any rectifiable curve $\gamma$ in $A\cup B$ by dividing it into finitely many parts contained in either $A$ or $B$.}

Then, by Proposition \ref{prop:quasicont-complete} we have that $f_\delta$ is quasicontinuous in $\hat{X}$. Therefore, for any $\delta>0$ there is an open subset $E_\delta$ so that $f_\delta |_{\hat{X}\setminus E_\delta}$ is continuous and $\Cp_p^{\hat{X}}(E_\delta)<\delta.$ Fix $\epsilon>0$ and let $E=\cup_{i=1}^\infty E_{\epsilon 2^{-i}}\cap X.$

We have $\Cp_p^X(E) \leq \Cp_p^{\hat{X}}(E)\leq \epsilon$.  Now, $f_{2^{-i}\epsilon}|_{X \setminus E}$ is continuous for every $i\in \N$. 
Therefore $f|_{X_{2^{-i}\epsilon} \setminus E}=f_{2^{-i}\epsilon}|_{X_{2^{-i}\epsilon}  \setminus E}$ is continuous on for any $i\in\N$. From this we get $f|_{X \setminus E}$ is continuous.
\end{proof}

\subsection{Density of continuous functions}

Here and in what follows, the support ${\rm supp}(f)$ of a function $f:X\to \R$ is the smallest closed set $C$ so that $f|_{X\setminus C}$ vanishes identically.  
In the proof of the density of continuous functions we will apply a partition of unity argument

We will need a standard construction for a partition of unity subordinate to a cover. Let $X_i = \{x : d(\hat{X}\setminus X, x)\geq 2^{-i}\}$ and $\Omega_i = \{x : d(\hat{X}\setminus X,x)>2^{-i}\}$.  In the following, the distance of a point to an empty set is defined as $\infty$. Also, we say that a sum of functions $\sum_{i=1}^\infty f_i(x)$ is \emph{locally finite} if for every $x\in X$ there exists a neighborhood, where only finitely many terms are non-zero.

\begin{lemma}\label{lem:partitionofunity} Let $\hat{X}$ be the completion of $X$ and let $X_i$ be defined as above.
For each $n\in \N$, There exist $4^{n}$-Lipschitz functions $\psi_n:\hat{X}\to [0,1]$, so that
\begin{enumerate}
\item[(a)] ${\mathrm{supp}(\psi_0)}\subset X_{1}$ and ${\mathrm{supp}(\psi_n)}\subset X_{n+1} \setminus X_{n-1}$ for $n\geq 1$;
\item[(b)]  the functions are a partition of unity: $\sum_{n=0}^\infty \psi_n(x)=1$ for $x\in X$; and
\item[(c)] the previous sum is locally finite in $X$: for every $x\in X$ there exists a $\delta>0$ so that are at most three $n\in\N$ so that $\psi_n(y)\neq 0$ for $y\in B(x,\delta)$.
\end{enumerate} 
\end{lemma}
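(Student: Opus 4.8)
The statement asks for a partition of unity $\{\psi_n\}_{n\ge 0}$ subordinate to the exhaustion of $X$ by the closed sets $X_i = \{x : d(\hat X\setminus X, x)\ge 2^{-i}\}$, with the precise support, Lipschitz, and local-finiteness bounds listed in (a)--(c). I will build the $\psi_n$ from a single scale of auxiliary cutoff functions and then normalize. The plan is to first introduce, for each $i\ge 0$, a Lipschitz function $\varphi_i:\hat X\to[0,1]$ with $\varphi_i=1$ on $X_i$ and $\varphi_i=0$ off $\Omega_{i+1}:=\{x:d(\hat X\setminus X,x)>2^{-(i+1)}\}$; the explicit choice
\[
\varphi_i(x) := \min\!\Big(1,\ \max\!\Big(0,\ 2^{i+1}\big(d(x,\hat X\setminus X) - 2^{-(i+1)}\big)\Big)\Big)
\]
does this and is $2^{i+1}$-Lipschitz (a truncation of a $2^{i+1}$-Lipschitz affine function of the $1$-Lipschitz map $x\mapsto d(x,\hat X\setminus X)$). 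Note $\varphi_i$ is supported in $\overline{\Omega_{i+1}}\subset X_{i+2}$ and equals $1$ on $X_i\supset X_{i-1}$. Since $X$ is open in $\hat X$ and every point of $X$ has positive distance to $\hat X\setminus X$, the $\varphi_i$ increase to $\ones_X$ pointwise, and on any ball $B(x,\delta)\subset X$ with $\delta$ small enough that $d(\cdot,\hat X\setminus X)>2^{-i_0}$ there, we have $\varphi_i\equiv 1$ for all $i\ge i_0$.

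**Defining the $\psi_n$ and checking the properties.**
Set $\psi_0 := \varphi_0$ and, for $n\ge 1$, $\psi_n := \varphi_n - \varphi_{n-1}$. Each $\psi_n$ is Lipschitz as a difference of Lipschitz functions, with constant at most $2^{n+1}+2^{n}\le 2^{n+2}\le 4^n$ for $n\ge 2$; for $n=0,1$ one checks the bound $4^n$ directly (the constant of $\varphi_0$ is $2\le 1$? — no: here I must be a little careful and will instead simply take $\varphi_i$ to be $4^{i}$-Lipschitz by the same formula, which is fine since $2^{i+1}\le 4^{i+1}$, and then $\psi_n$ is $(4^n+4^{n-1})\le 4^{n+1}$-Lipschitz; to land exactly on $4^n$ one rescales the transition band, replacing the factor $2^{i+1}$ by a smaller slope over a wider band inside $\Omega_{i+1}\setminus X_i$ — this is a routine adjustment of constants and I will carry it out so that the final Lipschitz constant of $\psi_n$ is at most $4^n$). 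For property (a): $\psi_0=\varphi_0$ is supported in $\overline{\Omega_1}\subset X_1$; for $n\ge 1$, $\psi_n=\varphi_n-\varphi_{n-1}$ vanishes wherever $\varphi_{n-1}=1$, in particular on $X_{n-1}$, so $\mathrm{supp}(\psi_n)\subset \hat X\setminus X_{n-1}^{\circ}$ — and also $\psi_n$ vanishes where $\varphi_n=0$, hence $\mathrm{supp}(\psi_n)\subset\overline{\Omega_{n+1}}\subset X_{n+1}$; intersecting gives $\mathrm{supp}(\psi_n)\subset X_{n+1}\setminus X_{n-1}$ after a mild inflation of the band, which I will absorb into the definition. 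For property (b): the partial sums telescope, $\sum_{n=0}^{N}\psi_n = \varphi_N$, and $\varphi_N(x)\to 1$ for $x\in X$, so $\sum_{n\ge 0}\psi_n = 1$ on $X$. Also each $\psi_n\ge 0$ since $\varphi_n\ge\varphi_{n-1}$ (the functions $\varphi_i$ are monotone increasing in $i$), so $\psi_n:\hat X\to[0,1]$.

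**Local finiteness and the main obstacle.**
For property (c), fix $x\in X$. Then $r:=d(x,\hat X\setminus X)>0$, so pick $i_0$ with $2^{-i_0}<r/2$ and $\delta>0$ with $B(x,\delta)\subset X$ and $d(\cdot,\hat X\setminus X)>2^{-i_0}$ on $B(x,\delta)$; then for all $n$ with $n-1\ge i_0$ we have $\varphi_{n-1}\equiv 1$ on $B(x,\delta)$, hence $\psi_n=\varphi_n-\varphi_{n-1}\equiv 0$ there. So only finitely many $\psi_n$ are nonzero on $B(x,\delta)$; by choosing $\delta$ so that $B(x,\delta)$ meets at most two consecutive bands $\Omega_{n+1}\setminus X_{n-1}$ (which is possible since the bands are "one scale apart") one gets at most three nonzero terms, as claimed. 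The only genuinely delicate point — the main obstacle — is bookkeeping the constants so that $\psi_n$ is \emph{exactly} $4^n$-Lipschitz and its support sits \emph{exactly} inside $X_{n+1}\setminus X_{n-1}$, simultaneously; the two requirements pull against each other (a steeper cutoff gives a narrower, better-located support but a worse Lipschitz constant). The resolution is to place the transition of $\varphi_i$ on the annulus $\{2^{-(i+1)}< d(\cdot,\hat X\setminus X)<2^{-i}\}$, whose "$d$-width" is $2^{-i}-2^{-(i+1)}=2^{-(i+1)}$, giving slope $\le 2^{i+1}$ for $\varphi_i$ and thus $\le 2^{i+1}+2^{i}< 4^n$ for $\psi_n$ when $n\ge 1$, while $\psi_0=\varphi_0$ has slope $\le 2 = 4^0\cdot 2$ — so for $n=0$ one widens the band slightly to bring the constant down to $1=4^0$, or more simply one notes $\psi_0$ is supported in $X_1$ and $2$-Lipschitz, and absorbs the discrepancy by relabeling (replacing $4^n$ by $4^{n+1}$ throughout, which does not affect any later use). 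I expect the argument above to go through verbatim once these constants are pinned down; none of the topological content is hard, since $X$ being open in $\hat X$ is exactly what makes $\varphi_i\nearrow\ones_X$ with the stated locally-uniform convergence.
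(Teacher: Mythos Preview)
Your telescoping construction $\psi_0=\varphi_0$, $\psi_n=\varphi_n-\varphi_{n-1}$ is correct and takes a genuinely different route from the paper. The paper instead sets $\psi_0(x)=\min\{1,2d(x,\hat X\setminus X_1)\}$ and recursively
\[
\psi_n(x)=\Big(1-\sum_{k=0}^{n-1}\psi_k(x)\Big)\min\{1,\,2^{n+1}d(x,\hat X\setminus X_{n+1})\},
\]
proving $\sum_{k\le n-1}\psi_k=1$ on $X_{n-1}$ by induction. Your approach is cleaner: (b) is immediate since the partial sums telescope to $\varphi_N$, and $\psi_n\ge 0$ follows from the monotonicity $\varphi_{i-1}\le\varphi_i$, whereas the paper needs an induction for both. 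The paper's product form makes the support containment in $X_{n+1}$ slightly more direct (the second factor vanishes on the open set $\hat X\setminus X_{n+1}$). Your hand-wringing about the exact constant $4^n$ for $n=0,1$ is justified but harmless: your $\psi_0$ is $2$-Lipschitz and your $\psi_1$ is $6$-Lipschitz, and the paper's own proof has the same defect (it records $\psi_0$ as $2$-Lipschitz, not $4^0$-Lipschitz). Every later use of the lemma only needs each $\psi_n$ to be $L_n$-Lipschitz for \emph{some} finite $L_n$, so replacing $4^n$ by, say, $4^{n+1}$ is entirely innocuous, as you suspected. The same remark applies to the boundary issue in (a): strictly, $\overline{\{\psi_n\neq 0\}}$ may graze $\partial X_{n-1}$, but this too is shared by the paper's argument and is irrelevant for the applications.
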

\begin{proof}
Let $\psi_0(x) = \min\{1,2d(x,\hat{X}\setminus X_1)\}$.  Recursively, for $n\ge 1$, define 
\begin{equation}\label{eq:psin-def}
\psi_{n}(x)=\left(1-\sum_{k=0}^{n-1}\psi_{k}\right) \min\{1,2^{n+1}d(x,\hat{X}\setminus X_{n+1})\}.
\end{equation}
First, $\psi_0$ is $2$-Lipschitz, and by induction one can show that $\psi_n$ is Lipschitz with constant $(1+\cdots+4^{n-1})+2^{n+1}\leq 4^n$. 
We have $\psi_0|_{X_0}=1$. By induction, we get that $\sum_{k=0}^{n-1} \psi_{k}|_{X_{n-1}}=1$. Therefore, (b) holds. Moreover, this gives (a), since the first factor in (\ref{eq:psin-def}) vanishes on $X_{n-1}$ and the second factor vanishes outside $X_{n+1}$. 

Finally, we prove (c).  If $x\in X$, then $x\in X_n\setminus X_{n-1}$ for some $n\geq 0$, where $X_{-1}=\emptyset$ to simplify the argument. We have for $\delta=2^{-(n-1)}$ that $B(x,\delta)\subset X_{n+1}$. Thus, for $y\in B(x,\delta)$, due to (a), $\psi_k(y)\neq 0$ can only occur for $k=n-1,n,n+1$.
\end{proof}

We also need a fairly simple version of the sheaf property for Sobolev functions.

\begin{lemma}\label{lem:increasingsets}Let $p\in [1,\infty)$ and let $X$ be any metric measure space equipped with a Radon measure $\mu$, finite on balls. If $A_i\subset X$ is any increasing sequence of open sets, and $H: A:=\bigcup_{i=1}^\infty A_i \to [-\infty,\infty]$ is a function so that $H|_{A_i} \in N^{1,p}(A_i)$ with $\sup_{i\in \N} \|H\|_{N^{1,p}(A_i)}<\infty$, for $i\ge 1$, then $H\in N^{1,p}(A)$.
Further, $\|H\|_{N^{1,p}(A)}=\lim_{i\to\infty}\|H\|_{N^{1,p}(A_i)}$.
\end{lemma}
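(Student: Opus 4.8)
The plan is to build a single upper gradient of $H$ on $A$ out of the minimal $p$-weak upper gradients on the pieces $A_i$, exploiting the fact that any rectifiable curve in $A$ already lies in some $A_i$ by compactness.

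First I would record monotonicity of the pieces. If $g$ is an upper gradient for $H$ on $A_{i+1}$, then $g|_{A_i}$ is one on $A_i$ with $\|g|_{A_i}\|_{L^p(A_i)}\le\|g\|_{L^p(A_{i+1})}$, and $\|H\|_{L^p(A_i)}\le\|H\|_{L^p(A_{i+1})}$; hence $c_i:=\|H\|_{N^{1,p}(A_i)}$ is non-decreasing, and $C:=\lim_{i\to\infty}c_i=\sup_i c_i<\infty$ by hypothesis. Let $g_i$ be a finite-valued Borel representative of the minimal $p$-weak upper gradient of $H|_{A_i}$ on $A_i$, so that $\int_{A_i}g_i^p\,d\mu\le c_i^p\le C^p$ by \cite[Theorem 6.3.20]{shabook}. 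For $j\le i$, the $p$-modulus of a family of curves all contained in the open set $A_j$ is the same whether computed in $A_j$ or in $A_i$ (one passes between admissible functions by restriction and by extension by $0$); consequently $g_i|_{A_j}$ is a $p$-weak upper gradient of $H$ on $A_j$, and a.e.-minimality of $g_j$ yields $g_j\le g_i$ $\mu$-a.e. on $A_j$.

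Next I would set $g:=\sup_{i\in\N}\big(g_i\ones_{A_i}\big)$ as a pointwise supremum of Borel functions. Then $g_i\le g$ everywhere on $A_i$, and by the previous step the sequence $g_i\ones_{A_i}$ is $\mu$-a.e. non-decreasing, so monotone convergence gives $\int_A g^p\,d\mu=\lim_{i\to\infty}\int_{A_i}g_i^p\,d\mu\le C^p$, and likewise $\int_A|H|^p\,d\mu=\lim_{i\to\infty}\int_{A_i}|H|^p\,d\mu\le C^p$; hence $H,g\in L^p(A)$. To see that $g$ is a $p$-weak upper gradient of $H$ on $A$, let $\Gamma_i$ be the family of rectifiable curves in $A_i$ along which the upper-gradient inequality for $(H,g_i)$ fails; then $\Mod_p(\Gamma_i)=0$ computed in $A_i$, hence in $A$, so $\Gamma_0:=\bigcup_i\Gamma_i$ is $p$-exceptional by countable subadditivity. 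Any rectifiable curve $\gamma$ in $A$ has compact image, hence lies in some $A_i$; if $\gamma\notin\Gamma_0$ then $|H(\gamma(1))-H(\gamma(0))|\le\int_\gamma g_i\,ds\le\int_\gamma g\,ds$ since $g_i\le g$ on $A_i\supseteq\image(\gamma)$. Thus $g$ is a $p$-weak upper gradient of $H$ lying in $L^p(A)$, and Lemma \ref{lem:lowersem} upgrades it to a genuine upper gradient of $H$ in $L^p(A)$, so $H\in N^{1,p}(A)$.

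Finally I would deduce the norm identity. Restriction gives $\|H\|_{N^{1,p}(A)}\ge\|H\|_{N^{1,p}(A_i)}=c_i$ for every $i$, hence $\|H\|_{N^{1,p}(A)}\ge C$. Conversely, if $g_H$ denotes the minimal $p$-weak upper gradient of $H$ on $A$, then $g_H\le g$ $\mu$-a.e., and using \cite[Theorem 6.3.20]{shabook} together with monotone convergence,
\[
\|H\|_{N^{1,p}(A)}^p=\|H\|_{L^p(A)}^p+\int_A g_H^p\,d\mu\le\lim_{i\to\infty}\Big(\|H\|_{L^p(A_i)}^p+\int_{A_i}g_i^p\,d\mu\Big)=\lim_{i\to\infty}c_i^p=C^p,
\]
so $\|H\|_{N^{1,p}(A)}=C=\lim_{i\to\infty}\|H\|_{N^{1,p}(A_i)}$. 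The lemma is essentially bookkeeping with standard facts about minimal weak upper gradients and modulus; the one point that deserves care is the observation that the $p$-modulus of a curve family supported in an open subset is intrinsic to that subset, since this is what legitimizes the comparisons $g_j\le g_i$ across levels and the addition of the exceptional families $\Gamma_i$ after passing to the larger space $A$.
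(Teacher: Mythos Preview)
Your proof is correct and follows essentially the same strategy as the paper's: build a global candidate from the minimal $p$-weak upper gradients $g_i$ on the pieces, then use that any rectifiable curve in $A$ lies in some $A_i$. The only organizational difference is that the paper applies Lemma~\ref{lem:lowersem} on each $A_i$ first (obtaining genuine upper gradients $g_{i,\epsilon}$) and then takes the supremum to get a genuine upper gradient on $A$, whereas you take the supremum of the $p$-weak upper gradients first and invoke Lemma~\ref{lem:lowersem} once at the end; your ordering is slightly cleaner and your use of the inequality $g_j\le g_i$ a.e.\ (via minimality) suffices where the paper invokes the stronger locality statement $g_i|_{A_j}=g_j$ a.e.
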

\begin{proof}
The $L^p$-version of the claim follows from monotone convergence, and we get $H\in L^{p}(A)$ and $\|H\|_{L^{p}(A)}=\lim_{i\to\infty}\|H\|_{L^{p}(A_i)}$

Let $g_i:X\to[0, \infty]$ be the zero-extension of the minimal $p$-weak upper gradient of $H|_{A_i}$. By locality of $p$-weak upper gradients, see \cite[Proposition 6.3.22]{shabook}, $g_i|_{A_j}=g_j$ almost everywhere on $A_j$ for all $j<i$. Thus, there exist a function $g:A \to [0,\infty]$ with $g\in L^p(A)$ and $g|_{A_i}=g_i$ almost everywhere on $A_i$. Thus, $g|_{A_i}$ is a $p$-weak upper gradient for $H|_{A_i}$ for all $i\in \N$. 

Fix $\epsilon>0$. By Lemma \ref{lem:lowersem}, for every $i$, we can find a lower semi-continuous $g_{i,\epsilon}:A_i \to [0,\infty]$ which is an upper gradient for $H|_{A_i}$ with $g_{i,\epsilon}\geq g|_{A_i}$ and $\|g_{i,\epsilon}-g|_{A_i}\|_{L^p(A_i)}\leq \epsilon 2^{-i}$. Extend $g_{i,\epsilon}$ by zero, and define $\tilde{g}=\sup_{i} g_{i,\epsilon}.$ We have, on the set $A$,
\[
|\tilde{g}-g|\le \sum_{i=1}^\infty |g_{i,\epsilon}-g|\ones_{A_i}.
\]
Then $\tilde{g}\in L^p(A)$, and, by monotone convergence,
\[
\|\tilde{g}\|_{L^p(A)}\leq \epsilon + \lim_{i\to\infty}\|g_i\|_{L^p(A_i)}.
\]

By construction, $\tilde{g}|_{A_i}\ge g_{i,\epsilon}$ and thus $\tilde{g}|_{A_i}$ is an upper gradient for $H|_{A_i}$. Every rectifiable curve in $\bigcup_{i\in \N}A_i$ is contained in $A_i$ for some $i\in \N$. This argument verifies \eqref{eq:ug} and $\tilde{g}$ is an upper gradient for $H$. Thus, $H\in N^{1,p}(A)$.  Further, 
\[
\|H\|_{N^{1,p}(A)}\leq (\|H\|_{L^p(A)}^p+\|\tilde{g}\|_{L^p(A)}^p)^{\frac{1}{p}} \leq \lim_{i\to\infty}((\epsilon + \|g_i\|_{L^p(A_i)})^p + \|H\|_{L^{p}(A_i)}^p)^{\frac{1}{p}}.
\]
Since $\epsilon>0$ is arbitrary the claim follows.
\end{proof}

\begin{proof}[Proof of density in Theorem \ref{thm:density-main}] Let $f\in N^{1,p}(X)$ be any function. Fix $\epsilon>0$. Let $\psi_n$ be the partition of unity functions from Lemma \ref{lem:partitionofunity}. We also define $\hat{\psi}_n=\psi_{n}+\psi_{n+1}+\psi_{n-1}$ for $n\geq 1$ and $\hat{\psi}_0=\psi_{0}+\psi_{1}$. For every $x\in\Omega$ we have finitely many $n$ so that $\hat{\psi}_n(x)\neq 0$. Further, whenever $\psi_n(x) \neq 0$, we have $\hat{\psi_n}(x)=1$. There are also constants $L_n$ so that $\hat{\psi}_n$ are $L_n$-Lipschitz. Indeed, with some care, we could show that $L_n\lesssim 4^n$, but we will not need this. 

As in the proof of the quasicontinuity in Theorem \ref{thm:density-complete} we set $f_n = f\hat{\psi}_n \in N^{1,p}(\hat{X})$. By Theorem \ref{thm:density-complete}, there is a continuous $u_n' \in N^{1,p}(\hat{X})\cap C(\hat{X})$ so that $\|f_n-u_n'\|_{N^{1,p}(\hat{X})}\leq \epsilon 2^{-4-n} (1+L_n)^{-1}$. Also, let $u_n=\hat{\psi}_n g_n'$. Then, by using the Leibniz rule (see \cite[Proposition 6.3.28]{shabook}), we get

\[
\|u_n-f_n\|_{N^{1,p}(\hat{X})} = \|\hat{\psi}_n(u_n-f_{n})\|_{N^{1,p}(\hat{X})}\leq 2(1+L_n)\|u_n-f_n\|_{N^{1,p}(\hat{X})} \leq 2^{-2-n}\epsilon.
\]

Let $u = \sum_{n=1}^\infty u_n$. Since the sum is a locally finite sum of continuous functions by Property (c) of Lemma \ref{lem:partitionofunity}, then $u\in C(X)$ and the sum is well defined.

We show that $u\in N^{1,p}(X)$. Fix an $i\in \N$. We have that $f|_{\Omega_i}=\sum_{i=0}^{n+1} f_i|_{\Omega_i}$ and that  $u|_{\Omega_i}=\sum_{i=0}^{n+1} u_i|_{\Omega_i}$. Thus,
\[
\|f-u\|_{N^{1,p}(\Omega_i)}\leq \sum_{n=1}^\infty\|f_n-u_n\|_{N^{1,p}(\Omega_i)} \leq \epsilon /2
\]
and $u|_{\Omega_i} \in N^{1,p}(\Omega_i)$ with a uniformly bounded norm independent of $i$.

By Lemma \ref{lem:increasingsets}, $u\in N^{1,p}(X)$. Further,  $\|g\|_{N^{1,p}(X)}=\lim_{i\to\infty} \|g\|_{N^{1,p}(\Omega_i)}$. Finally, by applying this argument to the difference $f-g$, we obtain $\|f-g\|_{N^{1,p}(X)}\leq \epsilon$. Since $g$ is continuous, the claim follows.
\end{proof}

\section{Choquet capacities and equivalence of definitions}\label{sec:choquet}

In the final section we study the capacity  $E\to \Cp_p(E)$ and condenser capacity $\Cp_p(E,F)$, and prove that they satisfy certain regularity properties. Specifically, we prove the following three theorems.

\begin{enumerate}
\item Theorem \ref{thm:capacityouterreg}: Concluding that $E\to \Cp_p(E)$ is outer regular.
\item Corollary \ref{cor:capacity-choquet}: Concluding that $E\to \Cp_p(E)$ is a Choquet capacity for $p>1$.
\item Theorem \ref{thm:capacity}: Concluding that different definitions of $\Cp_p(E,F)$ coincide. In particular, capacity can be computed with locally lipschitz funtions with locally lipschitz upper gradients.
\end{enumerate}

\subsection{Choquet capacity and outer regularity}

We start by defining a Choquet capacity. Denote by $\mathcal{P}(X)$ the collection of all subsets of $X$, i.e. its power set.

\begin{definition}\label{def:Choquet} A functional $I:\mathcal{P}(X) \to [0,\infty]$ is called a Choquet capacity, if it satisfies the following three properties.
\begin{enumerate}
\item Increasing: If $A\subset B \subset X$, then $I(A) \leq I(B)$.
\item Continuity from below: If $(A_n)_{n\in \N}$ is an increasing sequence of subsets of $X$, then
\[
\lim_{n\to\infty} I(A_n)=I\left(\bigcup_{n\in \N}A_n\right).
\] 
\item Continuity from above: If $(K_n)_{n\in \N}$ is a decreasing sequence of compact subsets of $X$, then 
\[
\lim_{n\to\infty} I(K_n)=I\left(\bigcap_{n\in \N}K_n\right).
\]
\end{enumerate}
\end{definition}

A reader interested in Choquet capacities may consult any of the following \cite{dellacherie,choquet}. A condensed treatise is available in \cite{carleson}. An earlier result showing that a variant of  $\Cp_p$, see Remark \ref{rmk:neighborhoodcapacity}, is Choquet is presented in \cite{kinnunenmartio}. One of the main motivations for introducing Choquet capacities is the ``Capacitability theorem'' of Choquet, which states that any analytic subset $A\subset X$ satisfies: $I(A)=\sup_{K\subset A} I(K)$, where the supremum is taken over compact subsets of $A$.

For $I=\Cp_p$, the increasing property is immediate from the definition. The continuity from below holds without further assumptions, when $p>1$. The continuity from above is reduced to the functional being \emph{outer-regular}. Recall that the functional $I$ is outer regular, if for every compact set $K\subset X$, and any $\epsilon>0$, there exists an open set $O$ such that $K\subset O$ and $I(O)\leq I(K)+\epsilon$. In other words, the main object is to establish outer regularity, and then collect all the pieces together to prove that $\Cp_p$ is a Choquet capacity.

We first prove the outer regularity of the capacity, which was stated in Theorem \ref{thm:capacityouterreg}. This is a repetition of the argument in \cite[Proof of Corollary 1.3]{bjornnages}, with the only change being that Proposition \ref{prop:quasicont-complete} is used instead of \cite[Theorem 1.1]{bjornnages}. For the reader's convenience, we sketch the idea here.
\begin{proof}[Sketch of proof of Theorem \ref{thm:capacityouterreg}] Let $u\in N^{1,p}(X)$ be any non-negative function with $u|_E \geq 1$. Fix $\epsilon>0$. Then, $u$ is quasicontinuous by Theorem \ref{thm:density-main}, and there is an open set $V$ with $\Cp_p(V)<\epsilon^p$ so that $u|_{X\setminus V}$ is continuous. Choose a non-negative function $v$ so that $\|v\|_{N^{1,p}(X)} \leq \epsilon$ and $v|_V \geq 1$.

 By continuity in $X\setminus V$,  there is an open set $O_E$ with $E\setminus V \subset O_E$  so that $u|_{O_E\cap X\setminus V} \geq 1-\epsilon$.  Consider the function $u_\epsilon = \frac{u}{1-\epsilon}+v$. Then $u_\epsilon|_{O_E\cup V}\geq 1$. The set $O_E \cup V$ is open, and thus,

$$\inf_{E\subset O} \Cp_p(O) \leq \|u_\epsilon\|^{p}_{N^{1,p}(X)} \leq \left(\frac{1}{(1-\epsilon)}\|u\|_{N^{1,p}(X)} + \epsilon\right)^p.$$

Taking an infimum over $u \in N^{1,p}(X)$ with $u|_{E} \geq 1$ and letting $\epsilon\rightarrow 0$ yields the claim.
\end{proof}

Next, we prove that $\Cp_p$ is a Choquet capacity. This was stated in the introduction as Corollary \ref{cor:capacity-choquet}.

\begin{proof}[Proof of Corollary \ref{cor:capacity-choquet}]
We verify the three properties of a Choquet capacity from Definition \ref{def:Choquet}.

\begin{enumerate}
\item \textbf{Increasing:} If $A\subset B \subset X$, then $\Cp_p(A)\leq \Cp_p(B)$, since every function $u\in N^{1,p}(X)$ with $u|_B=1$ also satisfies $u|_A=1$.

\item \textbf{Continuity from below:} We follow the proof of \cite{kinnunenmartio}, which is presented with a slightly different definition of capacity. Let $A_n$ be any increasing sequence of sets. By the increasing property, the property of continuity from below is automatic if $\lim_{n\to\infty} \Cp_p(A_n)=\infty$. Thus, we may assume that $\lim_{n\to\infty} \Cp_p(A_n)<\infty$. 

Choose any sequence $u_n \in N^{1,p}(X)$ so that $u_n|_{A_n}=1$ and 
\[
\lim_{n\to\infty} \|u_n\|_{N^{1,p}(X)}^p = \lim_{n\to\infty} \Cp_p(A_n).
\]
 The functions $u_n$ and their minimal $p$-weak upper gradients $g_{u_n}$ are uniformly bounded in $L^p(X)$. Therefore, by Mazur's Lemma, we may choose convex combinations $\tilde{u}_n$ of $\{u_k\}_{k=n}^\infty$ and corresponding convex combinations $\tilde{g}_n$ of $\{g_k\}_{k=n}^\infty$ so that $\tilde{u}_n$ and $\tilde{g}_n$ are Cauchy in $L^p(X)$ and so that $\tilde{g}_n$ is an upper gradient for $\tilde{u}_n$. Choose a subsequence $(n_k)_{k=1}^\infty$, with $n_k \geq k$, so that 

\begin{equation}\label{eq:Lp}
\sum_{k=1}^\infty |\tilde{u}_{n_{k+1}}-\tilde{u}_{n_{k}}| + |\tilde{g}_{n_{k+1}}-\tilde{g}_{n_{k}}| \in L^p(X). 
\end{equation}

Next, define $\tilde{u}_l = \sup_{k\geq l} \tilde{u}_{n_l}$ and $\tilde{g}_l = \sup_{k\geq l} \tilde{u}_{n_l}$. It follows from \eqref{eq:Lp} that $\tilde{u}_l, \tilde{g}_l \in L^p(X)$, and that as $l\to \infty$ they converge in $L^p(X)$. A fairly direct calculation using the definition \eqref{eq:ug} shows that $\tilde{g}_l$ is a $p$-weak upper gradient for $\tilde{u}_l$, and $\tilde{u}_l \in N^{1,p}(X)$.

Note that $\|\tilde{u}_l\|_{N^{1,p}(X)}^p\leq \|\tilde{u}_l\|_{L^p(X)}^p +  \|\tilde{g}_l\|_{L^p(X)}^p$. Then, by construction  and the $L^p(X)$ convergence, we get
\[
\lim_{l\to \infty} \|\tilde{u}_l\|_{L^p(X)}^p +  \|\tilde{g}_l\|_{L^p(X)}^p \leq \lim_{n\to\infty} \|u_n\|_{N^{1,p}(X)}^p = \lim_{n\to\infty} \Cp_p(A_n).
\]
Now, $\tilde{u}_l|_{A_k} \geq 1$ for every $k\geq l$. Thus $\tilde{u}_l|_{\bigcup_{k} A_k} \geq 1$. In particular, 
\[
\Cp_p\left(\bigcup_k A_k\right) \leq \|\tilde{u}_l\|_{L^p(X)}^p +  \|\tilde{g}_l\|_{L^p(X)}^p.
\]
Sending $l\to \infty$, gives 
\[
\Cp_p\left(\bigcup_k A_k\right) \leq \lim_{n\to\infty} \Cp_p(A_n).
\]
The opposite inequality follows from the increasing property. This completes the proof of continuity from below.

\item \textbf{Continuity from above:} Let $(K_n)_{n\in \N}$ be any decreasing sequence of compact sets and let $K= \bigcap_n K_n$. From the capacity being increasing, we get $\Cp_p(K) \leq \lim_{n\to\infty} \Cp_p(K_n)$. We next establish this inequality in the opposite direction. If $\Cp_p(K)=\infty$, then $\Cp_p(K)=\lim_{n\to\infty} \Cp_p(K_n)$. Thus, consider the case of $\Cp_p(K)<\infty$. By Theorem \ref{thm:capacityouterreg}, for every $\epsilon>0$, there exists an open set $O$ with $K\subset O$ and $\Cp_p(O)\leq \Cp_p(K)+\epsilon$. For $n$ sufficiently large $K_n \subset O$, and thus  by the increasing property, we get
\[
\Cp_p(K)\leq \lim_{n\to\infty} \Cp_p(K_n) \leq \Cp_p(K) +\epsilon.
\]
Since $\epsilon>0$ is arbitrary, the claim follows.
\end{enumerate}

\end{proof}

\subsection{Different definitions of capacity agree}

\begin{proof}[Proof of Theorem \ref{thm:capacity}]
Let $E,F\neq \emptyset$ be two closed, disjoint non-empty subsets in $X$ with $d(E,F)>0$.
It is straightforward to show that
\[
\Cp_p(E,F)\leq \Cp_p^c(E,F)\leq \Cp_p^{\lip}(E,F)\leq \Cp_p^{(\lip,\lip)}(E,F).
\]
Thus, it suffices to prove $\Cp_p^{(\lip,\lip)}(E,F)\leq \Cp_p(E,F)$. If $\Cp_p(E,F)=\infty$, this is obvious. Thus, assume $\Cp_p(E,F)<\infty$. Let $\epsilon>0$ be arbitrary. We can choose a function $u\in N^{1,p}(X)$ which is non-negative, with $u|_E=0$ and $u|_F=1$, and with an upper gradient $g_\epsilon$ such that
\[
\int g_\epsilon^p d\mu \leq \Cp_p(E,F) + \epsilon.
\]
Let $\hat{X}$ be the completion of $X$. Fix $x_0 \in X$. Extend $u$ and $g_\epsilon$ by zero to functions in  $L^p(\hat{X})$. 
Let $X_j = \{x\in X \cap \overline{B(x_0,j)} : d(x, \hat{X} \setminus X) \geq 2^{-j}\}$. Let $\psi_j$ be the partition of unity constructed in Lemma \ref{lem:partitionofunity}. Recall that $\sum_{n=0}^\infty \psi_n(x)=1$, each $\psi_n$ is $L_n$-Lipschitz for some $L_n<\infty$, and that ${\mathrm{supp}(\psi_n)}\subset X_{n+1} \setminus X_{n-1}$ for $n\geq 1$ and ${\rm supp}(\psi_0) \subset X_0$.

Let $E_j = E \cap X_j, F_j = F \cap X_j$. Let $j_0 \in \N$ be so that $E_j,F_j \neq \emptyset$ for all $j\geq j_0$.  
The space $X_j$ is complete and bounded, and so we can apply Theorem \ref{thm:capacity-complete} and Corollary \ref{rmk:refinedversion} to the functions $u|_{X_j}$ and $(g_\epsilon)|_{X_j}$. We obtain that for every $j\geq j_0$ there exist Lispchitz functions $u_j\in N^{1,p}(X_j)$ with locally Lipschitz upper gradients $g_j\in L^p(X_j)$, for $j\in \N$, so that  $u_j|_{E_j}=1, u_j|_{F_j}=0$ and $\|g_j-g_\epsilon\|_{L^p(X_j)}\leq 2^{-j}$ in $L^p(X_j)$. Extend each $u_j$ and $g_j$ by zero to an $L^p(X)$ function defined on all of $X$.  

We have $0\leq u_j\leq 1$. We will briefly consider the space $L^2(X_k)$ in order to avail ourselves of weak compactness in this space. The sets $X_k$ are bounded and have bounded measure for $k\in \N$. Thus, $u_j|_{X_k} \in L^2(X_k)$, for every $k$ and every $j\in \N$, and  $\sup_{j\in \N} \|u_j\|_{L^2(X_k)}<\infty$. Weak compactness allows us to take a subsequence converging weakly in $L^2(X_k)$. Further, by  Mazur's Lemma and a diagonal argument, we can take finite convex combinations $v_{j}$ of $\{u_j,u_{j+1}, \dots\}$ which converge in $L^2(X_k)$ for every $k\in \N$. It is direct to show that $v_j$ converges in $L^p(X_k)$ for every $k\in  \N$. 

Consider the corresponding convex combinations $h_j$ of the upper gradients in $\{g_j, g_{j+1}, \dots\}$. By using the definition of $g_j$, these converge, for every $k\in \N$, in $L^p(X_k)$ to $g_\epsilon|_{X_k}$, since $g_j|_{X_k}$ converge to $g_\epsilon|_{X_k}$ as $j\to\infty$.  By construction, $h_j$ is an upper gradient for $v_j$ in $X_j$, $v_j|_{E_k}=0, v_j|_{F_k}=1$ and $v_j$ is Lipschitz on $X_k$ for every $j\geq k$. Further, each $h_j$ is locally Lipschitz. 

Choose a subsequence $(n_k)_{k\in \N}$ so that $\|v_{n_k}-v_{n_{k+1}}\|_{L^p(X_{k+2})}\leq \epsilon L_k^{-1}2^{-k}$, $\|h_{n_k}-g_{\epsilon}\|_{L^p(X_{k+1})} \leq \epsilon 2^{-k}$ and so that $n_k \geq k+1$.
Finally, define 
\[
  U = \sum_{i=0}^\infty v_{n_i} \psi_i.
\]
By Property (c) of Lemma \ref{lem:partitionofunity}, the sum in $U$ is locally finite and $U$ is locally Lipschitz.  It is not hard to see that $U|_E=0$ and $U|_F=1$. 

Let $G:=\sum_{i=0}^\infty h_{n_i} \psi_i + L_i (\psi_{i-1}+\psi_{i}+\psi_{i+1})|v_{n_i}-U|$. We have the following estimates, since $\psi_i$ is a partition of unity:

\begin{align*}
\|G\|_{L^p(X)}&\leq \|g_\epsilon\|_{L^p(X)} + \|G-g_\epsilon\|_{L^p(X)} \\
&\leq \|g_\epsilon\|_{L^p(X)} + \sum_{i=0}^\infty \|(h_{n_i}-g_\epsilon)\psi_i + L_i \ones_{X_{i+2}}|v_{n_i}-U|\|_{L^p(X)} \\
&\leq\|g_\epsilon\|_{L^p(X)} + \sum_{i=1}^\infty \|(h_{n_{i}}-g_\epsilon)\|_{L^p(X_{i+1})} + L_i\|v_{n_i}-U\|_{L^p(X_{i+2})}\\
& \leq \Cp_p(E,F)^{1/p} + 4 \epsilon.
\end{align*}

Assume for the moment that $G$ is a $p$-weak upper gradient of $U$. Then, by Lemma \ref{lem:lowersem} for every $\epsilon>0$, there exists an upper gradient $g$ of $U$ with $\|g\|_{L^p(X)}\leq \|G\|_{L^p(X)}+\epsilon\leq \Cp_p(E,F)^{1/p} + 5 \epsilon.$ Since $\epsilon>0$ is arbitrary the claim follows. Thus, we only need to show that $G$ is a $p$-weak upper gradient of $U$. This is a matter of a final Lemma.

\begin{lemma} Suppose that $\psi_i$ are $L_i$ Lipschitz functions, so that ${\rm supp}\{\psi_i\}\subset X_i$, and so that $\sum_{i=1}^\infty \psi_i = 1$, where the sum is locally finite. Then, if $v_{n_i} \in N^{1,p}(X_{i+1})$ are functions with continuous upper gradients $h_{n_i} \in L^{p}(X_{i+1})$, then
\[G:=\sum_{i=0}^\infty h_{n_i} \psi_i + L_i (\psi_{i-1}+\psi_{i}+\psi_{i+1})|v_{n_i}-U|\]
is an upper gradient of
\[
  U = \sum_{i=0}^\infty v_{n_i} \psi_i.
\]
\end{lemma}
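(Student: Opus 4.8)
The plan is to prove the slightly stronger statement that $G$ is a $p$-weak upper gradient of $U$ — which is exactly what the proof of Theorem~\ref{thm:capacity} invokes — by verifying the upper gradient inequality along $p$-a.e.\ rectifiable curve. The key point is that along a single curve $U$ reduces to a \emph{finite} sum, after which one differentiates and uses $\sum_i\psi_i\equiv 1$ to see that the "error'' terms $L_i(\psi_{i-1}+\psi_i+\psi_{i+1})|v_{n_i}-U|$ are precisely what is needed to control the part of $(U\circ\gamma)'$ coming from the derivatives of the cutoffs.

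First I would record, from the properties of the partition of unity in Lemma~\ref{lem:partitionofunity} (in particular $\supp(\psi_i)\subset X_{i+1}\setminus X_{i-1}$, with the convention $\psi_{-1}\equiv 0$): on $\supp(\psi_i)$ only $\psi_{i-1},\psi_i,\psi_{i+1}$ can be nonzero, and there they sum to $1$; hence $\ones_{\supp(\psi_i)}\le\psi_{i-1}+\psi_i+\psi_{i+1}$ on all of $X$, and therefore the pointwise bound
\[
G\ \ge\ \sum_{i\ge 0}\Big(h_{n_i}\psi_i + L_i|v_{n_i}-U|\ones_{\supp(\psi_i)}\Big).
\]

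Then I would fix a rectifiable curve $\gamma:[0,1]\to X$ of constant speed with $\int_\gamma G\,ds<\infty$. By compactness of $\gamma([0,1])$ and local finiteness of $\sum_i\psi_i$, the set $F=\{i:\psi_i\circ\gamma\not\equiv 0\}$ is finite, so $U\circ\gamma=\sum_{i\in F}w_i\circ\gamma$ with $w_i:=v_{n_i}\psi_i$ (zero–extended) and $\sum_{i\in F}\psi_i\circ\gamma\equiv 1$. For $p$-a.e.\ $\gamma$ — the exceptional set being a countable union over $i\in\N$ of $p$-null families, the passage from a $p$-null family for $v_{n_i}$ in $X_{i+1}$ to a $p$-null family of curves in $X$ being handled by extending admissible densities by zero — and for each $i\in F$, I would show that $w_i\circ\gamma$ is absolutely continuous on $[0,1]$ with $|(w_i\circ\gamma)'|\le\big(h_{n_i}\psi_i + L_i|v_{n_i}|\ones_{\supp(\psi_i)}\big)(\gamma)\,|\gamma'|$ a.e.: on the open set $\gamma^{-1}(\{\psi_i>0\})$, a countable union of intervals whose closures $\gamma$ maps into $\supp(\psi_i)\subset X_{i+1}$, this follows from the product rule applied to the absolutely continuous functions $v_{n_i}\circ\gamma$ (using that $h_{n_i}$ controls its derivative along $p$-a.e.\ subcurve in $X_{i+1}$) and $\psi_i\circ\gamma$ (Lipschitz); on the complement $w_i\circ\gamma\equiv 0$, and the two descriptions glue to a globally absolutely continuous function because $\psi_i\circ\gamma=0$ at the boundary points. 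Summing over $i\in F$, $U\circ\gamma$ is absolutely continuous, and since $\sum_{i\in F}(\psi_i\circ\gamma)'\equiv 0$ a.e.\ one may replace $v_{n_i}$ by $v_{n_i}-U$ in the terms involving $(\psi_i\circ\gamma)'$; since moreover $(\psi_i\circ\gamma)'=0$ a.e.\ off $\gamma^{-1}(\supp\psi_i)$, the product rule gives $|(U\circ\gamma)'|\le\sum_{i\in F}\big(h_{n_i}\psi_i+L_i|v_{n_i}-U|\ones_{\supp(\psi_i)}\big)(\gamma)\,|\gamma'|\le G(\gamma)\,|\gamma'|$ a.e.; integrating over $[0,1]$ yields $|U(\gamma(1))-U(\gamma(0))|\le\int_\gamma G\,ds$. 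Lemma~\ref{lem:lowersem} then converts this $p$-weak upper gradient into an honest one, which is what is used; and since each $h_{n_i}$ is in fact a genuine upper gradient of $v_{n_i}$ on $X_{i+1}$ and each $\psi_i$ is Lipschitz, a curve–splitting (rather than differentiation) variant of the same estimate upgrades this to a genuine upper gradient, matching the statement.

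I expect the main obstacle to be the careful justification that $w_i\circ\gamma$ is absolutely continuous on \emph{all} of $[0,1]$ — not merely on each component interval of $\gamma^{-1}(\{\psi_i>0\})$ — and that the derivative bound is unaffected by the measure-zero set of times at which $\gamma$ touches $\partial\,\supp(\psi_i)$, while $v_{n_i}$ and $h_{n_i}$ are only defined on $X_{i+1}$ and $\gamma$ need not stay in $X_{i+1}$. The saving grace is precisely that $w_i$, and its variation, are supported in $\supp(\psi_i)\subset X_{i+1}$, so only subcurves lying inside $X_{i+1}$ ever enter the estimate; everything else reduces to the elementary facts that a finite sum of absolutely continuous functions is absolutely continuous and that a product of absolutely continuous functions on a compact interval is absolutely continuous with the usual Leibniz derivative.
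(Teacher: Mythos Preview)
Your argument is correct and arrives at the same conclusion as the paper, but you take a slightly different---and in one respect more careful---route. The paper's proof is very short: it invokes the Leibniz rule \cite[Proposition~6.3.28]{shabook} to assert directly that $h_{n_i}\psi_i + L_i(\psi_{i-1}+\psi_i+\psi_{i+1})|v_{n_i}-U|$ is an upper gradient for the single product $v_{n_i}\psi_i$ on $X_{i+1}$, then extends by zero and sums. You instead work at the level of a fixed curve, reduce to a finite sum by compactness, establish absolute continuity of each $w_i\circ\gamma$ with the na\"ive Leibniz bound involving $|v_{n_i}|$, and only \emph{after} summing use the identity $\sum_{i\in F}(\psi_i\circ\gamma)'=0$ to replace $|v_{n_i}|$ by $|v_{n_i}-U|$. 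This is the more transparent justification: the standard Leibniz rule by itself only yields the factor $|v_{n_i}|$, and the improvement to $|v_{n_i}-U|$ genuinely relies on the partition-of-unity cancellation, which the paper's term-by-term formulation leaves implicit. The gluing issue you flag (absolute continuity of $w_i\circ\gamma$ across the boundary of $\gamma^{-1}(\{\psi_i>0\})$) is real but handled correctly by your observation that $w_i\circ\gamma$ vanishes continuously there and has $L^1$ derivative on the open set; the paper sidesteps this entirely by quoting the Leibniz rule as a black box.
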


\begin{proof} Extend each $h_{n_i}, v_{n_i}$ by zero outside of $X_{i+1}$. This does not alter definitions of the functions $G$ and $U$ since ${\rm supp}(\psi_i)\subset X_{i+1}$. 
We have that $h_{n_i}$ is an upper gradient for $v_{n_i}$ in $X_{i+1}$. Note that $L_i (\psi_{i-1}+\psi_{i}+\psi_{i+1})$ is an upper gradient of $\psi_i$, since $\psi_i$ is $L_i$ Lipschitz, ${\rm supp}(\psi_i)\subset X_{i+1}\setminus X_{i-1}$ and $\ones_{X_{i+1}\setminus X_i} \leq (\psi_{i-1}+\psi_{i}+\psi_{i+1})$.  Thus, by the Leibnitz rule (see the proof of \cite[Proposition 6.3.28]{shabook}) we have that  $h_{n_i} \psi_i + L_i (\psi_{i-1}+\psi_{i}+\psi_{i+1})|v_{n_i}-U|$ is an upper gradient for $v_{n_i}\psi_i$ in $X_{i+1}$. Since $v_{n_i}\psi_i$ vanishes outside of $X_{i+1}$, $h_{n_i} \psi_i + L_i (\psi_{i-1}+\psi_{i}+\psi_{i+1})|v_{n_i}-U|$ is an upper gradient on all of $X$. Summing over $i\in\N$, we get that $G$ is a $p$-weak upper gradient for $U$. (It is direct to see that \eqref{eq:ug} is stable under countable sums.)
\end{proof}

\end{proof}

\bibliographystyle{acm}
\bibliography{pmodulus}

\end{document}